\newcommand{\shrinkmargins}[1]{
  \addtolength{\textheight}{#1\topmargin}
  \addtolength{\textheight}{#1\topmargin}
  \addtolength{\textwidth}{#1\oddsidemargin}
  \addtolength{\textwidth}{#1\evensidemargin}
  \addtolength{\topmargin}{-#1\topmargin}
  \addtolength{\oddsidemargin}{-#1\oddsidemargin}
  \addtolength{\evensidemargin}{-#1\evensidemargin}
  }
\DeclareMathOperator{\Hom}{Hom}
\DeclareMathOperator{\Spec}{Spec}
\DeclareMathOperator{\Gal}{Gal}
\DeclareMathOperator{\Frob}{Frob}
\DeclareMathOperator{\Res}{Res}
\DeclareMathOperator{\Conf}{Conf}
\DeclareMathOperator{\Br}{Br}
\DeclareMathOperator{\disc}{disc}
\newcommand{\field}[1]{\mathbb{#1}}
\newcommand{\Q}{\field{Q}}
\newcommand{\Z}{\field{Z}}
\newcommand{\A}{\field{A}}
\newcommand{\F}{\field{F}}
\newcommand{\C}{\field{C}}
\renewcommand{\P}{\field{P}}
\newcommand{\KK} {\mathcal{K}}
\newcommand{\ra}{\rightarrow}
\newcommand{\Gm}{\mathbb{G}_m}
\newcommand{\tensor} {\otimes}
\newcommand{\beq}{\begin{displaymath}}
\newcommand{\eeq}{\end{displaymath}}
\newcommand{\beqn}{\begin{equation}}
\newcommand{\eeqn}{\end{equation}}
\newcommand{\jordan}[1]{{\color{violet} ($\spadesuit$ Jordan: #1)}}
\theoremstyle{plain}
\newtheorem{thm}{Theorem}[subsection]
\newtheorem{prop}[thm]{Proposition}
\newtheorem{cor}[thm]{Corollary}
\newtheorem{lem}[thm]{Lemma}
\theoremstyle{definition}
\newtheorem{defn}[thm]{Definition}
\newtheorem{exmp}[thm]{Example}
\theoremstyle{remark}
\newtheorem{rem}[thm]{Remark}
\numberwithin{equation}{section}
\title{Averages of Arithmetic Functions over Conductors of Function Fields}
\author{Jordan Ellenberg}
\address{Department of Mathematics, University of Wisconsin, Madison, WI 53706}
\email{ellenber@math.wisc.edu}
\author{Mark Shusterman} 
\thanks{The second author is The Dr. A. Edward Friedmann Career Development Chair in Mathematics}
\address{Mark Shusterman, Faculty of Mathematics and Computer Science, Weizmann
Institute of Science, 234 Herzl Street, Rehovot 76100, Israel.}
\email{mark.shusterman@weizmann.ac.il}
\begin{document}

\begin{abstract}

For a finite group $G$ and a sufficiently large (but fixed) prime power $q$ coprime to $G$ we obtain asymptotics for the number of regular Galois extensions $L/ \F_q(t)$, with $\Gal(L/\F_q(t)) \cong G$, ramified at a single place of $\F_q(t)$, thus making progress on a positive characteristic analog of the Boston--Markin conjecture.
We also obtain similar results for other arithmetic functions of the product of places of $\F_q(t)$ ramified in $L$, and for more general one-variable function fields over $\F_q$ in place of $\F_q(t)$. 

Some of our proofs make crucial use of a series of recent breakthroughs by Landesman--Levy, as well as a new `vanishing of stable homology in a given direction' result for representations of braid groups arising from braided vector spaces. 
Other inputs include a study of (rings of coinvariants of) braided vector spaces associated to racks with $2$-cocycles, a connection between convolution of arithmetic functions and direct sums of braided vector spaces, and a Goursat lemma for racks.

\end{abstract}

\maketitle

{\em The authors dedicate this paper to the memory of Nigel Boston (1961-2024), whose outlook and ideas are foundational to the modern study of statistics of field extensions, and who greatly influenced both of us.}

\section{Introduction}

\subsection{Homological Stability}

It is by now well understood that purely topological theorems about homological stability for moduli spaces can (in many cases) be used to prove theorems in arithmetic statistics over global function fields. 
For example, a series of works starting with \cite{evw}, and culminating in \cites{llSplit, ll, llNew}, was concerned mostly with computing the homology of Hurwitz spaces (and their variants). In some recent works, such as \cites{etw, el, BDPW, MomentsOfLfunctions}, it has become clear that there is utility in proving homological stability theorems where instead of studying the homology of the space itself, one studies the homology of (non-constant) local systems on that space. The moduli spaces in question are often $K(\pi,1)$, so this reduces to computing the homology of the discrete group $\pi$ with some of its representations as coefficients.

For instance, $\Conf^n(\C)$ - the configuration space of $n$ unordered distinct points in the plane is a $K(B_n,1)$ where $B_n$ is the Artin braid group on $n$ strands given by the presentation
\[
B_n = \langle \sigma_1, \dots, \sigma_{n-1} : \sigma_i \sigma_j = \sigma_j \sigma_i \text{ for } i > j+1, \ \sigma_i \sigma_{i+1} \sigma_i = \sigma_{i+1} \sigma_i \sigma_{i+1} \text{ for } i < n-1 \rangle.
\]

The representations of $B_n$ whose homological stability we study (with applications to arithmetic statistics in mind) are of the form $V^{\otimes n}$ for a braided vector space $V$ - see \cref{def:braidedobject} for the notion of a braided vector space, and \cref{BraidedToNaturoid} for the action of $B_n$ on $V^{\otimes n}$.


A homological stability theorem in this setting is the assertion that, for each nonnegative integer $i$, the group $H_i(B_n, V^{\tensor n})$ is independent of $n$ once $n$ is large enough relative to $i$ - for arithmetic applications one typically wants `large enough' to mean `larger than a constant multiple of $i$'.
This constant is called the slope of stability. An example is \cite{el}*{Theorem 4.2.6} which applies under suitable assumptions on $H_0(B_n, V^{\otimes n})$.
As another example, \cite{etw} produces highly non-trivial upper bounds on the dimension of $H_i(B_n, V^{\tensor n})$.  

Many of the braided vector spaces that play a role in arithmetic arise from racks - see \cref{RackDef} and \cref{RacksToBraidedVS}.
For these, \cite{llNew} proves homological stability and determines the stable value (in many cases).
In this paper we obtain homological stability, indeed vanishing, for certain braided vector spaces; the braided vector spaces to which this result are quite general and particular need not arise from racks.  That said, the particular examples to which we apply the results in the latter part of the paper are related to racks (though often in a twisted way,  using the notion of a rack with 2-cocycle.)

Our most general vanishing of homology result is \cref{th:0controlledvanishing}, a special case of which is the following.

\begin{thm} \label{SamplerVanishingStableHomology}

Suppose that there exists a nonnegative integer $d$ such that $H_0(B_n, V^{\otimes n}) = 0$ for every $n > d$. Then
$
H_i(B_n, V^{\tensor n}) = 0
$
for $n > (d+2)i + d$.

\end{thm}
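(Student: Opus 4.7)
I would prove the vanishing by induction on $i$, with base case $i = 0$ given directly by the hypothesis (noting that $(d+2)\cdot 0 + d = d$). For the inductive step, I would invoke a spectral sequence arising from an augmented semi-simplicial resolution of $V^{\otimes n}$ as a $B_n$-representation, in the spirit of the arc complex of Ellenberg--Venkatesh--Westerland and the more recent refinements of Landesman--Levy referenced in the introduction. Such a resolution yields a first-quadrant spectral sequence whose $E^1$-entries are tensor products of $H_0$-coinvariants of smaller tensor powers of $V$ with a single higher-homology term,
\[
E^1_{p,q} \;\cong\; \bigoplus_{\substack{a_1 + \cdots + a_p + b = n \\ a_j, b \ge 1}} \bigotimes_{j=1}^{p} H_0(B_{a_j}, V^{\otimes a_j}) \otimes H_q(B_b, V^{\otimes b}),
\]
converging to zero in positive total degree by the acyclicity of the augmentation.

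The two vanishing inputs now combine sharply. The hypothesis forces each $a_j \le d$, so in particular $E^1_{p,q} = 0$ as soon as some $a_j > d$; combined with $a_1 + \cdots + a_p \ge p$ this means any nonzero contribution has $n - pd \le b \le n - p$. The inductive hypothesis then kills the $H_q$-factor whenever $q < i$ and $b > (d+2)q + d$. For $n > (d+2)i + d$, an explicit check verifies that every $E^1_{p,q}$ with $p + q = i$ and $q < i$ vanishes, and the remaining candidate survivors contributing to $H_i(B_n, V^{\otimes n})$ are killed by differentials emanating from total degree $i+1$ (whose sources vanish for the same reason, recursively within this induction). The slope $d+2$ is precisely the minimum that makes this bookkeeping close up: one needs $d$ strands to clear the $H_0$-footprint and an additional $2$ strands to absorb the shift between the simplicial degree and the spectral sequence differential.

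The main obstacle is not this spectral-sequence accounting but the production of the augmented resolution and the verification of its acyclicity in the relevant range. This is exactly the content of \cref{th:0controlledvanishing}, of which the present theorem is a special case; the genuine braid-theoretic input -- connectivity of the appropriate arc complex for braided vector spaces -- is concentrated there and presumably leans on the Landesman--Levy technology cited in the introduction.
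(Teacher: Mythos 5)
Your overall strategy --- induction on the homological degree, an arc-complex spectral sequence in the style of Ellenberg--Venkatesh--Westerland, and the finiteness of $\deg C(V)$ --- is the same as the paper's, and your base case and your heuristic for the slope $d+2$ are right. But two concrete steps in the execution fail. First, the $E^1$ page you write down is not the one the arc complex produces. By \cite{el}*{Proposition 3.2.4} the $E^1$ term is $\KK(M_p)_q = M_p \tensor V^{\tensor q}$, whose piece relevant to $B_n$ is $H_p(B_{n-q}, V^{\tensor (n-q)}) \tensor V^{\tensor q}$: a single homology group of a smaller braid group tensored with a \emph{plain} tensor power of $V$. No iterated coinvariants $\bigotimes_j H_0(B_{a_j}, V^{\tensor a_j})$ appear at $E^1$ (they would only emerge from a further bar-type resolution over $C(V)$), and I know of no spectral sequence with your stated $E^1$ page and abutment. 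Relatedly, the sequence does not converge to zero in all positive total degrees; the arc complex is only highly connected, so $E^\infty_{pq}$ vanishes only once $n > p+q+1$. That range happens to suffice here, but not for the reason you give.

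Second, and more seriously, the endgame does not close. You propose to kill the surviving corner term $H_i(B_n, V^{\tensor n})$ by showing that the sources of all incoming differentials vanish ``recursively within this induction.'' The first incoming differential has source (in either your version or the correct one) involving $H_i(B_b, V^{\tensor b})$ with $n-d \le b < n$, after an $H_0$-factor of weight at most $d$ is split off. These groups sit in the \emph{same} homological degree $i$, with a strand count that can drop below the threshold $(d+2)i+d$, so neither the induction on $i$ nor a secondary descent on $n$ reaches them. The correct conclusion from the degeneration is not that $H_i(B_n, V^{\tensor n})$ dies, but that the cokernel of $V \tensor M_i \to M_i$, i.e.\ $M_i/IM_i$, is concentrated in degrees at most $(d+2)i$; in other words, $M_i$ is \emph{generated} as a $C(V)$-module in degrees at most $(d+2)i$. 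One then invokes $\deg C(V) \le d$ to conclude $\deg M_i \le (d+2)i + d$. This module-generation step over $C(V)$ is the actual source of the trailing ``$+d$'' in the bound, and it is missing from your sketch.
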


A significant feature of \cref{th:0controlledvanishing}, not reflected in \cref{SamplerVanishingStableHomology}, is that it allows one to obtain homological stability `in a given direction' - a form of stability considered also in \cite{llNew}.
This is accomplished here by keeping track of a grading on $V$.
Other notable features are that that \cref{th:0controlledvanishing} applies more generally to surface braid groups, and that the bounds it provides are not only effective but explicit.

In order to apply \cref{SamplerVanishingStableHomology}, 
we study the ring of coinvariants of $V$ - see \cref{DefAlgCoinvs}.
Much is known about this ring in case $V$ arises from a rack - starting from works of Conway--Parker and Fried--V\"olklein, all the way to more recent works such as \cites{evw, el, shus}.
We extend this theory to braided vector spaces arising from $2$-cocycles on racks in \cref{CoinvsAlg}. 

We wonder whether there is an analog of \cref{SamplerVanishingStableHomology} for automorphism groups of free groups, or for mapping class groups of connected closed orientable surfaces.

\subsection{Arithmetic Applications}


Let $G$ be a finite group,
let $p$ be a prime number not dividing $|G|$, and let $q$ be a power of $p$.
Every Galois extension of $\F_q(t)$ with Galois group $G$ is thus tamely ramified.

Let $R \subseteq G$ be a nonempty conjugacy-closed generating set consisting of nontrivial elements of $G$ such that $r^q \in R$ for every $r \in R$.
Let $R_1, \dots, R_k$ be the conjugacy classes of $G$ whose (disjoint) union is $R$.

Fix a separable tame closure $\overline{\F_q(t)}$ of $\F_q(t)$, a place of it lying over each (finite) place of $\F_q(t)$, and a (topological) generator of the corresponding (tame) inertia subgroup of $\Gal(\overline{\F_q(t)}/\F_q(t))$.

For nonnegative integers $n_1, \dots, n_k$ denote by $\mathcal E_q^R(G;n_1, \dots, n_k)$ the family of regular Galois extensions $L$ of $\F_q(t)$ inside $\overline{\F_q(t)}$, equipped with an isomorphism $\varphi \colon \Gal(L/\F_q(t)) \to G$ such that the following conditions are met.

\begin{itemize}
    
\item The infinite place of $\F_q(t)$ splits completely in $L$.

\item The chosen generators of inertia subgroups in $\Gal(\overline{\F_q(t)}/\F_q(t))$ of (finite) places ramified in $L$ are mapped to $R$ under the composition of group homomorphisms
\[
\begin{tikzcd}
	{\operatorname{Gal}(\overline{\mathbb{F}_q(t)}/\mathbb{F}_q(t))} & {\operatorname{Gal}(L/\mathbb{F}_q(t))} & G.
	\arrow[from=1-1, to=1-2]
	\arrow["\varphi", from=1-2, to=1-3]
\end{tikzcd}
\]

\item For each $1 \leq j \leq k$ the sum of the degrees of all the (finite) places of $\F_q(t)$ whose inertia generators map to $R_j$ is $n_j$.

\end{itemize}

\begin{rem}

The extensions in $\mathcal E_q^R(G;n_1, \dots, n_k)$ correspond to Galois $G$-covers of $\P^1$ over $\F_q$ by smooth projective geometrically connected curves branched at $n = n_1 + \dots + n_k$ geometric points in $\mathbb P^1$ with $n_i$ geometric branch points in $\mathbb P^1$ of type $R_i$ for every $1 \leq i \leq k$, and $|G|$ many $\F_q$-points over $\infty \in \mathbb P^1$. 
    
\end{rem}

\begin{rem}

Raising to $q$th power permutes $\{R_1, \ldots, R_k\}$; if $R_i$ and $R_j$ are in the same cycle of this permutation for some $1 \leq i < j \leq k$, then $\mathcal E_q^R(G;n_1,\ldots, n_k)$ can only be nonempty if $n_i = n_j$.
    
\end{rem}

As an application of the `homological stabilization in a given direction' results of \cite{llNew}, that work obtains, up to a count of connected components, an asymptotic for $\# \mathcal E_q^R(G;n_1, \dots, n_k)$ as $n = n_1 + \dots + n_k \to \infty$ with $q$ sufficiently large depending on $|G|$, thus going way beyond what is known for the analogous problem over $\Q$ in place of $\F_q(t)$.
In this paper we obtain enhancements and variants of this asymptotic counting, by studying the average over $L \in \mathcal E_q^R(G;n_1, \dots, n_k)$ of arithmetic functions of the conductor of $L$.
The guiding principal is that as $n \to \infty$ and $L$ ranges over $\mathcal E_q^R(G;n_1, \dots, n_k)$, the conductors $f_L$ should behave like a random multiset of degree $n$ squarefree polynomials in $\F_q[t]$.

The conductor of $L$, which we denote by $f_L$, is the radical of the discriminant of $L$, or equivalently the (monic squarefree) polynomial in $\F_q[t]$ obtained by multiplying all the monic irreducible polynomials in $\F_q[t]$ that ramify in $L$.
It also possible to think of $f_L$ as (defining) the branch locus of the branched cover of $\P^1$ over $\F_q$ by the smooth projective geometrically connected curve corresponding to $L$. 

\subsubsection{Irreducible Polynomials}

Perhaps the most natural function to average over the extensions $L/\F_q(t)$ in $\mathcal E_q^R(G;n_1, \dots, n_k)$ is the indicator function $\mathbf{1}_{\textup{irr}}$ of $f_L$ being irreducible, as suggested by Boston and Markin. \cite{BM} also observed that, for $f_L$ to be irreducible, all of the $n_j$ but one have to be $0$.
For the purpose of averaging $\mathbf{1}_{\textup{irr}}$ we may therefore assume that $R$ is a (single) conjugacy class of $G$.
In particular $G$ is generated by a single conjugacy class, or equivalently by \cite{nsw}*{Theorem 10.2.6} - the abelianization of $G$ is cyclic.

In analytic number theory, given an arithmetic function with an asymptotic estimate for its sum, one is often interested in asymptotics for the sum of this function over the prime numbers (or monic irreducible polynomials). In our case the value of the function on a (monic squarefree) polynomial $g \in \F_q[t]$ is the number of $L \in \mathcal E_q^R(G;n_1, \dots, n_k)$ with $f_L = g$.

Regardless of the basic method being employed to estimate the sum of an arithmetic function over the primes - be it a (sophisticated) sieve, an identity such as that of Vaughn, or a convolution identity involving the M\"obius function, an estimate for the sum of the function over the multiples of a given integer (or polynomial) which is allowed to grow with the range of summation is usually available. 
In our case, such an estimate is not provided by the results of Landesman--Levy in their present form, yet we manage to obtain the desired asymptotic.

\begin{thm} \label{MainArithApp}

For $q$ sufficiently large depending on $|G|$, as $n \to \infty$ we have
\begin{equation*} 
\#\{L \in \mathcal E_q^R(G;n) : f_L \text{ is irreducible}\} \sim \frac{q}{(q-1)n} \# \mathcal E_q^R(G;n)    
\end{equation*}
with a power saving error term.

\end{thm}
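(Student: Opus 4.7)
The plan is to reduce the count to pointed Hurwitz counts via a M\"obius sieve in $\F_q[t]$. For any monic squarefree $f \in \F_q[t]$ of degree $n$, the elementary identity
\[
n \cdot \mathbf{1}_{f \text{ irreducible}} \;=\; \sum_{d\mid n}\mu(n/d)\,N_d(f),\qquad N_d(f)\;:=\;\#\{\alpha\in\F_{q^d}:f(\alpha)=0\},
\]
gives, after summing over $L\in\mathcal E_q^R(G;n)$ and reorganizing $N_d(f_L)$ by closed point of $\mathbb A^1_{\F_q}$,
\[
n\,b_n \;=\; \sum_{d\mid n}\mu(n/d)\sum_{e\mid d}e\sum_{Q\text{ of degree }e}\#\{L\in\mathcal E_q^R(G;n):Q\text{ ramified in }L\},
\]
where the inner count is the $\F_q$-point count of a ``pointed'' Hurwitz stack in which one of the $n$ branch points of $L$ is fixed at $Q$ and the remaining $n-e$ are free.

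For $Q$ of degree $e$ well below $n$, I would apply Landesman--Levy to this reduced stack to obtain $\#\{L:Q\text{ ram}\}=C(Q)q^{n-e}+O(q^{\beta(n-e)})$ for some absolute $\beta<1$, with $C(Q)$ depending only on the $\F_q$-isomorphism class of the local monodromy data at $Q$. Substituting into the sieve and using the arithmetic M\"obius cancellations $\sum_{e\mid d\mid n}\mu(n/d)=[e=n]$ and $\sum_{d\mid n}\mu(n/d)\tau(d)=1$, the heuristic that ``$f_L$ is uniformly distributed among monic squarefree polynomials of degree $n$'' forces the main term to be $\tfrac{q}{(q-1)n}a_n$, precisely matching the density of irreducible polynomials among degree-$n$ squarefrees.

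The main obstacle, explicitly flagged in the introduction, is that Landesman--Levy does not provide such an estimate when $\deg Q$ grows with $n$: when $e$ approaches $n$ the reduced stack has too few free branch points for the stability input, at $e = n$ the naive M\"obius identity becomes \emph{circular} (the $e = n$ contribution to $A_n$ is literally $nb_n$), and an independent input on the pointed counts at large $e$ is genuinely needed. To supply this I would invoke the vanishing-of-stable-homology-in-a-given-direction theorem \cref{th:0controlledvanishing}, applied to the braided vector space attached to $R$ together with its rack $2$-cocycle via the coinvariant-ring analysis of \cref{CoinvsAlg}. The direction frozen there corresponds to the ramification fixed at $Q$, so the quantitative form of that theorem bounds the pointed counts uniformly enough to control the tail. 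Splitting the sieve at a cutoff $D$ growing slowly with $n$, handling $e \leq D$ by Landesman--Levy and $e > D$ by \cref{th:0controlledvanishing}, and balancing the two error terms then yields the stated power-saving error.

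The hardest step will be converting the qualitative directional vanishing into a quantitative tail bound strong enough for a power saving in $n$. This will require the full effectivity of \cref{th:0controlledvanishing} (the explicit form given in \cref{SamplerVanishingStableHomology}), together with the Goursat lemma for racks announced in the abstract, to decompose the pointed Hurwitz space into graded pieces where the vanishing is effective with the right slope.
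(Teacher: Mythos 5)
Your overall strategy---split the analysis into a range handled by Landesman--Levy and a range handled by \cref{th:0controlledvanishing}---correctly identifies the two key inputs, but the concrete reduction has a gap. The M\"obius identity you start from is tautological: since $e$ ranges over divisors of $n$, the coefficient of $e\,a_e(f)$ in $\sum_{d\mid n}\mu(n/d)N_d(f)$ is $[e=n]$, so after summing over $L$ the identity reads $nb_n=nb_n$; every pointed count with $e<n$ cancels identically and carries no information. You acknowledge the circularity at $e=n$, but the proposed fix does not break it, because the $e=n$ ``pointed count'' \emph{is} the quantity $nb_n$ you are trying to estimate. To extract information one must compute each $\sum_L N_d(f_L)$ by an independent cohomological method \emph{before} inverting; by Newton's identities this is equivalent to controlling $\sum_L \operatorname{tr}(\sigma_{f_L},\wedge^k\operatorname{std}_n)$ for all $0\le k\le n-1$, which is exactly the route the paper takes via \cref{VonMangoldtUsingWedges}.

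The large-$e$ input you invoke is also a mismatch. The ``direction'' in \cref{th:0controlledvanishing} is a grading on the braided vector space (here, on $\kappa R_\wedge=\kappa R\oplus\kappa R_{-1}$, whose $m$th graded piece of the $n$th tensor power is $\kappa R^{\otimes n}\otimes\wedge^m\operatorname{Perm}_n$); it is not about freezing a branch point at a closed point $Q$ of large degree. A pointed Hurwitz space with $\deg Q=e$ close to $n$ has only $n-e$ free branch points and lies entirely outside any stable range, and neither Landesman--Levy nor \cref{th:0controlledvanishing} says anything about it. What the paper actually does is: decompose $\mathsf{Hur}^n_{G,R}$ into connected components; use \cite{llNew}*{Theorem 1.3.5} to identify $H^i_c$ of each component with coefficients $\wedge^k\operatorname{std}_n$ with that of $\Conf^n$ when $i>\lambda n$ and $k\le\alpha n$ (this is where the main term $\frac{q}{(q-1)n}$ comes from, via the prime polynomial theorem on $\Conf^n$---a step your appeal to an equidistribution ``heuristic'' leaves unproved); use \cref{pr:wedgevanishing}, the consequence of \cref{th:0controlledvanishing} for $\kappa R_\wedge$, to kill the cohomology when $k>\alpha n$; and bound the unstable range $i\le\lambda n$ by \cref{Salvetti} and Deligne's bounds. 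Your sieve would need an analogue of each of these steps, and the crucial large-$k$ one cannot be phrased as a statement about pointed Hurwitz counts at a single large-degree $Q$.
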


In particular, the odds that $f_L$ is irreducible are asymptotically the ones of a random squarefree polynomial to be irreducible.

\cref{MainArithApp} makes progress toward conjectures from \cite{BM}, as restated in the first part of \cite{shus}*{Conjecture 1.9}.
The latter work also proves a large finite field version of \cref{MainArithApp}.

A version of the problem not involving counting, only existence of $G$-extensions, and allowing ramification at a small number of primes, has been considered over global fields for instance in \cites{BSEF, BSS, DW, Ho, JR, KS, KNS, MU, No, Pl}.
An upper bound for the version of our problem over $\Q$ with $G = S_4$ has been obtained in \cite{BG}*{Theorem 7.11}, and a lower bound for $G = S_3$ (respectively, $G = S_4$) with at most $3$ (respectively, $8$) ramified primes has been obtained \cite{TT}.

The key inputs in our proof of \cref{MainArithApp} are \cref{SamplerVanishingStableHomology}, and \cite{llNew}*{Theorem 1.3.5} - a representation stability result which relies on, among many other things, a large monodromy theorem from \cite{shus}.

\cite{shus}*{Conjecture 1.4} makes a prediction for the counts over $\Q$ analogous to that from \cref{MainArithApp}, based on established heuristics, yet it does not commit to an explicit constant in the asymptotics, only predicting the rate of growth.
Having \cref{MainArithApp}, one can be more confident about making a conjecture over $\Q$ with the most natural constant in the asymptotics - the value of the Riemann zeta function at $2$ (and no correction factor). 

\cref{MainArithApp} (and other results we obtain) have implications towards refinements of the Cohen--Lenstra heuristics where one cares about the factorization of the discriminant of the function field whose class group is studied.  

There are some applications of the ideas around \cref{MainArithApp} that we leave for future works to pursue.
One such application, following \cite{els}, is to show that the asymptotic proportion among quadratic Dirichlet $L$-functions over $\F_q[t]$ with irreducible conductor of those having a zero at the central point $s = 1/2$ (or at any other given point) is $0$.
Another potential application, following \cites{el,llNew}, would be to the distribution of Selmer groups of prime quadratic twists of an abelian variety over a global function field, and as a standard consequence, a result toward the minimalist conjecture for the rank of the Mordell--Weil group in this family.

\subsubsection{An Aside on Random Profinite Groups}

\cite{BE} suggests, among other things, the following random model of profinite groups.
For each positive integer $n$ consider the average over all the degree $n$ irreducible polynomials $P \in \F_q[t]$ of the Dirac measure at the (maximal pro-prime-to-$q$ quotient of the) \'etale fundamental group of $\operatorname{Spec} \F_q[t,P^{-1}]$ - the affine line punctured at the roots of $P$. \cite{BE} postulates that as $n \to \infty$ these (averaged) probability measures converge (weakly) to a probability measure on a suitable space of profinite groups.
The assumption that $P$ is irreducible (or at least, has a bounded number of irreducible factors) is crucial for if dispensed with, a limiting probability measure as $n \to \infty$ will not exist.

Arithmetic topology furnishes us with an analogous random model of (profinite) $3$-manifold groups with boundary - one selects a random knot in the $3$-sphere and takes the profinite completion of the fundamental group of the complement in the $3$-sphere of the interior of a tubular neighborhood of the knot. The analogy becomes particularly close if the knot is chosen as the closure of a random braid in $B_n$ that maps to an $n$-cycle in $S_n$, and then the limit as $n \to \infty$ is taken.

\cite{SaWo} determines such a limiting probability distribution for the Dunfield--Thurston model of a random closed $3$-manifold, and \cite{SaWoo} studies measures on profinite groups (and other categories) much more generally, emphasizing the key role of the $G$-moment in studying such distributions.
The $G$-moment is the expectation of the number of surjections from a random profinite group in our model onto (the given finite group) $G$.
\cref{MainArithApp} makes progress towards the computation of the $G$-moment for the random model of profinite groups proposed in \cite{BE}.

\subsubsection{Other factorization functions}

Here a factorization function is a function on squarefree monic polynomials $f \in \F_q[t]$ determined by the (multiset of) degrees of the irreducible factors of $f$.
For example, we denote by $\omega(f)$ the number of (distinct monic) irreducible factors of $f$, and define the M\"obius function by $\mu(f) = (-1)^{\omega(f)}$.

\begin{thm} \label{MobMainThm}

For $q$ large enough depending on $|G|$, as $n = n_1 + \dots + n_k \to \infty$ we have cancellation with a power saving in
\[
\sum_{L \in \mathcal E_q^R(G; n_1, \dots, n_k)} \mu(f_L).
\]

\end{thm}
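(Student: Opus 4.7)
The plan is to recast the Möbius sum as a trace of Frobenius on \'etale cohomology of a Hurwitz space with a sign-twisted coefficient system, and then to bound that trace by appealing to \cref{SamplerVanishingStableHomology}. If a squarefree $f_L \in \F_q[t]$ factors as $P_1 \cdots P_\omega$ with $\deg P_i = d_i$ and $n = \sum_i d_i$, then Frobenius acts on the $n$ geometric roots of $f_L$ as a permutation of cycle type $(d_1,\dots,d_\omega)$ whose sign is $(-1)^{n-\omega}$. Consequently
\[
\mu(f_L) = (-1)^{\omega(f_L)} = (-1)^n \,\text{sgn}\bigl(\Frob_q \text{ acting on the geometric branch points of } L\bigr).
\]
The sign character $B_n \to \{\pm 1\}$ factoring through $B_n \twoheadrightarrow S_n$ defines a rank-one \'etale local system $\mathcal L$ on $\Conf^n(\A^1)$, and hence, by pullback, on the Hurwitz space $\mathcal H^R_{n_1,\dots,n_k}$ over $\F_q$ whose $\F_q$-points parametrize $\mathcal E_q^R(G;n_1,\dots,n_k)$. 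The trace of $\Frob_q$ on the stalk of $\mathcal L$ at such a point $L$ equals the sign above, so the Grothendieck--Lefschetz trace formula yields
\[
\sum_{L \in \mathcal E_q^R(G;n_1,\dots,n_k)} \mu(f_L) \;=\; (-1)^n \sum_i (-1)^i \Tr\!\left(\Frob_q \mid H^i_c(\mathcal H^R_{n_1,\dots,n_k, \overline{\F_q}}, \mathcal L)\right).
\]

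Next, I would translate to the braid-group homology framework. Poincar\'e duality on the smooth $n$-dimensional Hurwitz space --- which is a $K(\pi,1)$ for the relevant colored braid group --- identifies $H^{2n-i}_c(\mathcal H^R, \mathcal L)$ up to a Tate twist with $H_i(\widetilde B_n, V'^{\otimes n})$, where $\widetilde B_n$ is an appropriate finite-index subgroup of $B_n$ and $V' = K[R]$ is the free vector space on $R$ equipped with the sign-twisted braiding $\sigma'(v_r \otimes v_s) = -v_{r \triangleright s} \otimes v_r$. This $V'$ is the braided vector space attached to the rack $R$ together with the constant $2$-cocycle $-1$, so it falls within the framework of \cref{CoinvsAlg}. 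By the Weil bound, $H^{2n-i}_c$ contributes at most $O(q^{n - i/2})$ to the Lefschetz sum, so a power saving of size $q^{-\delta n}$ in the Möbius sum will follow once $H_i(\widetilde B_n, V'^{\otimes n}) = 0$ for every $i$ below a positive constant multiple of $n$.

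This range of vanishing is then supplied by \cref{SamplerVanishingStableHomology} as soon as its hypothesis is verified: that the sign-coinvariants $H_0(\widetilde B_n, V'^{\otimes n})$ vanish for all $n$ exceeding some absolute $d$. I expect this verification to be the main obstacle. A small-case check is encouraging: for $n=2$ the relation $\sigma' v = v$ forces $2(v_r \otimes v_r) = 0$ on diagonal tensors, and iterating $v_r \otimes v_s = -v_{r \triangleright s} \otimes v_r$ around the length-$3$ orbit of the braiding on $V' \otimes V'$ yields $v_r \otimes v_s = -v_r \otimes v_s$, so both vanish as soon as $\car K \nmid 2|G|$. For general $n$ I would appeal to the structural results of \cref{CoinvsAlg} on coinvariant rings of braided vector spaces arising from racks with 2-cocycles, combined with the Goursat lemma for racks advertised in the abstract, both to reduce to a small list of building blocks and to handle the multi-conjugacy-class case $k > 1$.
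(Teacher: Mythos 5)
Your strategy is the one the paper actually uses (see \cref{pr:mobdisc} and the genus-zero argument in \cref{SecMulCharDiscGenZero}): write $\mu(f_L)$ as $(-1)^n$ times the sign of Frobenius on the branch points, realize this as the trace function of the sign local system pulled back to the Hurwitz space, apply Grothendieck--Lefschetz, Deligne, and the comparison isomorphism \cref{ComparisonCohomCor} to reduce to braid homology with coefficients in $\kappa R_{-1}^{\otimes n}$ (the rack $R$ with the constant $2$-cocycle $-1$), and then invoke \cref{SamplerVanishingStableHomology}. However, you have left open precisely the input that makes this work, namely that $H_0(B_n, \kappa R_{-1}^{\otimes n})=0$ for all $n$ larger than some fixed $d$, and your plan for closing it points at the wrong tools. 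The Goursat lemma for racks (\cref{GoursatRacks}) plays no role here --- it is used for \cref{LegendreThmConductors} --- and the needed statement is \cref{pr:0controlled}, whose proof is a short pigeonhole argument (\cref{QuandleConditionVanishingCoinvariants}, \cref{PigeonholePrincVanishingCoinvs}): once $n>|R|$ two entries of a tuple coincide, say $x_l=x_j=r$, and since $R$ is a quandle the braid $\sigma_{j-1}\cdots\sigma_{l+1}\sigma_l\sigma_{l+1}^{-1}\cdots\sigma_{j-1}^{-1}$ stabilizes the tuple while scaling the corresponding pure tensor by $c(r,r)=-1\neq 1$ (choose $\ell\neq 2$ so that $-1\neq 1$ in $\kappa=\F_\ell$), killing its image in the coinvariants. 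Your $n=2$ sanity check is also not right as stated: the $\sigma_1$-orbit of an off-diagonal pair $(r,s)$ need not have length $3$, and when $r$ and $s$ commute the stabilizer of $(r,s)$ acts with $\nabla=+1$, so $v_r\otimes v_s$ genuinely survives in $H_0(B_2,\cdot)$; this does not matter because vanishing is only needed for $n>|R|$, but it shows the phenomenon is not forced degree by degree.

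Two smaller gaps. First, "$H^{2n-i}_c$ contributes at most $O(q^{n-i/2})$" hides a Betti number that grows exponentially in $n$; to convert homological vanishing below slope $\delta$ into a power saving you must bound the dimensions in the unstable range, e.g.\ by \cref{Salvetti} (or \cref{AaronJordanExplicated}), obtaining a total of at most $(2|R|)^n q^{n-\delta n}$ --- this is exactly where the hypothesis ``$q$ large enough depending on $|G|$'' enters, and without it the claimed saving does not follow. Second, for $k>1$ the cleanest route is not to work with homology of the colored subgroup $B_{n_1,\dots,n_k}$ directly but to observe (as after \cref{HurwitzPointsGCovers}, using \cref{MultiPartitionExmpAction}) that the relevant representation is a $B_n$-direct summand of $\kappa R_{-1}^{\otimes n}$, so the vanishing and dimension bounds for the full tensor power suffice; your Shapiro-type reformulation can be made to work but needs to be said.
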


This makes progress on the second part of \cite{shus}*{Conjecture 1.9}.
It is notable that neither our methods, nor other methods such as those of Landesman--Levy, seem to provide an asymptotic with power saving for $\# \mathcal E_q^R(G; n_1, \dots, n_k)$ when $k > 1$. The difficulty is in understanding the asymptotic number of connected components of Hurwitz spaces when one of the $n_1, \dots, n_k$ is small. 
Progress on this problem, and thus also on the Malle conjecture, has recently been made in \cite{Sant}.

Our methods allow us to treat also other factorization functions such as the generalized divisor functions 
\begin{equation*} 
d_m(f) = \# \{(f_1, \dots, f_m) \in \F_q[t]^m : f_1, \dots, f_m \textup{ are monic, } \ f_1 \cdots f_m = f\} =m^{\omega(f)} 
\end{equation*}
of a monic squarefree $f \in \F_q[t]$.

In \cref{HigherGenus} these results are obtained for function fields of arbitrary smooth projective curves over $\F_q$, not only for $\P^1$.

\subsubsection{Arithmetic functions beyond factorization}

The following result, valid in arbitrary positive characteristic, extends the $p \neq 2$ case of \cref{MobMainThm}.

\begin{thm} \label{CharDiscCancThm}

For $q$ large enough depending on $|G|$, and a nontrivial character $\chi \colon \F_q^\times \to \C^\times$, as $n = n_1 + \dots + n_k \to \infty$ we have cancellation with a power saving in
\[
\sum_{L \in \mathcal E_q^R(G; n_1, \dots, n_k)} \chi(\disc(f_L))
\]
where $\disc(f)$ is the (nonzero) discriminant of a (squarefree) polynomial $f \in \F_q[t]$.

\end{thm}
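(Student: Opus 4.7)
The plan is to extend the Hurwitz-space/braid-group-homology machinery behind \cref{MobMainThm} by threading the discriminant twist $\chi \circ \disc$ through it as a rank-one coefficient system. First, identify $\mathcal E_q^R(G;n_1,\dots,n_k)$ with the $\F_q$-points of a Hurwitz space $\HH$ that is finite étale over $\prod_j \Conf^{n_j}(\A^1)$. The discriminant of the branch locus gives a morphism $\delta \colon \prod_j \Conf^{n_j}(\A^1) \to \Gm$, and pulling back the Kummer $\bar{\Q}_\ell$-sheaf $\LL_\chi$ attached to $\chi$ along $\delta$ and then through $\HH \to \prod_j \Conf^{n_j}$ produces a rank-one lisse sheaf $\FF_\chi$ on $\HH$. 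By the Grothendieck--Lefschetz trace formula,
\begin{displaymath}
\sum_{L \in \mathcal E_q^R(G; n_1, \dots, n_k)} \chi(\disc(f_L)) \;=\; \sum_i (-1)^i \Tr\bigl(\Frob_q \,\bigm|\, H^i_c(\HH_{\bar{\F}_q}, \FF_\chi)\bigr),
\end{displaymath}
and the problem reduces to bounding these cohomology groups.

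Second, pass from étale cohomology to braid-group homology. Over $\C$, $\HH$ is a $K(\pi,1)$ for a subgroup of $\prod_j B_{n_j}$, and the pullback of $\LL_\chi$ along $\delta$ makes $\FF_\chi$ correspond, via the standard Hurwitz-space/braid-group dictionary, to a braided vector space $V_\chi^{\otimes n}$. Here $V_\chi$ has basis $R$, is graded by the decomposition $R = R_1 \sqcup \dots \sqcup R_k$, and carries the natural rack braiding $r_1 \otimes r_2 \mapsto (r_1 \triangleright r_2) \otimes r_1$ multiplied by the scalar $2$-cocycle that records the change in $\chi \circ \disc$ when two neighboring branch points are swapped. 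This is precisely the rack-with-$2$-cocycle framework developed in \cref{CoinvsAlg}, and the ``convolution equals direct sum'' principle from the paper lets me aggregate the $k$ conjugacy classes into a single graded braided vector space without losing track of the multigrading by $(n_1,\dots,n_k)$.

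Finally, apply \cref{SamplerVanishingStableHomology} to $V_\chi$: provided that $H_0(B_n, V_\chi^{\otimes n}) = 0$ for every $n$ past some bound $d$, one obtains $H_i(B_n, V_\chi^{\otimes n}) = 0$ for $n > (d+2)i + d$. Combining this with Deligne's purity bounds on the surviving $H^i_c$ in the Weil~II range yields the desired power-saving cancellation. The main obstacle is verifying the $H_0$ hypothesis for the twisted $V_\chi$: one must show that the twisted ring of coinvariants is nilpotent in high degree. I would attack this using the explicit description of rack $2$-cocycle coinvariants from \cref{CoinvsAlg}, the Goursat lemma for racks from the paper, and the nontriviality of $\chi$ on $\F_q^\times$, which differs from the trivial cocycle precisely by a nontrivial character so that any would-be stable coinvariant must support a $\chi$-equivariant functional on a power of $R$ — a condition forced to fail above bounded degree. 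Extension to higher-genus base curves (see \cref{HigherGenus}) replaces $B_n$ by the relevant surface braid group, which is already accommodated by the more general \cref{th:0controlledvanishing}.
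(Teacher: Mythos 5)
Your proposal follows essentially the same route as the paper's proof in \cref{SecMulCharDiscGenZero}: the character sheaf $\delta^{-1}\mathcal L_\chi$ twists the Hurwitz pushforward, Grothendieck--Lefschetz together with \cref{ComparisonCohomCor} reduces everything to $H_*(B_n, \kappa R_\zeta^{\otimes n})$, and the $H_0$-vanishing you identify as the main obstacle is exactly \cref{pr:0controlled}, proved via the quandle/cocycle condition $c(r,r)=\zeta\neq 1$ and pigeonhole (neither the Goursat lemma nor the convolution formalism is actually needed here), after which \cref{th:0controlledvanishing} applies. The one ingredient you leave implicit is the coarse Betti bound of \cref{Salvetti} on the homology groups outside the vanishing range, which is what converts Deligne's purity into an actual power saving.
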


Indeed, if $q$ is not a power of $2$ and $\chi$ is quadratic, we have $\chi(\disc(f)) = (-1)^{\deg f} \mu(f)$, so in this case \cref{CharDiscCancThm} reduces to \cref{MobMainThm}.
When applied to all characters, \cref{CharDiscCancThm} implies that as $L$ ranges over $\mathcal E_q^R(G; n_1, \dots, n_k)$, the discriminant of $f_L$ asymptotically equidistributes in $\F_q^\times$.
\cref{CharDiscCancThm} is proven in \cref{SecMulCharDiscGenZero}, and improved power savings for some particular choices of $G$ and $R$ are obtained using \cref{pr:dexamples}.

Suppose now that $q$ is odd.
For a monic irreducible $P \in \F_q[t]$ and $f \in \F_q[t]$ not divisible by $P$ the Legendre symbol $(\frac{f}{P})$ is defined to be $1$ if $f$ is a square modulo $P$ and $-1$ otherwise. For a monic squarefree $g \in \F_q[t]$ coprime to $f$ the Jacobi symbol $(\frac{f}{g})$ is defined to be the product of the Legendre symbols of $f$ over the monic irreducible factors of $g$.

\begin{thm} \label{LegendreThmConductors}

For $q$ large enough depending on $|G|$, as $n \to \infty$ we have 
\[
\sum_{L \in \mathcal E_q^R(G; n)} \ \sum_{gh = f_L} \left( \frac{g}{h} \right) \sim 2 \# \mathcal E_q^R(G;n)
\]
where $g,h \in \F_q[t]$ are monic (squarefree, and coprime), with a power saving error term.

\end{thm}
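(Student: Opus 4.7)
The plan is to separate the trivial factorizations $g=1$ and $h=1$ from the inner divisor-type sum, which between them yield the main term $2\#\mathcal E_q^R(G;n)$, and to treat the remaining off-diagonal sum by the Hurwitz space/braided vector space methods developed for \cref{MobMainThm} and \cref{CharDiscCancThm}. Concretely, since $f_L$ is squarefree, each factorization $f_L = gh$ with coprime monic $g, h$ is determined by a subset of the monic irreducible factors of $f_L$; the empty subset and the full subset give $(1/f_L) = (f_L/1) = 1$ apiece, for a combined contribution of $2$ per $L$.

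For the error term
\[
E(n) := \sum_{L \in \mathcal E_q^R(G;n)} \ \sum_{\substack{gh = f_L \\ g \neq 1, \, h \neq 1}} \left(\frac{g}{h}\right),
\]
I would interpret the inner sum as running over the non-trivial ``red/blue'' colorings of the branch divisor of $L$, with $g$ (resp.\ $h$) the product of the red (resp.\ blue) branch primes, so that the Jacobi symbol factors as $(g/h) = \prod_{P \text{ red},\, Q \text{ blue}} (P/Q)$. Geometrically this data lives on a ``colored'' enlargement of the Hurwitz space underlying $\mathcal E_q^R(G;n)$, and the Jacobi symbol defines a $\pm 1$-valued local system on it. By function-field quadratic reciprocity, which introduces the sign $(-1)^{((q-1)/2)\deg P \deg Q}$, the monodromy of this local system can be packaged into a braided vector space $V'$ obtained from the rack-based braided vector space $V_R$ associated to $R$ by tensoring with a $2$-dimensional ``color'' factor carrying a braiding twisted by a $2$-cocycle that absorbs the reciprocity sign. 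This fits into the framework of \cref{CoinvsAlg}.

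The proof is then reduced to verifying the hypothesis of \cref{SamplerVanishingStableHomology}, or more precisely its graded form \cref{th:0controlledvanishing}, for the non-monochromatic summand of $V'$: that the corresponding piece of $H_0(B_n,(V')^{\otimes n})$ vanishes once $n$ exceeds a constant depending on $|G|$. Via the coinvariant ring description of \cref{CoinvsAlg} this is a purely algebraic question, and it is the main obstacle --- one must show that the quadratic character does not stably factor through the rack coinvariants of $R$, so that the non-trivial colorings die in the stable range. The analysis is sensitive to the interplay between $R$ and reciprocity, and is likely to split by $q\bmod 4$, paralleling how the discriminant character was handled in \cref{SecMulCharDiscGenZero}. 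Granted this vanishing, the general theorem yields $H_i(B_n,(V')^{\otimes n}) = 0$ throughout a linear range in $i$, and the Grothendieck--Lefschetz trace formula combined with the large-monodromy input of \cite{llNew}*{Theorem 1.3.5} converts this into the claimed power-saving bound for $E(n)$, completing the proof.
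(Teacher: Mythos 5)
Your decomposition of the main term is correct, and your identification of the relevant braided vector space (a two-dimensional ``color'' factor twisted by a $2$-cocycle absorbing reciprocity, i.e.\ essentially $\kappa_{\pm}$, tensored with $\kappa R$ to give $\kappa R_{\pm}$) matches the paper. But the core of your error-term analysis --- verifying the hypothesis of \cref{SamplerVanishingStableHomology} (or its graded form \cref{th:0controlledvanishing}) for this braided vector space --- cannot work. The hypothesis requires $\deg C(V) < \infty$, i.e.\ vanishing of the \emph{full} $H_0(B_n, V^{\otimes n})$ (or of the full $m$-th graded piece $C(V)_m$) beyond some degree. For $V = \kappa R_{\pm} = \kappa R \oplus \underline{\kappa R}$ this fails in every reasonable grading: the two monochromatic summands $H_0(B_n, \kappa R^{\otimes n})$ and $H_0(B_n, \underline{\kappa R}^{\otimes n})$ carry the untwisted rack braiding (the sign in $\kappa_{\pm}$ only appears on mixed pairs), so they are nonzero for all $n$ --- they are the component rings of Hurwitz spaces. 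You cannot restrict the vanishing theorem to the ``non-monochromatic summand'': the proof of \cref{th:0controlledvanishing} is an induction on $p$ over the entire graded $C(V)$-module $M_p$, and its base case is precisely the finiteness of $\deg C(V)$. The mixed coinvariants do vanish for large $n$ (this is \cref{OneControlledCocycleT2}), but that only yields $1$-controlledness (\cref{co:pm1controlled}), not the $0$-controlled hypothesis your plan needs.

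This is exactly why the paper's proof of \cref{LegendreThmConductors} does not go through \cref{th:0controlledvanishing} at all (and why, as noted in the introduction, the resulting power saving is inexplicit, unlike in \cref{MobMainThm} and \cref{CharDiscCancThm}). The actual route is: embed $\operatorname{Ind}^{B_n}_{B_{n-i,i}} \operatorname{Span}_\kappa v^{\otimes (n-i)} \otimes \underline{v}^{\otimes i}$, tensored with $\kappa R^n_\times$, into the rack linearization $\kappa[R^n_\times \times \mathcal S_{\pm}(i,n-i)]$ via the Joyce quandle; verify using \cref{GoursatRacks} (together with \cref{SingleConjugacyClassConnected} and \cref{InnerAutJoyceQuandle}) that these tuples generate $R \times \mathcal S_{\pm}$; invoke the Landesman--Levy stable homology computation (\cite{ll}*{Theorem 1.4.2}, \cite{llNew}*{Example 1.4.7}) to see that in a linear range only constituents appearing in $\operatorname{Ind}_{B_{i,n-i}}^{B_n}\kappa$ contribute; and then kill the relevant $\operatorname{Hom}$-space by Frobenius reciprocity and Mackey theory, since the representation does not factor through $S_n$. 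Your citation of \cite{llNew}*{Theorem 1.3.5} as the final input is also misplaced --- that representation-stability result is used for \cref{MainArithApp}, not here --- and the ``split by $q \bmod 4$'' worry is handled much more simply, by using reciprocity only to identify the contributions of $i$ and $n-i$ up to sign and restricting to $i < n/2$.
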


The arithmetic function in \cref{LegendreThmConductors} has been considered in \cite{anh}, appears in the study of multiple Dirichlet series, and serves as an illustration of the kind of arithmetic functions our methods can handle. We intend to consider additional arithmetic function in future works.

The power savings in \cref{MobMainThm} and \cref{CharDiscCancThm} are explicit, and we know exactly how large $q$ has to be for our proof to work.
On the other hand, the power savings in \cref{MainArithApp} and \cref{LegendreThmConductors} are not explicit, and neither is the `large enough' condition on $q$.
The reason is that \cref{SamplerVanishingStableHomology} provides an explicit slope while the homological stability results of Landesman--Levy, which are crucial for our proofs of \cref{MainArithApp} and \cref{LegendreThmConductors}, do not provide such information in their present form, even though with some additional work it can perhaps be obtained.

\subsection{Topology Meets Arithmetic}

The space $\Conf^n$ descends to a finite-type scheme over $\operatorname{Spec} \Z$, whose $\F_q$-points are the monic squarefree degree $n$ polynomials in $\F_q[t]$,
and for various braided vector spaces $V$, the representation $V^{\otimes n}$ of $B_n$ descends to an \'etale sheaf on (an open subset of) $\operatorname{Spec} \Z$.
The trace functions of some of these sheaves are the ones whose sums we estimate in our theorems. To do this, we apply the Grothendieck--Lefschetz trace formula and Deligne's upper bound on the eigenvalues of Frobenius acting on the (compactly supported, \'etale)  cohomology of our sheaves. 
We then need to prove vanishing of some cohomology groups, upper bound the dimensions of those cohomology groups that do not vanish, or compare to some other cohomology groups on which the trace of Frobenius has already been computed.

For example, in order to prove \cref{MobMainThm} we study the one-dimensional vector space braided by negation, whose trace function is $f \mapsto (-1)^{\deg f}\mu(f)$, and apply \cref{SamplerVanishingStableHomology} to it(s tensor powers). 
To prove \cref{MainArithApp} we introduce in \cref{PlainDirectSumEx} a (plain) direct sum operation on braided vector spaces, and consider the direct sum of the trivial (one-dimensional) braided vector space with the one used to study the M\"obius function.  
We show in \cref{ConvolutionCorrDirectSum} that the (plain) direct sum of braided vector spaces corresponds, under the function-sheaf dictionary, to Dirichlet convolution of arithmetic functions on polynomials in $\F_q[t]$.
We then conclude in \cref{WedgeArithmetized} that the value of the trace function of our $2$-dimensional braided vector space on (a monic squarefree polynomial) $f \in \Conf^n(\F_q)$ is $0$ if $f$ has an irreducible factor in $\F_q[t]$ of even degree, and $d_2(f)$ otherwise.

From the (function theoretic) perspective of analytic number theory, it is not at all clear why control of the behavior of this function on a (multi)set would tell us anything about irreducible polynomials (in that set). But once nonabelian harmonic analysis is employed, this function and the indicator function of monic irreducible polynomials turn out to involve the same representations of the braid group, so controlling the homology groups that govern this function gives information about irreducible polynomials as well.
It would be interesting to understand more generally when two arithmetic functions are `twinned' in this way.

The braiding of the aforementioned vector spaces, and of other ones whose trace functions are factorization functions, are involutions. Equivalently, the corresponding representations of $B_n$ are inflated from $S_n$ via the surjective group homomorphism $B_n \to S_n$ that sends $\sigma_i$ to the transposition $(i \ \ i+1)$ for $1 \leq i \leq n-1$.
The irreducible representations of $S_n$ that appear in the study of irreducible polynomials (on whose roots Frobenius acts by an $n$-cycle) are the wedge powers of the standard representation, so we study the homology of Hurwitz spaces twisted by these representations.
It would also be interesting to determine the homology of Hurwitz spaces twisted by other irreducible representations of $S_n$, and \cite{hmw} makes significant progress on this problem.

The arithmetic function in \cref{CharDiscCancThm} arises from a vector space whose braiding is given by scaling by a root of unity whose order is that of $\chi$.
The arithmetic function in \cref{LegendreThmConductors} can be viewed as a generalized convolution, and it arises from a generalized notion of a (not necessarily plain) direct sum of braided vector spaces that we study in \cref{Addability}.

All the braided vector spaces we consider arise from (cyclotomic) $2$-cocycles on racks, and so do other braided vector spaces of interest in arithmetic, such as those in \cite{SW}. It would therefore be interesting to see a relevant example of a braided vector space, say over a finite field, that does not arise from a $2$-cocycle on a rack.

Given a braided vector space $V$ over a finite field, for every $n$ we can choose a number field, and an open subset of the spectrum of its ring of integers to which $V^{\otimes n}$ descends. It is natural to wonder whether this choice can be made uniform in $n$, or even whether $V^{\otimes n}$ descends to an open subscheme of $\Spec \Z$ - this is what happens if $V$ comes from a rack that embeds into a group. 
Notable in this context is \cite{tomerkei}, whose results we would expect to imply a positive answer to this question for the braided vector spaces coming from a certain special class of finite racks called {\em keis}.

The paper is structured as follows.  In section~\ref{s:homologicalvanishing}, we prove Theorem~\ref{th:0controlledvanishing}, which does not require the machinery of the rest of the paper.  In section~\ref{s:sheaves}, we lay out the basic arithmetic geometry necessary to move between theorems about group cohomology and arithmetic counting problems.  In section~\ref{s:coarsebounds}, we recount the bounds on Betti numbers we will need in order to get coarse (but good enough) control of homology outside the stable range.  Section~\ref{s:racks}, concerning racks, $2$-cocycles on racks, and operations on racks and the vector spaces they span, is the longest part of the paper.  With future applications in mind, we have tried to lay out the basics of the theory in a unified way.  We particularly point out \cref{CoinvsAlg} that recasts some results of \cites{el, evw} in the generality of $2$-cocycles on racks and the context of the results obtained in this paper, \cref{RackifyingCocycle} that embeds braided vector spaces associated to $2$-cocycles into braided vector spaces associated to racks, and \cref{GoursatRacks} which is a version of Goursat's lemma for racks.  With this setup in place, the final three sections are devoted to proving arithmetic counting theorems for extension of function fields, including \cref{LegendreThmConductors}.

\section{Homological Vanishing}

\label{s:homologicalvanishing}

In this section, we prove 
the main homological stability theorem needed for our applications. 
We begin with a modicum of definitions and examples regarding braided objects in categories.

Let $\mathcal C$ be a monoidal category - a category with tensor products, and a unit $I$ that satisfy natural compatibilities.
For the applications in this paper, the category $\mathcal C$ will (typically) be the category of (graded) vector spaces over a field $\kappa$.
Another relevant example is the category of sets with direct (Cartesian) products as tensor products, and singletons as unit elements.


\begin{defn} \label{def:braidedobject}

Let $V$ be an object of $\mathcal C$, and let $T \colon V \otimes V \to V \otimes V$ be an isomorphism. 
We say that $T$ is a braiding of $V$, or that $V$ is braided (in case $T$ is implicit),  if
\[
(\mathrm{id}_V \otimes T) \circ (T \otimes \mathrm{id}_V) \circ (\mathrm{id}_V \otimes T) = (T \otimes \mathrm{id}_V) \circ (\mathrm{id}_V \otimes T) \circ (T \otimes \mathrm{id}_V)
\]
as morphisms from $V \otimes V \otimes V$ to $V \otimes V \otimes V$.
For objects $V,W$ of $\mathcal C$ with braidings $T_V, T_W$, a morphism $f \colon V \to W$ is a morphism of braided objects if $T_W \circ (f \otimes f) = (f \otimes f) \circ T_V$ in $\operatorname{Mor}(V \otimes V, W \otimes W)$. 

\end{defn}

With this definition, braided objects in $\mathcal C$ form a category.
Note that the braiding is part of the data of a braided object (namely, braided objects are objects with extra structure).

\begin{exmp} \label{TrivialBraidingUnit}

On the unit object $I$ of $\mathcal C$ we have the (trivial) braiding $I \otimes I \to I \otimes I$ obtained by identifying $I \otimes I$ with $I$ and using the identity morphism of $I$.  
    
\end{exmp}

\begin{defn} \label{PermBraidObj}
    
Let $V$ be an object of $\mathcal C$ with braiding $T_V \colon V \otimes V \to V \otimes V$. We say that $V$ is a permutational braided object if $T_V \circ T_V = \mathrm{id}_{V \otimes V}$.
    
\end{defn}

\begin{exmp} \label{ScalingBraidedVS}

Braided (finite-dimensional) vector spaces are the braided objects in the category of (finite-dimensional) vector spaces. 
For instance, given a field $\kappa$ and $\zeta \in \kappa^\times$ we have the one-dimensional vector space $\kappa_\zeta$ over $\kappa$ braided by $T_{\kappa_\zeta} = \zeta \cdot \mathrm{id}_{\kappa_\zeta \otimes \kappa_\zeta}$.
This braided vector space is permutational if and only if $\zeta \in \{1, -1\}$.
We will sometimes pick a nonzero (basis) vector $v_\zeta \in \kappa_\zeta$.

\end{exmp}

\begin{exmp} \label{IntroducingKwedge}

Over a field $\kappa$ of characteristic different from $2$ we have a $2$-dimensional permutational braided vector space $\kappa_\wedge$ with basis $v_1, v_{-1}$ whose braiding is given by
\[
v_1 \otimes v_1 \mapsto v_1 \otimes  v_1, \qquad
 v_1 \otimes v_{-1} \mapsto v_{-1} \otimes v_1, \qquad
v_{-1} \otimes v_1 \mapsto v_1 \otimes v_{-1}, \qquad
v_{-1} \otimes v_{-1} \mapsto - v_{-1} \otimes v_{-1},
\]
or equivalently, it is represented by the matrix
\[
\begin{pmatrix}
&1 &0 &0 &0 \\
&0 &0 &1 &0 \\
&0 &1 &0 &0 \\
&0 &0 &0 &-1
\end{pmatrix}
\]
in the basis $v_1 \otimes  v_1, v_1 \otimes v_{-1}, v_{-1} \otimes v_1, v_{-1} \otimes v_{-1}$ for the vector space $\kappa_\wedge \otimes \kappa_\wedge$ over $\kappa$.
    
\end{exmp}

\begin{exmp} \label{LegendreBraidVS}

Over a field $\kappa$ of characteristic different from $2$ we have a $2$-dimensional braided vector space $\kappa_{\pm}$ with basis $v, \underline v$ whose braiding is given by
\[
v \otimes v \mapsto v \otimes  v, \qquad
 v \otimes \underline v \mapsto \underline v \otimes v, \qquad
\underline v \otimes v \mapsto - v \otimes \underline v, \qquad
\underline v \otimes \underline  v \mapsto \underline  v \otimes \underline v, \qquad
\]
or equivalently, it is represented by the matrix
\[
\begin{pmatrix}
&1 &0 &0 &0 \\
&0 &0 &-1 &0 \\
&0 &1 &0 &0 \\
&0 &0 &0 &1
\end{pmatrix}
\]
in the basis $v \otimes  v, v \otimes \underline v, \underline v \otimes v, \underline v \otimes \underline  v$ for the vector space $\kappa_{\pm} \otimes \kappa_{\pm}$ over $\kappa$.
    
\end{exmp}

\begin{defn} \label{BraidedToNaturoid}

To a braided object $V$ in $\mathcal C$ we functorially associate the sequence $\{V^{\otimes n}\}_{n \geq 0}$ of objects in $\mathcal C$ with an action of the braid group $B_n$, namely a group homomorphism $B_n \to \operatorname{Aut}(V^{\otimes n})$ given by 
\begin{equation*} 
\sigma_i \mapsto \mathrm{id}_{V^{\otimes (i-1)}} \otimes T_V \otimes \mathrm{id}_{V^{\otimes (n-i-1)}}, \qquad 1 \leq i \leq n-1.
\end{equation*}

\end{defn}

Note that $V^{\otimes 0} = I$, and that the groups $B_0 = B_1$ are trivial.

For nonnegative integers $m,n$, and $g \in B_m, \ h \in B_n$, we then get a commutative diagram
\begin{equation} \label{NaturoidCommDiag}
\begin{tikzcd}
V^{\otimes m} \otimes V^{\otimes n} \arrow[rr,"g \otimes h"] \arrow [d] & & V^{\otimes m} \otimes V^{\otimes n} \arrow[d] \\
V^{\otimes (m+n)} \arrow[rr,"gh"] & &  V^{\otimes (m+n)}
\end{tikzcd}
\end{equation}
in $\mathcal C$ where the vertical arrows are (the isomorphisms) coming from the symmetric monoidal structure, and $gh$ stands for juxtaposition of braids.

\begin{exmp} \label{SignInflation}

For $1 \leq i \leq n-1$ the element $\sigma_i \in B_n$ acts on the one-dimensional vector space $\kappa_\zeta^{\otimes n}$ via multiplication by $\zeta$.
In case $\zeta = -1 \neq 1$ this is the inflation of the sign representation from $S_n$ to $B_n$. 
    
\end{exmp}

A braided object $V$ in $\mathcal C$ is permutational in the sense of \cref{PermBraidObj} if and only if the action of $B_n$ on $V^{\otimes n}$ factors via the natural group homomorphism $B_n \to S_n$.

For an action of a group $G$ on a vector space $W$ over a field $\kappa$ we denote by
\[
H_0(G,W) = W_G = W/\operatorname{Span}_\kappa \ \{gw - w : g \in G, \ w \in W\}
\]
the coinvariants - the largest quotient of $W$ (in the category of vector spaces over $\kappa$) on which $G$ acts trivially.

\begin{defn} \label{DefAlgCoinvs}

Let $\kappa$ be a field.
To a braided vector space $V$ over $\kappa$ we functorially associate its graded $\kappa$-algebra of coinvariants
\[
C(V) = \bigoplus_{n=0}^\infty H_0(B_n, V^{\otimes n})
\]
with multiplication induced by the natural isomorphisms $V^{\otimes m} \otimes V^{\otimes n} \to V^{\otimes (m+n)}$, and well-defined by the commutativity of the diagram in \cref{NaturoidCommDiag}.

\end{defn}

This graded $\kappa$-algebra is the quotient of the (possibly noncommutative)  tensor (associative unital) graded $\kappa$-algebra 
$
\bigoplus_{n=0}^\infty V^{\otimes n}
$
modulo the (two-sided) ideal
\[
\langle gv - v : g \in B_n, \ v \in V^{\otimes n}, \ n \geq 0\rangle = \langle T(v_1 \otimes v_2) - v_1 \otimes v_2 : v_1, v_2 \in V \rangle.
\]

The projection from $C(V)$ to its degree $0$ part allows us to think of $C(V)$ as an augmented $\kappa$-algebra, so that the kernel $I$ of this projection (generated as a non-unital $\kappa$-algebra by $V = V^{\otimes 1}$) is the augmentation ideal of $C(V)$.

    

The configuration space of $n$ unordered distinct points on a genus $g$ surface with one boundary component and $f$ punctures is a $K(\pi,1)$ whose fundamental group is called the $n$-strand braid group on a surface of type $(g,f)$; we denote this group $B^n_{g,f}$.  
We think of the $B^n_{g,f}$ as a family of groups with $g,f$ fixed and $n$ varying.
We have $B_n = B^n_{0,0}$. 
By \cite{el}*{Notation 3.1.1} we have an injective group homomorphism
\begin{equation} \label{DirectProductSurfacBraid}
B_i \times B^{n-i}_{g,f} \to B^n_{g,f}
\end{equation}
for every $0\leq i \leq n$.
In particular, we view $B_n$ as a subgroup of $B^n_{g,f}$, corresponding to the $n$ points on the surface moving within a small disc.

\begin{defn}[\cite{el}*{Definition 3.1.6}]

Let $g,f$ be nonnegative integers.
Let $\kappa$ be a field, let $W$ be a vector space over $\kappa$, and let $V$ be a braided vector space over $\kappa$.
We say that $(V,W)$ is a coefficient system for the family of groups $B^n_{g,f}$ if for every $n \geq 0$ we have a $\kappa$-linear action of $B^n_{g,f}$ on $W \otimes V^{\otimes n}$ such that the diagram 
\[
\begin{tikzcd}
	{(B_i \times B^{n-i}_{g,f}) \times (V^{\otimes i} \otimes W \otimes V^{\otimes (n-i)})} && {V^{\otimes i} \otimes W \otimes V^{\otimes (n-i)}} \\
	\\
	{B^n_{g,f} \times (W \otimes V^{\otimes n})} && {W \otimes V^{\otimes n}}
	\arrow[from=1-1, to=1-3]
	\arrow[from=1-1, to=3-1]
	\arrow[from=1-3, to=3-3]
	\arrow[from=3-1, to=3-3]
\end{tikzcd}
\]
with horizontal arrows coming from the actions of the braid groups, and the vertical arrows coming from the monoidal structure and \cref{DirectProductSurfacBraid}, commutes.

A morphism from a coefficient system $(W_1,V_1)$ to a coefficient system $(W_2, V_2)$ is a pair of $\kappa$-linear maps $W_1 \to W_2$, $V_1 \to V_2$ such that for every $n \geq 0$ the induced $\kappa$-linear map $W_1 \otimes V_1^{\otimes n} \to W_2 \otimes V_2^{\otimes n}$ is a homomorphism of representations of $B^n_{g,f}$.
    
\end{defn}



Coefficient systems form a category, and $(V,W) \mapsto V$ gives rise to a functor from this category to the category of braided vector spaces.




In some situations, it will be useful to consider braided vector spaces $V$ which themselves carry a nonnegative grading that is compatible with the braiding; these are braided objects in the monoidal category of graded vector spaces.  When $V$ is graded, $C(V)$ is defined in the same way, but the grading (by nonnegative integers) is now given by
$$
C(V)_m = \bigoplus_{n = 0} ^\infty H_0(B_n, (V^{\tensor n})_m).
$$
When $V = V_1$ is homogeneous of degree $1$, this is simply $C(V)_m = H_0(B_m, V^{\tensor m})$.  When $(V,W)$ is a coefficient system and $V$ is a graded vector space, we place a grading on $W \tensor V^{\tensor n}$ by concentrating $W$ in degree $0$.







If $M$ is a graded vector space, we denote by $\deg M$ the largest $d$ such that $M_d$ is nonzero.  If there is no such $d$, we write $\deg M = \infty$.

\begin{thm} \label{th:0controlledvanishing}

Let $g,f$ be nonnegative integers, and let $(V,W)$ be a coefficient system for $B_{g,f}^n$ over a field $\kappa$.  
Suppose that $d = \deg C(V)$ is finite.
Then for nonnegative integers $m,n,p$ we have
$$
H_p(B_{g,f}^n, W \tensor (V^{\tensor n})_m) = 0
$$
whenever $p < (m-d)/(d+2\deg V)$.
In particular, if $V = V_{\leq 1}$, then
$$
H_p(B_{g,f}^n, W \tensor V^{\tensor n}) = 0
$$
whenever $p < (n-d)/(d+2)$.

\end{thm}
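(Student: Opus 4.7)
The plan is to prove the theorem by induction on $p$. The base case $p=0$ follows directly from the hypothesis: taking $i=n$ in the inclusion \eqref{DirectProductSurfacBraid}, $B_n$ embeds as a subgroup of $B_{g,f}^n$, so $H_0(B_{g,f}^n, W \otimes (V^{\otimes n})_m)$ is a quotient of $H_0(B_n, W \otimes (V^{\otimes n})_m) = W \otimes C(V)_m$. By assumption the latter tensor factor vanishes for $m > d$, and the inequality $0 = p < (m-d)/(d+2\deg V)$ is equivalent to $m > d$ in this case.

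For the inductive step, I would construct a $B_{g,f}^n$-equivariant chain complex of the form
\[
\cdots \to C_2 \to C_1 \to C_0 \to W \otimes V^{\otimes n} \to 0
\]
that is acyclic and whose terms $C_k$ are induced from representations of the subgroups $B_j \times B_{g,f}^{n-j}$ (with $j$ growing roughly linearly in $k$) on modules of the form $V^{\otimes j} \otimes W \otimes V^{\otimes (n-j)}$. Such a resolution is natural from the perspective of the coefficient system axioms, and in the $g=f=0$ setting it reduces to a complex implicit in works such as \cite{evw} and \cite{el} coming from marking an initial segment of strands in the braid and recording how relations in $C(V)$ are imposed. The acyclicity of this complex in degrees $\leq k$ translates into a statement about the Koszul-type resolution of the augmentation ideal of $C(V)$, whose finite-generation properties are controlled by the assumption $\deg C(V) = d < \infty$.

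Feeding this resolution into the hyperhomology spectral sequence, one obtains an $E^1$-page whose entries are $H_q(B_j \times B_{g,f}^{n-j}, V^{\otimes j} \otimes W \otimes V^{\otimes (n-j)})$, which by the K\"unneth formula and the inductive hypothesis vanish whenever the residual degree $m - (j \cdot \deg V)$ is sufficiently large relative to $q$. Each step in the resolution costs at most $\deg V$ in degree and at most one step in homological degree, and to pass from an $H_0$ bound (``$> d$'') to an $H_p$ bound one must absorb an additional $2\deg V$ at each inductive step to account for the Koszul differentials on both sides of the spectral sequence. This bookkeeping produces the recursive bound $m > d + (d+2\deg V)p$, matching the claimed range.

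The main obstacle, and the place where some care is needed, is constructing the equivariant resolution in the generality of surface braid groups. For the plain braid group case ($g=f=0$) one can appeal to the contractibility of a standard arc complex on the disk; for general $(g,f)$ the complex must be built out of the $B_n$-subgroups via \eqref{DirectProductSurfacBraid}, and the compatibility diagram in the definition of coefficient system is precisely what guarantees that the $B_n$-equivariant differentials extend to $B_{g,f}^n$-equivariant ones. Getting explicit control of the connectivity, and hence the effective slope $d+2\deg V$ advertised in the theorem, will be the most delicate part of the argument. The ``in particular'' statement then follows by specializing to the case $V = V_1$, for which $(V^{\otimes n})_m$ vanishes unless $m = n$ and $\deg V = 1$.
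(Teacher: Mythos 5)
Your proposal follows essentially the same route as the paper: the paper also views $\bigoplus_n H_p(B_{g,f}^n, W\otimes (V^{\tensor n})_m)$ as a graded $C(V)$-module $M_p$, runs an induction on $p$ through the arc-complex spectral sequence whose $E^1$-page is $\KK(M_p)_q = M_p \tensor V^{\tensor q}$ (with entries computed, via Shapiro, from the subgroups $B_j \times B_{g,f}^{n-j}$ exactly as you describe), and extracts the bound $\deg M_p \leq (d+2\deg V)p + d$ from the vanishing of $E^\infty$ in high degree together with the identification of $E^2_{p0}$ with $M_p/IM_p$. The equivariant resolution whose construction you flag as the main obstacle is precisely the one already established for surface braid groups in \cite{el}*{Proposition 3.2.4}, so no new construction is needed and your plan, with that citation supplied and the degree bookkeeping made explicit, reproduces the paper's argument.
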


\begin{proof}
    The basic argument is derived from the spectral sequence used in \cite{evw} to compute the cohomology of Hurwitz spaces, which was generalized to base curves of arbitrary genus in \cite{el}.
    We begin by recalling some notation from \cites{el, evw}, slightly modified in order to account for the more general grading we allow on $V$. We denote by $M_p$ the graded vector space whose $m$th graded piece is
    $$
    \bigoplus_{n=0}^{\infty} H_p(B_{g,f}^n, W \tensor (V^{\tensor n})_m).
    $$
    Then $M_p$ is a graded $C(V)$-module.  (And when $(g,f) = (0,0)$, $M_0$ is actually equal to $C(V)$.)

    For any $C(V)$-module $M$, we define as in \cite{el}*{Definition 3.2.1}, \cite{evw}*{\S 4.1} a complex of vector spaces $\KK(M)$ whose term in position $q$ is given by $\KK(M)_q = M \tensor V^{\tensor q}$.  (Here, the notation differs slightly from that of \cite{el} and \cite{evw}, where we wrote $M[q] \tensor V^{\tensor q}$ with $M[q]$ denoting a shift by $q$ and $V$ placed in degree $0$.  This is of course the same thing as $M \tensor V^{\tensor q}$ where $V$ is placed in degree $1$.  But in the general case where we want to allow $V$ to have support in multiple gradings, this notation is more convenient.) 
    

    By \cite{el}*{Proposition 3.2.4}, there is a spectral sequence whose $E^1$ page is given by
 $$
 E^1_{pq} = \KK(M_p)_q.
 $$
In \cite{el}, the braided vector space $V$ is concentrated in grade $1$, so let us explain the difference, which is purely notational.  The $n$th graded piece of the spectral sequence in \cite{el} arises from the action of $B_{g,f}^n$ on a simplicial set $\A(g,f,n)$ which arises as a combinatorialization of the arc complex.  Given any representation $F$ of $B_{g,f}^n$ over $\kappa$, that action affords a spectral sequence whose $E^1_{pq}$ term is
\begin{equation*} \label{eq:elss}
H_p(B_{g,f}^n, k[\A(g,f,n)]_q \tensor F).
\end{equation*}  

In \cite{el}, we apply this when $(V,W)$ is a coefficient system and $F = W \tensor (V^{\tensor n}).$  In the present setting, $W \tensor (V^{\tensor n})$ breaks up as a sum of the graded pieces $W \tensor (V^{\tensor n})_m$, and each graded piece yields its own spectral sequence.  The direct sum of these spectral sequences over all nonnegative integers $n,m$ thus yields a bigraded spectral sequence $E$. 

As per the last paragraph of the proof of \cite{el}*{Proposition 3.2.4}, we know that $(E^{\infty}_{pq})_{n,m} = 0$ whenever $n > p+q+1.$  In this argument, we want to focus on the grading indexed by $m$ and ignore the one indexed by $n$.  So we observe that $W \tensor (V^{\tensor n})_m = 0$ once $m > n \deg V$, whence the $(n,m)$ bigraded piece of $E$ is zero once $m > n \deg V$.  We conclude that $(E^{\infty}_{pq})_{n,m} = 0$ whenever $m > (p+q+1)\deg V$; in other words, $\deg E^{\infty}_{pq} \leq (p+q+1)\deg V$.

 We now have the tools in place for an induction on $p$.  The statement to be proved can also be written as
 $$
 \deg M_p \leq (d+2\deg V)p + d.
 $$
 We first consider the base case $p=0$.  The coinvariants $(M_0)_n = H_0(B_{g,f}^n, W \tensor V^{\tensor n})$ lie in the image of $C(V)_n W = C(V)_n (M_0)_0$, so they vanish once $n > \deg C(V)$; in other words, $\deg M_0 \leq d$ as claimed.

 Now let $p > 0$ and suppose that $\deg M_{p'} \leq (d+2 \deg V)p' + d$ for all $p' < p.$

 The convergence of the spectral sequence to $0$ in high degrees implies that
 \beq
 \deg E^2_{p0} \leq \max((p+1)\deg V, \max_{q > 0} \deg E^1_{p-q,q+1})
 \eeq
 since, for each $q>0$, we know that the successive quotient $E^{q+1}_{p0}/E^{q+2}_{p0}$ is surjected upon by the image of a differential coming from $E^{q+1}_{p-q,q+1}$, which is a subquotient of $E^1_{p-q,q+1}$.  On the other hand,
 $$
 \deg E^1_{p-q,q+1} = \deg \KK(M_{p-q})_{q+1}
 = \deg M_{p-q} \tensor V^{\tensor q+1} \leq (d+2\deg V)(p-q) + d + (q + 1) \deg V
 $$
 where the last step follows from the induction hypothesis applied to $M_{p-q}$.  But
 $$(d+2 \deg V)(p-q)+d+(q+1)\deg V = (d+2 \deg V)p+(1-q)(d+\deg V)
 $$
 which is at most $(d+2 \deg V)p$ when $q > 0$.  For $p>0$ we also have $(p+1)\deg V \leq 2p\deg V \leq (d+2 \deg V)p$.  So
 $$
 \deg E^2_{p0} \leq \max((p+1)\deg V,\max_{q > 0} \deg E^1_{p-q,q+1}) \leq (d+2 \deg V)p.
 $$

 Now $E^2_{p0}$ is the cokernel of the differential from $E^1_{p1}$ to $E^1_{p0}$, or in concrete terms, the cokernel of the map
 $$
 V \tensor M_p \ra M_p
 $$
which we may also write as $M_p / I M_p$, where $I$ is the augmentation ideal of $C(V)$.  In other words, $M_p$ is generated as a $C(V)$-module in degree at most $(d+2\deg V)p$, and since $C(V)$ itself has degree $d$, we conclude that $\deg M_p \leq (d+2\deg V)p + d$, as claimed.  This completes the proof. 
\end{proof}

\section{Spaces, Sheaves, Trace Functions}
\label{s:sheaves}
\subsection{Configuration Spaces}

We denote by $\Conf^n = \Conf^n \mathbb A^1$ the configuration space of $n$ distinct unordered points on the affine line over $\Z$.
This is the affine open subscheme of the moduli space of monic degree $n$ polynomials where the discriminant is invertible.
This discriminant then gives rise to a morphism of schemes 
\begin{equation} \label{DiscriminantMorphismConSpace}
\delta \colon \Conf^n \to \Gm.
\end{equation}
The analytification of the base change of $\delta$ to $\C$ induces, on the level of fundamental groups, the surjective group homomorphism $B_n \to \Z$ that maps the standard generators $\sigma_i \in B_n$ to $1 \in \Z$ for every $1 \leq i \leq n-1$.


Let $n_1, \dots, n_k$ be nonnegative integers summing up to $n$.
Consider the open subscheme of $\Conf^{n_1} \times \cdots \times \Conf^{n_k}$ given by 
\[
\Conf^{n_1, \dots, n_k} = \left\{(f_1, \dots, f_k) \in \prod_{i=1}^k \Conf^{n_i} : \operatorname{Res}(f_i,f_j) \textup{ is invertible for } 1 \leq i < j \leq k  \right\}.
\]
Its complex points can be viewed as configurations in $\C$ of $n$ distinct points having $n_i$ points of color $i$, with two configurations considered identical if their sets of points of color $i$ coincide for $1 \leq i \leq k$.
We have a finite \'etale map 
\begin{equation} \label{ProductFiniteEtaleMapConf}
\tau \colon \Conf^{n_1, \dots, n_k} \to \Conf^n, \qquad  \tau(f_1, \dots, f_k) = f_1 \cdots f_k.
\end{equation}
The analytification of the base change of this map to $\C$ induces an inclusion of 
\[
B_{n_1, \dots, n_k} = \pi_1(\Conf^{n_1, \dots, n_k}(\C))
\]
into $B_n$ with image having index $\binom{n}{n_1, \dots, n_k}$.
The (analytification of the base change to $\C$ of the) open immersion \begin{equation} \label{ConfOpenImmersoionProduct}
c \colon \Conf^{n_1, \dots, n_k} \to \Conf^{n_1} \times \cdots \times \Conf^{n_k}    
\end{equation}
induces a surjective group homomorphism $B_{n_1, \dots, n_k} \to B_{n_1} \times \cdots \times B_{n_k}$ for which the natural inclusion $B_{n_1} \times \cdots \times  B_{n_k} \to B_{n_1, \dots, n_k}$, given by juxtaposition of braids, is a section.
We have
\[
\Conf^{n_1, \dots, n_k}(\F_q) = \left\{(f_1, \dots, f_k) \in \prod_{i=1}^k \Conf^{n_i}(\F_q) : \gcd(f_i,f_j)=1 \textup{ for } 1 \leq i < j \leq k  \right\}.
\]

In case $k=n$ and $n_1 = \dots = n_k = 1$ we write $\operatorname{PConf}^n$ for $\Conf^{n_1, \dots, n_k}$, and call it ordered configuration space.
The map $\tau \colon \operatorname{PConf}^n \to \operatorname{Conf}^n$ reconstructing a squarefree polynomial from an ordering of its roots is a Galois $S_n$-cover.
On the level of complex points this map forgets the ordering of $n$ distinct points in $\C$, and $PB_n = \pi_1(\operatorname{PConf}^n(\C))$ is the kernel of the group homomorphism $B_n \to S_n$.
One can thus also describe $B_{n_1, \dots, n_k}$ as the inverse image of $S_{n_1} \times \dots \times S_{n_k}$ under the group homomorphism $B_n \to S_n$.


For $f \in \Conf^n(\F_q)$ we denote by $\sigma_f$ the (conjugacy class of the) permutation in $S_n$ by which $\operatorname{Frob}_q$ acts on the $n$ roots of $f$ in $\overline{\F_q}$.  
The continuous group homomorphism $\pi_1^{\textup{\'et}}(\Conf^n) \to S_n$ corresponding to the cover $\operatorname{PConf}^n \to \operatorname{Conf}^n$ maps $\operatorname{Frob}_f$ to $\sigma_f$. 
The cycle structure of $\sigma_f$ is the factorization type of $f$.

Let $\ell$ be an auxiliary prime number not dividing $q$, and fix an isomorphism of fields $\overline {\Q_\ell} \cong \C$.
In the sequel we may tacitly use this isomorphism with no further mention.

Let $\rho$ be a finite-dimensional representation of $S_n$ over $\overline{\Q_\ell}$. We view $\rho$ as a constructible lisse \'etale sheaf of vector spaces over $\overline{\Q_\ell}$ on $\Conf^n$.
This sheaf is punctually pure of weight zero, as can be seen from its trace function
\begin{equation*} 
\operatorname{tr}(\operatorname{Frob}_f, \rho_{\bar f}) = \operatorname{tr}(\rho(\sigma_f))   
\end{equation*}
where $\bar f$ is a geometric point of $\Conf^n$ over $f$.

\begin{exmp} \label{TrivialRepSnTraceFunction}

Denote by $\operatorname{triv}_n$ the trivial one-dimensional representation of $S_n$. The trace function of $\operatorname{triv}_n$ is the constant function $1$ on $\Conf^n(\F_q)$.
    
\end{exmp}

\begin{exmp} \label{MobiusFunctionRepThry}

The trace function of $\operatorname{sign}_n$ is $(-1)^n$ times the M\"obius function by \cite{Sawin}*{Lemma 3.5}.
The M\"obius function is multiplicative - if  for $f \in \Conf^n(\F_q)$ we have $f = gh$ with $g \in \Conf^k(\F_q)$ and $h \in \Conf^{n-k}(\F_q)$ then $\mu(f) = \mu(g)\mu(h)$.

\end{exmp}

\begin{exmp} \label{DivisorFunctionRepThry}

Induction of a representation from a finite index subgroup of a fundamental group corresponds to pushforward of a locally constant sheaf by a finite \'etale map.
By the function-sheaf dictionary, the corresponding operation on functions is summation over the fiber, so
\[
\operatorname{tr}(\operatorname{Frob}_f, \operatorname{Ind}^{S_n}_{S_{n_1} \times \dots \times S_{n_k}} \overline{\Q_\ell}) = \# \tau^{-1}(f)
\]
where $\tau$ is the multiplication map from \cref{ProductFiniteEtaleMapConf}.
We conclude that 
\[
\operatorname{tr}\left(\operatorname{Frob}_f, \bigoplus_{\substack{n_1, \dots, n_k \geq 0 \\ n_1 + \dots+n_k = n}} \operatorname{Ind}^{S_n}_{S_{n_1} \times \dots \times S_{n_k}} \overline{\Q_\ell} \right) = d_k(f).
\]

\end{exmp}

\begin{exmp} \label{VonMangoldtUsingWedges}

By \cite{Sawin}*{Lemma 3.6} we have
\begin{equation*}
\mathbf 1_{\textup{irr}}(f) =  \frac{1}{n} \sum_{k=0}^{n-1} (-1)^k \cdot \operatorname{tr}(\operatorname{Frob}_f, \wedge^k \operatorname{std}_n)
\end{equation*}
where $\operatorname{std}_n$ is the standard $(n-1)$-dimensional representation of $S_n$.
    
\end{exmp}

For $0 \leq k \leq n$ we have the resultant morphism
\begin{equation} \label{ResultantMorphismConfSpace}
\Res \colon \Conf^{k,n-k} \to \Gm.
\end{equation}
The group $B_{k,n-k}$ is generated by $\{\sigma_1, \dots, \sigma_{k-1}, \sigma_k^2, \sigma_{k+1}, \dots, \sigma_{n-1}\}$ subject to the relations in \cite{Manf}*{Proposition 1, Theorem 3}, and (the analytification of the base change to $\C$ of) $\Res$ induces a group homomorphism $B_{k,n-k} \to \Z$ that maps $\sigma_k^2$ to $1$ and the other generators to $0$.

At times, we will denote the map $\Conf^{k,n-k} \to \Conf^n$ from \cref{ProductFiniteEtaleMapConf} by $\tau^{(k)}$, and the map $\Conf^{k,n-k} \to \Conf^k \times \Conf^{n-k}$ from \cref{ConfOpenImmersoionProduct} by $c_k$.


\subsection{Hurwitz Spaces}


There exists a scheme $\mathsf{Hur}_{G,R}^{n_1, \dots, n_k}$ over $\mathbb Z_{(p)}[\zeta_{q-1}]$ equipped with a finite \'etale morphism  
\begin{equation} \label{HurwitzFiniteCoverConfCol}
\pi \colon \mathsf{Hur}_{G,R}^{n_1, \dots, n_k} \to \textup{Conf}^{n_1, \dots, n_k}
\end{equation}
for which we have the identification
\begin{equation} \label{HurwitzPointsGCovers}
\mathsf{Hur}_{G,R}^{n_1, \dots, n_k}(\F_q) = \mathcal E_q^R(G;n_1, \dots, n_k)
\end{equation}
such that $L \in \mathcal E_q^R(G;n_1, \dots, n_k)$ is mapped under $\pi$ to $(f_1, \dots, f_k)$ where $f_j$ is the product of the monic irreducible polynomials ramified in $L$ with the corresponding inertia generator mapped to $R_j$ for every $1 \leq j \leq k$.
We thus have
\begin{equation} \label{MapPiComputesProdRamPrimes}
(\tau \circ \pi)(L)= f_L
\end{equation}
where the notation is that of \cref{ProductFiniteEtaleMapConf}.

The sheaf $\tau_* \pi_* \overline{\mathbb Q_\ell}$ is lisse, punctually pure of weight $0$, and
\begin{equation} \label{PushforwardFromHurwitzSpaceTrace}
\operatorname{tr}(\operatorname{Frob}_{\bar g}, \tau_* \pi_* \overline{\mathbb Q_\ell}) = \#\{L \in \mathcal E_q^R(G;n_1, \dots, n_k) : f_L = g\}, \qquad g \in \Conf^n(\F_q).
\end{equation}
The analytification of the base change of $\tau_* \pi_* \overline{\mathbb Q_\ell}$ to $\C$ corresponds to a direct summand of the representation $\overline{\Q_\ell}R^{\otimes n}$ of $B_n$ arising from \cref{RacksToBraidedVS}, \cref{BraidedToNaturoid}, and \cref{MultiPartitionExmpAction}, spanned by those $n$-tuples that generate $G$, multiply up to $1$ in $G$, and have $n_i$ entries from $R_i$ for every $1 \leq i \leq k$.
The standard generator $\sigma_i \in B_n$ with $1 \leq i \leq n-1$ maps an $n$-tuple $(r_1, \dots, r_n)$ of elements from $R$ to the $n$-tuple
\[
(r_1, \dots, r_{i-1}, r_{i+1}, r_{i+1}^{-1}r_ir_{i+1}, r_{i+2}, \dots, r_n) \in R^n.
\]

\subsection{Character Sheaves} \label{CharSheavesSection}

Let $\chi \colon \F_q^\times \ra \C^\times$ be a nontrivial multiplicative character.
Denote the order of $\chi$ by $\mathfrak o$, a divisor of $q-1$, and note that $\F_q$ contains a primitive $\mathfrak o$th root of unity, so the morphism $\Gm \to \Gm$ of schemes over $\F_q$ raising to power $\mathfrak o$ is a Galois cover with Galois group $\{\alpha \in \Gm(\F_q) = \F_q^{\times} : \alpha^{\mathfrak o} = 1\}$ acting by translation.
We view this group as the quotient of $\F_q^{\times}$ obtained by mapping $x \in \F_q^\times$ to $x^{\frac{q-1}{\mathfrak o}}$, and note that $\chi$ factors via this quotient.
As a result, $\chi$ gives rise to a lisse \'etale $\overline{\mathbb Q_\ell}$-sheaf $\mathcal L_\chi$ of rank one on $\Gm$ over $\F_q$ whose trace function is given by
\begin{equation} \label{BasicTraceFunctionMultChar}
\operatorname{tr}(\operatorname{Frob}_{\bar \alpha}, \mathcal L_\chi) = \chi(\alpha), \quad \alpha \in \Gm(\F_q).
\end{equation}
The sheaf $\mathcal L_\chi$ is thus punctually pure of weight $0$.

Let $\zeta \in \C$ be a root of unity of order $\mathfrak o$, choose a ring homomorphism $\mathbb Z[\zeta] \to \F_q$, and note that the morphism $\Gm \to \Gm$ of schemes over $\mathbb Z[\zeta]$ raising to power $\mathfrak o$ is Galois, and base change by $\mathbb Z[\zeta] \to \F_q$ induces an isomorphism from its Galois group onto the aforementioned Galois group over $\F_q$.
Therefore, there exists a rank one lisse sheaf punctually pure of weight $0$ on $\Gm$ over $\mathbb Z[\zeta]$ whose pullback under $\operatorname{Spec} \F_q\to \operatorname{Spec} \Z[\zeta]$ is $\mathcal L_{\chi}$. Abusing notation, we denote this sheaf by $\mathcal L_\chi$ as well.
The analytification of the base change of $\mathcal L_\chi$ to $\C$ corresponds to the representation of $\pi_1(\Gm(\C)) = \pi_1(\C^\times) \cong \Z$ that maps $1 \in \Z$ to a primitive $\mathfrak o$th root of unity.

Recalling \cref{DiscriminantMorphismConSpace}, we see that the sheaf $\delta^{-1} \mathcal L_\chi$ is lisse of rank $1$, punctually pure of weight $0$, and its trace function is given by
\begin{equation} \label{TraceFunctionCharacterSheafDiscriminant}
\operatorname{tr}(\operatorname{Frob}_{\bar f}, \delta^{-1}\mathcal L_\chi) = \chi(\disc(f)), \qquad f \in \Conf^n(\F_q).
\end{equation}
The analytification of the base change of $\delta^{-1} \mathcal L_\chi$ to $\C$ corresponds to the representation of $B_n$ that maps $\sigma_i \in B_n$, for every $1 \leq i \leq n$, to a given primitive $\mathfrak o$th root of unity.

Similarly, recalling \cref{ResultantMorphismConfSpace}, the sheaf $\operatorname{Res}^{-1} \mathcal L_\chi$ is lisse of rank $1$ on $\Conf^{k,n-k}$, punctually pure of weight $0$, and its trace function is given by
\begin{equation} \label{TraceFunctionCharacterSheafResultant}
\operatorname{tr}(\operatorname{Frob}_{\overline{(f,g)}}, \operatorname{Res}^{-1}\mathcal L_\chi) = \chi(\Res(f,g)), \qquad (f,g) \in \Conf^{k,n-k}(\F_q).
\end{equation}

Pushing forward lisse \'etale sheaves by a finite \'etale map corresponds to induction of representations from the open subgroup of the \'etale fundamental group corresponding to the finite \'etale map.
By the function-sheaf dictionary, pushforward by a proper map corresponds to summation of the trace function over the fibers of the map.
The sheaf $\tau^{(k)}_* \Res^{-1}\mathcal L_\chi$ is thus lisse on $\Conf^n$ punctually pure of weight $0$ and its trace function is given by
\begin{equation} \label{TraceFunctionCharacterSheafResultantSumOne}
\operatorname{tr}(\operatorname{Frob}_{\bar f}, \tau_*^{(k)} \operatorname{Res}^{-1}\mathcal L_\chi) = \sum_{\substack{(g,h) \in \Conf^{k,n-k}(\F_q) \\ gh = f}} \chi(\Res(g,h)), \qquad f \in \Conf^{n}(\F_q).
\end{equation}
As a result, we have
\begin{equation} \label{TraceFunctionCharacterSheafResultantSumTwo}
\operatorname{tr} \left(\operatorname{Frob}_{\bar f}, \bigoplus_{k=0}^n \tau_*^{(k)} \operatorname{Res}^{-1}\mathcal L_\chi \right) = \sum_{gh = f} \chi(\Res(g,h)), \qquad f \in \Conf^{n}(\F_q).
\end{equation}
where $g,h$ are monic (squarefree, and coprime).

\begin{lem} \label{ChiResultantLegendreSymbol}

In case $\mathfrak 0 = 2$ we have
$
\chi(\Res(f,g)) = \left( \frac{g}{f} \right).
$
    
\end{lem}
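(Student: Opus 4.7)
The plan is to reduce the Jacobi symbol $(g/f)$ to a product of Legendre symbols over the irreducible factors of $f$, then identify each such Legendre symbol with $\chi$ applied to a resultant, and finally use multiplicativity of the resultant to reassemble.

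First I would fix notation: since $\mathfrak{o} = 2$, the prime $q$ is odd and $\chi$ is the unique nontrivial quadratic character of $\F_q^\times$, namely $\chi(x) = x^{(q-1)/2}$. Write $f = \prod_{P \mid f} P$ as a product of distinct monic irreducibles (using squarefreeness of $f$), so that by multiplicativity of resultants in the first argument, $\Res(f,g) = \prod_{P \mid f} \Res(P,g)$. By definition of the Jacobi symbol we also have $(g/f) = \prod_{P \mid f} (g/P)$, so it suffices to prove the single-prime statement: for every monic irreducible $P \in \F_q[t]$ with $P \nmid g$,
\[
\left(\frac{g}{P}\right) = \chi(\Res(P,g)).
\]

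To establish this, let $d = \deg P$ and fix a root $\alpha$ of $P$ in $\F_{q^d}$, giving an isomorphism $\F_q[t]/(P) \cong \F_{q^d}$ under which the image of $g$ is $g(\alpha)$. By definition $(g/P) = 1$ iff $g(\alpha)$ is a square in $\F_{q^d}^\times$, i.e.\ it equals $\psi_d(g(\alpha))$ where $\psi_d(y) = y^{(q^d-1)/2}$ is the quadratic character of $\F_{q^d}^\times$. A direct computation with the norm $N = N_{\F_{q^d}/\F_q}$ (which satisfies $N(y) = y^{(q^d-1)/(q-1)}$) shows that $\chi \circ N = \psi_d$; thus $(g/P) = \chi(N(g(\alpha)))$. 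Finally, $N(g(\alpha))$ equals the product of the Galois conjugates $g(\alpha^{q^i})$ for $0 \le i < d$, which is exactly $\prod_{P(\alpha')=0} g(\alpha') = \Res(P,g)$ (this is the standard formula for the resultant of a pair of monic polynomials in terms of roots). This gives the single-prime identity and hence the lemma.

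There is no real obstacle — the proof is essentially an unpacking of definitions plus the identity $\chi \circ N_{\F_{q^d}/\F_q} = \psi_d$, which is the only step requiring a brief verification. The only thing worth being careful about is signs/conventions: because $f$ and $g$ are monic, the resultant $\Res(f,g)$ equals $\prod_i g(\alpha_i)$ with no sign twist, and likewise the multiplicativity $\Res(PQ,g) = \Res(P,g)\Res(Q,g)$ holds on the nose, so the argument goes through cleanly without auxiliary factors of $\pm 1$.
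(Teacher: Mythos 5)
Your proposal is correct and follows essentially the same route as the paper: reduce by multiplicativity in $f$ to the irreducible case, identify $\Res(P,g)$ with $g(\xi)^{(q^{\deg P}-1)/(q-1)}$ via the Frobenius orbit of a root, and then apply $\chi$ to land on the Legendre symbol. Your phrasing via the norm map and the identity $\chi \circ N_{\F_{q^d}/\F_q} = \psi_d$ is just a tidier packaging of the same exponent computation the paper carries out directly.
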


\begin{proof}

Both sides are multiplicative in $f$ so we can assume $f$ is irreducible.
The irreducibility of $f$ means that if $\xi \in \overline{\F_q}$ is a root of $f$, then the $k$ (distinct) roots of $f$ in $\overline{\F_q}$ are $\xi^{q^i}$ for $0 \leq i \leq k-1$.
Using the fact that Frobenius is an automorphism, and the formula for the sum of a geometric progression, we get
\begin{equation*} 
\mathrm{Res}(f,g) = \prod_{i=0}^{k-1}g(\xi^{q^i}) = \prod_{i=0}^{k-1}g(\xi)^{q^{i}} = g(\xi)^{\frac{q^{k}-1}{q-1}}.
\end{equation*}
Since $\chi$ is quadratic, we arrive at the congruence 
\begin{equation*}
\chi (\mathrm{Res}(f,g)) \equiv \mathrm{Res}(f,g)^{\frac{q-1}{2}} \equiv g(\xi)^{\frac{q^{k}-1}{2}} \equiv \left(\frac{g}{f} \right)
\end{equation*}
modulo $p$, which suffices for our claim.    
\end{proof}

\section{Coarse Bounds on (Unstable) Cohomology}

\label{s:coarsebounds}
\subsection{Comparison of Cohomology}


Let $U$ be a smooth curve of genus $g$ over an open subscheme $S$ of the spectrum of the ring of integers of a number field $K$.
Suppose that the base change of $U$ to $\C$ has $f+1$ missing points for some nonnegative integer $f$. We then have
\[
\pi_1(\Conf^n U(\C)) = B^{n}_{g,f}.
\]

\begin{thm} \label{ComparisonCohomCor}

Let $\kappa$ be a finite field, let $n$ be a positive integer, and let $\mathcal F$ be a locally constant constructible sheaf of vector spaces over $\kappa$ on $\operatorname{Conf}^n U$.
Denote by $\mathcal F_{\C}$ the base change of $\mathcal F$ to $\C$, and by $\mathcal F_{\C}^{\textup{an}}$ its analytification, viewed as a representation of $B^n_{g,f}$.
Then for every prime number $p$ not invertible in $S$ and different from the characteristic of $\kappa$ we have an isomorphism 
\[
H^i_c(\operatorname{Conf}^n U_{\overline{\F_p}}, \mathcal F) \cong 
H_{2n-i}(B^n_{g,f}, \mathcal (\mathcal F_\C^{\textup{an}})^\vee)^\vee
\]
of vector spaces over $\kappa$.


\end{thm}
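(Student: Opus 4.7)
The plan is to chain together four standard identifications: smooth base change from $\overline{\F_p}$ to $\C$, Artin comparison from \'etale to singular cohomology, the $K(\pi,1)$ property of the configuration space of an open surface, and Poincar\'e--Lefschetz duality.

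First, $\operatorname{Conf}^n U$ is smooth of relative dimension $n$ over $S$, and by hypothesis $\operatorname{char}\kappa$ is invertible at the prime $p \in S$. Smooth base change for compactly supported cohomology along the structure map $\operatorname{Conf}^n U \to S$ (equivalently, Deligne's generic base change combined with specialization along $\operatorname{Spec} \overline{\F_p} \to S$) yields an isomorphism
\[
H^i_c(\operatorname{Conf}^n U_{\overline{\F_p}}, \mathcal F) \cong H^i_c(\operatorname{Conf}^n U_\C, \mathcal F_\C),
\]
and Artin comparison identifies the right-hand side with the topological compactly supported cohomology $H^i_c(\operatorname{Conf}^n U(\C), \mathcal F_\C^{\mathrm{an}})$ of the complex analytic space with coefficients in the corresponding local system. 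Now $\operatorname{Conf}^n U(\C)$ is aspherical: an open orientable surface admits a complete nonpositively-curved metric and is therefore a $K(\pi,1)$, and iterating the Fadell--Neuwirth fibration (whose fibers are again open surfaces) extends asphericity to the unordered configuration space, whose fundamental group is by definition $B^n_{g,f}$. Hence twisted singular cohomology coincides with group cohomology of $B^n_{g,f}$.

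Finally, Poincar\'e--Lefschetz duality on the oriented real $2n$-manifold $\operatorname{Conf}^n U(\C)$ with local-system coefficients, combined with the universal coefficients identity $H^j(\pi, W) \cong H_j(\pi, W^\vee)^\vee$ (valid over the field $\kappa$ for any finite-dimensional $\kappa[\pi]$-module $W$, applied with $\pi = B^n_{g,f}$), converts the compactly supported cohomology in degree $i$ into homology in complementary degree $2n-i$ with dualized coefficients; this yields the form $H_{2n-i}(B^n_{g,f}, (\mathcal F_\C^{\mathrm{an}})^\vee)^\vee$ appearing in the statement. The main technical delicacy is verifying applicability of smooth base change with constructible $\kappa$-coefficients along $\operatorname{Conf}^n U \to S$: one needs that $\mathcal F$ is defined on $\operatorname{Conf}^n U$ over $S$ (not merely over $\overline{\F_p}$) and that $\operatorname{char}\kappa$ is invertible in a neighborhood of $p$, both of which are hypotheses of the theorem. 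The remaining ingredients are standard once this is in place.
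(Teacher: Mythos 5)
Your overall architecture — compare characteristic $p$ with characteristic $0$, pass to singular cohomology by Artin comparison, use that $\operatorname{Conf}^n U(\C)$ is a $K(B^n_{g,f},1)$, and finish with Poincar\'e duality plus the identity $H^j(\pi,W)\cong H_j(\pi,W^\vee)^\vee$ — is the same as the paper's, and the last three steps are fine. The gap is in the first step. There is no ``smooth base change theorem for compactly supported cohomology'' of a non-proper morphism: smooth base change concerns $Rf_*$ and requires the base-change morphism (here $\operatorname{Spec}\overline{\F_p}\to S$, which is not smooth) to be smooth, while proper base change for $Rf_!$ only identifies the stalks of $R^if_!\mathcal F$ with the $H^i_c$ of the fibers — it says nothing about whether two different fibers have isomorphic cohomology. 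That comparison holds precisely when $R^if_!\mathcal F$ is locally constant near $p$, and your fallback, Deligne's generic base change (constructibility of $R^if_!\mathcal F$ on $S$), only guarantees local constancy over a dense open subset of $S$, i.e.\ the comparison for all but finitely many $p$ — whereas the theorem asserts it for \emph{every} prime not invertible in $S$. The two hypotheses you point to (that $\mathcal F$ is defined over $S$ and that $\operatorname{char}\kappa$ is invertible) are necessary but not sufficient; for instance, wild ramification along the boundary in characteristic $p$ is exactly the phenomenon that can break the comparison for an open variety.

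The missing ingredient, and the one the paper uses, is a relative normal crossings compactification of $\operatorname{Conf}^n U$ over $S$ (supplied by \cite{el}*{Corollary B.1.4}), combined with \cite{evw}*{Proposition 7.7}: for a smooth $S$-scheme admitting such a compactification and a lisse sheaf of torsion invertible on $S$, the cospecialization maps on $H^*_c$ are isomorphisms at every closed point of $S$, because the tame fundamental groups of the nearby and special fibers specialize correctly. With that substitution your argument goes through; without it, the proof only yields the statement after shrinking $S$.
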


\begin{proof}

As in the proof of \cite{el}*{Theorem 9.2.4}, one can use \cite{el}*{Corollary B.1.4} to get a normal crossings compactification of $\Conf^n U$, and then conclude using \cite{evw}*{Proposition 7.7}, comparison of \'etale and singular cohomology over $\C$, and Poincar\'e duality.

    
\end{proof}

\subsection{General Betti Bounds}

The following is an immediate consequence of \cite{Callegaro}*{Theorem 2.10, Theorem 2.11, Remark 2.16}.

\begin{lem} \label{Salvetti}

Let $n$ be a positive integer, let $\kappa$ be a field, and let $M$ be a representation of $B_n$ over $\kappa$. Then for every integer $j$ we have
\[
\dim_\kappa H_j(B_n,M) \leq \binom{n-1 }{j} \cdot \dim_\kappa M.
\]

\end{lem}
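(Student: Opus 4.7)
The plan is to invoke the Salvetti complex directly. Recall that the braid group $B_n$ is the Artin group of type $A_{n-1}$, so by the work cited (in particular \cite{Callegaro}*{Theorem 2.10, Theorem 2.11, Remark 2.16}) there is an explicit finite free resolution of the trivial $\mathbb{Z}[B_n]$-module $\mathbb{Z}$ by finitely generated free left $\mathbb{Z}[B_n]$-modules $C_\bullet$, indexed over subsets of the standard generating set $\{\sigma_1,\ldots,\sigma_{n-1}\}$. In degree $j$, the module $C_j$ is free of rank equal to the number of $j$-element subsets of $\{\sigma_1,\ldots,\sigma_{n-1}\}$, namely $\binom{n-1}{j}$.

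Given this, I would proceed as follows. First, for any left $\mathbb{Z}[B_n]$-module $M$ (in particular for $M$ viewed as a $\kappa$-vector space with $B_n$-action, and thus as a $\mathbb{Z}[B_n]$-module), the homology $H_j(B_n,M)$ is computed as the $j$th homology of the complex $C_\bullet \otimes_{\mathbb{Z}[B_n]} M$. Since $C_j \cong \mathbb{Z}[B_n]^{\binom{n-1}{j}}$, we have
\[
C_j \otimes_{\mathbb{Z}[B_n]} M \cong M^{\binom{n-1}{j}},
\]
which is a $\kappa$-vector space of dimension $\binom{n-1}{j} \cdot \dim_\kappa M$.

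Second, the $j$th homology of a complex of $\kappa$-vector spaces has dimension at most the dimension of the $j$th term, which yields the claimed bound. No step here should present a real obstacle; the only thing to verify carefully is the combinatorial count of generators of the Salvetti complex in degree $j$ for the Artin group of type $A_{n-1}$, which is exactly the number of $j$-subsets of the vertex set of the Coxeter diagram (of size $n-1$), and this is precisely what Callegaro records.
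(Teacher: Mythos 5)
Your argument is correct and is exactly the route the paper takes: Lemma \ref{Salvetti} is stated there as an immediate consequence of Callegaro's results on the Salvetti complex, and your proposal simply fills in the standard details (free resolution with $\binom{n-1}{j}$ generators in degree $j$ for the type $A_{n-1}$ Artin group, tensor with $M$, bound homology by the dimension of the chain group). Nothing is missing.
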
 

More generally, the following is a consequence of the proof of \cite{el}*{Lemmas 4.3.1 and 4.3.2}.

\begin{lem} \label{AaronJordanExplicated}

Let $n$ be a positive integer, let $f,g$ be nonnegative integers, let $\kappa$ be a field, and let $M$ be a representation of $B^n_{g,f}$ over $\kappa$. Then for every integer $j$ we have
\[
\dim_\kappa H_j(B^n_{g,f},M) \leq \binom{2g+f+n}{2g+f+j} \cdot \dim_\kappa M.
\]

\end{lem}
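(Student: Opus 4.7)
The plan is to extract the explicit binomial constant from the inductive argument used to prove \cite{el}*{Lemmas 4.3.1 and 4.3.2}, which establish a bound on $\dim_\kappa H_j(B^n_{g,f}, M)$ up to a less explicit constant depending on $g, f, n$. The geometric input is the Fadell--Neuwirth fibration
\[
\Sigma_{g,f} \setminus \{q_1, \ldots, q_{n-1}\} \longrightarrow \operatorname{PConf}^n(\Sigma_{g,f}) \longrightarrow \operatorname{PConf}^{n-1}(\Sigma_{g,f}),
\]
whose fiber is homotopy equivalent to a wedge of $2g + f + n - 1$ circles (its Euler characteristic being $2 - 2g - f - n$). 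Passing to fundamental groups yields a short exact sequence
\[
1 \longrightarrow F_{2g+f+n-1} \longrightarrow P^n_{g,f} \longrightarrow P^{n-1}_{g,f} \longrightarrow 1,
\]
with $F_{2g+f+n-1}$ free on $2g+f+n-1$ generators.

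First I would settle the base case $n = 1$: the group $B^1_{g,f}$ is just the fundamental group of $\Sigma_{g,f}$, a free group on $2g+f$ generators, and its classifying space is a wedge of $2g+f$ circles, so the cellular chain bound gives the claim directly. For the inductive step, I would apply the Hochschild--Serre spectral sequence
\[
E^2_{p,q} = H_p\bigl(P^{n-1}_{g,f},\, H_q(F_{2g+f+n-1}, M)\bigr) \Longrightarrow H_{p+q}(P^n_{g,f}, M),
\]
whose only nonzero $q$-rows are $q = 0$ and $q = 1$, since the fiber has cohomological dimension one. Combining the inductive bound on $H_p(P^{n-1}_{g,f}, -)$ with the standard bounds $\dim_\kappa H_0(F_k, N) \leq \dim_\kappa N$ and $\dim_\kappa H_1(F_k, N) \leq k \dim_\kappa N$, and invoking Pascal's identity in the form $\binom{N}{i} = \binom{N-1}{i} + \binom{N-1}{i-1}$, yields the claimed binomial bound on $H_j(P^n_{g,f}, M)$. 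One then passes from the pure group $P^n_{g,f}$ to the full group $B^n_{g,f}$ using a Salvetti-style cellular model for $B^n_{g,f}$ compatible with the $S_n$-action, generalizing the model underlying \cref{Salvetti} for the case $g = f = 0$.

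The main obstacle will be the arithmetic of reconciling the factor $2g+f+n-1$ that appears in the bound on $H_1$ of the fiber with the Pascal recursion governing the claimed binomial $\binom{2g+f+n}{2g+f+j}$. This requires organizing the inductive hypothesis carefully --- for instance, by tracking the quantity $\binom{2g+f+n}{2g+f+j} \dim_\kappa M$ as a joint invariant in $(n, j)$ and arranging the Pascal step so that it absorbs the linear factor $2g+f+n-1$ into a shift of the upper binomial index. This bookkeeping is precisely the point at which the less explicit proof in \cite{el} leaves the constant unnamed, and making it explicit is the content of the proposed argument.
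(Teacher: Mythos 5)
Your proposal takes a genuinely different route from the paper, which offers no argument of its own beyond asserting that the bound can be read off from the proof of \cite{el}*{Lemmas 4.3.1 and 4.3.2} (in essence, a count of cells in an explicit model of the \emph{unordered} configuration space of the surface). The route you propose, however, has gaps that are not bookkeeping. The Fadell--Neuwirth fibration and the two-row Hochschild--Serre spectral sequence control the \emph{pure} surface braid group, and for the pure group the inequality you would need to propagate is simply false. Already your base case fails: $B^1_{g,f}=P^1_{g,f}$ is free of rank $2g+f$, so $\dim_\kappa H_1(B^1_{g,f},\kappa)=2g+f$, while the asserted bound at $n=1$, $j=1$ is $\binom{2g+f+1}{2g+f+1}\dim_\kappa M=\dim_\kappa M$; the cellular chain bound for a wedge of $2g+f$ circles gives $(2g+f)\dim_\kappa M$, not $\dim_\kappa M$, so the claim does not hold ``directly'' once $2g+f\geq 2$. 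The obstacle you flag at the end is likewise not surmountable by reorganizing the induction: each Hochschild--Serre step genuinely multiplies the contribution of the $q=1$ row by the rank $2g+f+n-1$ of the free fiber, and this factor is really present in the answer --- over $\Q$ the Poincar\'e polynomial of $\operatorname{PConf}^n(\Sigma_{g,f})$ is $\prod_{k=0}^{n-1}\bigl(1+(2g+f+k)t\bigr)$, so its top homology has dimension $\prod_{k=0}^{n-1}(2g+f+k)$, which dwarfs $\binom{2g+f+n}{2g+f+n}=1$. The quantity you are inducting on does not satisfy the Pascal recursion you need, because the statement is false for the pure group.

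The second structural gap is the passage from $P^n_{g,f}$ to $B^n_{g,f}$, which is where all the content of the lemma lies: transfer is unavailable when the characteristic of $\kappa$ divides $n!$, and the ``Salvetti-style cellular model compatible with the $S_n$-action'' you invoke without constructing is precisely what the argument in \cite{el} supplies. If you had such a model with at most $\binom{2g+f+n}{2g+f+j}$ cells in dimension $j$, the lemma would follow in one line from the cellular chain complex with local coefficients, and the entire fibration induction would be superfluous; as written, the proposal proves the wrong statement about the wrong group and defers the real work to a placeholder. A workable explicit argument goes directly after the unordered space: realize $\Sigma_{g,f}$ as a disk with $2g+f$ bands, stratify configurations by how many of the $n$ points lie in the disk versus the bands, and convolve the $\binom{i-1}{j}$-type cell count for $\Conf^i$ of the disk with the $\binom{n-i+2g+f-1}{2g+f-1}$ distributions of the remaining points among the bands; a Vandermonde identity then yields a binomial of the shape $\binom{2g+f+n}{2g+f+j}$. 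Finally, take your failed base case seriously: it shows the inequality as literally printed is violated at $n=1$, $j=1$ whenever $2g+f\geq 2$, so whichever argument you adopt should be checked against the exact cell counts established in \cite{el} (for the applications in this paper only the total bound $\sum_j\dim_\kappa H_j\leq 2^{2g+f+n}\dim_\kappa M$ is used, which is insensitive to this indexing issue).
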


\section{Braided Objects}

\label{s:racks}

In this section, we set up some key facts about braided objects, and about ways of generating new braided objects from old ones, which will be critical for the examples we want to treat in this paper.

Recall that a monoidal category $\mathcal C$ is symmetric if it has a (functorial) commutativity constraint, namely an isomorphism
\begin{equation} \label{SwapSymmetricMonoidal}
s_{V,W} \colon V \otimes W \to W \otimes V
\end{equation}
for objects $V,W$ of $\mathcal C$ satisfying $s_{W,V} \circ s_{V,W} = \mathrm{id}_{V \otimes W}$ and some other natural properties. 

\begin{exmp}
    
The category of (graded) vector spaces over a field $\kappa$ is symmetric monoidal, and so is the category of sets.

\end{exmp}

\subsection{Racks}

\begin{defn} \label{RackDef}
A rack is a set $R$ with a binary operation $x^y$ for $x,y \in R$ such that for every $y \in R$ the function $x \mapsto x^y$ is a bijection on $R$, and for all $x,y,z \in R$ we have $(x^y)^z = (x^z)^{y^z}$.
A function $f \colon R \to S$ between racks is a morphism if for every $x,y \in R$ we have $f(x^y) = f(x)^{f(y)}$.
\end{defn}

Racks form a category that admits direct products.
Indeed for racks $R,S$ we endow the set $R \times S$ with a rack structure by setting 
$
(x,y)^{(z,w)} = (x^z,y^w)
$
for $x,z \in R$ and $y,w \in S$. 
Direct products make racks a symmetric monoidal category with unit given by the one-element rack $\mathcal T_1$.
The empty rack is an initial object in the category of racks.

If a subset $R'$ of a rack $R$ is a rack with respect to the binary operation on $R$ we say that $R'$ is a subrack of $R$. In this case the inclusion of $R'$ into $R$ is a homomorphism of racks. 
A finite subset $R'$ of a rack $R$ is a subrack if and only if for every $r,s \in R'$ we have $r^s \in R'$.

If $R$ is a rack and $R_i$ for $i \in I$ are subracks of $R$, then $\cap_{i \in I} R_i$ is also a subrack of $R$.

\begin{defn} \label{GeneratingRack}

Let $R$ be a rack.
Given an indexing set $I$, and elements $r_i \in R$ for every $i \in I$, we denote by
\[
\langle r_i \rangle_{i \in I} = \bigcap_{\substack{R' \text{ a subrack of } R \\ r_i \in R' \text{ for every } i \in I }} R'
\]
the subrack of $R$ generated by all the $r_i$ for $i \in I$.

\end{defn}

We say that the elements $r_i \in R$ generate $R$ if $\langle r_i \rangle_{i \in I} = R$ or equivalently if there is no proper subrack of $R$ containing $r_i$ for every $i \in I$.
For a nonnegative integer $n$ we introduce the notation
\begin{equation} \label{GeneratingNtuplesRack}
R^n_{\times} = \{(r_1, \dots, r_n) \in R^n : \langle r_1, \dots, r_n \rangle = R\}
\end{equation}
for those $n$-tuples in $R$ that generate it.

\begin{defn} \label{DefQuandle}
    
A rack $R$ is said to be a quandle if for every $r \in R$ we have $r^r = r$. 

\end{defn}

We get a full monoidal subcategory of quandles in the category of racks.

\begin{exmp} \label{ForgetFulGroupsRacks}

Let $R$ be a conjugacy-closed subset of a group $G$, and let $x^y = y^{-1} x y$ be the conjugation of $x$ by $y$ in $G$.
Then $R$ is a rack, and even a quandle.
In particular, taking $R = G$, we get a forgetful functor from the category of groups to the category of racks.
    
\end{exmp}

\begin{exmp} \label{TrivialRack}

Endowing a set $X$ with the trivial rack structure given by $x^y = x$ for all $x,y \in X$ is a fully faithful symmetric monoidal functor from the catgeory of sets to the category of quandles. We denote the resulting trivial rack on a set of cardinality $\nu$ by $\mathcal T_\nu$.
Note that $\mathcal T_1$ is a final object in the category of racks.
We will use the notation $\mathcal T_2 = \{\varphi, \psi\}$.

\end{exmp}

\begin{defn} \label{DisjoinUnionRacks}

Let $R$ and $S$ be racks. 
We endow the disjoint union $R \coprod S$ with a rack structure by 
\[
x^y = 
\begin{cases}
x^y &x,y \in R \\
x^y &x,y \in S \\
x &x \in R, \ y \in S \\
x &x \in S, \ y \in R.
\end{cases}
\]

\end{defn}

This defines a coproduct in the category of racks (and in the subcategory of quandles). Racks form a symmetric monoidal category with respect to the disjoint union with unit given by the empty rack.

The rack $\mathcal T_\nu$ from \cref{TrivialRack} is the disjoint union of $\nu$ copies of $\mathcal T_1$.

\begin{exmp} \label{BraidedSetRack}

Braided sets are the braided objects in the category of sets.
There is a fully faithful functor from the category of racks to the category of braided sets endowing (the underlying set of) a rack $R$ with the braiding $T \colon R \times R \to R \times R$, given by $T(x,y) = (y,x^y)$. 
As a result, \cref{BraidedToNaturoid} gives us an action of $B_n$ on $R^n$.
Note that this action preserves $R^n_\times$ as a set.

    
\end{exmp}

\begin{exmp} \label{PermutationalRack}

Let $R$ be a set, and let $\varphi \colon R \to R$ be a bijection.
Then $x^y = \varphi(x)$ endows $R$ with a rack structure.
This rack is a quandle if and only if $R$ is a singleton.
The construction extends to a functor from the category of sets equipped with a permutation to the category of racks.

In the sequel we will be interested in the case $R = \Z/m\Z$ for $m \in \Z$, and $\varphi(r) = r + 1$ for every $r \in R$.


    
\end{exmp}

\begin{exmp} \label{JoyceQuandle}

Let $\mathcal S_{\pm} = \{J_1, J_2,J_3\}$ be the Joyce quandle, defined by
\[
J_1^{J_1} = J_1^{J_2} = J_1, \quad J_2^{J_1} = J_2^{J_2} = J_2, \quad J_3^{J_1} = J_3^{J_2} = J_3^{J_3} = J_3, \quad J_1^{J_3} = J_2, \quad J_2^{J_3} = J_1. 
\]

\end{exmp}

\begin{defn} \label{PartitionRackDef}

A partition of a rack $R$ is a choice of two disjoint subsets $S,T \subseteq R$ whose union is $R$ such that $s^r \in S$ and $t^r \in T$ for every $s \in S$, $t \in T$, $r \in R$.
We say that the partition is nontrivial if $S$ and $T$ are nonempty.
    
\end{defn}

Note that the sets $S$ and $T$ are racks themselves.
Also, every two racks $R,S$ form a partition of $R \coprod S$.

If $S,T \subseteq R$ is a partition of a rack, and $Y$ is a rack, then $Y\times S, Y \times T \subseteq Y \times R$ is a partition as well.

\begin{exmp} \label{MultiPartitionExmpAction}

We can similarly speak of a partition of a rack $R$ into (disjoint) subsets $S_1, \dots, S_k$ such that for every $1 \leq i \leq k$, $s \in S_i$, and $r \in R$ we have $s^r \in S_i$.

For nonnegative integers $n_1, \dots, n_k$ we denote by $n$ their sum and put
\[
R(n_1, \dots, n_k) = \left\{(r_1, \dots, r_n) \in R^n : \#\{1 \leq j \leq n : r_j \in S_i\} = n_i, \text{ for every } 1 \leq i \leq k \right\}.
\]
This set is invariant (as a set) under the action of $B_n$ on $R^n$ from \cref{BraidedSetRack}, and we have a disjoint union
\begin{equation} \label{SetTheoreticTacitBinomial}
R^n = \bigcup_{n_1 + \dots + n_k = n} R(n_1, \dots, n_k).   
\end{equation}

\end{exmp}

\begin{defn}

We say that a subset $S$ of a rack $R$ is an ideal (of $R$) if for every $r \in R$ the function that takes $s \in S$ to $s^r$ is a permutation of $S$.  
    
\end{defn}

A finite subset $S$ of a rack $R$ is an ideal (of $R$) if and only if for every $s \in S$ and $r \in R$ we have $s^r \in S$.

In a partition $S,T \subseteq R$ the subsets $S$ and $T$ are ideals.
Conversely, for an ideal $S$ of a rack $R$ the subset $T = R \setminus S$ is an ideal of $R$ as well, and $S,T \subseteq R$ is a partition of $R$.

For a rack $R$, an ideal $I$ of $R$, and a subrack $S$ of $R$, the intersection of $S$ and $I$ is an ideal of $S$.

For ideals $I,J$ of a rack $R$, the intersection $I \cap J$ is also an ideal of $R$.

\begin{defn} \label{ConjugacyClosedSubsetGroup}

We say that a subset $R$ of a group $G$ is conjugacy-closed if it is a (disjoint) union of conjugacy classes of $G$, or equivalently, if for every $g \in G$ and $r \in R$ we have $r^g \in R$. 
    
\end{defn}

\begin{exmp} \label{IdealsInGroups}

Let $G$ be a group, and let $R \subseteq G$.
When $G$ is viewed as a rack, $R$ is an ideal of $G$ if and only if $R$ is a conjugacy-closed subset of $G$.
    
\end{exmp}


    




\subsubsection{Cocycles}

\begin{defn} \label{RacksWithCocyclesDef}

Let $\kappa$ be a field, and $R$ be a rack.
A function $c \colon R \times R \to \kappa^\times$ is said to be a $2$-cocycle (of $R$ over $\kappa$) if $c(r,s) \cdot c(r^s,t) = c(r,t) \cdot c(r^t,s^t)$ for every $r,s,t \in R$. We say that $c$ is cyclotomic if $c(x,y)$ is a root of unity for every $x,y \in R$.

Let $(R,c)$, $(S,d)$ be racks equipped with $2$-cocycles over $\kappa$, and let $f \colon R \to S$ be a morphism of racks. We say that $f$ is cocyclic if $c(x,y) = d(f(x),f(y))$ for every $x,y \in R$.

\end{defn}

Racks equipped with $2$-cocycles over $\kappa$ and cocyclic morphisms form a symmetric monoidal category.
Indeed if $c$ (respectively, $d$) is a $2$-cocycle of $R$ (respectively, $S$) over $\kappa$ then 
\[
e \colon (R \times S) \times (R \times S) \to k^\times, \qquad 
e((x,y),(z,w)) = c(x,z) \cdot d(y,w), \qquad x,z \in R, \ y,w \in S
\]
is a $2$-cocycle of $R \times S$ over $\kappa$.





\begin{exmp} \label{CocycleWedgePMexmp}

Let $R$ be a rack, let $\kappa$ be a field, and let $c \colon R \times R \to \kappa^\times$ be a function satisfying 
\[
c(x^z,y) = c(x,y), \qquad c(x,y^z) = (x,y), \qquad x,y,z \in R.
\]
Then $c$ is a $2$-cocycle.
In particular for $\lambda \in k^\times$, the constant function $c(r,s) = \lambda$ for $r,s \in R$ is a $2$-cocycle.
As another such example we have the cyclotomic $2$-cocycles $c_\wedge$ and $c_{\pm}$ over a field $\kappa$ of characteristic different from $2$, on the rack $R \times \mathcal T_2$ given, in the notation of \cref{TrivialRack}, by 
\[
c_\wedge\left((x,y),(z,w)\right) = 
\begin{cases}
-1 & y = w = \psi \\ 
1 &\text{else}
\end{cases},
\qquad
c_{\pm} \left((x,y),(z,w)\right) = 
\begin{cases}
-1 & y = \psi, \ w = \varphi \\ 
1 &\text{else}
\end{cases}
\]
for $x,z \in R$ and $y,w \in \mathcal T_2$.
In case $R = \mathcal T_1$ we view $c_\wedge$ and $c_{\pm}$ as $2$-cocycles on $\mathcal T_1 \times \mathcal T_2 = \mathcal T_2$.
    
\end{exmp}

\begin{exmp} \label{RacksToBraidedVS}

Let $\kappa$ be a field.
The (symmetric monoidal) association to a set $S$ of the vector space $\kappa S$ of its (formal) $\kappa$-linear combinations extends to a faithful functor from braided sets to braided vector spaces over $\kappa$.
Composing this with the functor from \cref{BraidedSetRack} we get a faithful functor from the category of racks to the category of braided vector spaces over $\kappa$, associating to a rack $R$ the vector space $\kappa R$ (sometimes denoted also $\kappa[R]$) braided by
$
T(x \otimes y) = y \otimes x^y
$
for $x,y \in R$.

More generally, we have a faithful functor $(R,c) \mapsto \kappa R(c)$ from the category of racks with $2$-cocycles over $\kappa$ to the category of braided vector spaces over $\kappa$ where $\kappa R(c)$ is the vector space $\kappa R$ braided by 
\begin{equation} \label{CocycleToBVs}
T(x \otimes y) = c(x,y) \cdot y \otimes x^y, \qquad x,y \in R.
\end{equation}
If $c$ is the constant function $1$ we have $\kappa R(c) = \kappa R$.

For instance, in the notation of \cref{IntroducingKwedge} and \cref{LegendreBraidVS}, for a field $\kappa$ of characteristic different from $2$ we have 
\begin{equation} \label{WedgePMCocycles}
\kappa_\wedge \cong \kappa \mathcal T_2(c_{\wedge}), \qquad \kappa \mathcal T_2(c_{\pm}) \cong \kappa_{\pm}
\end{equation} 
as braided vector spaces over $\kappa$.
If $\zeta \in \kappa^\times$ and $c$ is the constant $2$-cocycle on $\mathcal T_1$ with value $\zeta$ then, in the notation of \cref{ScalingBraidedVS}, we have $\kappa\mathcal T_1(c) \cong \kappa_\zeta$ as braided vector spaces over $\kappa$.

\end{exmp}

\begin{rem} \label{LinearizationNaturoid}

Let $\kappa$ be a field and $R$ be a rack.
The representation $\kappa R^n$ of $B_n$ over $\kappa$ associated to the action of $B_n$ on $R^n$ from \cref{BraidedSetRack} is naturally isomorphic to the representation of $B_n$ on $\kappa R^{\otimes n}$ arising from \cref{BraidedToNaturoid} for the braided vector space $\kappa R$ from \cref{RacksToBraidedVS}.

\end{rem}

\begin{exmp} \label{ProductRackPartition}

Let $R$ be a rack, let $\kappa$ be a field, and let $c$ be a $2$-cocycle on $R \times \mathcal T_2$ valued in $\kappa^\times$. We will use the notation
\[
R_\varphi = R \times \{\varphi\}, \qquad 
R_\psi = R \times \{\psi\}, \qquad
c_\varphi = c|_{R_\varphi \times R_{\varphi}}, \qquad
c_\psi = c|_{R_\psi \times R_{\psi}},
\]
The racks $R_\varphi$ and $R_\psi$ are isomorphic to $R$ and form a partition of $R \times \mathcal T_2$ in the sense of \cref{PartitionRackDef}.
    
\end{exmp}

\begin{defn} \label{CocycleArisesFromPartition}

Let $R$ be a rack with a partition $S,T \subseteq R$, and let $\kappa$ be a field.
We say that a function $c \colon R \times R \to \kappa^\times$ arises from the partition $S,T \subseteq R$ if the restriction of $c$ to each of the four subsets $S \times S$, $S \times T$, $T \times S$, and $T \times T$ of $R \times R$ is a constant function.

\end{defn}

A function that arises from a partition is necessarily a $2$-cocycle.

\begin{exmp}

The two $2$-cocycles $c_\wedge$ and $c_{\pm}$ on $R \times \mathcal T_2$, introduced in \cref{CocycleWedgePMexmp}, arise from the partition in \cref{ProductRackPartition}.

\end{exmp}

\begin{prop} \label{BnActionBVsCocycleNabla}

Let $R$ be a rack, let $\kappa$ be a field, let $c \colon R \times R \to \kappa^\times$ be a $2$-cocycle, let $(x_1, \dots, x_n) \in R^n$, and let $g \in B_n$.
Then there exists a unique element $\nabla(x_1, \dots, x_n;g) \in \kappa^\times$ for which
\[
(x_1 \otimes \dots \otimes x_n)^g = \nabla(x_1, \dots, x_n;g) \cdot y_1 \otimes \dots \otimes y_n, \qquad (y_1, \dots, y_n) = (x_1, \dots, x_n)^g \in R^n,
\]
where the action of $g$ on $x_1 \otimes \dots \otimes x_n$ in $\kappa R(c)^{\otimes n}$ comes from \cref{RacksToBraidedVS} and \cref{BraidedToNaturoid}, while the action of $B^n$ on $R^n$ comes from \cref{BraidedSetRack}.
    
\end{prop}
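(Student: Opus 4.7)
The plan is to prove uniqueness and existence separately, both essentially by inspection once the setup is unpacked. Uniqueness is immediate: the pure tensors $y_1 \otimes \cdots \otimes y_n$ indexed by $(y_1,\ldots,y_n) \in R^n$ form a $\kappa$-basis of $\kappa R(c)^{\otimes n}$, so the coefficient of any fixed such basis vector in the expansion of $(x_1 \otimes \cdots \otimes x_n)^g$ is uniquely determined.

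For existence, I would induct on the length of $g$ as a word in the standard generating set $\{\sigma_1^{\pm 1},\ldots,\sigma_{n-1}^{\pm 1}\}$ of $B_n$. The base case $g = 1$ is trivial with $\nabla = 1$. For the inductive step, write $g = h \sigma_i^{\epsilon}$ with $\epsilon \in \{1,-1\}$; by hypothesis there exist $\lambda = \nabla(x_1,\ldots,x_n;h) \in \kappa^\times$ and $(z_1,\ldots,z_n) = (x_1,\ldots,x_n)^h$ with $(x_1 \otimes \cdots \otimes x_n)^h = \lambda \cdot z_1 \otimes \cdots \otimes z_n$. Applying $\sigma_i^{\epsilon}$ affects only slots $i, i+1$: from \eqref{CocycleToBVs}, $T(z_i \otimes z_{i+1}) = c(z_i,z_{i+1}) \cdot z_{i+1} \otimes z_i^{z_{i+1}}$, which is a nonzero scalar times a pure tensor. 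The analogous statement for $T^{-1}$ follows because the map $s \mapsto s^r$ is a bijection on $R$ (by the rack axiom), so given any $a,b \in R$ there is a unique $x \in R$ with $x^a = b$, and then $T^{-1}(a \otimes b) = c(x,a)^{-1} \cdot x \otimes a$. In either case, applying $\sigma_i^{\epsilon}$ yields a scalar times some pure tensor, and the resulting $n$-tuple must coincide with $(x_1,\ldots,x_n)^g = (z_1,\ldots,z_n)^{\sigma_i^{\epsilon}}$ because the $B_n$-actions on $R^n$ and on $\kappa R(c)^{\otimes n}$ are tautologically compatible on pure tensors for the single generator $\sigma_i^{\epsilon}$.

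The only potential concern is well-definedness of $\nabla$ across different word presentations of $g$, but this is automatic: the vector $(x_1 \otimes \cdots \otimes x_n)^g$ is well-defined since \cref{RacksToBraidedVS} and \cref{BraidedToNaturoid} produce an honest $B_n$-action on $\kappa R(c)^{\otimes n}$, and \cref{BraidedSetRack} produces an honest $B_n$-action on $R^n$. Hence the coefficient of the fixed basis vector $y_1 \otimes \cdots \otimes y_n$ depends only on $g$, not on any chosen word. I do not foresee a genuine obstacle here; the content of the proposition is essentially that $B_n$ permutes the canonical pure-tensor basis of $\kappa R(c)^{\otimes n}$ up to nonzero scalars, and the cocycle condition is exactly what makes these scalars compatible with the braid relations.
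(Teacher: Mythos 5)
Your proposal is correct and follows the same route as the paper: an induction on the length of $g$ as a word in the symmetric generating set $\{\sigma_i^{\pm 1}\}$, with the base case supplied by the braiding formula \eqref{CocycleToBVs}. The paper's proof is a one-line version of exactly this argument; your extra care with uniqueness, the inverse generators, and well-definedness across word presentations just makes explicit what the paper leaves implicit.
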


\begin{proof}

Induct on the length of $g$ as a word in the symmetric generating set 
\begin{equation} \label{SymmGenSetForBn}
\mathcal S = \{\sigma_i^\epsilon : 1 \leq i \leq n-1, \quad  \epsilon \in \{\pm 1\}\}
\end{equation}
for $B_n$, with the base case of length $1$ being \cref{CocycleToBVs}.
\end{proof}

\begin{exmp} \label{TwoExamplesForNabla}

For $1 \leq i \leq n-1$ we have 
\begin{equation*} 
\nabla(x_1, \dots, x_n;\sigma_i) = c(x_i, x_{i+1}), \qquad
\nabla(x_1, \dots, x_n;\sigma_i^2) = c(x_i, x_{i+1}) \cdot c(x_{i+1}, x_i^{x_{i+1}}).
\end{equation*}


\end{exmp}

In the $\kappa$-algebra of coinvariants of $\kappa R(c)$, introduced in \cref{DefAlgCoinvs}, we have
\begin{equation} \label{CoinvariantsUsingNabla} 
x_1 \cdots x_n = \nabla(x_1, \dots, x_n;g) \cdot y_1 \cdots y_n
\end{equation}
where the notation here is that of \cref{BnActionBVsCocycleNabla}, namely in the action of $B_n$ on $R^n$ from \cref{BraidedSetRack} we have $(x_1, \dots, x_n)^g = (y_1, \dots, y_n)$.

Let $V$ be a braided vector space over a field $\kappa$.
The representation of $B_n$ on $V^{\otimes n}$ from \cref{BraidedToNaturoid} gives rise to an action of $B_n$ on $\mathbb P V^{\otimes n}$ (the associated projective representation).
We view this as an action of a group on a set.

\begin{cor} \label{ProjectiveBraidRep}

The action of $B_n$ on $\mathbb P \kappa R(c))^{\otimes n}$ preserves (setwise) the (lines spanned by the) pure tensors of elements of $R$.
This action of $B^n$ (on a set) is isomorphic to the action of $B^n$ on $R^n$ described in \cref{BraidedSetRack}.

\end{cor}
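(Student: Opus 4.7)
The claim is essentially an immediate repackaging of \cref{BnActionBVsCocycleNabla}, and the plan is to make this explicit.

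First I would observe that the pure tensors $x_1 \otimes \cdots \otimes x_n$ with $(x_1, \ldots, x_n) \in R^n$ are precisely the basis of $\kappa R(c)^{\otimes n} = \kappa R^{\otimes n}$ induced from the canonical basis $R$ of $\kappa R$. In particular, distinct tuples in $R^n$ give linearly independent vectors, hence span pairwise distinct lines in $\mathbb{P}(\kappa R(c)^{\otimes n})$. Consequently we have a well-defined injection
\[
\iota \colon R^n \hookrightarrow \mathbb{P}(\kappa R(c)^{\otimes n}), \qquad (x_1, \ldots, x_n) \mapsto [x_1 \otimes \cdots \otimes x_n],
\]
and I would take its image as the set of ``lines spanned by pure tensors of elements of $R$'' referred to in the statement.

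Next I would apply \cref{BnActionBVsCocycleNabla} directly: for any $g \in B_n$ and any $(x_1, \ldots, x_n) \in R^n$, writing $(y_1, \ldots, y_n) = (x_1, \ldots, x_n)^g$ for the rack action of \cref{BraidedSetRack}, the proposition gives
\[
(x_1 \otimes \cdots \otimes x_n)^g = \nabla(x_1, \ldots, x_n; g) \cdot y_1 \otimes \cdots \otimes y_n
\]
with $\nabla(x_1, \ldots, x_n; g) \in \kappa^\times$. Since the scalar $\nabla(x_1, \ldots, x_n; g)$ is nonzero, the two sides span the same line in projective space, so
\[
\iota(x_1, \ldots, x_n)^g = [y_1 \otimes \cdots \otimes y_n] = \iota\bigl((x_1, \ldots, x_n)^g\bigr).
\]

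This computation shows two things at once: the image of $\iota$ is stable under $B_n$ (establishing the ``setwise preservation'' half of the claim), and $\iota$ is $B_n$-equivariant with respect to the rack action of \cref{BraidedSetRack} on the source and the projective action on the target. Since $\iota$ is also a bijection onto its image, the induced action of $B_n$ on the set of lines of pure tensors is isomorphic to its action on $R^n$, as asserted. There is no real obstacle here; the only thing being used beyond \cref{BnActionBVsCocycleNabla} is the elementary linear-algebra fact that a basis of $\kappa R^{\otimes n}$ gives rise to pairwise distinct points of $\mathbb{P}(\kappa R^{\otimes n})$.
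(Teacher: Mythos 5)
Your proposal is correct and is exactly the argument the paper intends: the paper's proof is the one-line statement that the corollary "follows at once from" \cref{BnActionBVsCocycleNabla}, and you have simply spelled out the (routine) details of why the nonzero scalar $\nabla$ makes the map $\iota$ a $B_n$-equivariant bijection onto the set of lines of pure tensors. Nothing further is needed.
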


\begin{proof}

Follows at once from \cref{BnActionBVsCocycleNabla}.
\end{proof}

\begin{prop} \label{RackifyingCocycle}

Let $\kappa$ be a field, let $A \leq \kappa^\times$ be a subgroup, let $R$ be a rack, and let $c \colon R \times R \to A$ be a $2$-cocycle.
Then
\begin{equation} \label{DefExtensionRackCocycle}
(r,a)^{(s,b)} = (r^s, c(r,s) \cdot a), \qquad r,s \in R, \quad a,b \in A
\end{equation}
endows $R \times A$ with a rack structure which we denote by $R_c$.
This association $(R,c) \mapsto R_c$ extends to a functor from the category of racks equipped with $A$-valued $2$-cocycles to the category of racks.
Also, the rack $R_c$ is a quandle if and only if $R$ is a quandle and $c(r,r) = 1$ for every $r \in R$.

Moreover, if $A$ is finite (so $c$ is cyclotomic) we have an injective morphism of braided vector spaces from $\kappa R(c)$ to $\kappa R_c$ mapping every $r \in R$ to
\begin{equation} \label{EmbBVScocycleRack}
\sum_{a \in A} a^{-1} \cdot (r,a) \in \kappa R_c.
\end{equation}

Suppose at last that $f \colon \kappa R(c) \to V$ is a morphism of braided vector spaces over $\kappa$.
Then there exists a morphism of braided vector spaces $\kappa R_c \to V$ that fits into the commutative diagram
\begin{equation} \label{UniversalityCocycleRackBVS}
\begin{tikzcd}
	{\kappa R_c} \\
	{\kappa R(c)} & V.
	\arrow[from=1-1, to=2-2]
	\arrow[from=2-1, to=1-1]
	\arrow["f"', from=2-1, to=2-2]
\end{tikzcd}
\end{equation}
    
\end{prop}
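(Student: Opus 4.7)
The plan is to verify each clause in the order presented, with the rack structure on $R_c$ coming first since everything else rests on it. The map $(r,a) \mapsto (r^s, c(r,s) \cdot a)$ is a bijection on $R \times A$ since $r \mapsto r^s$ is a bijection on $R$ and the $A$-coordinate can be inverted by multiplication by $c(r,s)^{-1}$ once $r$ is recovered. The self-distributive axiom $((r,a)^{(s,b)})^{(t,d)} = ((r,a)^{(t,d)})^{((s,b)^{(t,d)})}$ unfolds in the $R$-coordinate to the rack identity $(r^s)^t = (r^t)^{s^t}$, and in the $A$-coordinate to $c(r^s, t) \cdot c(r,s) = c(r^t, s^t) \cdot c(r,t)$, which is exactly the $2$-cocycle condition on $c$. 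For functoriality, cocyclicity of a morphism $(R, c) \to (S, d)$ is precisely what makes $(r,a) \mapsto (f(r), a)$ a rack homomorphism $R_c \to S_d$. The quandle condition $(r,a)^{(r,a)} = (r,a)$ unwinds to $r^r = r$ together with $c(r,r) \cdot a = a$, which holds for all $(r,a)$ if and only if $R$ is a quandle and $c(r,r) = 1$ for every $r \in R$.

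Next I would set $v_r := \sum_{a \in A} a^{-1} \cdot (r,a)$ and observe that the $v_r$ for different $r$ are supported on disjoint subsets of the basis $R \times A$, so the linear extension $r \mapsto v_r$ is injective. To verify it is a morphism of braided vector spaces I would compute $T_{\kappa R_c}(v_r \otimes v_s)$ by expanding the double sum: the braiding of $\kappa R_c$ produces terms of the form $a^{-1} b^{-1} \cdot (s,b) \otimes (r^s, c(r,s) \cdot a)$, and the substitution $a' = c(r,s) \cdot a$ pulls out the scalar $c(r,s)$ and refactors the double sum as $c(r,s) \cdot v_s \otimes v_{r^s}$. This agrees with the image of $T_{\kappa R(c)}(r \otimes s) = c(r,s) \cdot s \otimes r^s$ under the embedding.

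For the universal property, the key point is that any finite subgroup $A$ of $\kappa^\times$ has order coprime to the characteristic of $\kappa$, because $\kappa^\times$ has no $p$-torsion in characteristic $p$. Hence $|A|$ is invertible in $\kappa$ and I may define $\tilde f \colon \kappa R_c \to V$ by $\tilde f(r, a) = |A|^{-1} \cdot a \cdot f(r)$. The composition $\kappa R(c) \to \kappa R_c \to V$ sends $r$ to $\sum_a a^{-1} \cdot |A|^{-1} \cdot a \cdot f(r) = f(r)$, which is the commutativity of (\ref{UniversalityCocycleRackBVS}). Braiding preservation of $\tilde f$ is a direct check: evaluated on $(r,a) \otimes (s,b)$, both $T_V \circ (\tilde f \otimes \tilde f)$ and $(\tilde f \otimes \tilde f) \circ T_{\kappa R_c}$ reduce to $|A|^{-2} \cdot a b \cdot c(r,s) \cdot f(s) \otimes f(r^s)$, using for the first that $f$ is a morphism of braided vector spaces from $\kappa R(c)$, so $T_V(f(r) \otimes f(s)) = c(r,s) \cdot f(s) \otimes f(r^s)$.

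I expect no conceptual obstacle beyond bookkeeping. The cocycle identity is tailored to make $R_c$ a rack, the $A$-averaging formula $v_r = \sum_a a^{-1} (r,a)$ is tailored so that the scalar $c(r,s)$ produced by $T_{\kappa R_c}$ reassembles precisely into the cocycle-twisted braiding of $\kappa R(c)$, and the invertibility of $|A|$ in $\kappa$ needed for the universal property comes for free.
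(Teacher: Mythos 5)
Your proof is correct and follows essentially the same route as the paper: explicit verification of the rack axioms via the cocycle identity, the same averaging formula for the embedding with the same substitution trick for the braiding check, and the same map $(r,a)\mapsto |A|^{-1}\cdot a\cdot f(r)$ for the universal property. Your added remark that $|A|$ is automatically invertible in $\kappa$ (since $\kappa^\times$ has no $p$-torsion in characteristic $p$) is a worthwhile point the paper leaves implicit.
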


\begin{proof}

The fact that \cref{DefExtensionRackCocycle} gives a rack, and that the construction is functorial, is immediate from \cref{RackDef} and \cref{RacksWithCocyclesDef}.
The rack $R_c$ is a quandle if and only if for every $r \in R$ and $a \in A$ the element
\[
(r,a)^{(r,a)} = (r^r, c(r,r) \cdot a)
\]
equals $(r,a)$.
The equality on the first coordinate is equivalent to $R$ being a quandle, and the equality on the second to $c(r,r) = 1$.

Suppose from now on that $A$ is finite.
It is clear that \cref{EmbBVScocycleRack} describes an injective $\kappa$-linear map, so it remains to check that this map intertwines the braidings in accordance to \cref{def:braidedobject}.
This check can be performed on $R \times R$ viewed as a basis for $\kappa R(c) \otimes \kappa R(c)$.

For $(r,s) \in R \times R$, applying first the braiding and then the map from \cref{EmbBVScocycleRack} we get
\[
c(r,s) \cdot \left( \sum_{a \in A} a^{-1} \cdot (s,a) \right) \otimes \left( \sum_{b \in A} b^{-1} \cdot (r^s,b) \right) =
c(r,s) \cdot \sum_{(a,b) \in A \times A} a^{-1}b^{-1} \cdot (s,a) \otimes (r^s,b),
\]
whereas applying first the map from \cref{EmbBVScocycleRack} and then the braiding gives
\[
\sum_{(\alpha, \beta) \in A \times A} \alpha^{-1} \beta^{-1} \cdot (s,\beta) \otimes (r^s, c(r,s) \cdot \alpha) = c(r,s) \cdot \sum_{(\gamma, \beta) \in A \times A} \gamma^{-1} \beta^{-1} \cdot (s, \beta) \otimes (r^s,\gamma), 
\]
so interchanging $a$ with $\beta$, and $b$ with $\gamma$, we arrive at the required equality.    

At last we claim that the $\kappa$-linear map from $\kappa R_c$ to $V$ that sends $(r,a) \in R_c$ to $|A|^{-1}af(r) \in V$ (where we think of $r$ as an element of $\kappa R(c)$) is a morphism of braided vector spaces over $\kappa$ that makes the diagram in \cref{UniversalityCocycleRackBVS} commute.
The commutativity is immediate from \cref{EmbBVScocycleRack}, so it remains to check for $(r,a), (s,b) \in R_c$ that braiding $(r,a) \otimes (s,b) \in \kappa R_c \otimes \kappa R_c$ and applying the tensor product with itself of the linear map just constructed gives the same vector in $V \otimes V$ as applying this tensor product to $(r,a) \otimes (s,b) \in \kappa R_c \otimes \kappa R_c$ and then braiding (in $V \otimes V$).
Indeed, that same vector in $V \otimes V$ is $|A|^{-2}\cdot a \cdot b \cdot c(r,s) \cdot f(s) \otimes f(r^s)$ as can be seen from \cref{CocycleToBVs}, \cref{DefExtensionRackCocycle}, and our assumption that $f$ is a morphism of braided vector spaces over $\kappa$.
\end{proof}

\subsubsection{Associated Groups}

\begin{defn} \label{DefRackGroup}

To a rack $R$ we functorially associate the group given by the presentation
\[
\Gamma_R = \langle R : s^{-1}rs = r^s \ \text{for all} \ (r,s) \in R^2 \rangle.
\]
This group acts from the right on $R$, and is sometimes called the structure group of $R$.

\end{defn}

This functor $R \mapsto \Gamma_R$ is left adjoint to the forgetful functor from the category of groups to the category of racks described in \cref{ForgetFulGroupsRacks}.

\begin{exmp} \label{StructureGroupRackInGroup}

Let $R$ be a conjugacy-closed subset of a group $G$.
The aforementioned adjunction provides us with a group homomorphism $\Gamma_R \to G$ via which the inclusion of $R$ into $G$ factors.
The group $G$ acts on $R$ from the right by conjugation, and the action of $\Gamma_R$ on $R$ factors via the group homomorphism $\Gamma_R \to G$.

\end{exmp}

\begin{exmp} \label{StructureGroupEmptyRack}

The structure group of the empty rack is trivial.
    
\end{exmp}

\begin{exmp} \label{TrivRackStructGroup}

The structure group of $\mathcal T_1$ is (isomorphic to) $\Z$.
    
\end{exmp}

\begin{exmp} \label{DirectProdStructGroupsRack}

Let $R,S$ be racks. Then the structure group of $R \coprod S$ is the direct product of the structure group of $R$ and the structure group of $S$.

\end{exmp}

\begin{defn}

Let $R$ be a rack.
We term the orbits of the action of $\Gamma_R$ on $R$ the connected components of $R$.
We say that $R$ is connected if it has exactly one connected component, or equivalently, if the action of $\Gamma_R$ on $R$ is transitive.
    
\end{defn}

The connected components of $R$ form the finest partition of $R$ in the sense of \cref{MultiPartitionExmpAction}.
The empty rack is not connected.

\begin{exmp} \label{SingleConjugacyClassConnected}

A conjugacy-closed subset $R$ of a group $G$ is a (single) conjugacy class of $G$ if it is connected (as a rack).
A generating conjugacy class $R$ of a group $G$ is connected (as a rack).
    
\end{exmp}

\begin{exmp} \label{PermutationalRackConnected}

The rack $\Z/m\Z$ from \cref{PermutationalRack} is connected.
    
\end{exmp}

\begin{exmp} \label{RackifyingCpm}

The connected components of the Joyce quandle $\mathcal S_{\pm}$ from \cref{JoyceQuandle} are $\{J_1, J_2\}$ and $\{J_3\}$.
Next we check that the structure group of $\mathcal S_{\pm}$ is abelian.
Indeed conjugation in $\mathcal S_{\pm}$ by $J_1$ and $J_2$ is the identity map, and $J_3$ is invariant under conjugation by any element of $\mathcal S_{\pm}$. It follows that the images in the structure group of $\mathcal S_{\pm}$ of any two elements of $\mathcal S_{\pm}$ commute, so our check is complete.
A similar argument shows that the structure group of the quandle $(\mathcal T_2)_{c_{\pm}}$ constructed in \cref{RackifyingCocycle} is abelian as well.

For a field $\kappa$ of characteristic different from $2$ we have an injective homomorphism $\kappa_{\pm} \to \kappa \mathcal S_{\pm}$ of braided vector spaces defined by
\[
v \mapsto J_3, \quad \underline{v} \mapsto J_1 - J_2
\]
on the basis for $\kappa_{\pm}$ from \cref{LegendreBraidVS}.








\end{exmp}

The abelianization $\Gamma_R^{\textup{ab}}$ of the structure group $\Gamma_R$ of a rack $R$ is canonically isomorphic to the free abelian group on the set of connected components of $R$.

\begin{exmp} \label{RackificationOneDimensional}

Let $n$ be a positive integer, let $\kappa$ be a field of characteristic not dividing $n$, and let $\zeta \in \kappa$ be a primitive $n$th root of unity.
Endow $\Z/n\Z$ with the rack structure from \cref{PermutationalRack}.
Then the $\kappa$-linear map from $\kappa_\zeta$ to $\kappa[\Z/n\Z]$ sending the fixed nonzero $v_\zeta \in \kappa_\zeta$ from \cref{ScalingBraidedVS} to 
\[
\sum_{r \in \Z/n\Z} \zeta^{-r} \cdot r \in \kappa[\Z/n\Z]
\]
is an injective morphism of braided vector spaces in view of \cref{RackifyingCocycle} applied to $R = \mathcal T_1$, $A = \{\alpha \in \kappa : \alpha^n = 1\}$, and $c \colon R \times R \to A$ the $2$-cocycle that maps the unique pair in $R \times R$ to $\zeta$.
In particular, the rack $\mathbb Z/n \mathbb Z$ above becomes isomorphic to the rack $(\mathcal T_1)_c$ from \cref{RackifyingCocycle} once we identify $A$ with $\Z/n\Z$ by sending a residue class $r$ of integers modulo $n$ to $\zeta^r \in A$.

Next we check that the structure group of this rack is abelian (in fact, cyclic).
For that matter, it would be enough to show that the images of all elements of $\Z/n\Z$ in the structure group of this rack coincide.
Indeed for $a \in \Z/n\Z$ we have
\[
a = a^a = a+1
\]
in the structure group of $\Z/n\Z$, so the required coincidence follows from the fact that $1$ is a generator of the additive group of residue classes of integers modulo $n$. 
 
\end{exmp}

The inclusion of a subrack $S$ into a rack $R$ induces a group homomorphism $\Gamma_S \to \Gamma_R$, and thus a right action of $\Gamma_S$ on $R$. 

\begin{lem} \label{SequalsRtransitive}

Suppose that $S \neq \emptyset$ and that the action of $\Gamma_S$ on $R$ is transitive.
Then $S = R$.
    
\end{lem}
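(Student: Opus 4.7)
The plan is to exhibit $S$ as a $\Gamma_S$-invariant subset of $R$ containing an orbit point, and then invoke transitivity to force $S = R$.

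First, I would fix any element $s_0 \in S$, which exists since $S \neq \emptyset$. Because $\Gamma_S$ acts transitively on $R$, the orbit $s_0 \cdot \Gamma_S$ equals all of $R$.

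Next, I would check that $S \subseteq R$ is invariant under the action of $\Gamma_S$. Recall that $\Gamma_S$ is generated by the symbols $\{t : t \in S\}$, and its action on $R$ is obtained by composing $\Gamma_S \to \Gamma_R$ (induced by the inclusion $S \hookrightarrow R$) with the natural action of $\Gamma_R$ on $R$, under which each generator $t \in S \subseteq R$ acts by $x \mapsto x^t$. For $s' \in S$ and $t \in S$, the element $s'^t$ lies in $S$ because $S$ is a subrack of $R$. The inverse generator $t^{-1} \in \Gamma_S$ acts by the inverse of the bijection $x \mapsto x^t$ on $R$; but since $S$ is itself a rack under the restricted operation (this is built into \cref{RackDef} and the notion of subrack preceding \cref{GeneratingRack}), the map $x \mapsto x^t$ is a bijection of $S$ onto itself, so its inverse also preserves $S$. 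A straightforward induction on word length in the generators $\{t^{\pm 1} : t \in S\}$ shows $s_0 \cdot \Gamma_S \subseteq S$.

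Combining these two observations, $R = s_0 \cdot \Gamma_S \subseteq S \subseteq R$, so $S = R$. The argument is essentially formal; the only subtle step to verify carefully is that inverse generators preserve $S$, which is why the full rack axiom (not just closure under the binary operation $x^y$) enters through the definition of subrack.
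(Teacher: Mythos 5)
Your proof is correct and follows essentially the same route as the paper's: pick a point of $S$, use transitivity to reach any $r \in R$ from it via some $\gamma \in \Gamma_S$, and observe that the $\Gamma_S$-action preserves $S$ because $S$ is a subrack. Your explicit check that inverse generators also preserve $S$ (via the bijectivity of $x \mapsto x^t$ on the rack $S$) is just a careful unpacking of the paper's one-line appeal to the subrack property.
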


\begin{proof}

Let $r \in R$. 
Our task is to show that $r \in S$.
Our assumption that $S$ is nonempty allows us to pick $s \in S$.
The transitivity of the action of $\Gamma_S$ on $R$ gives us $\gamma \in \Gamma_S$ with $s^\gamma = r$.
Since $S$ is a subrack of $R$ we have $s^\gamma \in S$, hence $r \in S$ as required.
\end{proof}

\begin{defn}

For a rack $R$ denote by $\operatorname{Inn}(R)$ the subgroup of $\operatorname{Aut}(R)$ generated by the automorphisms $x \mapsto x^y$, with $y$ ranging over $R$, and $x \in R$. 
We call $\operatorname{Inn}(R)$ the inner automorphism group of $R$.

\end{defn}

The map sending $y \in R$ to the automorphism $x \mapsto x^y$ in $\operatorname{Inn}(R)$ is a homomorphism of racks, so the adjunction mentioned after \cref{DefRackGroup} tells us that this map factors via the homomorphism of racks $R \to \Gamma_R$.
Therefore, the action of the structure group $\Gamma_R$ on $R$ factors via $\operatorname{Inn}(R)$.
Consequently, the connected components of $R$ are the orbits of the action of $\operatorname{Inn}(R)$ on $R$.
The kernel of the group homomorphism $\Gamma_R \to \operatorname{Inn}(R)$ is the collection of all elements in $\Gamma_R$ that act trivially on $R$.

\begin{exmp}

The group of inner automorphisms of a trivial rack is trivial.
    
\end{exmp}

\begin{exmp}

The group of inner automorphisms of the rack $\mathbb Z/n \mathbb Z$ from \cref{PermutationalRack} is isomorphic to the group $\mathbb Z/n\mathbb Z$.
    
\end{exmp}

\begin{exmp} \label{InnerAutJoyceQuandle}

The group of inner automorphisms of $\mathcal{S}_{\pm}$ is isomorphic to $\mathbb Z/2\mathbb Z$.
    
\end{exmp}

\begin{exmp}

For racks $R$, $S$ we have
$
\operatorname{Inn}(R \coprod S) \cong \operatorname{Inn}(R) \times \operatorname{Inn}(S).
$
    
\end{exmp}

\begin{exmp}

The group of inner automorphisms of $\mathcal{S}_{\wedge}$ is isomorphic to $\mathbb Z/2\mathbb Z$.
    
\end{exmp}

\begin{exmp}

Let $R$ be a conjugacy-closed generating set of a group $G$.
Then the inner automorphism group of $R$ is isomorphic to the quotient of $G$ by its center.

\end{exmp}

\begin{lem} \label{GenerateRackCriterion}

Let $X$ be a subset of a rack $R$.
Then $X$ generates $R$ if and only if $X$ contains at least one element from each connected component of $R$ and the image of $X$ in $\operatorname{Inn}(R)$ is a generating set.
    
\end{lem}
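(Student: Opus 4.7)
The plan is to verify the two directions separately, working throughout with the two pieces of data: the decomposition of $R$ into connected components (the orbits of $\operatorname{Inn}(R)$ on $R$) and the homomorphism of racks $R \to \operatorname{Inn}(R)$ sending $y$ to $\rho_y := (x \mapsto x^y)$.

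For the forward direction, suppose $\langle X \rangle = R$. To see that $X$ meets every connected component, I would first observe that for any connected component $C$ of $R$ the complement $R \setminus C$ is a subrack of $R$: if $r,s \in R \setminus C$ then $r^s = \rho_s(r)$ lies in the same $\operatorname{Inn}(R)$-orbit as $r$, hence stays outside $C$. So if $X$ missed $C$, then $X \subseteq R \setminus C \subsetneq R$ would contradict $\langle X \rangle = R$ (since $C$ is nonempty, connected components being orbits and so nonempty). For the second condition, I would use the rack identity $(a^b)^c = (a^c)^{b^c}$ with $a=x$, $b=y_1$, $c=y_2$, which rewrites as $\rho_{y_1^{y_2}} = \rho_{y_2}\, \rho_{y_1}\, \rho_{y_2}^{-1}$ inside $\operatorname{Inn}(R)$. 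Iterating, an easy induction on the complexity of a rack-word expressing $y \in R$ in terms of the generators $X$ shows that $\rho_y$ lies in the subgroup of $\operatorname{Inn}(R)$ generated by $\{\rho_x : x \in X\}$. Since $\operatorname{Inn}(R)$ is generated by $\{\rho_y : y \in R\}$, this subgroup is all of $\operatorname{Inn}(R)$, as desired.

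For the reverse direction, suppose $X$ meets every connected component and $\{\rho_x : x \in X\}$ generates $\operatorname{Inn}(R)$. Set $R' = \langle X \rangle$, a subrack of $R$. The key preliminary observation is that for each $x \in X$ the bijection $\rho_x \colon R \to R$ setwise preserves $R'$ as well as its set-theoretic inverse preserves $R'$: because $R'$ is a subrack and $x \in R'$, the restriction $\rho_x|_{R'}$ is a bijection of $R'$ (by the subrack axiom), so $\rho_x(R') = R'$ and consequently $\rho_x^{-1}(R') = R'$. Therefore the subgroup $H \leq \operatorname{Inn}(R)$ generated by $\{\rho_x^{\pm 1} : x \in X\}$ preserves $R'$ setwise. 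Now pick an arbitrary $r \in R$; by hypothesis there is some $x \in X$ in the same connected component as $r$, so there exists $g \in \operatorname{Inn}(R)$ with $x^g = r$, and by the second hypothesis $g \in H$. Since $x \in R'$ and $H$ preserves $R'$, we conclude $r = x^g \in R'$, giving $R' = R$.

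I do not expect any real obstacle here; the only subtle point, to which I would pay attention in writing the argument, is the distinction between a map of $R$ sending a subset into itself and its restriction being a bijection of that subset. The subrack axiom, which builds bijectivity into the definition, is exactly what makes this distinction vanish when applied to $\rho_x$ for $x \in X \subseteq R'$.
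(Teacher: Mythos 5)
Your proposal is correct and follows essentially the same route as the paper: the forward direction via the observation that complements/unions of connected components are subracks plus the conjugation identity $\rho_{y_1^{y_2}} = \rho_{y_2}\rho_{y_1}\rho_{y_2}^{-1}$ (which the paper leaves implicit), and the reverse direction by showing the subrack $\langle X\rangle$ is stable under the subgroup of $\operatorname{Inn}(R)$ generated by the image of $X$ and then transporting a chosen $x \in X$ to any $r$ in its component. Your explicit remark that the subrack axiom forces $\rho_x^{-1}$ to preserve $\langle X\rangle$ correctly handles the $\epsilon_i = -1$ case, which the paper's proof also relies on tacitly.
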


\begin{proof}

Suppose first that $X$ generates $R$.
Since unions of connected components of $R$ are subracks of $R$, it follows that no connected component of $R$ is disjoint from $X$.
Because the image of $R$ in $\operatorname{Inn}(R)$ is a generating set, and $X$ generates $R$, it follows that the image of $X$ in $\operatorname{Inn}(R)$ is a generating set as well.
The proof of one implication is thus complete.

For the other direction, suppose now that $X$ meets each connected component of $R$ and that its image generates $\operatorname{Inn}(R)$.
Let $r \in R$, and denote by $S$ the subrack of $R$ generated by $X$.
Our task is to show that $r \in S$.
By assumption, there exists $x \in X$ that lies in the connected component of $r$.
Our assumption that $X$ generates $\operatorname{Inn(R)}$ implies then that there exist $y_1, \dots, y_n \in X$ and $\epsilon_1, \dots, \epsilon_n \in \{\pm 1\}$ such that
\[
x^w = r, \quad w = \prod_{i=1}^n y_i^{\epsilon_i} \in \operatorname{Inn}(R).
\]
Since $X \subseteq S$ we get that $x,y_1,\dots,y_n \in S$.
Because $S$ is a subrack of $R$, it follows that $r \in S$.
\end{proof}

The following is a special case of \cite{shus}*{Definition 4.28}.

\begin{defn} \label{SynchDefRacks}

We say that two racks $R$ and $S$ are synchronized if for every connected component $C$ of $R$ and every connected component $D$ of $S$ the subset $C \times D$ of the rack $R \times S$ is a connected component, namely the action of the structure group of $R \times S$ on $C \times D$ is transitive.
    
\end{defn}

\begin{lem} \label{SynchQuandle}

Suppose that $R$ is a quandle.
Then $R$ and $S$ are synchronized.

\end{lem}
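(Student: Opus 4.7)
The plan is to proceed by showing two things: first, $C \times D$ is closed under the rack action of $R \times S$ on itself, so it is automatically a union of connected components; second, it is a single orbit under $\Gamma_{R \times S}$. Closure under the rack action is immediate: for $(c,d) \in C \times D$ and $(r,s) \in R \times S$ we have $(c,d)^{(r,s)} = (c^r, d^s)$, and $c^r \in C$, $d^s \in D$ because connected components are preserved by the rack action. Hence it suffices to produce, for arbitrary $(c,d),(c',d') \in C \times D$, an element of $\Gamma_{R\times S}$ carrying $(c,d)$ to $(c',d')$.

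The key bookkeeping observation is that in $\Gamma_{R\times S}$, a word $(r_1,s_1)^{\epsilon_1}\cdots (r_n,s_n)^{\epsilon_n}$ acts on $(x,y) \in R \times S$ by the pair of induced actions of $r_1^{\epsilon_1}\cdots r_n^{\epsilon_n} \in \Gamma_R$ on $x$ and $s_1^{\epsilon_1}\cdots s_n^{\epsilon_n} \in \Gamma_S$ on $y$; this follows from the definition $(x,y)^{(r,s)} = (x^r,y^s)$ and the fact that the rack action is a bijection in each coordinate, so inverses in $\Gamma_{R\times S}$ decompose coordinatewise as well.

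Using this, I would carry out the transitivity in two stages. \emph{Stage 1:} Since $c,c' \in C$, choose a word $r_1^{\epsilon_1}\cdots r_n^{\epsilon_n} \in \Gamma_R$ taking $c$ to $c'$, fix any $s_0 \in S$, and act by $(r_1,s_0)^{\epsilon_1}\cdots (r_n,s_0)^{\epsilon_n}$ on $(c,d)$; this lands in $(c', d'')$ for some $d'' \in D$, since the second coordinate is obtained by acting on $d$ by a word in $\Gamma_S$ and $D$ is closed under $\Gamma_S$. \emph{Stage 2:} Now I want to move $(c',d'')$ to $(c',d')$ without disturbing the first coordinate. Here the quandle hypothesis on $R$ enters: since $c'^{c'} = c'$, the element $c' \in \Gamma_R$ fixes $c'$ under the rack action, so any word purely in $c'$ and $c'^{-1}$ fixes $c'$. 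Choosing a word $s_1^{\delta_1}\cdots s_m^{\delta_m} \in \Gamma_S$ connecting $d''$ to $d'$ (which exists because $d'',d' \in D$), the element $(c',s_1)^{\delta_1}\cdots (c',s_m)^{\delta_m}$ of $\Gamma_{R\times S}$ acts on $(c',d'')$ as $(c', d')$.

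Concatenating the two stages gives a word in $\Gamma_{R\times S}$ carrying $(c,d)$ to $(c',d')$, which proves synchronization. I do not expect any serious obstacle here; the only subtle point is the use of the quandle condition in Stage 2, without which we would have no way to move the second coordinate independently of the first.
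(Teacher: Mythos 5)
Your proof is correct and follows essentially the same two-stage argument as the paper: first move the first coordinate using connectivity of $C$ (letting the second coordinate drift within $D$), then use the quandle identity $c'^{c'}=c'$ to adjust the second coordinate via elements of the form $(c', s_j)^{\delta_j}$ without disturbing the first. The coordinatewise decomposition of the $\Gamma_{R\times S}$-action that you spell out is exactly what the paper uses implicitly.
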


\begin{proof}

For $r_1, r_2$ lying in the same connected component of $R$, and $s_1, s_2$ lying in the same connected component of $S$, we should show that $(r_1, s_1)$ and $(r_2, s_2)$ lie in the same connected component of $R \times S$.
Our assumption that $r_1, r_2$ lie in the same connected component implies that there exists $s$ in the connected component of $s_1$ such that $(r_1,s_1)$ and $(r_2, s)$ lie in the same connected component of $R \times S$.
Our assumption that $s_1, s_2$ lie in the same connected component implies that $s$ and $s_2$ lie in the same connected component.
This means that there exist $x_1, \dots, x_n \in S$ and $\epsilon_1, \dots, \epsilon_n \in \{\pm 1\}$ for which
\[
s^{w} = s_2, \quad w = \prod_{i=1}^n x_i^{\epsilon_i}.
\]
Since $R$ is a quandle, we have $r_2^{r_2} = r_2$, so
\[
(r_2,s)^u = (r_2, s_2), \quad u = \prod_{i=1}^n (r_2, x_i^{\epsilon_i}).
\]
It follows that $(r_1, s_2)$ and $(r_2, s_2)$ indeed lie in the same connected component.    
\end{proof}

\begin{lem} \label{InnerProduct}

Let $R$ and $S$ be racks.
Then the homomorphism of racks $R \times S \to \operatorname{Inn}(R)$ factors via $\operatorname{Inn}(R \times S)$. In other words, we have a commutative diagram
\[\begin{tikzcd}
	{R \times S} & {\operatorname{Inn}(R \times S)} \\
	R & {\operatorname{Inn}(R)}
	\arrow[from=1-1, to=1-2]
	\arrow[from=1-1, to=2-1]
	\arrow[from=1-2, to=2-2]
	\arrow[from=2-1, to=2-2]
\end{tikzcd} 
\]
where the right vertical arrow is a homomorphism of groups.

\end{lem}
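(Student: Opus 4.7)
The plan is to construct the right vertical arrow $\operatorname{Inn}(R \times S) \to \operatorname{Inn}(R)$ via the structure groups, using that the rack projection $R \times S \to R$ is a morphism of racks. If $S = \emptyset$ then $R \times S = \emptyset$, hence $\operatorname{Inn}(R \times S)$ is trivial and the diagram commutes trivially, so I would begin by reducing to the case $S \neq \emptyset$.

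Assuming $S \neq \emptyset$, the rack projection $p \colon R \times S \to R$ and the adjunction from \cref{DefRackGroup} yield a group homomorphism $\Gamma_p \colon \Gamma_{R \times S} \to \Gamma_R$, and I would take the composite
\[
\Psi \colon \Gamma_{R \times S} \xrightarrow{\Gamma_p} \Gamma_R \longrightarrow \operatorname{Inn}(R),
\]
where the second map is the natural surjection whose kernel is the set of elements in $\Gamma_R$ acting trivially on $R$. The next step is to show that $\Psi$ factors through the quotient $\Gamma_{R \times S} \twoheadrightarrow \operatorname{Inn}(R \times S)$, which will give the desired group homomorphism $\phi \colon \operatorname{Inn}(R \times S) \to \operatorname{Inn}(R)$.

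For the factorization, I would verify that the kernel of $\Gamma_{R \times S} \to \operatorname{Inn}(R \times S)$ is contained in the kernel of $\Psi$. Concretely, any $\gamma \in \Gamma_{R \times S}$ is a word $\gamma = (r_1,s_1)^{\epsilon_1} \cdots (r_n,s_n)^{\epsilon_n}$ in the generators, and the action of $\Gamma_{R \times S}$ on $R \times S$ coming from \cref{DefRackGroup} is the coordinatewise action
\[
(x,y)^{\gamma} = \bigl(x^{r_1^{\epsilon_1} \cdots r_n^{\epsilon_n}},\; y^{s_1^{\epsilon_1} \cdots s_n^{\epsilon_n}}\bigr),
\]
where the two factors are computed in $\Gamma_R$ and $\Gamma_S$ respectively. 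Picking any $y \in S$ (using $S \neq \emptyset$), if $\gamma$ acts trivially on $R \times S$ then $x^{\Gamma_p(\gamma)} = x$ for every $x \in R$, so $\Gamma_p(\gamma)$ maps to the identity in $\operatorname{Inn}(R)$, as required. This yields $\phi$.

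Finally, I would check the commutativity of the diagram on elements: an element $(r,s) \in R \times S$ goes along the top and right to $\phi$ applied to the inner automorphism $(x,y) \mapsto (x^r,y^s)$, which by construction of $\phi$ (via the projection on structure groups) is the inner automorphism $x \mapsto x^r$; this agrees with the image of $(r,s)$ along the left and bottom, namely the image of $r$ in $\operatorname{Inn}(R)$. The main subtlety, and essentially the only real content, is the kernel-containment step above, which is straightforward once one writes the action of $\Gamma_{R \times S}$ on $R \times S$ coordinatewise; the remaining work is purely a book-keeping exercise with the functoriality of $R \mapsto \Gamma_R$ and $R \mapsto \operatorname{Inn}(R)$.
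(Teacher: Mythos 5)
Your proposal is correct and follows essentially the same route as the paper: both reduce to showing that any word $\gamma \in \Gamma_{R\times S}$ acting trivially on $R\times S$ has image acting trivially on $R$, using that the $\Gamma_{R\times S}$-action is coordinatewise. The only cosmetic differences are your explicit detour through $\Gamma_p\colon \Gamma_{R\times S}\to\Gamma_R$ (the paper applies the adjunction directly to $R\times S\to\operatorname{Inn}(R)$) and your separate treatment of $S=\emptyset$, which the paper's argument absorbs automatically since $\Gamma_{\emptyset}$ is trivial.
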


\begin{proof}

The homomorphism of racks $R \times S \to \operatorname{Inn}(R)$ factors, in view of the adjunction mentioned after \cref{DefRackGroup}, via a group homomorphism $\Gamma_{R \times S} \to \operatorname{Inn}(R)$.
Our task is to show that this group homomorphism factors via the group homomorphism $\Gamma_{R \times S} \to \operatorname{Inn}(R \times S)$.
For that matter we take an element
\[
g = \prod_{i=1}^n (r_i,s_i)^{\epsilon_i} \in \Gamma_{R \times S}, \quad (r_i,s_i) \in R \times S, \quad \epsilon_i \in \{\pm 1\},
\]
that acts trivially on $R \times S$ with the purpose of checking that $g$ lies in the kernel of the group homomorphism $\Gamma_{R \times S} \to \operatorname{Inn}(R)$.
That homomorphism maps $g$ to
$
\prod_{i=1}^n r_i^{\epsilon_i} \in \operatorname{Inn}(R)
$
which acts as the identity on $R$ because $g$ acts as the identity on $R \times S$.
This concludes our verification that $g$ lies in the kernel of the homomorphism $\Gamma_{R \times S} \to \operatorname{Inn}(R)$.
\end{proof}

\cref{InnerProduct} provides us with an injective group homomorphism $\operatorname{Inn}(R \times S) \to \operatorname{Inn}(R) \times \operatorname{Inn}(S)$.

\begin{lem} \label{NonactingElementImpliesInnerProductive}

Suppose that $R$ is nonempty, and that there exists an $s \in S$ such that for every $x \in S$ we have $x^s = x$.
Then the group homomorphism $\operatorname{Inn}(R \times S) \to \operatorname{Inn}(R) \times \operatorname{Inn}(S)$ is an isomorphism.
    
\end{lem}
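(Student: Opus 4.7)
The plan is to prove surjectivity of the homomorphism $\operatorname{Inn}(R \times S) \to \operatorname{Inn}(R) \times \operatorname{Inn}(S)$ coming from \cref{InnerProduct}; injectivity was already recorded in the sentence following that lemma. Since $\operatorname{Inn}(R)$ is generated as a group by the inner automorphisms $r_{*} \colon x \mapsto x^{r}$ for $r \in R$, and likewise $\operatorname{Inn}(S)$ is generated by the $t_{*}$ for $t \in S$, it suffices to show that every element of the form $(r_{*}, \operatorname{id}_{S})$ with $r \in R$, and every element of the form $(\operatorname{id}_{R}, t_{*})$ with $t \in S$, lies in the image.

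The key observation is that the inner automorphism of $R \times S$ induced by a pair $(r,t) \in R \times S$ is precisely $(r_{*}, t_{*})$, since $(x,y)^{(r,t)} = (x^{r}, y^{t})$ by the definition of the product rack. First I would exploit the hypothesis: let $s \in S$ be the distinguished element with $x^{s} = x$ for all $x \in S$, so that $s_{*} = \operatorname{id}_{S}$. For any $r \in R$ (which exists since $R$ is nonempty), the inner automorphism attached to $(r,s) \in R \times S$ maps to $(r_{*}, s_{*}) = (r_{*}, \operatorname{id}_{S})$, putting all elements of the form $(r_{*}, \operatorname{id}_{S})$ into the image.

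Next, for an arbitrary $t \in S$, pick any $r \in R$ and consider the inner automorphism attached to $(r,t)$, which maps to $(r_{*}, t_{*})$. Multiplying by the already-established image element $(r_{*}, \operatorname{id}_{S})^{-1} = (r_{*}^{-1}, \operatorname{id}_{S})$, we obtain $(\operatorname{id}_{R}, t_{*})$ in the image. Combining the two types of generators exhibits the image as all of $\operatorname{Inn}(R) \times \operatorname{Inn}(S)$, giving surjectivity.

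There is not really a hard step here: the only place the hypotheses genuinely enter is in producing an element of $R \times S$ whose action on the second factor is trivial, which requires both the nonemptiness of $R$ (to supply the first coordinate) and the existence of the trivially acting $s \in S$ (to supply the second). The mild point to keep track of is simply that we are working with inner automorphism groups rather than structure groups, so that the identification of the image can be read off at the level of how $(r,t)$ acts on $R \times S$ coordinate-wise, without having to worry about relations in $\Gamma_{R \times S}$.
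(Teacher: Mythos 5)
Your proof is correct and follows essentially the same route as the paper's: both use the nonemptiness of $R$ together with the distinguished element $s$ acting trivially on $S$ to show that the generators $(r_{*},\operatorname{id}_{S})$ lie in the image, and then deduce surjectivity (the paper phrases the second step as "the image surjects onto $\operatorname{Inn}(S)$ and contains the kernel of that projection," which is the same as your multiplication by $(r_{*},\operatorname{id}_{S})^{-1}$). Nothing further is needed.
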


\begin{proof}

Denote by $H$ the image of our group homomorphism.
Our task is to show that $H = \operatorname{Inn}(R) \times \operatorname{Inn}(S)$.
Our assumption that $R$ is nonempty implies that the the restriction of the projection $\operatorname{Inn}(R) \times \operatorname{Inn}(S) \to \operatorname{Inn}(S)$ to $H$ is surjective.
To conclude the argument it is sufficient (and necessary) to show that $H$ contains the kernel $\operatorname{Inn}(R) \times \{\mathrm{id}_S\}$ of this projection.

Since $H$ is a subgroup of $\operatorname{Inn}(R) \times \operatorname{Inn}(S)$, it is sufficient (and necessary) to show that $H$ contains the generating set $\{(r,\mathrm{id}_S) \in \operatorname{Inn}(R) \times \{\mathrm{id}_S\} : r \in R\}$ of $\operatorname{Inn}(R) \times \{\mathrm{id}_S\}$.
Indeed, because $x^s = x$ for every $x \in S$, the image of $(r,s) \in \operatorname{Inn}(R \times S)$ under the group homomorpism $\operatorname{Inn}(R \times S) \to \operatorname{Inn}(R) \times \operatorname{Inn}(S)$ is $(r, \mathrm{id}_S)$ for every $r \in R$, and it lies in $H$ by definition.
\end{proof}

\begin{lem} \label{InnerProductiveImpliesSynchronized}

Suppose that the group homomorphism $\operatorname{Inn}(R \times S) \to \operatorname{Inn}(R) \times \operatorname{Inn}(S)$ is an isomorphism.
Then $R$ and $S$ are synchronized.
    
\end{lem}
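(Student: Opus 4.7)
The plan is to unpack the definition of synchronization and directly exhibit, for any pair of points in a product $C \times D$ of connected components, an element of $\operatorname{Inn}(R \times S)$ carrying one to the other; the isomorphism hypothesis is used only at the very end to produce that element from its component-wise data.

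First I would observe that the connected components of any rack are precisely the orbits of its inner automorphism group, so saying that $R$ and $S$ are synchronized amounts to saying that, for every connected component $C$ of $R$ and every connected component $D$ of $S$, the action of $\operatorname{Inn}(R \times S)$ on $R \times S$ is transitive on $C \times D$. Next I would set up the comparison between the action of $\operatorname{Inn}(R\times S)$ and the product action. By the commutative diagram of \cref{InnerProduct} (applied once with the roles of $R$ and $S$ interchanged), the group homomorphism $\operatorname{Inn}(R \times S) \to \operatorname{Inn}(R) \times \operatorname{Inn}(S)$ is compatible with the two natural rack actions in the sense that if $\gamma \in \operatorname{Inn}(R \times S)$ maps to $(g,h) \in \operatorname{Inn}(R) \times \operatorname{Inn}(S)$, then $(x,y)^\gamma = (x^g, y^h)$ for every $(x,y) \in R \times S$. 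This identity is the technical heart of the argument; it is immediate on generators $(r,s) \in R \times S$ (since $(x,y)^{(r,s)} = (x^r, y^s)$ by the definition of the product rack) and extends to all of $\operatorname{Inn}(R\times S)$ because both sides are group actions.

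Given this, the proof proceeds as follows. Fix connected components $C \subseteq R$ and $D \subseteq S$, and fix $(r_1,s_1),(r_2,s_2) \in C \times D$. Since $r_1,r_2 \in C$ and $s_1,s_2 \in D$, there exist $g \in \operatorname{Inn}(R)$ and $h \in \operatorname{Inn}(S)$ with $r_1^g = r_2$ and $s_1^h = s_2$. Using the hypothesis that $\operatorname{Inn}(R \times S) \to \operatorname{Inn}(R) \times \operatorname{Inn}(S)$ is an isomorphism, lift $(g,h)$ to an element $\gamma \in \operatorname{Inn}(R\times S)$. By the compatibility identity recorded above,
\[
(r_1,s_1)^\gamma = (r_1^g, s_1^h) = (r_2, s_2),
\]
which is exactly the required transitivity.

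I expect no real obstacle: the only nontrivial ingredient is the compatibility $(x,y)^\gamma = (x^g, y^h)$, and this is a formal consequence of the definition of the inner automorphism group of a product rack together with \cref{InnerProduct}. The surjectivity half of the isomorphism hypothesis is what makes the lift $\gamma$ exist; injectivity is not needed for this direction.
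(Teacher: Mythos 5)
Your proposal is correct and follows essentially the same route as the paper: the paper's proof also lifts the pair $(g,h)$ to an element $\tau \in \operatorname{Inn}(R \times S)$ using surjectivity and concludes $(r,s)^\tau = (r^g, s^h)$, with the compatibility of the homomorphism with the two actions (which you spell out via \cref{InnerProduct}) used implicitly. Your observation that only surjectivity is needed matches the paper's usage as well.
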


\begin{proof}

For $r \in R$, $s \in S$, $g \in \operatorname{Inn}(R)$, $h \in \operatorname{Inn}(S)$, we should show that $(r,s)$ and $(r^g, s^h)$ lie in the same orbit in $R \times S$ under the action of $\operatorname{Inn}(R \times S)$.
Our assumption that the group homomorphism $\operatorname{Inn}(R \times S) \to \operatorname{Inn}(R) \times \operatorname{Inn}(S)$ is surjective provides us with $\tau \in \operatorname{Inn}(R \times S)$ that maps to $(g,h)$ in $\operatorname{Inn}(R) \times \operatorname{Inn}(S)$.
We conclude that $(r,s)^{\tau} = (r^g,s^h)$ as required.   
\end{proof}

\begin{cor} \label{GoursatRacks}

Let $R$ be a connected rack, and let $S$ be a rack for which the group $\operatorname{Inn}(S)$ is abelian.
Suppose that there exists $s \in S$ such that for every $x \in S$ we have $x^s = x$.
Let $X$ be a subset of $R \times S$ whose projection to $R$ generates $R$, and whose projection to $S$ generates $S$.
Then $X$ generates $R \times S$.
    
\end{cor}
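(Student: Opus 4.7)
The plan is to verify the criterion of \cref{GenerateRackCriterion} applied to $X \subseteq R \times S$: that is, to show (a) $X$ meets every connected component of $R \times S$, and (b) the image of $X$ in $\operatorname{Inn}(R \times S)$ is a generating set. Since $R$ is connected (hence nonempty) and $s$ acts trivially on $S$, \cref{NonactingElementImpliesInnerProductive} identifies $\operatorname{Inn}(R \times S)$ with $\operatorname{Inn}(R) \times \operatorname{Inn}(S)$, and then \cref{InnerProductiveImpliesSynchronized} tells us $R$ and $S$ are synchronized, so the connected components of $R \times S$ are exactly the sets $R \times D$ as $D$ ranges over connected components of $S$. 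Part (a) is then immediate: $\pi_S(X)$ generates $S$, so by \cref{GenerateRackCriterion} applied to $S$ it meets every connected component of $S$, whence $X$ meets every $R \times D$.

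For part (b), let $G \leq \operatorname{Inn}(R) \times \operatorname{Inn}(S)$ be the subgroup generated by the image of $X$. The hypotheses on $\pi_R(X)$ and $\pi_S(X)$, together with \cref{GenerateRackCriterion} applied to $R$ and $S$ separately, show that $G$ projects onto each factor, so Goursat's lemma gives the description $G = \{(a,b) : \phi(aN) = bM\}$ for some $N \triangleleft \operatorname{Inn}(R)$, $M \triangleleft \operatorname{Inn}(S)$, and an isomorphism $\phi \colon \operatorname{Inn}(R)/N \xrightarrow{\sim} \operatorname{Inn}(S)/M$. The goal reduces to showing $N = \operatorname{Inn}(R)$; equivalently, that the composite homomorphism $\psi \colon \operatorname{Inn}(R) \twoheadrightarrow \operatorname{Inn}(R)/N \xrightarrow{\phi} \operatorname{Inn}(S)/M$ is trivial.

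This is where both hypotheses on $S$ enter, in distinct ways. Since $\operatorname{Inn}(S)$ is abelian, so is $\operatorname{Inn}(S)/M$, which means $\psi$ factors through $\operatorname{Inn}(R)^{\mathrm{ab}}$; by connectedness of $R$, any two generators $r_*, r'_*$ with $r, r' \in R$ are conjugate in $\operatorname{Inn}(R)$ and so have the same image in $\operatorname{Inn}(R)^{\mathrm{ab}}$, whence $\psi(r_*)$ takes a common value $c \in \operatorname{Inn}(S)/M$ independent of $r$. To identify $c$, I would use the abelianness of $\operatorname{Inn}(S)$ again to conclude that every element in the connected component of $s$ has the same image as $s_* = 1$ in $\operatorname{Inn}(S)$; since $\pi_S(X)$ generates $S$ it meets that component, so there is some $(r_0, x_0) \in X$ with $(x_0)_* = 1$, which gives $((r_0)_*, 1) \in G$ and hence $\psi((r_0)_*) = 1$. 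Thus $c = 1$, so $\psi$ vanishes on the generating set $\{r_* : r \in R\}$ of $\operatorname{Inn}(R)$, and $\psi = 0$. I expect the main obstacle to be organizing the Goursat step cleanly and keeping track of the two separate roles played by the hypotheses on $S$—the abelianness flattens $\psi$ to an abelian-valued map, while the trivially-acting $s$ supplies the single equation needed to force the common value $c$ to vanish.
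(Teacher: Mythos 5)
Your proposal is correct and follows essentially the same route as the paper's proof: both reduce to \cref{GenerateRackCriterion}, identify $\operatorname{Inn}(R\times S)$ with $\operatorname{Inn}(R)\times\operatorname{Inn}(S)$ via \cref{NonactingElementImpliesInnerProductive}, get synchronization from \cref{InnerProductiveImpliesSynchronized}, and apply Goursat's lemma, using connectedness of $R$ plus abelianness of $\operatorname{Inn}(S)$ to force a single common value and the trivially-acting $s$ to kill it. The only differences are cosmetic — you phrase Goursat as an isomorphism of quotients rather than a common quotient with two surjections, and you pin down the common value using an element of $X$ whose $S$-coordinate lies in the component of $s$ rather than an element of $X$ in the component of $(r,s)$.
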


\begin{proof}

Our assumption that $R$ is connected implies that it is nonempty, so we get from \cref{NonactingElementImpliesInnerProductive} that the group homomorphism $\operatorname{Inn}(R \times S) \to \operatorname{Inn}(R) \times \operatorname{Inn}(S)$ is an isomorphism.
In particular, we deduce from \cref{InnerProductiveImpliesSynchronized} that $R$ and $S$ are synchronized.

We claim that for every connected component $C$ of $R \times S$ we have $C \cap X \neq \emptyset$.
Our assumption that $R$ is connected and synchronized with $S$ implies that there exists a connected component $D$ of $S$ such that $C = R \times D$.
The assumption that the projection of $X$ to $S$ generates it implies, in view of \cref{GenerateRackCriterion}, that this projection is not disjoint from $D$. It follows that $X$ is not disjoint from $C$ - our claim is established.

Next we claim that the image of $X$ in $\operatorname{Inn}(R \times S)$ is a generating set.
To prove this, we first identify $\operatorname{Inn}(R \times S)$ with $\operatorname{Inn}(R) \times \operatorname{Inn}(S)$ using the isomorphism from the first paragraph.
Our initial assumptions on $X$ imply, in view of \cref{GenerateRackCriterion}, that the image $X$ in $\operatorname{Inn}(R) \times \operatorname{Inn}(S)$ projects to a generating set of each of the factors.
Therefore, it follows from Goursat's lemma that there exists a group $G$ and surjective group homomorphisms $\varphi \colon \operatorname{Inn}(R) \to G$, $\psi \colon \operatorname{Inn}(S) \to G$ such that the subgroup of $\operatorname{Inn}(R) \times \operatorname{Inn}(S)$ generated by the image of $X$ is
\[
\{(\alpha,\beta) \in \operatorname{Inn}(R) \times \operatorname{Inn}(S) : \varphi(\alpha) = \psi(\beta)\}.
\]

Our assumption that $\operatorname{Inn}(S)$ is abelian implies that $G$ is abelian as well.
Since $R$ is connected, its elements are conjugate in $\Gamma_R$ and thus also in $\operatorname{Inn}(R)$, so their images in $G$ are conjugate as well because $\varphi$ is surjective.
As $G$ is abelian, conjugate elements in $G$ are equal, so all elements of $R$ map to the same $g \in G$.
Since $R$ generates $\operatorname{Inn}(R)$, its image in $G$ generates it, so $g$ generates $G$.

Take an $r \in R$. It follows from the previous claim that $X$ contains an element $\pi$ from the connected component of $(r,s)$ in $R \times S$.
It follows from our initial assumption on $s$ that the projection to $\operatorname{Inn}(S)$ of the image of $\pi$ in $\operatorname{Inn}(R) \times \operatorname{Inn}(S)$ is the identity map on $S$.
We conclude that 
$
g = \psi(\mathrm{id}_S) = 1
$
and thus $G$ is trivial. 
This concludes the proof of our claim that the image of $X$ in $\operatorname{Inn}(R \times S)$ generates it.

The two claims we have proven, in conjunction with \cref{GenerateRackCriterion}, guarantee that $X$ generates $R \times S$ as required.
\end{proof}

\begin{lem} \label{IdealRackNormalSubgroup}

Let $R$ be a rack, and let $S$ be an ideal of $R$.
Then the image of the group homomoprhism $\Gamma_S \to \Gamma_R$ is a normal subgroup of $\Gamma_R$.
    
\end{lem}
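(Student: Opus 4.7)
The plan is to verify normality directly from the presentation of $\Gamma_R$ recorded in \cref{DefRackGroup}. Write $H$ for the image of $\Gamma_S \to \Gamma_R$; since $\Gamma_S$ is generated by the elements of $S$, so is $H$ as a subgroup of $\Gamma_R$. Because $\Gamma_R$ is in turn generated by $R$, to show that $H$ is normal it suffices to check that $r s r^{-1} \in H$ and $r^{-1} s r \in H$ for every $r \in R$ and every $s \in S$.

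First I would handle the easier direction $r^{-1} s r$. The defining relation in $\Gamma_R$, applied to the pair $(s,r) \in R \times R$ (both elements lie in $R$), reads $r^{-1} s r = s^r$, and the hypothesis that $S$ is an ideal puts $s^r \in S \subseteq H$.

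Next I would treat $r s r^{-1}$, and this is the step where the full strength of the definition of an ideal comes in. Because the map $s' \mapsto (s')^r$ is a \emph{permutation} of $S$ (not merely a self-map), there exists $s' \in S$ with $(s')^r = s$; that is, $r^{-1} s' r = s$ in $\Gamma_R$, whence $r s r^{-1} = s' \in S \subseteq H$.

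The only real subtlety, and thus the step I would be most careful about, is recognizing that the two directions of conjugation draw on the two halves of the ideal condition separately: the surjectivity half (that $s \mapsto s^r$ maps onto $S$, not merely into it) is what makes the second containment go through without a finiteness assumption. In the finite case, closure under $s \mapsto s^r$ alone would suffice, as noted in the excerpt.
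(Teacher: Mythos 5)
Your proof is correct and follows the same strategy as the paper's: reduce to checking that each generator $r \in R$ of $\Gamma_R$ normalizes the image of $\Gamma_S$. You are in fact somewhat more careful than the paper, which only explicitly cites the containment $s^r \in S$ (giving $r^{-1}sr \in H$) and leaves implicit the surjectivity half of the ideal condition that you correctly identify as what is needed for the reverse conjugate $rsr^{-1}$.
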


\begin{proof}

It is sufficient (and necessary) to check that every $r \in R$ lies in the normalizer of the image of $\Gamma_S$ in $\Gamma_R$.
This follows at once from the fact that $s^r \in S$ fro every $s \in S$ which is a consequene of our assumption that $S$ is an ideal of $R$.
\end{proof}

\begin{lem}

Let $R$ be a rack, and let $S$ be an ideal of $R$.
Then there exists a unique rack structure on the set $R/S = R/\Gamma_S$ of orbits in $R$ under the action of $\Gamma_S$ for which the quotient map $R \to R/S$ is a homomorphism of racks.
    
\end{lem}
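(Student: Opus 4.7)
The plan is to handle uniqueness first, then reduce existence to a well-definedness check, and finally verify the rack axioms descend.

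For uniqueness, note that if $R/S$ admits a rack structure for which the quotient map $q \colon R \to R/S$ is a rack homomorphism, then we must have $q(x)^{q(y)} = q(x^y)$ for all $x,y \in R$. This forces the binary operation on $R/S$ to be $[x]^{[y]} = [x^y]$, so there is at most one such rack structure. The task is therefore to show this formula is well-defined on $\Gamma_S$-orbits, and that the rack axioms then follow.

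For well-definedness I would appeal to the preceding \cref{IdealRackNormalSubgroup}: the image $N$ of $\Gamma_S \to \Gamma_R$ is a normal subgroup of $\Gamma_R$, and the $\Gamma_S$-orbits on $R$ coincide with the $N$-orbits. Writing the right action of $\Gamma_R$ on $R$ as $r \mapsto r \cdot \gamma$ and letting $\iota \colon R \to \Gamma_R$ denote the canonical map, one has $r^s = r \cdot \iota(s)$ and more generally the conjugation identity $\iota(r \cdot \gamma) = \gamma^{-1}\iota(r)\gamma$ (proved by induction on word length using the defining relations of $\Gamma_R$). Given $r' = r \cdot n_1$ and $s' = s \cdot n_2$ with $n_1, n_2 \in N$, a direct computation yields
\[
r'^{s'} = (r \cdot n_1) \cdot \iota(s \cdot n_2) = (r \cdot n_1) \cdot (n_2^{-1}\iota(s) n_2) = (r^s) \cdot \bigl(\iota(s)^{-1} n_1 n_2^{-1} \iota(s) n_2\bigr).
\]
Normality of $N$ in $\Gamma_R$ then forces the parenthesized element to lie in $N$, so $r'^{s'}$ and $r^s$ lie in the same $\Gamma_S$-orbit, as required. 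This is the step I expect to carry the real content of the argument; the rest is formal.

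Finally, the rack axioms on $R/S$ follow by picking representatives. Self-distributivity $([x]^{[y]})^{[z]} = ([x]^{[z]})^{[y]^{[z]}}$ is immediate from the corresponding identity in $R$. For bijectivity of the map $[x] \mapsto [x]^{[y]}$, the same computation applied to the inverse rack operation $x \mapsto x \cdot \iota(y)^{-1}$ (which is well-defined on orbits by an identical use of normality) produces a two-sided inverse. Thus the induced operation endows $R/S$ with a rack structure making $q$ a rack homomorphism, and it is the unique such structure.
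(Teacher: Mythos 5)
Your proof is correct and follows essentially the same route as the paper's: both arguments reduce existence to checking that the operation $[x]^{[y]} = [x^y]$ is well-defined on $\Gamma_S$-orbits, and both deduce this from the normality of the image of $\Gamma_S$ in $\Gamma_R$ established in \cref{IdealRackNormalSubgroup}. The only difference is organizational — the paper verifies invariance in each argument separately, whereas you combine both changes of representative into one conjugation computation and are slightly more explicit about the descent of the rack axioms.
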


We say that the rack $R/S$ is the quotient of $R$ by $S$.

\begin{proof}

Uniqueness is clear from the surjectivity of the quotient map.
For existence we need to check two things.
First we need to check that for every $x,y \in R$ and $g \in \Gamma_S$ the elements $(x^g)^y$ and $x^y$ of $R$ lie in the same $\Gamma_S$-orbit. Since $S$ is an ideal of $R$, \cref{IdealRackNormalSubgroup} tells us that the image of $\Gamma_S$ in $\Gamma_R$ contains $y^{-1}gy$ so $x^y$ lies in the same $\Gamma_S$-orbit with
\[
(x^y)^{y^{-1}gy} = x^{yy^{-1}gy} = x^{gy} = (x^g)^y
\]
as required.

Second we need to check that for every $x,y \in R$ and $g \in \Gamma_S$ the elements $x^{y^g}$ and $x^y$ of $R$ lie in the same $\Gamma_S$-orbit. Since $S$ is an ideal of $R$, \cref{IdealRackNormalSubgroup} guarantees that the image of $\Gamma_S$ in $\Gamma_R$ contains $y^{-1}g^{-1}yg$ so $x^y$ lies in the same $\Gamma_S$-orbit with
\[
(x^y)^{y^{-1}g^{-1}yg} = x^{yy^{-1}g^{-1}yg} = x^{g^{-1}yg} = x^{y^g}
\]
as required.
We have thus checked for $x,y \in R$ that the $\Gamma_S$-orbit of $x^y$ depends only on the $\Gamma_S$-orbit of $x$ and on the $\Gamma_S$-orbit of $y$, endowing $R/S$ with the sought rack structure. 
\end{proof}

\begin{exmp}

For a rack $R$, the fibers of the map $R \to R/R$ are the connected components of $R$.
Following \cite{shus}*{Definition 4.9} we call $R/R$ the trivialization of $R$, and denote it by $R_{\mathrm{triv}}$.
The association to a rack $R$ of the set $R_{\mathrm{triv}}$ is a functor left adjoint to the functor that endows a set with the trivial rack structure.
    
\end{exmp}

\begin{exmp} \label{QuotientJoyceQuandle}

The quotient of $\mathcal S_\pm$ by the ideal $\{J_3\}$ is isomorphic to the trivial rack $\mathcal T_2$.
More generally, arguing as in the proof of \cref{SynchQuandle}, we see that for a connected quandle $R$ the quotient of the rack $R \times \mathcal S_{\pm}$ by the ideal $R \times \{J_3\}$ is isomorphic to the trivial rack $\mathcal T_2$.
    
\end{exmp}


    


    






    


    

\subsection{Addability} \label{Addability}

\begin{defn} \label{DefAddablePair}

Let $\mathcal C$ be an additive monoidal category, namely for every object $X$ of $\mathcal C$ the functors $X \otimes -$ and $- \otimes X$ are additive.
We say that a pair $(U,V)$ of braided objects in $\mathcal C$ is addable if it is equipped with isomorphisms $T_{U,V} \colon U \otimes V \to V \otimes U$ and $T_{V,U} \colon V \otimes U \to U \otimes V$ in $\mathcal C$ such that the diagrams
\[\begin{tikzcd}
	{U \otimes U \otimes V} && {U \otimes U \otimes V} && {U \otimes V \otimes U} \\
	&&&& {} \\
	{U \otimes V \otimes U} && {V \otimes U \otimes U} && {V \otimes U \otimes U} \\ \\
	{U \otimes U \otimes V} && {U \otimes U \otimes V} && {U \otimes V \otimes U}
	\arrow["{T_U \otimes \mathrm{id}_V}", from=1-1, to=1-3]
	\arrow["{\mathrm{id}_U \otimes T_{U,V}}", from=1-1, to=3-1]
	\arrow["{\mathrm{id}_U \otimes T_{U,V}}", from=1-3, to=1-5]
	\arrow["{T_{U,V} \otimes \mathrm{id}_U}", from=1-5, to=3-5]
	\arrow["{T_{U,V} \otimes \mathrm{id}_U}", from=3-1, to=3-3]
	\arrow["{\mathrm{id}_V \otimes T_{U}}", from=3-3, to=3-5]
    \arrow["{\mathrm{id}_U \otimes T_{V,U}}", from=3-1, to=5-1]
    \arrow["{T_{V,U} \otimes \mathrm{id}_U}", from=3-5, to=5-5]
    \arrow["{T_{U} \otimes \mathrm{id}_V}", from=5-1, to=5-3]
    \arrow["{\mathrm{id}_U \otimes T_{U,V}}", from=5-3, to=5-5]
\end{tikzcd}\]
and
\[\begin{tikzcd}
	{V \otimes U \otimes U} && {U \otimes V \otimes U} && {U \otimes U \otimes V} \\
	\\
	{V \otimes U \otimes U} && {U \otimes V \otimes U} && {U \otimes U \otimes V}
	\arrow["{T_{V,U} \otimes \mathrm{id}_U}", from=1-1, to=1-3]
	\arrow["{\mathrm{id}_V \otimes T_{U}}", from=1-1, to=3-1]
	\arrow["{\mathrm{id}_U \otimes T_{V,U}}", from=1-3, to=1-5]
	\arrow["{T_{U} \otimes \mathrm{id}_V}", from=1-5, to=3-5]
	\arrow["{T_{V,U} \otimes \mathrm{id}_U}", from=3-1, to=3-3]
	\arrow["{\mathrm{id}_U \otimes T_{V,U}}", from=3-3, to=3-5]
\end{tikzcd}\]
commute, and the diagrams obtained from these by interchanging $U$ with $V$ everywhere commute as well.

\end{defn}

Note that addability is not a property of a pair of braided objects, rather an extra structure (satisfying some properties).

\begin{defn} \label{MorphismAddablePairBVS}

A morphism from an addable pair $(U,V)$ of braided objects to an addable pair $(X,Y)$ of braided objects in $\mathcal C$ is a pair of morphisms $(f \colon U \to X, g \colon V \to Y)$ of braided objects in $\mathcal C$ such that the diagram
\[\begin{tikzcd}
	{U \otimes V} && {V \otimes U} && {U \otimes V} \\
	{X \otimes Y} && {Y \otimes X} && {X \otimes Y}
	\arrow["{T_{U,V}}"', from=1-1, to=1-3]
	\arrow["{f \otimes g}", from=1-1, to=2-1]
	\arrow["{T_{V,U}}"', from=1-3, to=1-5]
	\arrow["{g \otimes f}"', from=1-3, to=2-3]
	\arrow["{f \otimes g}"', from=1-5, to=2-5]
	\arrow["{T_{X,Y}}", from=2-1, to=2-3]
	\arrow["{T_{Y,X}}", from=2-3, to=2-5]
\end{tikzcd}\]
commutes.
    
\end{defn}

Addable pairs of braided objects in $\mathcal C$ form a category.

\begin{exmp} \label{AutoAddablePair}

Let $V$ be a braided object in $\mathcal C$. Then $(V,V)$ becomes an addable pair of braided objects if we set $T_{V,V} = T_V$.
    
\end{exmp}

\begin{defn} \label{DirectSumBVSDef}

To an addable pair $(U,V)$ of braided objects in $\mathcal C$ we functorially associate the object $U \oplus V$ in $\mathcal C$ with braiding defined as follows.
Identify $(U \oplus V) \otimes (U \oplus V)$ with 
\[
(U \otimes U) \oplus (V \otimes V) \oplus (U \otimes V) \oplus (V \otimes U)
\]
and let $T_{U \oplus V}$ be $T_U$ on the first summand, be $T_V$ on the second summand, be $T_{U,V}$ on the third summand, and be $T_{V,U}$ on the fourth summand.

\end{defn}

Note that the direct sum of an addable pair of braided objects depends not just on the braided objects but also on the extra structure carried by the addable pair.

On the category of addable pairs of braided objects in $\mathcal C$ we have an involution $(U,V) \mapsto (V,U)$ and $U \oplus V \cong V \oplus U$ as braided objects in $\mathcal C$.

The natural inclusion and projection maps
\begin{equation} \label{DirectSumBVSfunctoriality}
U \to U \oplus V, \qquad V \to U \oplus V, \qquad U \oplus V \to U, \qquad U \oplus V \to V,
\end{equation}
are morphisms of braided objects.


\begin{exmp} \label{PlainDirectSumEx}

Let $U$ and $V$ be braided objects in a symmetric monoidal additive category $\mathcal C$.
Then $(U,V)$ becomes an addable pair of braided objects if, in the notation of \cref{SwapSymmetricMonoidal}, we set $T_{U,V} = s_{U,V}$ and $T_{V,U} = s_{V,U}$. We then say that the braided object $U \oplus V$ is the plain direct sum of $V$ and $W$.
With the operation of plain direct sum, the category of braided objects in $\mathcal C$ is symmetric monoidal, the zero object of $\mathcal C$ being the unit for plain direct sums.

If $U$ and $V$ are permutational in the sense of \cref{PermBraidObj}, then so is their plain direct sum $U \oplus V$.    

\end{exmp}

\begin{exmp} \label{PlainDirectSumWedgeDec}

For a field $\kappa$ we have the plain direct sum decomposition of permutational braided vector spaces $\kappa_\wedge = \kappa_1 \oplus \kappa_{-1} = \kappa \oplus \kappa_{-1}$ introduced in \cref{ScalingBraidedVS} and \cref{IntroducingKwedge}.
    
\end{exmp}

\begin{exmp} \label{WeightedDirectSumExmp}

Let $\mathcal C$ be a symmetric monoidal additive category with unit $I$.
Let $\alpha, \beta \in \operatorname{Aut}(I)$ be commuting elements which we call weights. If $\mathcal C$ is the category of vector spaces over a field $\kappa$, then $\alpha,\beta \in \kappa^\times$.

Let $U$ and $V$ be braided objects in $\mathcal C$.
Then $(U,V)$ becomes an addable pair of braided objects if we let $T_{U,V}$ be the composition of $s_{U,V}$ with the identification $V \otimes U \to V \otimes U \otimes I$ and $\mathrm{id}_{V \otimes U} \otimes \alpha$, and similarly with  $U,V$ interchanged and $\alpha, \beta$ interchanged.
We denote the direct sum of this addable pair by $U \oplus_{\alpha, \beta} V$, and call it the weighted (by $\alpha, \beta$) direct sum of $U, V$.
The weighted direct sum is the plain one if and only if $\alpha = \beta  = \mathrm{id}_I$.

For instance, in the notation of \cref{LegendreBraidVS} we have
\[
\kappa \oplus_{1,-1} \kappa = \kappa \oplus_{\mathrm{id}_\kappa, - \mathrm{id}_\kappa} \kappa \cong \kappa_{\pm}
\]
as braided vector spaces over $\kappa$.
    
\end{exmp}

\begin{exmp} \label{AddableDecompositionRackCocyclePartition}

Let $S,T \subseteq R$ be a partition of a rack, let $\kappa$ be a field, and let $c \colon R \times R \to \kappa^\times$ be a $2$-cocycle. 
Then
\[
\kappa R(c) \cong \kappa S(c|_{S \times S}) \oplus \kappa T(c|_{T \times T})
\]
as braided vector spaces over $\kappa$, where we have taken the direct sum of the addable pair 
\[
(\kappa S(c|_{S \times S}), \kappa T(c|_{T \times T}))
\]
of braided vector spaces over $\kappa$ with respect to the maps
\[
\kappa S(c|_{S \times S}) \otimes \kappa T(c|_{T \times T}) \to \kappa T(c|_{T \times T}) \otimes \kappa S(c|_{S \times S}), \qquad s \otimes t \mapsto c(s,t) \cdot t \otimes s^t
\]
and
\[
\kappa T(c|_{T \times T}) \otimes \kappa S(c|_{S \times S}) \to \kappa S(c|_{S \times S}) \otimes \kappa T(c|_{T \times T}), \qquad t \otimes s \mapsto c(t,s) \cdot s \otimes t^s.
\]
In case $R$ is the disjoint union of $S$ and $T$ in the sense of \cref{DisjoinUnionRacks}, we get that $\kappa R$ is the plain direct sum of $\kappa S$ and $\kappa T$ in the sense of \cref{PlainDirectSumEx}.

\end{exmp}

\begin{exmp} \label{SawinRack}

With the notation of \cref{TrivialRack}, \cref{DisjoinUnionRacks}, and \cref{PermutationalRack}, we denote by $\mathcal S_{\wedge}$ the three-element rack $\mathcal T_1 \coprod (\Z/2\Z)$.
For a field $\kappa$ of characteristic different from $2$ we then have an injective morphism of braided vector spaces
\[
\kappa_{\wedge} = \kappa \oplus \kappa_{-1} \hookrightarrow \kappa \oplus \kappa[\Z/2\Z] = \kappa\mathcal T_1 \oplus \kappa[\Z/2\Z] = \kappa \left[\mathcal T_1 \coprod \Z/2\Z \right] = \kappa[\mathcal S_\wedge]
\]
arising from \cref{PlainDirectSumWedgeDec}, the direct sum of $\mathrm{id}_\kappa$ with the map from \cref{RackificationOneDimensional}, and the transformation of disjoint union into direct sum from \cref{AddableDecompositionRackCocyclePartition}.

It follows from \cref{TrivRackStructGroup}, \cref{RackificationOneDimensional}, and \cref{DirectProdStructGroupsRack} that the structure group of $\mathcal S_{\wedge}$ is abelian.
An argument similar to that in \cref{RackifyingCpm}, in conjunction with the fact that the image of a rack in its structure group is a quandle, can be used to show that the structure group of the rack $(\mathcal T_2)_{c_\wedge}$ from \cref{RackifyingCocycle} is abelian as well.

\end{exmp}

\begin{exmp} \label{PmPairsMor}

The morphism of addable pairs of braided vector spaces 
\[
(\operatorname{Span}_\kappa \{v\}, \operatorname{Span}_\kappa \{ \underline v\}) \to (\kappa[\{J_3\}], \kappa[\{J_1, J_2\}]), \qquad v \mapsto J_3, \quad \underline{v} \mapsto J_1 - J_2,
\]
over a field $\kappa$ described in terms of the notation of \cref{LegendreBraidVS} and \cref{AddableDecompositionRackCocyclePartition}, gives rise via the functoriality in \cref{MorphismAddablePairBVS} to the morphism of braided vector spaces in \cref{RackifyingCpm}.

\end{exmp}

\subsection{Tensor Products}

\begin{defn} \label{TensorProductBVSDef}

Let $\mathcal C$ be a symmetric monoidal category, and let $V,W$ be braided objects in $\mathcal C$. 
We define a braiding on $V \otimes W$ to be the morphism
\[
T_{V \otimes W} = (\mathrm{id}_V \otimes s_{V,W} \otimes \mathrm{id}_W) \circ (T_V \otimes T_W) \circ (\mathrm{id}_V \otimes s_{W,V} \otimes \mathrm{id}_W)
\]
from $V \otimes W \otimes V \otimes W$ to itself, using the notation from \cref{SwapSymmetricMonoidal}.
\end{defn}



The category of braided objects in $\mathcal C$ is thus a symmetric monoidal category with unit $I$ braided as in \cref{TrivialBraidingUnit}.

The construction in \cref{BraidedToNaturoid} preserves the monoidal structure, namely if $V$ and $W$ are braided objects in $\mathcal C$ then the homomorphism $B_n \to \operatorname{Aut}((V \otimes W)^{\otimes n})$ factors as
\[
B_n \to B_n \times B_n \to \operatorname{Aut}(V^{\otimes n}) \times \operatorname{Aut}(W^{\otimes n}) \to \operatorname{Aut}(V^{\otimes n} \otimes W^{\otimes n}) \cong \operatorname{Aut}((V \otimes W)^{\otimes n}) 
\]
where the first map is the diagonal.

For a field $\kappa$ the functor $(R,c) \mapsto \kappa R(c)$ from \cref{RacksToBraidedVS} is symmetric monoidal.

\begin{defn} \label{BVStwistedByZetaWedgePM}

For a braided vector space $W$ over a field $\kappa$ we introduce the braided vector spaces
\begin{equation*} 
W_{\zeta} = W \otimes \kappa_{\zeta} , \qquad W_\wedge = W \otimes \kappa_{\wedge}, \qquad W_{\pm} = W \otimes \kappa_{\pm}.
\end{equation*}

\end{defn}

If $R$ is a rack then by \cref{WedgePMCocycles} and the aformenioned monoidality we have
\begin{equation} \label{RwedgePMcocycles}
\begin{split}
&\kappa R_\wedge = \kappa R \otimes \kappa_{\wedge} = \kappa R \otimes \kappa \mathcal T_2(c_{\wedge}) = \kappa[R \times \mathcal T_2](c_\wedge), \\
&\kappa R_{\pm} = \kappa R \otimes \kappa \mathcal T_2(c_{\pm}) = \kappa [R \times \mathcal T_2](c_{\pm})
\end{split}
\end{equation}
as braided vector spaces over $\kappa$.
Similarly if $c$ is the $2$-cocycle on $\mathcal T_1$ with image $\zeta \in \kappa^\times$ then 
\begin{equation} \label{TwistingAsCocycle}
\kappa R_\zeta = \kappa R \otimes \kappa_\zeta = \kappa R \otimes \kappa \mathcal T_1(c) = \kappa R(c)
\end{equation}
as braided vectors spaces over $\kappa$, where (abusing notation) we view $c$ in the last expression as the $2$-cocycle on $R$ satisfying $c(x,y) = \zeta$ for all $x,y \in R$.

\begin{exmp} \label{WedgeTwistedRackExample}

Let $\kappa$ be a field, and $R$ be a rack. 
Following \cref{BVStwistedByZetaWedgePM} and \cref{IntroducingKwedge}, we have the vector space $\kappa R_\wedge$ over $\kappa$ with basis 
\[
\bigcup_{x \in R} \{x_1 = x \otimes v_1, \ x_{-1} = x \otimes v_{-1}\},
\]
braided by
\[
x_1 \otimes  y_1 \mapsto  y_1 \otimes x^y_1, \quad
x_1 \otimes y_{-1} \mapsto y_{-1} \otimes x^y_1, \quad
x_{-1} \otimes  y_1 \mapsto y_1 \otimes x^y_{-1}, \quad
x_{-1} \otimes y_{-1} \mapsto - y_{-1} \otimes x^y_{-1},
\]
for every $x,y \in R$.
    
\end{exmp}

\begin{exmp} \label{PMtwistBVS}

Let $\kappa$ be a field, and $R$ be a rack. 
Following \cref{BVStwistedByZetaWedgePM} and \cref{LegendreBraidVS}, we have the vector space $\kappa R_{\pm}$ over $\kappa$ with basis 
\[
\bigcup_{x \in R} \{x = x \otimes v, \ \underline x = x \otimes \underline v\},
\]
braided by
\[
x \otimes  y \mapsto  y \otimes x^y, \quad
x \otimes \underline y \mapsto \underline y \otimes x^y, \quad
\underline x \otimes  y \mapsto - y \otimes \underline x^y, \quad
\underline x \otimes \underline y \mapsto  \underline y \otimes \underline x^y,
\]
for every $x,y \in R$.
    
\end{exmp}





\begin{prop} \label{TensorAddablePair}

Let $(U,V)$ be an addable pair of braided objects in an additive symmetric monoidal category $\mathcal C$, and let $W$ be a braided object in $\mathcal C$.
Then $(U \otimes W, V \otimes W)$ functorially becomes an addable pair of braided objects in $\mathcal C$ if we set
\[
T_{U \otimes W, V \otimes W} = \mathrm{id}_V \otimes s_{U,W} \otimes \mathrm{id}_W  \circ T_{U,V} \otimes T_W \circ \ \mathrm{id}_U \otimes s_{W,V} \otimes \mathrm{id}_W
\]
and define $T_{V \otimes W, U \otimes W}$ by exchanging $U,V$ in the formula above. We then have
\[
(U \oplus V) \otimes W \cong (U \otimes W) \oplus (V \otimes W)
\]
as braided objects in $\mathcal C$, where the direct sums are formed with respect to the braided pairs.
    
\end{prop}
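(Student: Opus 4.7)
The plan is to establish the two claims of the proposition in sequence. First I will verify that the specified data endow $(U \otimes W, V \otimes W)$ with the structure of an addable pair in the sense of \cref{DefAddablePair}. Second I will construct the claimed isomorphism $(U \oplus V) \otimes W \cong (U \otimes W) \oplus (V \otimes W)$ of braided objects. Functoriality in $(U,V)$ and $W$ will be formal once the underlying constructions are exhibited, since every ingredient is natural.

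For the first step, $U \otimes W$ and $V \otimes W$ are already braided objects via \cref{TensorProductBVSDef}. The coherence diagrams of \cref{DefAddablePair} for $(U \otimes W, V \otimes W)$ will be verified by unfolding every occurrence of $T$ into its definition: $T_{U \otimes W}$ and $T_{V \otimes W}$ via \cref{TensorProductBVSDef}, and the cross-braidings via the formulas given in the statement. Using the naturality of the symmetric constraint $s$ in $\mathcal{C}$, together with the coherence diagrams of \cref{DefAddablePair} for $(U,V)$ and the braiding axiom of \cref{def:braidedobject} applied to $W$, one pulls apart the resulting compositions and reduces each coherence diagram for $(U \otimes W, V \otimes W)$ to a coherence diagram for $(U, V)$ with extra $W$-strands carrying only identities and symmetries.

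For the second step, additivity of the tensor product provides a natural isomorphism $\iota \colon (U \oplus V) \otimes W \to (U \otimes W) \oplus (V \otimes W)$ in $\mathcal{C}$, and I must verify that $\iota$ respects braidings. The source braiding is obtained from \cref{TensorProductBVSDef} applied to the direct-sum braiding $T_{U \oplus V}$ of \cref{DirectSumBVSDef}, while the target braiding is obtained from \cref{DirectSumBVSDef} applied to the addable pair $(U \otimes W, V \otimes W)$ constructed in step one. Decomposing the self-tensor $((U \oplus V) \otimes W)^{\otimes 2}$ into the four summands indexed by ordered pairs from $\{U,V\}$, I check that $\iota \otimes \iota$ identifies the restriction of the source braiding to each summand with the corresponding restriction of the target braiding. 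On the two diagonal summands this reduces to the formula for $T_{U \otimes W}$ and $T_{V \otimes W}$ in \cref{TensorProductBVSDef}; on the two off-diagonal summands this is exactly the content of the defining formulas for $T_{U \otimes W, V \otimes W}$ and $T_{V \otimes W, U \otimes W}$ in the statement.

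The main obstacle is purely bookkeeping: tracking the four-factor tensor through several symmetry swaps interleaved with $T_U, T_V, T_{U,V}, T_{V,U}$ and $T_W$ is tedious but routine, and string-diagram notation or careful labelling of tensor factors makes the verification mechanical. The formulas for the cross-braidings appearing in the statement have in fact been engineered precisely so that step two becomes tautological once step one is in place; the only conceptually nontrivial check is the addability axiom in step one, and there the hypothesis that $(U,V)$ is already addable does most of the work.
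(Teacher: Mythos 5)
Your proposal is correct and takes essentially the same approach as the paper, whose entire proof is the one-line remark that this is a straightforward check against \cref{DefAddablePair}; your two-step plan (verify the addability diagrams for $(U\otimes W, V\otimes W)$ by unfolding definitions and naturality of $s$, then check the four-summand decomposition makes the isomorphism braided) is exactly the check the paper leaves implicit.
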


\begin{proof}

This is a straightforward check using \cref{DefAddablePair}.
\end{proof}

\begin{exmp} \label{TensoringWithKWedge}

For every braided vector space $W$ over a field $\kappa$, in view of \cref{BVStwistedByZetaWedgePM}, \cref{PlainDirectSumWedgeDec}, and \cref{TensorAddablePair},  we have
\[
W_{\wedge} = W \otimes \kappa_\wedge = W \otimes (\kappa \oplus \kappa_{-1}) = (W \otimes \kappa) \oplus (W \otimes \kappa_{-1}) = W \oplus W_{-1}
\]
as braided vector spaces over $\kappa$, where the direct sum of (a pair of) braided vector spaces in the third term is plain in the sense of \cref{PlainDirectSumEx}, and the direct sum in the (one to) last term corresponds to the addable pair $(W, W_{-1})$ defined as in \cref{TensorAddablePair}.  

In particular, this example illustrates the subtlety that, even when $U \oplus V$ is a plain direct sum, $(U \tensor W) \oplus (V \tensor W)$ need not be.  This is why we need to emphasize that, for braided vector spaces, addability is an extra structure -- if we simply declared direct sum to be `plain direct sum', it would not commute with (or transform in a manageable way under) tensor product.

In the special case $W = \kappa R$ for a rack $R$ we have
\[
T_{W \otimes W_{-1}}(x_1 \otimes y_{-1}) = y_{-1} \otimes x^y_1, \qquad
T_{W_{-1} \otimes W}(x_{-1} \otimes y_1) = y_1 \otimes x^y_{-1}, \qquad x,y \in R.
\]
This can also be seen from \cref{AddableDecompositionRackCocyclePartition} using \cref{RwedgePMcocycles}.

\end{exmp}

\begin{exmp} \label{TensoringWithLegendre}

For every braided vector space $W$ over a field $\kappa$, in view of \cref{BVStwistedByZetaWedgePM}, \cref{WeightedDirectSumExmp}, and \cref{TensorAddablePair}, we have
\[
W_{\pm} = W \otimes \kappa_{\pm} = W \otimes (\kappa \oplus_{1,-1} \kappa) = (W \otimes \kappa) \oplus (W \otimes \kappa) = W \oplus \underline W
\]
as braided vector spaces over $\kappa$, where $\underline W$ is a copy of $W$ and the direct sum appearing in the third of the four terms above corresponds to the addable pair $(W, \underline W)$ defined as in \cref{TensorAddablePair}.

In the special case $W = \kappa R$ for a rack $R$ we have
\[
T_{W \otimes \underline W}(x \otimes \underline y) = \underline y \otimes x^y, \qquad
T_{\underline W \otimes W}(\underline x \otimes y) = - y \otimes \underline x^y, \qquad x,y \in R.
\]
This can also be seen from \cref{AddableDecompositionRackCocyclePartition} using \cref{RwedgePMcocycles}.

\end{exmp}

\begin{exmp} \label{RackificationOneDimensionalTwisted}

Let $n$ be a positive integer, let $\kappa$ be a field of characteristic not dividing $n$, and let $\zeta \in \kappa$ be a primitive $n$th root of unity.
Using the tensor product of the identity map on $\kappa R$ with the injective morphism from \cref{RackificationOneDimensional}, and the monoidality of the functor from \cref{RacksToBraidedVS}, we get an injective morphism of braided vector spaces
\[
\kappa R_\zeta = \kappa R \otimes \kappa_\zeta \hookrightarrow \kappa R \otimes \kappa[\Z/n\Z] = \kappa[R \times \Z/n\Z]
\]
which coincides with the injective morphism obtained by identifying $\kappa R_{\zeta}$ with $\kappa R(c)$ from \cref{TwistingAsCocycle}, and applying \cref{RackifyingCocycle}.

\end{exmp}

\begin{exmp} \label{TwistedSawinRack}

Let $\kappa$ be a field of characteristic different from $2$.
Using the tensor product of the identity map on $\kappa R$ with the injective morphism from \cref{SawinRack}, and the monoidality of the functor from \cref{RacksToBraidedVS}, we get an injective morphism of braided vector spaces
\[
\kappa R_{\wedge} = \kappa R \otimes \kappa_\wedge \hookrightarrow \kappa R \otimes \kappa \mathcal S_{\wedge} = \kappa[R \times \mathcal S_{\wedge}]. 
\]
As a consequence of \cref{PartitionRackDef} and the remarks following it, we have a partition of $R \times \mathcal S_\wedge$ with one part being $R \times \mathcal T_1 \cong R$ and the other $R \times \Z/2\Z$.

    
\end{exmp}

\begin{exmp} \label{RackifyingCpmTwisted}

Let $\kappa$ be a field of characteristic different from $2$.
Using the tensor product of the identity map on $\kappa R$ with the injective morphism from \cref{RackifyingCpm}, and the monoidality of the functor from \cref{RacksToBraidedVS}, we get an injective morphism of braided vector spaces
\[
\kappa R_{\pm} = \kappa R \otimes \kappa_{\pm} \hookrightarrow \kappa R \otimes \kappa \mathcal S_{\pm} = \kappa[R \times \mathcal S_{\pm}]. 
\]
We have a partition of $R \times \mathcal S_\pm$ with one part being $R \times \{J_3\} \cong R$ and the other $R \times \{J_1, J_2\} \cong R \times \mathcal T_2$.
We thus get from \cref{AddableDecompositionRackCocyclePartition} that $\kappa[R \times \mathcal S_{\pm}] = \kappa[R \times \{J_3\}] \oplus \kappa[R \times \{J_1, J_2\}]$.

From (the functoriality in) \cref{TensorAddablePair} and \cref{PmPairsMor} we get a morphism of addable pairs of braided vector spaces 
\[
(\kappa R, \underline{\kappa R}) \to (\kappa [R \times \{J_3\}], \kappa[R \times \{J_1, J_2\}]), \qquad r \mapsto (r,J_3), \quad \underline{r} \mapsto (r,J_1) - (r,J_2),
\]
for $r \in R$, in the notation of \cref{TensoringWithKWedge}.
This morphism induces, via the functoriality in \cref{MorphismAddablePairBVS}, the above injective homomorphism of braided vector spaces $\kappa R_{\pm} \to \kappa[R \times \mathcal S_{\pm}]$.

    
\end{exmp}

\subsection{Convolution}

For objects $V,W$ of a symmetric monoidal additive category $\mathcal C$ we will make a tacit identification  
\begin{equation} \label{TacitBinomial}
(V \oplus W)^{\otimes n} \cong \bigoplus_{i=0}^n \bigoplus_{\sigma \in S_n/(S_i \times S_{n-i})} \bigotimes_{j=1}^n Z(i,j;\sigma) 
\end{equation}
where
\[
Z(i,j;\sigma) = \begin{cases}
V  & j \in \{\sigma(1), \dots, \sigma(i)\} \\
W  & j \in \{\sigma(i+1), \dots, \sigma(n)\}.
\end{cases}
\]

\begin{rem} \label{BraidActionColorAndInflation}

Let $(V,W)$ be an addable pair of braided objects in a symmetric monoidal additive category $\mathcal C$.
For every $0 \leq i \leq n$ the action of $B_n$ on $(V \oplus W)^{\otimes n}$ restricts to an action of $B_{i,n-i}$ on the subobject $V^{\otimes i} \otimes W^{\otimes (n-i)}$ obtained by taking the trivial coset $S_i \times S_{n-i}$ in \cref{TacitBinomial}.
In case of a plain direct sum the latter action is inflated from the external tensor product action of $B_i \times B_{n-i}$ on $V^{\otimes i} \otimes W^{\otimes (n-i)}$.

\begin{rem} \label{BinomialConnection}

Let $S,T \subseteq R$ be a partition of a rack, and let $\kappa$ be a field.
Setting $V = \kappa S$, $W = \kappa T$, we recall from \cref{AddableDecompositionRackCocyclePartition} that $\kappa R \cong \kappa V \oplus \kappa W$ as braided vector spaces.
The isomorphism $\kappa R^n \cong \kappa R^{\otimes n}$ from \cref{LinearizationNaturoid} interchanges the disjoint union decomposition in \cref{SetTheoreticTacitBinomial} for $k=2$, with the direct sum decomposition in \cref{TacitBinomial}, mapping the $i$th summand in \cref{TacitBinomial} to the span over $\kappa$ of $R(i, n-i)$.

\end{rem}


\end{rem}

\begin{prop} \label{InducedStructureTensorPowerDirectSum}

Let $(V,W)$ be an addable pair of braided vector spaces over a field $\kappa$. Then we have a natural isomorphism
\[
(V \oplus W)^{\otimes n} \cong \bigoplus_{i=0}^n \operatorname{Ind}^{B_n}_{B_{i,n-i}} V^{\otimes i} \otimes W^{\otimes (n-i)}
\]    
of representations of $B_n$ over $\kappa$.
\end{prop}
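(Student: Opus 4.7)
The plan is to use Frobenius reciprocity to construct, for each $0 \leq i \leq n$, a canonical $B_n$-equivariant map from $\operatorname{Ind}^{B_n}_{B_{i,n-i}} V^{\otimes i} \otimes W^{\otimes (n-i)}$ into $(V \oplus W)^{\otimes n}$, and then to assemble these into an isomorphism by matching with the direct sum decomposition \eqref{TacitBinomial}.

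First I would observe that \cref{BraidActionColorAndInflation} already identifies $V^{\otimes i} \otimes W^{\otimes (n-i)}$ (the $\sigma = \mathrm{id}$ summand in \eqref{TacitBinomial}) as a $B_{i,n-i}$-subrepresentation of $(V \oplus W)^{\otimes n}$. By Frobenius reciprocity, this inclusion extends uniquely to a $B_n$-equivariant map
\[
\iota_i \colon \operatorname{Ind}^{B_n}_{B_{i,n-i}} V^{\otimes i} \otimes W^{\otimes (n-i)} \longrightarrow (V \oplus W)^{\otimes n}.
\]
The total map $\iota = \bigoplus_i \iota_i$ is then $B_n$-equivariant by construction.

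Next I would check that $\iota$ is an isomorphism by matching it with the decomposition \eqref{TacitBinomial}. Recall that $B_n / B_{i,n-i}$ is in canonical bijection with $S_n/(S_i \times S_{n-i})$, since $B_{i,n-i}$ is the preimage of $S_i \times S_{n-i}$ under the surjection $B_n \to S_n$. The induced representation thus decomposes (non-canonically) as $\bigoplus_{\sigma \in S_n / (S_i \times S_{n-i})} g_\sigma \otimes V^{\otimes i} \otimes W^{\otimes (n-i)}$, for any choice of lifts $g_\sigma \in B_n$. On the other hand, the inner sum in \eqref{TacitBinomial} is indexed by exactly the same coset set, and $B_n$ acts on the labels $Z(i,j;\sigma)$ through its projection to $S_n$, permuting the $\sigma$-summands in the way required for this to be an induced representation. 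It therefore suffices to verify that $\iota_i$ sends the summand indexed by a coset $\sigma$ isomorphically onto $\bigotimes_j Z(i,j;\sigma)$; but this follows from the fact that each braid generator $\sigma_j$ acts on a position of type $(V,V)$ by $T_V$, on $(W,W)$ by $T_W$, on $(V,W)$ by $T_{V,W}$, and on $(W,V)$ by $T_{W,V}$ (per \cref{DirectSumBVSDef}), so any braid $g_\sigma$ covering the permutation $\sigma$ carries the trivial summand into $\bigotimes_j Z(i,j;\sigma)$ bijectively.

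The main obstacle is well-definedness of $\iota_i$, i.e.\ checking that two different lifts $g_\sigma, g_\sigma' \in B_n$ of the same coset yield the same map on the $\sigma$-summand of the induced representation after composing with $(g_\sigma)^{-1} g_\sigma' \in B_{i,n-i}$. Concretely, this amounts to saying that the braiding $T_{V \oplus W}$ from \cref{DirectSumBVSDef} satisfies the braid relation on $(V \oplus W)^{\otimes 3}$, which is precisely what the two hexagon diagrams in \cref{DefAddablePair} encode: they guarantee that swapping two $V$'s past a $W$, or two $W$'s past a $V$, in different orders produces the same morphism, and hence that the $T_{U,V}$, $T_{V,U}$ `color-changing' maps are compatible with the internal braidings $T_V, T_W$ in the way demanded by the braid relation. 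Once this consistency check is in place, $\iota$ is a $B_n$-equivariant isomorphism, yielding the claimed natural decomposition.
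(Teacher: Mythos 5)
Your proposal is correct and follows essentially the same route as the paper: both construct the map $\operatorname{Ind}^{B_n}_{B_{i,n-i}} V^{\otimes i}\otimes W^{\otimes (n-i)}\to (V\oplus W)^{\otimes n}$ by applying Frobenius reciprocity to the inclusion of the trivial-coset summand of \eqref{TacitBinomial}, relying on the hexagon axioms of \cref{DefAddablePair} only through the fact (already packaged into \cref{DirectSumBVSDef}) that $T_{V\oplus W}$ is a genuine braiding. The only cosmetic difference is that you verify bijectivity directly by matching the coset summands of the induced module with the $\sigma$-indexed summands of \eqref{TacitBinomial}, whereas the paper builds the inverse map from the other adjunction (induction as right adjoint to restriction for the finite-index subgroup $B_{i,n-i}$) and checks the two composites are identities; your version actually spells out the isomorphism check that the paper leaves to the reader.
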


\begin{proof}

By a universal property of the direct sum, in order to construct a map from the right to the left hand side, it suffices to specify an element of 
\[
\operatorname{Hom}_{B_n} (\operatorname{Ind}^{B_n}_{B_{i,n-i}} V^{\otimes i} \otimes W^{\otimes (n-i)} , (V \oplus W)^{\otimes n}) = 
\operatorname{Hom}_{B_{i,n-i}}(V^{\otimes i} \otimes W^{\otimes (n-i)} , (V \oplus W)^{\otimes n})
\]
for every $0 \leq i \leq n$, because induction is left adjoint to restriction.
The requisite element is then provided by the inclusion of the trivial coset ($\sigma = \mathrm{id}$) summand in \cref{TacitBinomial}.

To construct a map in the other direction, it is enough to give an element of 
\[
\operatorname{Hom}_{B_n} ((V \oplus W)^{\otimes n}, \operatorname{Ind}^{B_n}_{B_{i,n-i}} V^{\otimes i} \otimes W^{\otimes (n-i)}) = 
\operatorname{Hom}_{B_{i,n-i}}((V \oplus W)^{\otimes n}, V^{\otimes i} \otimes W^{\otimes (n-i)})
\]
for every $0 \leq i \leq n$, because induction is right adjoint to restriction as $B_{i,n-i}$ is a finite index subgroup of $B_n$. 
The required element is given by the projection onto the trivial coset summand in \cref{TacitBinomial}.

We have thus obtained maps in opposite directions. To conclude, one checks that their compositions are identity morphisms.
\end{proof}

Next we explain the compatibility of the isomorphisms in \cref{InducedStructureTensorPowerDirectSum} as $n$ varies.
For that matter we put
\[
U_n = \bigoplus_{i=0}^n \operatorname{Ind}^{B_n}_{B_{i,n-i}} V^{\otimes i} \otimes W^{\otimes (n-i)}.
\]
We will now (intrinsically) define $\kappa$-linear maps (in fact, isomorphisms)
\[
\xi_{m,n} \colon U_m \otimes U_n \to U_{m+n}, \qquad m,n \geq 0,
\]
playing the role of the vertical maps in \cref{NaturoidCommDiag}.
That is, for nonnegative integers $m,n$, and $g \in B_m, \ h \in B_n$, we want to get a commutative diagram
\begin{equation*} 
\begin{tikzcd}
U_{m} \otimes U_{n} \arrow[rr,"g \otimes h"] \arrow [d, "\xi_{m,n}"] & & U_{m} \otimes U_{n} \arrow[d, "\xi_{m,n}"] \\
U_{m+n} \arrow[rr,"gh"] & &  U_{m+n}
\end{tikzcd}
\end{equation*}
of vector spaces over $\kappa$ where $gh$ stands for juxtaposition of braids.

To do this, it suffices to produce for every $0 \leq i \leq m$ and $0 \leq j \leq n$ a homomorphism of representations of $B_m \times B_n$ over $\kappa$ from 
\begin{equation} \label{TensorOfTwoInductions}
\operatorname{Ind}^{B_m}_{B_{i,m-i}} (V^{\otimes i} \otimes W^{\otimes (m-i)}) \otimes \operatorname{Ind}^{B_n}_{B_{j,n-j}} (V^{\otimes j} \otimes W^{\otimes (n-j)})
\end{equation}
to the restriction to $B_m \times B_n$ of the representation
\begin{equation} \label{InductionOfTensorProducts}
\operatorname{Ind}^{B_{m+n}}_{B_{i+j,m+n-(i+j)}} V^{\otimes i+j} \otimes W^{\otimes (m+n-(i+j))}
\end{equation}
of $B_{m+n}$.

The $\kappa$-linear representation of $B_m \times B_n$ in \cref{TensorOfTwoInductions} is isomorphic to
\[
\operatorname{Ind}^{B_m \times B_n}_{B_{i,m-i} \times B_{j,n-j}} (V^{\otimes (i+j)} \otimes W^{\otimes (m+n-(i+j))})
\]
and by the Mackey formula, the restriction to $B_m \times B_n$ of the representation in \cref{InductionOfTensorProducts} is isomorphic to
\[
\begin{split}
\bigoplus_{g \in (B_m \times B_n)\backslash B_{m+n} /B_{i+j,m+n-(i+j)}} \operatorname{Ind}^{B_{m} \times B_n}_{gB_{i+j,m+n-(i+j)}g^{-1} \cap (B_m \times B_n)} V^{\otimes i+j} \otimes W^{\otimes (m+n-(i+j))}.
\end{split}
\]
Since induction is left adjoint to restriction, we are looking for a homomorphism of representations of $B_{i, m-i} \times B_{j,n-j}$ over $\kappa$ from 
$V^{\otimes (i+j)} \otimes W^{\otimes(m+n-(i+j))}$ to the restriction to $B_{i,m-i} \times B_{j,n-j}$ of the representation
\begin{equation} \label{StrippedInduction}
\operatorname{Ind}^{B_{m} \times B_n}_{gB_{i+j,m+n-(i+j)}g^{-1} \cap (B_m \times B_n)} V^{\otimes i+j} \otimes W^{\otimes (m+n-(i+j))}
\end{equation}
of $B_m \times B_n$, in view of a univeral property of direct sums.

Since the index of $B_{i+j, m+n-(i+j)}$ in $B_{m+n}$ is finite, so is 
\[
[B_m \times B_n : gB_{i+j,m+n-(i+j)}g^{-1} \cap (B_m \times B_n)]
\]
because finiteness of the index is preserved by conjugation and by intersection with a subgroup.
Arguing similarly to the way we already did - applying the Mackey formula to the restriction of the representation in \cref{StrippedInduction} to $B_{i,m-i} \times B_{j,n-j}$, applying a universal property of direct sums, checking finiteness of index, and then using the ensuing right adjointness of induction to restriction, we eventually obtain the desired map $\xi_{m,n}$ from the identity morphism of the representation $V^{\otimes (i+j)} \otimes W^{\otimes(m+n-(i+j))}$ of a suitable subgroup of a braid group.


The aforementioned compatibility as $n$ varies is the resulting commutativity of the diagram
\[\begin{tikzcd}
	{(V \oplus W)^{\otimes m} \otimes (V \oplus W)^{\otimes n}} & {U_m \otimes U_n} \\
	{(V \oplus W)^{\otimes m + n}} & {U_{m+n}}
	\arrow[from=1-1, to=1-2]
	\arrow[from=1-1, to=2-1]
	\arrow[from=1-2, to=2-2, "\xi_{m,n}"]
	\arrow[from=2-1, to=2-2]
\end{tikzcd}\]
of vector spaces over $\kappa$ where the horizontal maps come from the isomorphisms given by \cref{InducedStructureTensorPowerDirectSum}, and the left vertical map is the isomorphism coming from the symmetric monoidal structure on vector spaces.




\begin{cor} \label{InducedStructureTensorPowerDirectSumPerm}

Let $V,W$ be permutational braided vector spaces over a field $\kappa$ in the sense of \cref{PermBraidObj}. Then
\[
(V \oplus W)^{\otimes n} \cong \bigoplus_{i=0}^n \operatorname{Ind}^{S_n}_{S_{i} \times S_{n-i}} V^{\otimes i} \boxtimes W^{\otimes (n-i)}
\]    
as representations of $S_n$ over $\kappa$, where the direct sum of permutational braided vector spaces on the left hand side is plain in the sense of \cref{PlainDirectSumEx}.

\end{cor}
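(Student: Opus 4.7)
The plan is to deduce this from \cref{InducedStructureTensorPowerDirectSum} by verifying that both sides of the isomorphism already factor through the natural quotient homomorphism $B_n \twoheadrightarrow S_n$, so that the $B_n$-equivariant isomorphism descends to an $S_n$-equivariant one.

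First I would observe that, since $V$ and $W$ are permutational in the sense of \cref{PermBraidObj} and the direct sum is plain, \cref{PlainDirectSumEx} tells us that $V \oplus W$ is again permutational. Hence by the characterization given just after \cref{SignInflation}, the left-hand side $(V \oplus W)^{\otimes n}$ is a representation of $B_n$ that is inflated from an action of $S_n$, yielding a well-defined $S_n$-representation structure. Next I would invoke the description of $B_{i,n-i}$ from the discussion following \cref{ConfOpenImmersoionProduct} (namely $B_{n_1,\dots,n_k}$ is the preimage of $S_{n_1}\times\cdots\times S_{n_k}$ under $B_n \to S_n$) to identify $B_{i,n-i}$ as the preimage of $S_i \times S_{n-i}$.

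For the right-hand side, the key input is \cref{BraidActionColorAndInflation}: because we are working with a plain direct sum, the action of $B_{i,n-i}$ on the summand $V^{\otimes i} \otimes W^{\otimes (n-i)}$ is inflated from the external tensor product action of $B_i \times B_{n-i}$. Since $V$ and $W$ are each permutational, this $B_i \times B_{n-i}$-action factors through $S_i \times S_{n-i}$, and so the $B_{i,n-i}$-action factors through $S_i \times S_{n-i}$ via the surjection $B_{i,n-i} \twoheadrightarrow S_i \times S_{n-i}$. A standard fact about induction along a surjection of groups (applied to $B_n \twoheadrightarrow S_n$ with the compatible surjection of subgroups $B_{i,n-i} \twoheadrightarrow S_i \times S_{n-i}$) then gives
\[
\operatorname{Ind}^{B_n}_{B_{i,n-i}} V^{\otimes i} \otimes W^{\otimes (n-i)} \cong \operatorname{Ind}^{S_n}_{S_i \times S_{n-i}} V^{\otimes i} \boxtimes W^{\otimes (n-i)}
\]
as $B_n$-representations inflated from $S_n$-representations, the external tensor product $\boxtimes$ replacing the internal one exactly because the action has become one of a direct product rather than of the braid subgroup.

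Putting these observations together with \cref{InducedStructureTensorPowerDirectSum} yields the desired isomorphism as $S_n$-representations. The main subtlety to check carefully — and the only place this is more than formal manipulation — is the compatibility of induction along the parallel surjections $B_{i,n-i} \twoheadrightarrow S_i \times S_{n-i}$ and $B_n \twoheadrightarrow S_n$, i.e. the claim that inflating then inducing coincides with inducing then inflating. This is immediate once one notes that the coset spaces $B_n / B_{i,n-i}$ and $S_n/(S_i \times S_{n-i})$ are canonically identified via the quotient map, so the required isomorphism is essentially tautological, leaving no substantive obstacle.
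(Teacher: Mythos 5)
Your proposal is correct and follows essentially the same route as the paper, whose proof is exactly "invoke \cref{InducedStructureTensorPowerDirectSum}, \cref{BraidActionColorAndInflation}, and the commutativity of induction and inflation." Your careful verification that the compatibility of induction along the parallel surjections $B_{i,n-i}\twoheadrightarrow S_i\times S_{n-i}$ and $B_n\twoheadrightarrow S_n$ reduces to the identification of coset spaces is precisely the content the paper leaves implicit.
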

    
\begin{proof}

Invoke \cref{InducedStructureTensorPowerDirectSum}, \cref{BraidActionColorAndInflation}, and the commutativity of induction and inflation.    
\end{proof}

\begin{cor} \label{BraidedSumCoinvs}

For an addable pair $(V,W)$ of braided vector spaces over a field $\kappa$ we have
\[
(V \oplus W)^{\otimes n}_{B_n} \cong \bigoplus_{i=0}^n (V^{\otimes i} \otimes W^{\otimes (n-i)})_{B_{i,n-i}}
\]
and in case the direct sum is plain, this is isomorphic to $\bigoplus_{i=0}^n V^{\otimes i}_{B_i} \otimes W^{\otimes (n-i)}_{B_{n-i}}$. 
As a result, the functor from \cref{DefAlgCoinvs} associating to a braided vector space over $\kappa$ its algebra of coinvariants is symmetric monoidal with respect to the plain direct sum monoidal structure from \cref{PlainDirectSumEx}.

\end{cor}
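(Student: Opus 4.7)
The plan is to obtain the first isomorphism by applying the $0$th homology functor $(-)_{B_n}$ directly to the decomposition produced by \cref{InducedStructureTensorPowerDirectSum}. Since $B_{i,n-i}$ has finite index in $B_n$, induction is both a left and a right adjoint to restriction, so for any representation $M$ of $B_{i,n-i}$ over $\kappa$ we have the standard identification $(\operatorname{Ind}_{B_{i,n-i}}^{B_n} M)_{B_n} \cong M_{B_{i,n-i}}$ (explicitly, $\kappa[B_n] \otimes_{\kappa[B_{i,n-i}]} M$ tensored over $\kappa[B_n]$ with $\kappa$ is just $\kappa \otimes_{\kappa[B_{i,n-i}]} M = M_{B_{i,n-i}}$). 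Applying this termwise to the decomposition of \cref{InducedStructureTensorPowerDirectSum} yields the first claimed isomorphism.

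Next, for the special case of a plain direct sum, \cref{BraidActionColorAndInflation} tells us that the action of $B_{i,n-i}$ on $V^{\otimes i} \otimes W^{\otimes (n-i)}$ is inflated along the surjection $B_{i,n-i} \twoheadrightarrow B_i \times B_{n-i}$ induced by \cref{ConfOpenImmersoionProduct}. Since the elements in the kernel of this surjection already act trivially, the $B_{i,n-i}$-coinvariants agree with the $B_i \times B_{n-i}$-coinvariants; and for an external tensor product of representations, the coinvariants of the product are the tensor product of the coinvariants, giving $V^{\otimes i}_{B_i} \otimes W^{\otimes (n-i)}_{B_{n-i}}$.

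For the symmetric monoidal assertion, we need to check that the resulting $\kappa$-linear isomorphism
\[
C(V \oplus W) = \bigoplus_{n \geq 0} (V \oplus W)^{\otimes n}_{B_n} \cong \bigoplus_{n \geq 0} \bigoplus_{i=0}^n V^{\otimes i}_{B_i} \otimes W^{\otimes (n-i)}_{B_{n-i}} = C(V) \otimes C(W)
\]
is multiplicative (and also respects units, which is automatic in degree $0$). This follows from the compatibility diagram with the structure maps $\xi_{m,n}$ recorded just before the statement of the corollary: pushing the isomorphism of \cref{InducedStructureTensorPowerDirectSum} through the juxtaposition map $(V\oplus W)^{\otimes m}\otimes (V\oplus W)^{\otimes n}\to (V \oplus W)^{\otimes(m+n)}$ and then taking coinvariants matches the multiplication on $C(V) \otimes C(W)$, since in the plain case the only nonzero contributions to $\xi_{m,n}$ come from the `trivial coset' summands indexed by $\sigma = \mathrm{id}$. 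The main (minor) obstacle is this last bookkeeping: one must verify that only the identity-coset summands of the Mackey decomposition survive after taking coinvariants in the plain case, which amounts to checking that the other double cosets contribute terms already accounted for by commuting induction with coinvariants at a lower level. With this verification the corollary follows.
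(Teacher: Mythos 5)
Your proposal is correct and follows essentially the same route as the paper: the first isomorphism via \cref{InducedStructureTensorPowerDirectSum} together with the identification of coinvariants of an induced representation with coinvariants over the subgroup (the Shapiro lemma in degree zero), the plain case via \cref{BraidActionColorAndInflation} and the K\"unneth-type statement for coinvariants of an external tensor product, and multiplicativity via the compatibility with the maps $\xi_{m,n}$ discussed after \cref{InducedStructureTensorPowerDirectSum}. The "minor obstacle" you flag at the end is treated at the same level of detail in the paper, which likewise defers to that discussion.
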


\begin{proof}

It follows from \cref{InducedStructureTensorPowerDirectSum}, additivity of coinvariants, and the Shapiro Lemma that
\[
\begin{split}
H_0(B_n, (V \oplus W)^{\otimes n}) &= H_0 \left(B_n, \bigoplus_{i=0}^n \operatorname{Ind}^{B_n}_{B_{i,n-i}} V^{\otimes i} \otimes W^{\otimes (n-i)} \right) \\ &= \bigoplus_{i=0}^n H_0(B_{i,n-i}, V^{\otimes i} \otimes W^{\otimes (n-i)}).
\end{split}
\]
In case the direct sum is plain, in view of \cref{BraidActionColorAndInflation} we have
\[
\begin{split}
\bigoplus_{i=0}^n H_0(B_{i,n-i}, V^{\otimes i} \otimes W^{\otimes (n-i)}) &=
\bigoplus_{i=0}^n H_0(B_{i} \times B_{n-i}, V^{\otimes i} \boxtimes W^{\otimes (n-i)})
\\
&\cong 
\bigoplus_{i=0}^n H_0(B_{i}, V^{\otimes i}) \otimes H_0(B_{n-i}, W^{\otimes (n-i)}).
\end{split}
\]


For the (symmetric) monoidality statement we mainly need to check that 
\[
C(V \oplus W) \cong C(V) \otimes C(W)
\]
as graded $\kappa$-algebras, where the direct sum of braided vector spaces on the left hand side is plain.
Indeed it follows from the above, and the discussion after the proof of \cref{InducedStructureTensorPowerDirectSum}, that we have an isomorphism of graded $\kappa$-algebras
\[
\begin{split}
C(V \oplus W) = \bigoplus_{n = 0}^\infty (V \oplus W)^{\otimes n}_{B_n} &\cong 
\bigoplus_{n = 0}^\infty \bigoplus_{i=0}^n V^{\otimes i}_{B_i} \otimes W^{\otimes (n-i)}_{B_{n-i}} \\ &\cong \left( \bigoplus_{i=0}^{\infty} V^{\otimes i}_{B_i} \right) \otimes \left( \bigoplus_{j = 0}^{\infty} W^{\otimes j}_{B_j} \right)= C(V) \otimes C(W)
\end{split}
\]
as required.
\end{proof}

For an addable pair $(V,W)$ of braided vector spaces over a field $\kappa$, \cref{DirectSumBVSfunctoriality} and the functoriality of the algebra of coinvariants provide us with an inclusion of graded $\kappa$-algebras 
\begin{equation} \label{InclusionGradedAlgebrasFromDirSumBVS}
C(V) \to C(V \oplus W),
\end{equation}
and a surjection of graded $\kappa$-algebras
\begin{equation} \label{SurjectionGradedAlgebrasFromDirSumBVS}
C(V \oplus W) \to C(V).
\end{equation}
This inclusion (respectively, surjection) arises also from \cref{BraidedSumCoinvs} via the inclusion of (respectively, projection to) the $i=n$ term.

We denote by $\operatorname{triv}_n$ the trivial one-dimensional representation of $S_n$ over a field $\kappa$, by $\operatorname{sign}_n$ the sign representation of $S_n$ over $\kappa$, by $\operatorname{Perm}_n = \operatorname{Ind}_{S_{n-1}}^{S_n} \kappa$ the permutation representation of $S_n$, and by $\operatorname{std}_n$ the standard representation of $S_n$ on $\{v \in \operatorname{Perm}_n : v_1 + \dots + v_n = 0\}$. If the characteristic of $\kappa$ does not divide $n$ we have
$
\operatorname{Perm}_n = \operatorname{triv}_n \oplus \operatorname{std}_n.
$

\begin{cor} \label{co:wedgedecomp}

We have an isomorphism of $S_n$-representations
\[
\kappa_\wedge^{\otimes n} \cong \bigoplus_{i=0}^n \operatorname{Ind}^{S_n}_{S_{i} \times S_{n-i}} \mathrm{sign}_{n-i} \cong \bigoplus_{i=0}^n \wedge^i \operatorname{Perm}_n  
\]
over $\kappa$ and in case the characteristic of $\kappa$ does not divide $n$ these representations of $S_n$ are also isomorphic to
\[
\bigoplus_{i=0}^n \wedge^i \operatorname{std}_n \oplus \wedge^{i-1} \operatorname{std}_n  \cong \bigoplus_{i=0}^{n-1} (\wedge^i \operatorname{std}_n)^{\oplus 2}.
\]

\end{cor}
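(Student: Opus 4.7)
The plan is to reduce everything to \cref{InducedStructureTensorPowerDirectSumPerm} applied to the decomposition $\kappa_\wedge \cong \kappa \oplus \kappa_{-1}$ from \cref{PlainDirectSumWedgeDec}. Since this is a \emph{plain} direct sum of two \emph{permutational} braided vector spaces, that corollary gives
\[
\kappa_\wedge^{\otimes n} \cong \bigoplus_{i=0}^n \operatorname{Ind}^{S_n}_{S_i \times S_{n-i}} \kappa^{\otimes i} \boxtimes \kappa_{-1}^{\otimes (n-i)}
\]
as $S_n$-representations. By \cref{TrivialBraidingUnit} the factor $\kappa^{\otimes i}$ is the trivial representation of $S_i$, and by \cref{SignInflation} the factor $\kappa_{-1}^{\otimes (n-i)}$ is the sign representation of $S_{n-i}$. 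This delivers the first isomorphism in the statement.

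For the second isomorphism, the plan is to invoke the classical identification $\wedge^{k}\operatorname{Perm}_n \cong \operatorname{Ind}^{S_n}_{S_k \times S_{n-k}} \operatorname{sign}_k \boxtimes \operatorname{triv}_{n-k}$. This is immediate from writing down a basis of $\wedge^k \operatorname{Perm}_n$ consisting of the wedges $e_{j_1} \wedge \dots \wedge e_{j_k}$ with $j_1 < \dots < j_k$: the group $S_n$ permutes these up to sign, the stabilizer of $e_1 \wedge \dots \wedge e_k$ is $S_k \times S_{n-k}$, and the first factor acts through its sign. Reindexing by $k = n - i$ (and using that $\operatorname{Ind}$ is insensitive to swapping the two blocks and the two tensor factors) matches the previous display term by term.

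For the last isomorphism, the plan is to assume $\operatorname{char}(\kappa) \nmid n$, so that $\operatorname{Perm}_n \cong \operatorname{triv}_n \oplus \operatorname{std}_n$, and then apply the exterior-power formula $\wedge^i(U \oplus W) \cong \bigoplus_{j} \wedge^{i-j} U \otimes \wedge^{j} W$. Since $\operatorname{triv}_n$ is one-dimensional, only the terms $j = i$ and $j = i-1$ survive, yielding $\wedge^i \operatorname{Perm}_n \cong \wedge^i \operatorname{std}_n \oplus \wedge^{i-1} \operatorname{std}_n$. Summing over $0 \leq i \leq n$ and using that $\dim \operatorname{std}_n = n-1$ (so $\wedge^i \operatorname{std}_n$ vanishes for $i \geq n$ and for $i < 0$) collapses the two telescoping sums to $\bigoplus_{i=0}^{n-1}(\wedge^i \operatorname{std}_n)^{\oplus 2}$.

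No step looks genuinely hard: once \cref{InducedStructureTensorPowerDirectSumPerm} is in hand, the rest is bookkeeping about exterior powers and induction from Young subgroups. The only point that requires a little care is making sure the characteristic hypothesis is used precisely where it is needed — namely to split $\operatorname{Perm}_n$ as $\operatorname{triv}_n \oplus \operatorname{std}_n$ — and not earlier; the first two isomorphisms should be stated in arbitrary characteristic.
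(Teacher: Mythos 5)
Your proposal is correct and follows the same overall route as the paper: both start from $\kappa_\wedge \cong \kappa \oplus \kappa_{-1}$ (\cref{PlainDirectSumWedgeDec}) and \cref{InducedStructureTensorPowerDirectSumPerm} to get the induced-representation decomposition, and both finish by splitting $\operatorname{Perm}_n \cong \operatorname{triv}_n \oplus \operatorname{std}_n$ when $\operatorname{char}(\kappa) \nmid n$ and collapsing the telescoping sum.

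The one place where you genuinely diverge is the middle identification $\operatorname{Ind}^{S_n}_{S_k \times S_{n-k}}(\operatorname{sign}_k \boxtimes \operatorname{triv}_{n-k}) \cong \wedge^{k}\operatorname{Perm}_n$. The paper produces a map in each direction by Frobenius reciprocity (using left adjointness for one direction, and self-duality of $\operatorname{Perm}_n$ together with finiteness of $S_n$ for the other) and then asserts that the two maps are mutually inverse. You instead exhibit $\wedge^k \operatorname{Perm}_n$ directly as a monomial representation: $S_n$ permutes the basis $\{e_{j_1} \wedge \dots \wedge e_{j_k}\}$ up to sign, the stabilizer of the line through $e_1 \wedge \dots \wedge e_k$ is $S_k \times S_{n-k}$ acting through $\operatorname{sign}_k \boxtimes \operatorname{triv}_{n-k}$, and the orbit is transitive. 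Your version is more elementary and self-contained (there is no ``one checks they are inverse'' left implicit), works verbatim in all characteristics, and makes the reindexing $k = n-i$ transparent; the paper's version is the one that generalizes more readily when explicit bases are unavailable. Your care in confining the characteristic hypothesis to the final step is exactly right and matches the paper.
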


\begin{proof}

\cref{PlainDirectSumWedgeDec} and \cref{InducedStructureTensorPowerDirectSumPerm} tell us that
\[
\kappa_\wedge^{\otimes n} = (\kappa \oplus \kappa_{-1})^{\otimes n} = 
\bigoplus_{i=0}^n \operatorname{Ind}^{S_n}_{S_{i} \times S_{n-i}} \kappa^{\otimes i} \otimes \kappa_{-1}^{\otimes (n-i)} = \bigoplus_{i=0}^n \operatorname{Ind}^{S_n}_{S_{i} \times S_{n-i}} \mathrm{sign}_{n-i}.
\]

We claim that for every $0 \leq i \leq n$ we have
\begin{equation} \label{ClaimIsomRepsSn}
\operatorname{Ind}^{S_n}_{S_{i} \times S_{n-i}}  \operatorname{sign}_{n-i} \cong \wedge^{n-i} \operatorname{Perm}_n
\end{equation}
as representations of $S_n$ over $\kappa$. 
Since induction is left adjoint to restriction we have
\[
\begin{split}
\Hom_{S_n}(\operatorname{Ind}^{S_n}_{S_{i} \times S_{n-i}}  \operatorname{sign}_{n-i}, \wedge^{n-i} \operatorname{Perm}_n) &\cong
\Hom_{S_i \times S_{n-i}}(\operatorname{sign}_{n-i}, \wedge^{n-i} \operatorname{Perm}_n) \\
&\cong \Hom_{S_i \times S_{n-i}}(\wedge^{n-i} \operatorname{Perm}_{n-i}, \wedge^{n-i} \operatorname{Perm}_n)
\end{split}
\]
as vector spaces over $\kappa$.
The inclusion of $\{i+1, \dots, n\}$ into $\{1, \dots, n\}$ gives us a homomorphism $\operatorname{Perm}_{n-i} \to \operatorname{Perm}_n$ 
of representations of $S_i \times S_{n-i}$, so taking its $(n-i)$th wedge power gives us a homomorphism of representations of $S_n$ over $\kappa$ from the left to the right hand side of \cref{ClaimIsomRepsSn}.

Since $\operatorname{Perm}_n$ is a self-dual representations of ($S_n$ and thus also of) $S_i \times S_{n-i}$, and duality commutes with forming wedge powers, we get a morphism $\wedge^{n-i}\operatorname{Perm}_n \to \wedge^{n-i}\operatorname{Perm}_{n-i}$ of $S_i \times S_{n-i}$-representations over $\kappa$. Since the group $S_n$ is finite, we have
\[
\begin{split}
\Hom_{S_n}(\wedge^{n-i}\operatorname{Perm}_n, \operatorname{Ind}^{S_n}_{S_{i} \times S_{n-i}}  \operatorname{sign}_{n-i}) &\cong
\Hom_{S_i \times S_{n-i}}(\wedge^{n-i}\operatorname{Perm}_n, \operatorname{sign}_{n-i}) \\
& \cong \Hom_{S_i \times S_{n-i}}(\wedge^{n-i}\operatorname{Perm}_n, \wedge^{n-i}\operatorname{Perm}_{n-i})
\end{split}
\]
as vector spaces over $\kappa$, which gives us a homomorphism of representations of $S_n$ over $\kappa$ from the right to the left hand side of \cref{ClaimIsomRepsSn}.
We have thus constructed morphisms between the $S_n$-representations in \cref{ClaimIsomRepsSn}, and one can check that these morphisms are mutually inverse.

If the characteristic of $\kappa$ does not divide $n$ we have
\begin{equation} \label{PermRepInTermsOfStdWedge}
\wedge^i \operatorname{Perm}_n \cong \wedge^i(\operatorname{std}_n \oplus \operatorname{triv}_n) = \wedge^i \operatorname{std}_n \oplus \wedge^{i-1}\operatorname{std}_n
\end{equation}
as representations of $S_n$ over $\kappa$, so we obtain the required isomorphisms by noticing that $i = 0$ and $i = n$ contribute only one (nonzero) term each.
\end{proof}



For every $0 \leq i \leq n$, as in \cref{ProductFiniteEtaleMapConf} and \cref{ConfOpenImmersoionProduct}, we have the morphisms 
\[
\tau^{(i)} \colon \operatorname{Conf}^{i,n-i} \to \operatorname{Conf}^n, \qquad c_i \colon \operatorname{Conf}^{i,n-i}  \to \operatorname{Conf}^i \times \operatorname{Conf}^{n-i}.
\]

\begin{cor} \label{ConvolutionCorrDirectSum}

Let $\kappa$ be a finite field, and let $V,W$ be finite-dimensional braided vector spaces over $\kappa$.
Let $S$ be an open subscheme of the spectrum of the ring of integers of a number field $K$, and for every nonnegative integer $n$ let $\mathcal V_n$ (respectively, $\mathcal W_n$) be a constructible locally constant \'etale sheaf of vector spaces over $\kappa$ on $\operatorname{Conf}^n \times \ S$ such that the analytification of the base change of $\mathcal V_n$ (respectively, $\mathcal W_n$) to $\operatorname{Conf}^n \times \ \C$ corresponds to the representation $V^{\otimes n}$ (respectively, $W^{ \otimes n}$) of $B_n$ over $\kappa$.
Then the \'etale sheaf of vector spaces 
\[
\mathcal U_n =  \bigoplus_{i=0}^n \tau^{(i)}_* c_i^{-1} \left(\mathcal V_i \boxtimes \mathcal W_{n-i}\right)
\]
over $\kappa$ is locally constant constructible, the analytification of its base change to $\operatorname{Conf}^n \times \ \C$ corresponds to the representation $(V \oplus W)^{\otimes n}$ of $B_n$ over $\kappa$ where the direct sum of braided vector spaces is plain, and the trace function of $\mathcal U$ is the Dirichlet convolution of the trace functions of $\mathcal V$ with the trace function of $\mathcal W$, namely for every $\mathfrak p \in S$, a finite extension $\F_q$ of $\mathcal O_K/\mathfrak p$, and every monic squarefree $f \in \F_q[t]$ we have
\[
\operatorname{tr}(\operatorname{Frob}_f, \mathcal U_f) = \sum_{gh = f} \operatorname{tr}( \operatorname{Frob}_g, \mathcal V_{g}) \cdot \operatorname{tr}(\operatorname{Frob}_h, \mathcal W_h)
\]
where $g,h \in \F_q[t]$ range over monic (coprime, squarefree) polynomials, and an expression such as $\mathcal U_f$ is a shorthand for the \'etale stalk of $\mathcal U_{\deg f}$ at a geometric point over $f \in \operatorname{Conf}^{\deg f}(\F_q)$.

\end{cor}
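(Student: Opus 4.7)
The assertion splits into three independent checks: (i) $\mathcal U_n$ is locally constant constructible, (ii) its analytification over $\C$ realizes the representation $(V \oplus W)^{\otimes n}$ of $B_n$ for the plain direct sum structure, and (iii) the trace function of $\mathcal U_n$ computes the Dirichlet convolution claimed. My plan is to dispatch them in that order.

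First I would observe that $c_i$ is an open immersion, so $c_i^{-1}$ preserves the class of locally constant constructible \'etale sheaves, while $\tau^{(i)}$ is finite \'etale, so $\tau^{(i)}_*$ also preserves this class (and is exact). A finite direct sum of such sheaves is again of the same type, settling (i).

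For (ii), I would invoke \cref{InducedStructureTensorPowerDirectSum} to reduce to identifying the analytified base change to $\operatorname{Conf}^n \times \C$ of each summand $\tau_*^{(i)} c_i^{-1}(\mathcal V_i \boxtimes \mathcal W_{n-i})$ with $\operatorname{Ind}^{B_n}_{B_{i,n-i}}(V^{\otimes i} \otimes W^{\otimes (n-i)})$. The open immersion $c_i$ induces on fundamental groups the surjection $B_{i,n-i} \to B_i \times B_{n-i}$ recalled after \cref{ConfOpenImmersoionProduct}, so the analytification of $c_i^{-1}(\mathcal V_i \boxtimes \mathcal W_{n-i})$ realizes the inflation to $B_{i,n-i}$ of the external tensor product representation $V^{\otimes i} \boxtimes W^{\otimes (n-i)}$ of $B_i \times B_{n-i}$. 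By the plain-direct-sum assertion of \cref{BraidActionColorAndInflation}, this inflated representation is canonically identified with the $B_{i,n-i}$-action on $V^{\otimes i} \otimes W^{\otimes (n-i)}$ coming from the plain addable pair structure on $(V,W)$. Applying the standard topological translation under which pushforward along a finite \'etale cover corresponds to induction of $\pi_1$-representations then yields the identification summand by summand.

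For (iii), I would apply the Grothendieck function-sheaf dictionary. Pushforward along the finite \'etale map $\tau^{(i)}$ turns the trace at $\operatorname{Frob}_f$ into a sum over the $\F_q$-fiber of $\tau^{(i)}$ over $f$, and that fiber inside $\operatorname{Conf}^{i,n-i}(\F_q)$ consists of pairs $(g,h)$ with $g$ monic of degree $i$, $h$ monic of degree $n-i$, and $gh = f$; the coprimality condition carving out $\operatorname{Conf}^{i,n-i}$ inside $\operatorname{Conf}^i \times \operatorname{Conf}^{n-i}$ is automatic because $f$ is squarefree. External tensor product contributes a product of traces and $c_i^{-1}$ leaves trace values unchanged, so summing over $i \in \{0, \dots, n\}$ and over the corresponding factorizations yields the Dirichlet convolution stated. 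The only delicate ingredient is the representation-theoretic matching in (ii): one must carefully align the inflation along $B_{i,n-i} \twoheadrightarrow B_i \times B_{n-i}$ with both the natural $B_{i,n-i}$-action of \cref{BraidActionColorAndInflation} and the summand appearing in \cref{InducedStructureTensorPowerDirectSum}, and this is where the plainness hypothesis genuinely enters, as \cref{TensoringWithKWedge} illustrates that the analogous inflation-of-external-tensor-product description fails for non-plain addable pairs.
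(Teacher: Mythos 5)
Your proposal is correct and follows essentially the same route as the paper's proof: the same stability properties of locally constant constructible sheaves, the same translation of $\tau^{(i)}_*$ and $c_i^{-1}$ into induction along $B_{i,n-i} \le B_n$ and inflation along $B_{i,n-i} \twoheadrightarrow B_i \times B_{n-i}$, combined with \cref{BraidActionColorAndInflation} and \cref{InducedStructureTensorPowerDirectSum}, and the same function-sheaf dictionary computation for the convolution. The minor refinements you add (noting that $c_i$ is an open immersion and that coprimality of the factors is automatic for squarefree $f$) are accurate but do not change the argument.
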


\begin{proof}

The property of being locally constant constructible is preserved under tensor products, pullbacks, direct sums, and pushforwards by proper smooth maps.
The morphisms $\tau^{(i)}$ are \'etale hence smooth, and are finite hence proper, so the local constancy and constructibility of the sheaves $\mathcal U_n$ follows.

Base change to $\operatorname{Conf}^n \times \ \C$ is an additive functor, namely it commutes with direct sums.
By the proper base change theorem, base change to $\C$ commutes with the $\tau^{(i)}$.
Pullback by $c_i$ also satisfies a suitable commutativity with base change to $\C$ because the latter is itself given by pullback, and because pullback reverses the order of composition.
Similarly, pullback to $\C$ commutes with the external tensor product.

Analytification is an additive functor that commutes with pushforwards, pullbacks, and external tensor products.

The analytification of the base change of $\tau^{(i)}$ (respectively, $c_i$) to $\C$ induces the inclusion of $B_{i, n-i}$ into $B_n$, (respectively, the projection of $B_{i,n-i}$ onto $B_i \times B_{n-i}$).

The equivalence between locally constant sheaves and representations of the fundamental group is additive, transforms $\tau_*^{(i)}$ to the induction of representations from $B_{i,n-i}$ to $B_n$, transforms $c_i^{-1}$ to the inflation of representations of $B_{i} \times B_{n-i}$ to $B_{i,n-i}$.
The asserted correspondence to $(V \oplus W)^{\otimes n}$ is now a consequence of \cref{BraidActionColorAndInflation} and \cref{InducedStructureTensorPowerDirectSum}.

According to the function-sheaf dictionary direct sums correspond to summations of functions, pushforward by the finite map $\tau^{(i)}$ corresponds to summation over all the factorizations over $\F_q$ as a product of a degree $i$ polynomial times a degree $n-i$ polynomial, pullback corresponds to restriction of functions, and tensor products correspond to products of functions.
The statement on convolution thus follows.
\end{proof}


\begin{cor} \label{WedgeArithmetized}

For every nonnegative integer $n$ there exist a unique lisse sheaf $\mathcal U_n$ on $\operatorname{Conf}^n$ of vector spaces over $\kappa$ satisfying the following properties.

\begin{itemize}

    \item The arithmetic monodromy group of $\mathcal U_n$ coincides with its geometric monodromy group.
    
    \item The analytification of the base change of $\mathcal U_n$ to $\operatorname{Conf}^n \times \ \C$ corresponds to the representation $\kappa_{\wedge}^{\otimes n}$ of $B_n$.

    \item For every monic squarefree polynomial $f \in \F_q[t]$ we have
\begin{equation} \label{WeirdArithFunction}
\operatorname{tr}(\operatorname{Frob}_f, \mathcal U_f) = 
\begin{cases}
0 & f \text{ has an irreducible factor over } \F_q \text{ of even degree} \\
d_2(f) &\text{ otherwise}.
\end{cases}
\end{equation}

\end{itemize}
    
\end{cor}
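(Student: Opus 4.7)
The plan is to construct $\mathcal U_n$ via the convolution machinery of \cref{ConvolutionCorrDirectSum} applied to the plain direct sum decomposition $\kappa_\wedge = \kappa \oplus \kappa_{-1}$ recorded in \cref{PlainDirectSumWedgeDec}. Take $\mathcal V_n$ to be the constant sheaf $\kappa$ on $\Conf^n$ (corresponding to the trivial representation of $B_n$, with trace function identically $1$, as in \cref{TrivialRepSnTraceFunction}), and take $\mathcal W_n$ to be the rank-one lisse sheaf obtained from the sign character along the Galois $S_n$-cover $\operatorname{PConf}^n \to \Conf^n$ (corresponding to the inflation of $\operatorname{sign}_n$ to $B_n$, with trace function $(-1)^{\deg f}\mu(f)$ by \cref{MobiusFunctionRepThry}). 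Both sheaves descend to $\Spec \Z$, so in particular their arithmetic and geometric monodromy groups coincide.

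Feeding these into \cref{ConvolutionCorrDirectSum} produces $\mathcal U_n = \bigoplus_{i=0}^n \tau^{(i)}_* c_i^{-1}(\mathcal V_i \boxtimes \mathcal W_{n-i})$, which is lisse and whose analytification corresponds to $(\kappa \oplus \kappa_{-1})^{\otimes n} \cong \kappa_\wedge^{\otimes n}$ as a representation of $B_n$. The arithmetic-equals-geometric property is preserved by external tensor product, pullback, direct sum, and pushforward along finite \'etale maps, so $\mathcal U_n$ inherits it. The trace function at $f$ is then the Dirichlet convolution $\sum_{gh=f}(-1)^{\deg h}\mu(h)$ over coprime squarefree factorizations $gh=f$.

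To evaluate this sum I would write $f = p_1 \cdots p_k$ as a product of monic irreducibles, so that coprime factorizations $f = gh$ are indexed by subsets $B \subseteq \{1,\ldots,k\}$ (with $h = \prod_{i \in B} p_i$), and the sum factors as $\prod_{i=1}^k (1 + (-1)^{1 + \deg p_i})$. The $i$th factor equals $2$ when $\deg p_i$ is odd and $0$ when $\deg p_i$ is even, so the product vanishes unless every irreducible factor of $f$ has odd degree, in which case it equals $2^{\omega(f)} = d_2(f)$, matching the stated formula.

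Uniqueness follows from the observation that the braiding on $\kappa_\wedge$ is permutational in the sense of \cref{PermBraidObj}, so the representation $\kappa_\wedge^{\otimes n}$ factors through the surjection $B_n \to S_n$. Any lisse sheaf with the prescribed analytification therefore descends along the $S_n$-cover $\operatorname{PConf}^n \to \Conf^n$, a cover defined uniformly over $\Spec \Z$; the arithmetic-equals-geometric requirement pins down the descent by forbidding nontrivial Galois twists of the Frobenius action. The only substantive computation is the convolution sum; beyond that, the corollary is a bookkeeping application of the tools established earlier, so there is no serious obstacle.
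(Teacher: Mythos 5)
Your construction is essentially identical to the paper's: both invoke \cref{ConvolutionCorrDirectSum} with $\mathcal V_n$ the constant sheaf and $\mathcal W_n$ the sign sheaf, use $\kappa_\wedge = \kappa \oplus \kappa_{-1}$ from \cref{PlainDirectSumWedgeDec}, and evaluate the convolution factor-by-factor over the irreducible divisors of $f$ (the paper phrases this as multiplicativity plus a check on irreducibles, which is the same computation). The only divergence is uniqueness, which the paper dispatches by citing a version of Chebotarev's density theorem rather than your descent-along-$\operatorname{PConf}^n$ argument, but this is a minor point and your overall proof is correct.
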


\begin{proof}

Uniqueness is a general fact that follows from a version of Chebotarev's density theorem.

We construct $\mathcal U_n$ by invoking \cref{ConvolutionCorrDirectSum} with $K = \Q$, $S = \operatorname{Spec} \Z$, $\mathcal V_n$ the constant sheaf of rank $1$, and $\mathcal W_n$ the sheaf corresponding to the sign representation of $S_n$ as in \cref{MobiusFunctionRepThry}.
In this case $V$ is the trivial one-dimensional braided vector space, and $W = \kappa_{-1}$.
The trace function of $\mathcal V_n$ is $1$ by \cref{TrivialRepSnTraceFunction}, and the trace function of $\mathcal W_n$ is $(-1)^n$ times the M\"obius function by \cref{MobiusFunctionRepThry}, in particular both these functions are multiplicative.
Because $\kappa_\wedge = \kappa \oplus \kappa_{-1}$ by \cref{PlainDirectSumWedgeDec}, it remains to check that the arithmetic function on the right hand side of \cref{WeirdArithFunction} agrees with the convolution of the constant function $1$ and the function $f \mapsto (-1)^{\deg f} \cdot \mu(f)$.
Since the convolution of multiplicative functions is multiplicative, the function on the right hand side of \cref{WeirdArithFunction} is multiplicative as the divisor function is, and we only deal with squarefree (monic) polynomials in $\F_q[t]$, so it suffices to check the aforementioned agreement on irreducible polynomials, for which both sides of \cref{WeirdArithFunction} equal to $2$ if the degree of the irreducible polynomial is odd and to $0$ if that degree is even.   
\end{proof}

\begin{cor} \label{ArithmetizingPMbvs}

Let $\chi \colon \F_q^\times \to \{\pm 1\}$ be the nontrivial character, and put $\kappa = \overline{\Q_\ell}$.
Then the analytification of the base change to $\C$ of $\bigoplus_{i=0}^n \tau_*^{(i)} \operatorname{Res}^{-1}\mathcal L_\chi$ from \cref{TraceFunctionCharacterSheafResultantSumTwo} corresponds to the representation $\kappa_{\pm}^{\otimes n}$ of $B_n$.

\end{cor}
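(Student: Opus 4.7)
The plan is to identify $\kappa_\pm$ with a weighted direct sum using \cref{WeightedDirectSumExmp}, apply \cref{InducedStructureTensorPowerDirectSum} to decompose $\kappa_\pm^{\otimes n}$, and match the resulting summands with $\tau^{(i)}_* \operatorname{Res}^{-1} \mathcal L_\chi$ via the function-sheaf dictionary discussed in \cref{CharSheavesSection}. By \cref{WeightedDirectSumExmp} we have $\kappa_\pm \cong \kappa \oplus_{1,-1} \kappa$ as braided vector spaces coming from an addable pair, and applying \cref{InducedStructureTensorPowerDirectSum} yields a $B_n$-equivariant decomposition
\[
\kappa_\pm^{\otimes n} \cong \bigoplus_{i=0}^n \operatorname{Ind}^{B_n}_{B_{i,n-i}} \bigl(\kappa^{\otimes i} \otimes \underline{\kappa}^{\otimes (n-i)}\bigr),
\]
where $\underline{\kappa}$ denotes the second copy of $\kappa$ in the addable pair.

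Next I would compute the one-dimensional character $\lambda_i$ of $B_{i,n-i}$ on $\kappa^{\otimes i} \otimes \underline{\kappa}^{\otimes (n-i)}$. With weights $(\alpha, \beta) = (1, -1)$ in \cref{WeightedDirectSumExmp}, we have $T_{\kappa, \underline{\kappa}} = s_{\kappa, \underline{\kappa}}$ and $T_{\underline{\kappa}, \kappa} = -s_{\underline{\kappa}, \kappa}$, whence $T_{\underline{\kappa}, \kappa} \circ T_{\kappa, \underline{\kappa}} = -\mathrm{id}_{\kappa \otimes \underline{\kappa}}$. Since the generators $\sigma_k$ with $k \neq i$ act via trivial braidings (on two copies of $\kappa$ or two copies of $\underline{\kappa}$), while $\sigma_i^2$ acts as $T_{\underline{\kappa}, \kappa} \circ T_{\kappa, \underline{\kappa}}$ on the $(i, i+1)$ positions, $\lambda_i$ sends $\sigma_i^2$ to $-1$ and all other standard generators of $B_{i,n-i}$ to $1$.

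Then I would match $\lambda_i$ with the representation attached to $\operatorname{Res}^{-1} \mathcal L_\chi$. By the discussion in \cref{CharSheavesSection}, the analytification of the base change to $\C$ of $\operatorname{Res}^{-1} \mathcal L_\chi$ corresponds to the composition $B_{i,n-i} \to \Z \to \kappa^\times$, where the first map is induced by \cref{ResultantMorphismConfSpace} (sending $\sigma_i^2 \mapsto 1$ and the other standard generators of $B_{i,n-i}$ to $0$) and the second sends $1$ to the primitive $\mathfrak{o}$th root of unity, which is $-1$ since $\mathfrak{o} = 2$. This is precisely $\lambda_i$. Since $\tau^{(i)}$ is finite \'etale, pushforward $\tau^{(i)}_*$ corresponds to $\operatorname{Ind}^{B_n}_{B_{i,n-i}}$, so summing over $0 \leq i \leq n$ completes the identification.

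The main obstacle is the sign bookkeeping in the second step: verifying that the combination of the weighted braiding $T_{\underline{\kappa}, \kappa} = -s_{\underline{\kappa}, \kappa}$ with the identity $s_{\underline{\kappa}, \kappa} \circ s_{\kappa, \underline{\kappa}} = \mathrm{id}$ produces exactly the scalar $-1$ on $\sigma_i^2$, and that all the other generators act trivially in this weighted setup. Once $\lambda_i$ is pinned down, matching it to the concrete description of $\operatorname{Res}^{-1} \mathcal L_\chi$ from \cref{CharSheavesSection} is routine, and the corollary follows from \cref{InducedStructureTensorPowerDirectSum} together with the fact that pushforward by a finite \'etale map corresponds to induction on the \'etale fundamental group level.
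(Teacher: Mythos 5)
Your proposal is correct and follows essentially the same route as the paper: decompose $\kappa_\pm^{\otimes n}$ via \cref{WeightedDirectSumExmp} and \cref{InducedStructureTensorPowerDirectSum} into $\bigoplus_i \operatorname{Ind}^{B_n}_{B_{i,n-i}}\operatorname{Span}_\kappa v^{\otimes i}\otimes\underline{v}^{\otimes(n-i)}$, check that $\sigma_i^2$ acts by $-1$ and the remaining generators of $B_{i,n-i}$ act trivially, and match this with the character attached to $\operatorname{Res}^{-1}\mathcal L_\chi$ and the fact that $\tau^{(i)}_*$ corresponds to induction. Your sign bookkeeping agrees with \cref{LegendreBraidVS}, so no gap remains.
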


\begin{proof}

The analytification of the base change to $\C$ of $\tau_*^{(i)} \operatorname{Res}^{-1}\mathcal L_\chi$ corresponds to the induction from $B_{i,n-i}$ to $B_n$ of the one-dimensional representation with $\sigma_i^2$ acting by negation, and $\sigma_1, \dots, \sigma_{i-1}, \sigma_{i+1}, \dots, \sigma_{n-1}$ act as the identity.

It follows from \cref{WeightedDirectSumExmp} and \cref{InducedStructureTensorPowerDirectSum} that 
\[
\kappa_{\pm}^{\otimes n} \cong \bigoplus_{i=0}^n \operatorname{Ind}^{B_n}_{B_{i,n-i}} \operatorname{Span}_\kappa v^{\otimes i} \otimes \underline{v}^{\otimes (n-i)}
\]
as representations of $B_n$ in the notation of \cref{LegendreBraidVS}.
To conclude the argument, one checks that the action of the generators $\{\sigma_1, \dots, \sigma_{i-1}, \sigma_i^2, \sigma_{i+1}, \dots, \sigma_{n-1} \}$ of $B_{i,n-i}$ on $\operatorname{Span}_\kappa v^{\otimes i} \otimes \underline{v}^{\otimes (n-i)}$ is the one from the previous paragraph.
\end{proof}

\begin{exmp} \label{IndResChiIntoRack}

It follows from \cref{PmPairsMor}, \cref{BinomialConnection}, and \cref{InducedStructureTensorPowerDirectSum} that for a field $\kappa$ we have an injective homomorphism of representations
\begin{equation*}
\bigoplus_{i=0}^n \operatorname{Ind}^{B_n}_{B_{i,n-i}} \operatorname{Span}_\kappa v^{\otimes i} \otimes \underline{v}^{\otimes (n-i)} \to \bigoplus_{i=0}^n \kappa \mathcal S_{\pm}(n-i,i)
\end{equation*}
of $B_n$ over $\kappa$, mapping the $i$th summand on the left into the $i$th summand on the right.

\end{exmp}

\subsection{Algebra of Coinvariants} \label{CoinvsAlg}

\begin{defn} 

Let $R$ be a rack. 
For $x,y \in R$ consider the recursively defined sequence
\[
x_0 = x, \qquad x_{i+1} = x_i^y, \qquad i \geq 0.
\]
Using the (right) action of $\Gamma_R$ on $R$ we can also define this sequence by
\[
x_i = x_0^{y^i}, \qquad i \geq 0.
\]

\end{defn}

\begin{prop} \label{CyclotomicCentralElementsPowers}

Let $R$ be a finite rack, let $\kappa$ be a field, let $c \colon R \times R \to \kappa^\times$ be a cyclotomic $2$-cocycle, and let $x,y \in R$.
Then there exists a (minimal) positive integer $\mathfrak q(x,y)$ for which $x_{\mathfrak q(x,y)} = x$, and a minimal positive integer $\mathfrak p(x,y)$ for which
\[
x_{\mathfrak p(x,y)} = x, \qquad \prod_{i=0}^{\mathfrak p(x,y) - 1} c(x_i,y) = 1,
\]
so in the graded $\kappa$-algebra of coinvariants of the braided vector space $\kappa R(c)$ from \cref{RacksToBraidedVS}
the degree-one element $x$ commutes with the homogeneous element $y^{\mathfrak p(x,y)}$ 
and
\[
y^{P_y} \in Z(C(\kappa R(c))), \qquad P_y = \operatorname{lcm}_{x \in R} \mathfrak p(x,y),
\]
namely a suitable power of $y$ lies in the center of the algebra $C(\kappa R(c))$ of coinvariants of the braided vector space $\kappa R(c)$.

\end{prop}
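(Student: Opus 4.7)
The plan is to build the proposition up in the four stages it names, using finiteness of $R$ for the first two and the formula from \cref{CoinvariantsUsingNabla} for the last two.

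First, I would note that because $R$ is finite and the map $r \mapsto r^y$ is a bijection of $R$ (see \cref{RackDef}), the sequence $x_0, x_1, x_2, \dots$ lies in a finite set on which a permutation acts, so it is purely periodic from the start; the minimal period is $\mathfrak q(x,y)$. For $\mathfrak p(x,y)$ I would observe that $x_m = x$ forces $\mathfrak q(x,y) \mid m$, and that on any interval of length $\mathfrak q(x,y)$ the product $\prod_i c(x_i,y)$ takes the same value $\zeta := \prod_{i=0}^{\mathfrak q(x,y)-1} c(x_i,y)$ because $c(x_{i+\mathfrak q(x,y)},y) = c(x_i,y)$. Since $c$ is cyclotomic, $\zeta$ is a root of unity, say of order $e$ in $\kappa^\times$. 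Writing $m = k \mathfrak q(x,y)$, the cocycle product equals $\zeta^k$, which is $1$ iff $e \mid k$; hence $\mathfrak p(x,y) = e \cdot \mathfrak q(x,y)$ exists.

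Next, to prove commutativity of $x$ with $y^{\mathfrak p(x,y)}$ in $C(\kappa R(c))$, I would apply the braid $g = \sigma_1 \sigma_2 \cdots \sigma_{\mathfrak p(x,y)} \in B_{\mathfrak p(x,y)+1}$ to the tuple $(x,y,y,\dots,y)$. Inspection of the rack action from \cref{BraidedSetRack} shows that at step $i$ the letter $x$ has moved to position $i+1$ as $x_i$, so after the full braid the tuple becomes $(y,y,\dots,y,x_{\mathfrak p(x,y)}) = (y,y,\dots,y,x)$. The companion scalar, computed by composing the one-step contributions in \cref{TwoExamplesForNabla}, is exactly $\nabla(x,y,\dots,y;g) = \prod_{i=0}^{\mathfrak p(x,y)-1} c(x_i,y)$, which by construction equals $1$. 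Applying \cref{CoinvariantsUsingNabla} in $C(\kappa R(c))$ then gives $x \cdot y^{\mathfrak p(x,y)} = y^{\mathfrak p(x,y)} \cdot x$.

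Finally, since $\mathfrak p(x,y) \mid P_y$ for every $x \in R$, the element $y^{P_y}$ is a power of $y^{\mathfrak p(x,y)}$ and therefore commutes with every $x \in R$. Because the degree-$1$ part $\kappa R$ generates $C(\kappa R(c))$ as a $\kappa$-algebra (the description of $C(V)$ as a quotient of the tensor algebra in \cref{DefAlgCoinvs}), commuting with all of $R$ forces $y^{P_y} \in Z(C(\kappa R(c)))$. The main technical point is the bookkeeping in the third stage — tracking both the rack-theoretic position of $x$ and the accumulated cocycle scalar under the successive $\sigma_i$'s — but once one sets up the induction on $\mathfrak p(x,y)$ using the multiplicativity of $\nabla$ under composition of braids, the argument is a direct calculation.
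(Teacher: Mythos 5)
Your proposal is correct and follows essentially the same route as the paper: finiteness of $R$ plus the bijectivity of $t \mapsto t^y$ gives $\mathfrak q(x,y)$, cyclotomicity of $c$ gives $\mathfrak p(x,y)$ as a multiple of $\mathfrak q(x,y)$, the identity $x\,y^{j} = \bigl(\prod_{i=0}^{j-1} c(x_i,y)\bigr)\, y^{j} x_j$ is established by induction on $j$ via the braid $\sigma_1\cdots\sigma_j$ and \cref{CoinvariantsUsingNabla}, and centrality of $y^{P_y}$ follows because $C(\kappa R(c))$ is generated in degree one. No gaps.
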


\begin{proof}

By \cref{RackDef} the function $t \mapsto t^y$ is a bijection on $R$ so our assumption that $R$ is finite implies that there exists a (minimal) positive integer $m$ for which $x_{\mathfrak q(x,y)} = x$. For every positive integer $l$ we thus have
\[
\prod_{i=0}^{l \cdot \mathfrak q(x,y) - 1} c(x_i,y) = \prod_{j=0}^{\ell-1} \ \prod_{i=j \cdot \mathfrak q(x,y)}^{(j+1) \cdot \mathfrak q(x,y) -1} c(x_i,y) = \prod_{j=0}^{\ell-1} \prod_{i=0}^{\mathfrak q(x,y)-1} c(x_i,y) = \left(\prod_{i=0}^{\mathfrak q(x,y)-1} c(x_i,y)\right)^l
\]
which is equal to $1$ for some (minimal) positive integer $l = l_0$ by our assumption that $c$ is cyclotomic.
We put $\mathfrak{p}(x,y) = l \cdot \mathfrak q(x,y)$.

Next we claim that for every positive integer $j$ we have
\begin{equation} \label{HowToCommuteWithPower}
x y^j = \prod_{i=0}^{j-1} c(x_i,y) \cdot y^jx_j
\end{equation}
in $C(\kappa R(c))$, or equivalently in the notation of \cref{BnActionBVsCocycleNabla}, that
\[
\nabla(x_1, y, \dots, y; \sigma_1 \cdots \sigma_{j}) = \prod_{i=0}^{j-1} c(x_i,y)
\]
where on the left hand side above $y$ appears (implicitly) $j$ times.
We prove \cref{HowToCommuteWithPower} the same way we proved \cref{BnActionBVsCocycleNabla} - by induction on $j$ with the base case $j=1$ being identical to \cref{CocycleToBVs}.
Assuming that $j > 1$ we get from the induction hypothesis that
\[
xy^j = xy^{j-1}y = \prod_{i=0}^{j-2} c(x_i,y) \cdot y^{j-1}x_{j-1}y = 
\prod_{i=0}^{j-2} c(x_i,y) \cdot c(x_{j-1},y) \cdot  y^{j-1} y x_{j-1}^y = \prod_{i=0}^{j-1} c(x_i,y) \cdot y^j x_j
\]
completing the induction and thus the proof of the claim.

Specializing the claim above to $j = \mathfrak{p}(x,y)$ we see that $x$ and $y^{\mathfrak{p}(x,y)}$ indeed commute.
Since $P_y$ is by our definition a multiple of $\mathfrak{p}(x,y)$, we get that $y^{P_y}$ (being a power of $y^{\mathfrak p(x,y)}$) also commutes with $x$.
Because $C(\kappa R(c))$ is generated in degree $1$, it follows that $y^{P_y}$ lies in its center as it commutes with all homogeneous elements of degree $1$.
\end{proof}

\begin{rem} \label{DependenceOnConnectedComponent}

We have $\mathfrak p(x^z,y^z) = \mathfrak p(x,y)$ for $x,y,z \in R$, so in case $c$ is a cocycle as in \cref{CocycleWedgePMexmp} (arising from the partition of $R$ into its connected components) the integer $P_y$ depends on $y$ only via the connected component of $R$ in which $y$ lies.

\end{rem}

\begin{lem} \label{GraphLemma}

Let $V$ be a vector space over a field $\kappa$ with basis $B$, let $\Gamma$ be a directed graph with $B$ as its set of vertices, and a set of edges $E$ labeled by elements of $\kappa^\times$. 
Denote by $s,t \colon E \to B$ the source and target maps,
and by $\lambda \colon E \to \kappa^\times$ the labeling.

Suppose that for every $e \in E$ there exists an $\bar e \in E$ with 
\[
s(\bar e) = t(e), \qquad  t(\bar e) = s(e), \qquad \lambda(\bar e) = \lambda(e)^{-1}.
\]

Denote by $W$ the subspace of $V$ spanned by $\{s(e) - \lambda(e) \cdot t(e) : e \in E\}$ over $\kappa$.
Let $\mathcal B$ be a subset of $B$ containing exactly one vertex from each connected component of $\Gamma$. 
Then the restriction of the quotient map $V \to V/W$ to the subset
\[
\mathcal B^1 = \{v \in \mathcal B : \textup{ the product in } \kappa^\times \text{ of the labels of every cycle in } \Gamma \textup{ visiting } v  \text{ is } 1 \}
\]
of $V$ is injective, and its image is a basis of $V/W$ over $\kappa$.
In particular, a vector $v \in B$ lies in $W$ if and only if there exists a cycle in $\Gamma$ visiting $v$ the product of whose labels is different from $1$.
    
\end{lem}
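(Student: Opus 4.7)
The plan is to translate membership in $W$ into a condition about walks in $\Gamma$, and then to use the hypothesis on reverse edges together with the fact that $\kappa^\times$ is abelian to show the condition is governed by a subgroup $H_C \leq \kappa^\times$ attached to each connected component $C$. The crucial observation is that, for any directed path $e_1, \dots, e_n$ in $\Gamma$ from $u$ to $w$, iterating the defining relations gives
\[
u - \left( \prod_{i=1}^n \lambda(e_i) \right) \cdot w \in W.
\]
Applied to a cycle $\gamma$ through $v$ with label-product $\pi(\gamma) \in \kappa^\times$, this yields $(1-\pi(\gamma)) v \in W$. Using the reverse-edge hypothesis and the commutativity of $\kappa^\times$, the subgroup $H_C \leq \kappa^\times$ generated by label-products of cycles through a vertex of $C$ does not depend on the chosen vertex: conjugating a cycle through $v_0$ by a walk from $u$ to $v_0$ (which exists thanks to the reverse edges) gives a cycle through $u$ with the same product.

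With this in hand, one direction of the last assertion is immediate: if $H_{C_v} \neq \{1\}$, pick any $\pi \neq 1$ arising as a cycle-product through $v$; then $(1-\pi) v \in W$ forces $v \in W$. For the converse, assuming $H_{C_v} = \{1\}$, I will construct a linear functional $\phi_v \colon V \to \kappa$ vanishing on $W$ with $\phi_v(v)=1$. Set $\phi_v(w)=0$ for $w \notin C_v$, and for $w \in C_v$ set $\phi_v(w) = \mu(w)^{-1}$, where $\mu(w)$ is the label-product along any walk from $v$ to $w$ (well-defined because $H_{C_v}$ is trivial). A one-line check shows $\phi_v(s(e)) = \lambda(e)\, \phi_v(t(e))$ for every edge $e$, hence $\phi_v(W) = 0$, which exhibits $v \notin W$.

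For the first two assertions, the collection $\{\phi_v\}_{v \in \mathcal B^1}$ simultaneously separates the images of the vertices in $\mathcal B^1$ inside $V/W$: since $\phi_{v_1}(v_2) = 0$ whenever $v_1,v_2 \in \mathcal B^1$ are distinct (they lie in different components), the images of $\mathcal B^1$ in $V/W$ are linearly independent, and the map $\mathcal B^1 \to V/W$ is injective. To see that these images span, take any $w \in B$: if $w$ lies in a component $C$ with $H_C \neq \{1\}$ then $w \in W$ by the previous paragraph; if $w$ lies in a component $C$ with $H_C = \{1\}$, let $v \in \mathcal B^1 \cap C$ be the chosen representative and $\mu(w)$ the (well-defined) product along a walk from $v$ to $w$, so that $w \equiv \mu(w)^{-1} v \pmod W$ by the telescoping identity above.

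The main obstacle is packaging these observations cleanly: verifying that $H_C$ is truly component-invariant requires using reverse edges to produce walks in both directions, and constructing $\phi_v$ requires checking that the assignment $w \mapsto \mu(w)^{-1}$ is path-independent precisely when $H_{C_v}$ is trivial. Both points are routine once the reverse-edge structure and the commutativity of $\kappa^\times$ are invoked, so no deeper ingredient is needed beyond careful bookkeeping with path products.
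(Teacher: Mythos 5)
Your proof is correct and follows essentially the same route as the paper's: the telescoping relation giving $(1-\pi(\gamma))v\in W$ for a cycle $\gamma$ through $v$, and, when all cycle products are trivial, a path-product functional on each component that kills $W$ and detects the chosen representative. The only differences are organizational (you avoid the paper's explicit reduction to the connected case by using the separating family $\{\phi_v\}$ to get linear independence directly), and your normalization $\phi_v(w)=\mu(w)^{-1}$ is in fact the correct one needed for $\phi_v(s(e))=\lambda(e)\phi_v(t(e))$.
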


\begin{rem}

The graph $\Gamma$ is allowed to have an $e \in E$ with $s(e) = t(e)$, and the set of edges of $\Gamma$ with a given source and target may contain more than one element.

\end{rem}

\begin{proof}

We start by reducing to the case $\Gamma$ is connected.
Let $\Gamma_i$ be the connected components of $\Gamma$ for $i \in I$ with sets of vertices $B_i$, sets of edges $E_i$, and maps $s|_{E_i}, t|_{E_i} \colon E_i \to B_i$. Denote by $V_i$ the span of $B_i$ over $\kappa$, and by $W_i$ the subspace of $V_i$ spanned by $\{s(e) - \lambda(e) \cdot t(e) : e \in E_i\}$ over $\kappa$.
Put $\mathcal B_i = \mathcal B \cap B_i$ and $\mathcal B_i^1 = \mathcal B^1 \cap \mathcal B_i$.
We then have
\[
B = \bigcup_{i \in I} B_i, \qquad E = \bigcup_{i \in I} E_i, \qquad V = \bigoplus_{i \in I} V_i, \qquad W = \bigoplus_{i \in I} W_i, \qquad V/W = \bigoplus_{i \in I} V_i/W_i,
\]
the quotient map $V \to V/W$ is the direct sum of the quotient maps $V_i \to V_i/W_i$, 
the set $\mathcal B_i$ is a singleton, and 
\[
\mathcal B^1_i = \{v \in \mathcal B_i : \textup{ the product in } \kappa^\times \text{ of the labels of every cycle in } \Gamma_i \textup{ visiting } v  \text{ is } 1 \}.
\]
These observations are sufficient to complete the reduction to the case $\Gamma$ is connected.

Suppose now that $\Gamma$ is connected, so that $\mathcal B = \{b\}$ for some $b \in B$.
The required injectivity follows from the fact that any function on a set of cardinality at most $1$ is injective.
We distinguish between two cases.

The first case is that there exists a cycle in $\Gamma$ visiting $b$ whose labels multiply up to $\alpha \in \kappa$ with $\alpha \neq 1$, namely $\mathcal B^1$ is empty.
It then follows from the definition of $W$, and an induction on the length of the cycle, that $(\alpha - 1)b \in W$, and thus $b \in W$.
From the connectedness of $\Gamma$ we then deduce that the basis $B$ of $V$ is contained in $W$, hence $W = V$ or equivalently $V/W = \{0\}$.
Therefor the image of $\mathcal B^1$ in $V/W$ (an empty set) is indeed a basis for $V/W$.
    
The second case is that the product of the labels of every cycle in $\Gamma$ visiting $b$ is $1$, namely $\mathcal B^1 = \{b\}$.
It then follows from the connectedness of $\Gamma$ that the image of $\mathcal B^1$ in $V/W$ spans $V/W$ over $\kappa$, so it remains to show that $V/W$ is nonzero.
We need to show that there exists a nonzero $\kappa$-linear functional on $V/W$, or equivalently a nonzero $\kappa$-linear functional $\varphi \colon V \to \kappa$ that vanishes on $W$.
We define $\varphi$ by specifying its value on every $v \in B$ to be the product in $\kappa^\times$ of the labels of a path in $\Gamma$ from $b$ to $v$.  
Our assumption that the product of the labels of every cycle in $\Gamma$ visiting $b$ is $1$ guarantees that $\varphi$ is well-defined. 
As $\varphi(b) = 1$, this is indeed a nonzero functional.
Since $W$ is contained in the kernel of $\varphi$, our treatment of the second case, and thus the whole proof, is complete.
\end{proof}

\begin{cor} \label{BasisVanishingPureTensorsRackCoinvsCocycle}

Let $R$ be a rack, let $\kappa$ be a field, and let $c \colon R \times R \to \kappa^\times$ be a $2$-cocycle.
Let $S \subseteq R^n$ be a set of representatives for the orbits of the action of $B_n$ on $R^n$.
Then as $(x_1, \dots, x_n)$ ranges over the elements of $S$ whose stabilizer in $B_n$ only contains braids $g$ for which $\nabla(x_1, \dots, x_n;g) = 1$, the resulting products $x_1 \cdots x_n$ in $C(\kappa R(c))$ are pairwise distinct and form a basis for $\kappa R(c)^{\otimes n}_{B_n}$.
In particular, the stabilizer in $B_n$ of a tuple $(x_1, \dots, x_n) \in R^n$ contains an element $g$ for which $\nabla(x_1, \dots, x_n;g) \neq 1$ if and only if $x_1 \cdots x_n = 0$ in $C(\kappa R(c))$.


\end{cor}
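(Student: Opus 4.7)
The plan is to deduce the corollary directly from \cref{GraphLemma}, applied to an explicit graph encoding the $B_n$-action on pure tensors. First I would identify the coinvariant space concretely: take $V = \kappa R(c)^{\otimes n}$ with basis $B = R^n$, and use \cref{BnActionBVsCocycleNabla} to rewrite every relation $g \cdot v - v = 0$ defining the coinvariants as $v - \nabla(v;g) \cdot v^g = 0$, where $v^g \in R^n$ denotes the rack-theoretic image of $v \in R^n$ under $g \in B_n$ as in \cref{BraidedSetRack}. Since $B_n$ is generated by the symmetric set $\mathcal S$ of \cref{SymmGenSetForBn} and $\nabla$ satisfies the cocycle identity $\nabla(v;gh) = \nabla(v;g) \cdot \nabla(v^g;h)$ (proved by the same induction on word length that establishes \cref{BnActionBVsCocycleNabla}), it is enough to impose these relations for $g$ ranging only over $\mathcal S$.

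Next I would build the directed labelled graph $\Gamma$ on vertex set $B = R^n$ by putting, for each $v \in R^n$ and each $s \in \mathcal S$, an edge $v \to v^s$ labelled by $\nabla(v;s) \in \kappa^\times$. The reverse-edge hypothesis of \cref{GraphLemma} is immediate from the cocycle identity applied to $\sigma_i \sigma_i^{-1} = 1$, which forces $\nabla(v^{\sigma_i}; \sigma_i^{-1}) = \nabla(v;\sigma_i)^{-1}$. With this choice, the subspace $W$ associated to $\Gamma$ by \cref{GraphLemma} is precisely the subspace generated by the relations from the previous paragraph, so $V/W \cong \kappa R(c)^{\otimes n}_{B_n}$. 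The connected components of $\Gamma$ are precisely the $B_n$-orbits on $R^n$, so the given transversal $S$ can serve as the set $\mathcal B$ of representatives required by the lemma.

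Finally I would identify the distinguished subset $\mathcal B^1$ of \cref{GraphLemma} with the set of tuples singled out in the statement. A cycle in $\Gamma$ visiting $v$ is a word in $\mathcal S$ whose image $g \in B_n$ stabilises $v$, and by the multiplicativity of $\nabla$ the product of its labels equals $\nabla(v;g)$; conversely every element of $\mathrm{Stab}_{B_n}(v)$ is represented by some such closed walk at $v$. Hence $v \in \mathcal B^1$ if and only if $\nabla(v;g) = 1$ for every $g \in \mathrm{Stab}_{B_n}(v)$, which is exactly the selection criterion in the corollary. The basis statement (pairwise distinctness together with spanning) and the vanishing criterion (``in particular'') then follow directly from the two conclusions of \cref{GraphLemma}.

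The only step that requires any genuine verification is the multiplicativity/cocycle identity for $\nabla$ together with the resulting identification of cycle-label products with values of $\nabla$ on stabilizer elements; once this translation is in place, every remaining step is the formal application of \cref{GraphLemma} to the graph $\Gamma$. I therefore expect the main expository care to go into recording this translation cleanly, rather than into any substantial new argument.
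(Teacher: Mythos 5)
Your proposal is correct and follows essentially the same route as the paper: both apply \cref{GraphLemma} to the Cayley graph of the $B_n$-action on $R^n$ with respect to the symmetric generating set of \cref{SymmGenSetForBn}, with edge labels given by the cocycle so that $W$ cuts out the coinvariants, orbits become connected components, and cycle-label products become $\nabla(x_1,\dots,x_n;g)$ for stabilizer elements $g$. Your explicit verification of the multiplicativity of $\nabla$ and of the inverse-label condition on reversed edges (labelling the $\sigma_i^{-1}$-edge by $\nabla(v;\sigma_i^{-1})$) is if anything slightly more careful than the paper's terse treatment of the $\sigma_i^{-1}$-edges, and is a worthwhile detail to record.
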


\begin{proof}

Apply \cref{GraphLemma} to $V = \kappa R(c)^{\otimes n} = (\kappa R)^{\otimes n} = \kappa (R^{n})$, $B = R^n$, the graph $\Gamma$ being the Cayley graph of the action of $B_n$ on $R^n$ with respect to the symmetric generating set from \cref{SymmGenSetForBn}, and $\lambda(e) = c(s(e)_i, s(e)_{i+1})$ (respectively, $\lambda(e) = c(t(e)_i, t(e)_{i+1})$) if $e$ corresponds to $\sigma_i$ (respectively, $\sigma_{i}^{-1}$) for some $1 \leq i \leq n-1$.
Note that $V/W = H_0(B_n, \kappa R(c)^{\otimes n})$, let $\mathcal B = S$ so that $\mathcal B^1$ is the set over which $(x_1, \dots, x_n)$ ranges, and observe that the product of the labels of a cycle starting (and ending) at $(x_1, \dots, x_n) \in R^n$ is $\nabla(x_1, \dots, x_n;g)$ where $g$ is the element of the stabilizer of $(x_1, \dots, x_n)$ in $B_n$ corresponding to our cycle.
\end{proof}

\begin{prop} \label{SuffiecientConditionVanishingCoinvariantsPure}

Let $\kappa$ be a field, let $V$ be a braided vector space over $\kappa$, let $R$ be rack, and let $c \colon R \times R \to \kappa^\times$ be a $2$-cocycle.
Suppose that $(V,\kappa R(c))$ carries an addable pair structure, let $0 \leq i \leq n$, let $w \in V^{\otimes i}$, and let $(x_1, \dots, x_{n-i}) \in R^{n-i}$.
Suppose that there exists $g \in B_{n-i}$ stabilizing $(x_1, \dots, x_{n-i})$ for which $\nabla(x_1, \dots, x_{n-i};g) \neq 1$.
Then the vector $w \otimes x_1 \otimes \dots \otimes x_{n-i} \in V^{\otimes i} \otimes \kappa R(c)^{\otimes (n-i)}$ maps to $0$ in $H_0(B_{i,n-i}, V^{\otimes i} \otimes \kappa R(c)^{\otimes (n-i)})$, the action of $B_{i,n-i}$ being the one from \cref{BraidActionColorAndInflation}.

\end{prop}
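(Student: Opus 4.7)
The plan is to exhibit an explicit element of $B_{i,n-i}$ that scales the tensor $w \otimes x_1 \otimes \dots \otimes x_{n-i}$ by the nonzero scalar $\nabla(x_1, \dots, x_{n-i}; g) \in \kappa^\times$ different from $1$; the vanishing in coinvariants then follows from the field hypothesis.

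First I would recall that the juxtaposition-of-braids homomorphism $B_i \times B_{n-i} \to B_n$ factors through $B_{i,n-i}$, and that the image of $\{1\} \times B_{n-i}$ is contained in the subgroup generated by $\sigma_{i+1}, \dots, \sigma_{n-1}$, namely the generators that braid only among the last $n-i$ strands. Under the action of $B_n$ on $(V \oplus \kappa R(c))^{\otimes n}$ built from the addable-pair structure via \cref{DirectSumBVSDef}, these generators preserve the summand decomposition of \cref{TacitBinomial}, and on the trivial coset summand $V^{\otimes i} \otimes \kappa R(c)^{\otimes (n-i)}$ they act as $\mathrm{id}_{V^{\otimes i}}$ tensored with the braidings $T_{\kappa R(c)}$ at the appropriate positions. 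Crucially, none of the (possibly non-symmetric) crossing isomorphisms $T_{V, \kappa R(c)}$ or $T_{\kappa R(c), V}$ enter, since no $V$-strand is being moved past a $\kappa R(c)$-strand.

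Consequently, letting $\tilde g \in B_{i,n-i}$ be the image of $(1, g) \in B_i \times B_{n-i}$, the action of $\tilde g$ on $w \otimes x_1 \otimes \dots \otimes x_{n-i}$ coincides with $w$ tensored with the natural $B_{n-i}$-action of $g$ on $x_1 \otimes \dots \otimes x_{n-i} \in \kappa R(c)^{\otimes (n-i)}$ coming from \cref{BraidedToNaturoid}. By \cref{BnActionBVsCocycleNabla}, together with the hypothesis that $g$ stabilizes the tuple $(x_1, \dots, x_{n-i})$ as an element of $R^{n-i}$, this action multiplies the tensor by $\nabla(x_1, \dots, x_{n-i}; g)$. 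Passing to $H_0(B_{i,n-i}, V^{\otimes i} \otimes \kappa R(c)^{\otimes (n-i)})$ yields
\[
\bigl(1 - \nabla(x_1, \dots, x_{n-i}; g)\bigr) \cdot w \otimes x_1 \otimes \dots \otimes x_{n-i} = 0,
\]
and since $1 - \nabla(x_1, \dots, x_{n-i}; g)$ is a unit in $\kappa$, the vanishing follows.

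The only step that requires care, rather than being an obstacle, is checking that for a general (not necessarily plain) addable pair structure on $(V, \kappa R(c))$, the subgroup $\{1\} \times B_{n-i} \subseteq B_{i,n-i}$ really acts on the trivial coset summand in the naive factorwise way; this is immediate by unwinding \cref{DirectSumBVSDef} on each generator $\sigma_{i+1}, \dots, \sigma_{n-1}$, since they permute only strands of equal type. Thus the proof is essentially formal, turning on the explicit identification of the $B_{n-i}$-action inside the addable-pair $B_n$-action.
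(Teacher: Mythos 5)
Your proof is correct and follows the same route as the paper's: view $g$ as an element of $B_{i,n-i}$ via the juxtaposition embedding, observe that it scales $w \otimes x_1 \otimes \dots \otimes x_{n-i}$ by $\nabla(x_1,\dots,x_{n-i};g)$, and conclude vanishing in coinvariants since that scalar differs from $1$. The extra care you take in verifying that $\{1\}\times B_{n-i}$ acts factorwise on the trivial coset summand is exactly the content of the action described in \cref{BraidActionColorAndInflation}, which the paper invokes without further comment.
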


\begin{rem}

In view of \cref{BraidedSumCoinvs} this is equivalent to
\[
wx_1 \cdots x_{n-i} = 0
\]
in the algebra of coinvariants of the braided vector space $V \oplus \kappa R(c)$.

\end{rem}

\begin{proof}

Since the inclusion of $B_{n-i}$ into $B_n$ factors via the inclusion of $B_{i,n-i}$ into $B_n$, we can view $g$ as an element of $B_{i,n-i}$ for which
\[
(w \otimes x_1 \otimes \dots \otimes x_{n-i})^g = \nabla(x_1, \dots, x_{n-i};g) \cdot w \otimes x_1 \otimes \dots \otimes x_{n-i}
\]
because of the stabilization assumption, so in the $B_{i,n-i}$-coinvarinats we get that
\[
w \otimes x_1 \otimes \dots \otimes x_{n-i} = \nabla(x_1, \dots, x_{n-i};g) \cdot w \otimes x_1 \otimes \dots \otimes x_{n-i}
\]
hence this vector is zero because $\nabla(x_1, \dots, x_{n-i};g) \neq 1$ by assumption.    
\end{proof}

\begin{cor} \label{QuandleConditionVanishingCoinvariants}

Let $R$ be a rack, let $\kappa$ be a field, let $c \colon R \times R \to \kappa^\times$ be a $2$-cocycle, and let $r \in R$ for which $r^r = r$ and $c(r,r) \neq 1$.
Let $0 \leq i \leq n$, let $(x_1, \dots, x_{n-i}) \in R^{n-i}$ such that $x_l = x_j = r$ for some $1 \leq l < j \leq n-i$.
Let $V$ be a braided vector space over $\kappa$ for which $(V,\kappa R(c))$ is an addable pair.
Then for every $w \in V^{\otimes i}$ the element $w \otimes x_1 \otimes \dots \otimes x_{n-i} \in V^{\otimes i} \otimes \kappa R(c)^{\otimes (n-i)}$ maps to $0$ in $H_0(B_{i,n-i}, V^{\otimes i} \otimes \kappa R(c)^{\otimes n-i})$.

\end{cor}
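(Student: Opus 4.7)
The plan is to reduce to \cref{SuffiecientConditionVanishingCoinvariantsPure}, so it suffices to exhibit an element $g \in B_{n-i}$ stabilizing $(x_1, \dots, x_{n-i}) \in R^{n-i}$ under the action from \cref{BraidedSetRack} such that $\nabla(x_1, \dots, x_{n-i}; g) \neq 1$. I would do this by induction on the gap $j - l$, where $l < j$ are chosen indices with $x_l = x_j = r$.

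For the base case $j = l + 1$, the action formula $T(a,b) = (b, a^b)$ combined with $r^r = r$ shows that $\sigma_l$ fixes the tuple. By \cref{TwoExamplesForNabla} we get $\nabla(x_1, \dots, x_{n-i}; \sigma_l) = c(x_l, x_{l+1}) = c(r,r) \neq 1$, so $g = \sigma_l$ works.

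For the inductive step, apply $\sigma_{j-1}$ to $(x_1, \dots, x_{n-i})$; since $x_j = r$, the resulting tuple $y = (x_1, \dots, x_{n-i})^{\sigma_{j-1}}$ agrees with the original at positions outside $\{j-1, j\}$, with $y_{j-1} = r$ and $y_j = x_{j-1}^{r}$. Thus $y_l = y_{j-1} = r$ with gap reduced from $j-l$ to $j-1-l$, and the inductive hypothesis furnishes $h \in B_{n-i}$ stabilizing $y$ with $\nabla(y; h) \neq 1$. Set $g = \sigma_{j-1} h \sigma_{j-1}^{-1}$. The computation
\[
(x)^{\sigma_{j-1} h \sigma_{j-1}^{-1}} = (y)^{h \sigma_{j-1}^{-1}} = (y)^{\sigma_{j-1}^{-1}} = x
\]
shows $g$ stabilizes $x$. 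Moreover, from the definition of $\nabla$ in \cref{BnActionBVsCocycleNabla} one obtains the cocycle identity $\nabla(x; ab) = \nabla(x; a) \cdot \nabla(x^a; b)$, and applying it twice together with the identity $\nabla(x;\sigma_{j-1}) \cdot \nabla(y; \sigma_{j-1}^{-1}) = 1$ gives
\[
\nabla(x; g) = \nabla(x; \sigma_{j-1}) \cdot \nabla(y; h) \cdot \nabla(y; \sigma_{j-1}^{-1}) = \nabla(y; h) \neq 1,
\]
completing the induction.

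Having produced such a $g$, \cref{SuffiecientConditionVanishingCoinvariantsPure} applied with this tuple and braid yields $w \otimes x_1 \otimes \dots \otimes x_{n-i} = 0$ in $H_0(B_{i,n-i}, V^{\otimes i} \otimes \kappa R(c)^{\otimes (n-i)})$ for every $w \in V^{\otimes i}$, as desired. I expect no serious obstacle: the main subtlety is bookkeeping the twisted action so that conjugation by $\sigma_{j-1}$ carries over the induction hypothesis correctly, but this is handled by the multiplicativity of $\nabla$ noted above.
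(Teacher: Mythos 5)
Your proof is correct and is essentially the paper's argument: unwinding your induction produces exactly the braid $\sigma_{j-1}\cdots\sigma_{l+1}\sigma_l\sigma_{l+1}^{-1}\cdots\sigma_{j-1}^{-1}$ that the paper writes down explicitly, with $\nabla = c(r,r)\neq 1$, followed by the same appeal to \cref{SuffiecientConditionVanishingCoinvariantsPure}. The only difference is presentational (induction on the gap versus an explicit conjugated braid).
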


\begin{proof}

Our assumption that $r^r=r$ implies that the braid 
\[
g = \sigma_{j-1} \sigma_{j-2} \cdots \sigma_{l+1} \sigma_{l} \sigma_{l+1}^{-1} \cdots \sigma_{j-2}^{-1} \sigma_{j-1}^{-1} \in B_{n-i}
\]
fixes $(x_1, \dots, x_{n-i}) \in R^{n-i}$, and 
$
\nabla(x_1, \dots, x_{n-i};g) = c(r,r) \neq 1
$
by our assumptions on $r$, so the statement follows from \cref{SuffiecientConditionVanishingCoinvariantsPure}.
\end{proof}

\begin{cor} \label{PigeonholePrincVanishingCoinvs}

Let $R$ be a rack, let $\kappa$ be a field, let $c \colon R \times R \to \kappa^\times$ be a $2$-cocycle, and set 
\[
Q = \{r \in R : r^r = r, \text{ and } c(r,r) \neq 1\}.
\]
Let $0 \leq i \leq n$, let $(x_1, \dots, x_{n-i}) \in R^{n-i}$ for which
$
\# \{1 \leq m \leq n-i : x_m \in Q\} > |Q|.
$
Let $V$ be a braided vector space over $\kappa$ for which $(V, \kappa R(c))$ is an addable pair.
Then for every $w \in V^{\otimes i}$ the element $w \otimes x_1 \otimes \dots \otimes x_{n-i} \in V^{\otimes i} \otimes \kappa R(c)^{\otimes (n-i)}$ maps to $0$ in $H_0(B_{i,n-i}, V^{\otimes i} \otimes \kappa R(c)^{\otimes (n-i
)})$.

\end{cor}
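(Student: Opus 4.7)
The plan is to reduce this to \cref{QuandleConditionVanishingCoinvariants} by a straightforward pigeonhole argument. The hypothesis gives more than $|Q|$ indices $m \in \{1, \ldots, n-i\}$ with $x_m \in Q$, so by the pigeonhole principle there must exist indices $1 \leq l < j \leq n-i$ and an element $r \in Q$ such that $x_l = x_j = r$.

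By the definition of $Q$, the element $r$ satisfies $r^r = r$ and $c(r,r) \neq 1$, so the hypotheses of \cref{QuandleConditionVanishingCoinvariants} are met for this choice of $(x_1, \ldots, x_{n-i})$, $l$, $j$, and $r$. Applying that corollary to the given $V$, $w$, and $(x_1, \ldots, x_{n-i})$ yields the desired conclusion that $w \otimes x_1 \otimes \cdots \otimes x_{n-i}$ maps to $0$ in $H_0(B_{i,n-i}, V^{\otimes i} \otimes \kappa R(c)^{\otimes (n-i)})$.

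There is essentially no obstacle here: the entire content is the pigeonhole step, and the structural work (constructing an explicit stabilizing braid $g$ with $\nabla(x_1, \ldots, x_{n-i}; g) \neq 1$ via the conjugation braid $\sigma_{j-1} \cdots \sigma_{l+1} \sigma_l \sigma_{l+1}^{-1} \cdots \sigma_{j-1}^{-1}$ that swaps positions $l$ and $j$ while using $r^r = r$ to ensure the tuple is fixed) has already been carried out in the proof of \cref{QuandleConditionVanishingCoinvariants}, which in turn invokes \cref{SuffiecientConditionVanishingCoinvariantsPure}.
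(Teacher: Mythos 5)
Your proposal is correct and matches the paper's proof exactly: both apply the pigeonhole principle to find indices $l < j$ with $x_l = x_j \in Q$ and then invoke \cref{QuandleConditionVanishingCoinvariants} with $r = x_l = x_j$. Nothing further is needed.
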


\begin{proof}

By the pigeonhole principle there exist $1  \leq l < j \leq n-i$ for which 
$
x_l = x_j \in Q
$
so our statement follows by invoking \cref{QuandleConditionVanishingCoinvariants} with $r = x_l = x_j$.    
\end{proof}

\begin{cor} \label{QuandleVanishingCoinvariantsCor}

Let $R$ be a finite quandle, let $\kappa$ be a field, and let $c \colon R \times R \to \kappa^\times$ be a $2$-cocycle such that $c(x,x) \neq 1$ for every $x \in R$.
Let $V$ be a braided vector space over $\kappa$ for which $(V, \kappa R(c))$ is an addable pair.
Then $H_0(B_{i,n-i}, V^{\otimes i} \otimes \kappa R(c)^{\otimes (n-i)}) = 0$ for every $0 \leq i < n-|R|$, hence the inclusion $C(V) \to C(V \oplus \kappa R(c))$ of $\kappa$-algebras from \cref{InclusionGradedAlgebrasFromDirSumBVS} makes $C(V \oplus \kappa R(c))$ into a finitely generated left (respectively, right) $C(V)$-module.
    
\end{cor}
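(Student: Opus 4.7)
The plan is a direct combination of \cref{PigeonholePrincVanishingCoinvs} with the decomposition in \cref{BraidedSumCoinvs}.

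First, I would observe that under the standing hypotheses, the set $Q \subseteq R$ appearing in \cref{PigeonholePrincVanishingCoinvs}---namely, the set of $r \in R$ with $r^r = r$ and $c(r,r) \neq 1$---equals all of $R$, since $R$ is a quandle and $c(r,r) \neq 1$ is assumed for every $r \in R$. In particular $|Q| = |R|$. For every $0 \leq i < n - |R|$ we have $n - i > |R| = |Q|$, so any tuple $(x_1, \dots, x_{n-i}) \in R^{n-i}$ automatically satisfies the hypothesis $\#\{m : x_m \in Q\} = n - i > |Q|$ of \cref{PigeonholePrincVanishingCoinvs}. Applying that proposition to each pure tensor $w \otimes x_1 \otimes \cdots \otimes x_{n-i}$---with $w$ ranging over a basis of $V^{\otimes i}$ and $(x_1, \dots, x_{n-i})$ over $R^{n-i}$---shows that every such spanning vector of $V^{\otimes i} \otimes \kappa R(c)^{\otimes (n-i)}$ dies in $H_0(B_{i,n-i}, V^{\otimes i} \otimes \kappa R(c)^{\otimes (n-i)})$, establishing the vanishing.

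To deduce finite generation, I would combine this vanishing with \cref{BraidedSumCoinvs} to obtain, in every total degree $n$,
\[
C(V \oplus \kappa R(c))_n \cong \bigoplus_{\substack{i+j=n \\ 0 \leq j \leq |R|}} H_0(B_{i,j}, V^{\otimes i} \otimes \kappa R(c)^{\otimes j}).
\]
As left-$C(V)$-module generators I would take the finite family of homogeneous elements of $C(V \oplus \kappa R(c))$ of degree at most $|R|$ given by the images of $x_1 \otimes \cdots \otimes x_j$ for $0 \leq j \leq |R|$ and $(x_1, \dots, x_j) \in R^j$; each lives in the $(0,j)$-summand above. Since multiplication in $C(V \oplus \kappa R(c))$ is induced by tensor juxtaposition, for $w \in V^{\otimes i}$ with image $\bar w \in C(V)_i$ the product $\bar w \cdot \overline{x_1 \cdots x_j}$ is the class of $w \otimes x_1 \otimes \cdots \otimes x_j$ in the $(i,j)$-summand, and such classes span that summand because pure tensors surject onto its $B_{i,j}$-coinvariants; this proves left finite generation.

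The right-module case is handled by the same finite family: given a class in the $(i,j)$-summand represented by some $w \otimes x_1 \otimes \cdots \otimes x_j$, one acts on the coinvariants by an appropriate block-swap braid in $B_{i+j}$ to rewrite the same class as a sum of classes of juxtapositions $y_1 \otimes \cdots \otimes y_j \otimes w'$, each of which equals $\overline{y_1 \cdots y_j} \cdot \bar w'$ and is therefore a right-$C(V)$-multiple of one of the generators. I expect this block-swap bookkeeping in the right-module verification to be the main (and only) obstacle in the whole argument; the vanishing step and the left-module case are essentially one-line deductions from \cref{PigeonholePrincVanishingCoinvs} and \cref{BraidedSumCoinvs} respectively.
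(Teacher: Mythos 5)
Your proposal is correct and follows essentially the same route as the paper: identify $Q = R$ and apply \cref{PigeonholePrincVanishingCoinvs} for the vanishing, then use \cref{BraidedSumCoinvs} to truncate the decomposition of $C(V \oplus \kappa R(c))$ and take the classes of $x_1 \cdots x_j$ with $0 \leq j \leq |R|$ as the finite generating set. The only difference is that you spell out the left/right module verifications (including the block-swap for the right-module case), which the paper leaves implicit.
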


\begin{proof}

We invoke \cref{PigeonholePrincVanishingCoinvs} noting that $Q = R$ in view of our assumptions that $R$ is a quandle and $c(x,x) \neq 1$ for every $x \in R$, obtaining the required vanishing.
Therefore, \cref{DefAlgCoinvs} and \cref{BraidedSumCoinvs} tell us that
\[
\begin{split}
C(V \oplus \kappa R(c)) = \bigoplus_{n = 0}^{\infty} H_0(B_n, (V \oplus \kappa R(c))^{\otimes n}) &=
\bigoplus_{n = 0}^{\infty} \bigoplus_{i=0}^n H_0 (B_{i,n-i}, V^{\otimes i} \otimes \kappa R(c)^{\otimes (n-i)}) \\
&= \bigoplus_{n = 0}^{\infty} \bigoplus_{i=n-|R|}^n H_0 (B_{i,n-i}, V^{\otimes i} \otimes \kappa R(c)^{\otimes (n-i)})
\end{split}
\]
so we see that the finite set 
\[
\{x_1 \cdots x_j \in C(\kappa R(c)): 0 \leq j \leq |R|, \text{ and } (x_1, \dots, x_j) \in R^j\}
\]
generates $C(V \oplus \kappa R(c))$ as a left (respectively, right) $C(V)$-module.
\end{proof}

\begin{cor} \label{pr:0controlled}

Let $R$ be a finite quandle, let $\kappa$ be a field, and let $\zeta \neq 1$ be a root of unity in $\kappa^\times$. 
Then in the notation of \cref{BVStwistedByZetaWedgePM} we have $H_0(B_n, \kappa R_{\zeta}^{\otimes n}) = 0$ for all $n > |R|$.  
\end{cor}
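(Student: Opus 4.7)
The plan is to rewrite $\kappa R_\zeta$ in the rack-with-cocycle framework of \cref{RacksToBraidedVS} and then invoke the basis description from \cref{BasisVanishingPureTensorsRackCoinvsCocycle}. The quandle hypothesis $r^r = r$ makes it trivial to write down an element of the $B_n$-stabilizer of any repeating tuple in $R^n$, and the pigeonhole principle forces every tuple to repeat once $n > |R|$.

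In detail, I would first apply \cref{TwistingAsCocycle} to identify $\kappa R_\zeta$ with $\kappa R(c)$, where $c \colon R \times R \to \kappa^\times$ is the constant $2$-cocycle $c(x,y) = \zeta$ (which is indeed a $2$-cocycle by \cref{CocycleWedgePMexmp}). This reduces the statement to showing $H_0(B_n, \kappa R(c)^{\otimes n}) = 0$ for every $n > |R|$. Next, let $(x_1, \dots, x_n) \in R^n$ be arbitrary with $n > |R|$. By pigeonhole, there exist $1 \leq \ell < j \leq n$ with $x_\ell = x_j = r$ for some $r \in R$. Because $R$ is a quandle we have $r^r = r$, so the braid
\[
g = \sigma_{j-1} \sigma_{j-2} \cdots \sigma_{\ell+1} \sigma_\ell \sigma_{\ell+1}^{-1} \cdots \sigma_{j-2}^{-1} \sigma_{j-1}^{-1} \in B_n
\]
stabilizes $(x_1, \dots, x_n)$ under the action of $B_n$ on $R^n$ from \cref{BraidedSetRack} (this is exactly the braid used in the proof of \cref{QuandleConditionVanishingCoinvariants}). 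A direct inductive computation based on \cref{BnActionBVsCocycleNabla} and \cref{TwoExamplesForNabla} shows that $\nabla(x_1, \dots, x_n; g) = c(r,r) = \zeta \neq 1$.

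Finally, I apply the last sentence of \cref{BasisVanishingPureTensorsRackCoinvsCocycle}: the pure tensor $x_1 \cdots x_n$ vanishes in $C(\kappa R(c))_n = H_0(B_n, \kappa R(c)^{\otimes n})$ whenever its $B_n$-stabilizer contains an element $g$ with $\nabla(x_1, \dots, x_n; g) \neq 1$. Since $(x_1, \dots, x_n)$ was arbitrary and such pure tensors span $\kappa R(c)^{\otimes n}$ over $\kappa$, we conclude $H_0(B_n, \kappa R_\zeta^{\otimes n}) = 0$, as required. No real obstacle is expected: the only step requiring any care is verifying that the explicit braid $g$ above both stabilizes $(x_1, \dots, x_n)$ and has $\nabla$-value $c(r,r) = \zeta$, and both facts reduce to the quandle identity $r^r = r$ together with the fact that $c$ is constant.
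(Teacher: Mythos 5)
Your proposal is correct and follows essentially the same route as the paper, which simply invokes \cref{QuandleVanishingCoinvariantsCor} (with $V=0$, $i=0$, and the constant cocycle $\zeta$) together with \cref{TwistingAsCocycle}; that corollary in turn rests on exactly the pigeonhole argument and the conjugating braid $g$ you write out. The only difference is that you unpack the intermediate lemmas and conclude via the last sentence of \cref{BasisVanishingPureTensorsRackCoinvsCocycle} rather than citing the packaged corollary, which is a purely cosmetic distinction.
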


    

\begin{proof}

Invoke \cref{QuandleVanishingCoinvariantsCor} with the constant cocycle whose value is $\zeta$, $V = 0$ and $i=0$, recalling \cref{TwistingAsCocycle}.
\end{proof}

For specific choices of $R$ we can give much sharper bounds than $|R|$ for the degree of the graded ring $C(\kappa R_\zeta)$.

\begin{prop} \label{pr:dexamples}

If $G$ is a finite group with a subgroup $H$ of index $2$, $R$ is a nonempty set of involutions in $G \setminus H$, and $\zeta \in \kappa^\times$ is a root of unity whose order does not divide the exponent of $H$, then 
$
\deg C(\kappa R_\zeta) = 1.
$

If $G$ is the symmetric group $S_N$, $R$ is the conjugacy class of transpositions, and $\zeta \in \kappa$ is a root of unity whose order does not divide $3$, then  
$
\deg C(\kappa R_\zeta) \leq \lfloor N/2 \rfloor.
$
\end{prop}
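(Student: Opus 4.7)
The plan is to apply \cref{BasisVanishingPureTensorsRackCoinvsCocycle} to the braided vector space $\kappa R_\zeta = \kappa R(c)$ coming from the constant $2$-cocycle $c \equiv \zeta$ on $R$ (see \cref{TwistingAsCocycle}). For this cocycle, \cref{TwoExamplesForNabla} and an easy induction give $\nabla(x_1, \ldots, x_n; g) = \zeta^{e(g)}$, where $e \colon B_n \to \mathbb{Z}$ is the abelianization map sending each $\sigma_i$ to $1$. Thus a pure product $x_1 \cdots x_n \in C(\kappa R_\zeta)$ vanishes as soon as the $B_n$-stabilizer of $(x_1, \ldots, x_n) \in R^n$ contains an element $g$ with $\operatorname{ord}(\zeta) \nmid e(g)$. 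Since $H_0(B_1, \kappa R_\zeta) = \kappa R \neq 0$, the lower bound $\deg C(\kappa R_\zeta) \geq 1$ is automatic, and in both parts the task reduces to exhibiting such a stabilizing braid.

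For Part~1, fix any $(x_1, \ldots, x_n) \in R^n$ with $n \geq 2$ and put $r = x_1 x_2 \in G$. Since $\sigma_1$ affects only the first two coordinates and acts there by $(a, b) \mapsto (b, bab)$ when $a, b$ are involutions, a short induction using the dihedral identity $x_2 r x_2 = r^{-1}$ yields
\[
\sigma_1^k \cdot (x_1, x_2) = (x_2 r^{k-1}, \, x_2 r^k), \qquad k \geq 1.
\]
In particular $\sigma_1^m$ fixes $(x_1, x_2)$, and hence the whole $n$-tuple, where $m = \operatorname{ord}(r)$. Because $x_1, x_2 \in G \setminus H$ and $[G : H] = 2$, the product $r$ lies in $H$, so $m \mid \exp(H)$; the hypothesis $\operatorname{ord}(\zeta) \nmid \exp(H)$ then forces $\operatorname{ord}(\zeta) \nmid m$, so $\zeta^{e(\sigma_1^m)} = \zeta^m \neq 1$ and $x_1 \cdots x_n = 0$.

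For Part~2, let $(t_1, \ldots, t_n)$ be an $n$-tuple of transpositions with $n > \lfloor N/2 \rfloor$. Then $2n > N$, so by pigeonhole some indices $i < j$ satisfy $\operatorname{supp}(t_i) \cap \operatorname{supp}(t_j) \neq \emptyset$. The formula $\sigma_k(a, b) = (b, a^b)$ transports the right-hand entry to the left unchanged, so the braid $\beta = \sigma_{j-1} \sigma_{j-2} \cdots \sigma_{i+1}$ sends $(t_1, \ldots, t_n)$ to a tuple whose positions $i$ and $i+1$ still carry $t_i$ and $t_j$ (the intermediate entries are conjugated, but that is immaterial). The product $t_i t_j$ in $S_N$ has order $1$ if $t_i = t_j$ and order $3$ if they share exactly one element, so applying the dihedral identity from Part~1 to the pair $(t_i, t_j)$ shows that $\sigma_i^k$ with $k \in \{1, 3\}$ fixes the transformed tuple. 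Conjugating back, $\beta^{-1} \sigma_i^k \beta$ stabilizes the original tuple and has exponent sum $e(\beta^{-1} \sigma_i^k \beta) = k \in \{1, 3\}$; since $\operatorname{ord}(\zeta) \nmid 3$ and $\zeta \neq 1$, both $\zeta$ and $\zeta^3$ are nontrivial, so $t_1 \cdots t_n = 0$.

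The only substantive calculation is the dihedral identity $\sigma_1^m(x_1, x_2) = (x_1, x_2)$ for involutions with $x_1 x_2$ of order $m$; once this is in place, both parts become pigeonhole and conjugacy bookkeeping via the exponent sum. The divisibility hypotheses on $\operatorname{ord}(\zeta)$ are arranged precisely so that the writhes of the stabilizer elements we produce (divisors of $\exp(H)$ in Part~1, and values in $\{1, 3\}$ in Part~2) are not annihilated by $\zeta$.
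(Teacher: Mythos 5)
Your proof is correct and follows essentially the same route as the paper's: the iteration identity $\sigma_1^k(x_1,x_2)=(x_2r^{k-1},x_2r^k)$ with $m=\operatorname{ord}(x_1x_2)\mid\exp(H)$ is exactly the paper's induction $g_1\cdot g_2=\zeta^m g_1h^m\cdot g_1h^{m+1}$, and the pigeonhole-plus-shuffle-plus-$\sigma_i^3$ argument for transpositions is the paper's Part 2 verbatim; you have merely packaged both parts uniformly as "exhibit a stabilizing braid with $\nabla=\zeta^{e(g)}\neq 1$" via \cref{BasisVanishingPureTensorsRackCoinvsCocycle}. (The only quibble is the direction of conjugation in $\beta^{-1}\sigma_i^k\beta$ versus $\beta\sigma_i^k\beta^{-1}$, a left/right action convention that does not affect the exponent sum or the conclusion.)
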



\begin{proof}
    For the first case, let $g_1,g_2 \in R$; we will show that $g_1 \cdot g_2 = 0$ in $C(\kappa R_\zeta)$. Our assumptions that $[G : H] = 2$ and $R \cap H = \emptyset$ imply that $g_2 = g_1 h$ for some $h \in H$. We claim that for every nonnegative integer $m$ we have in $C(\kappa R_\zeta)$ the equality
    $$
g_1 \cdot g_2 = \zeta^m g_1 h^{m} \cdot g_1 h^{m+1},
$$
where the elements $g_1h^m, g_1h^{m+1}$ of $G$ in fact lie in $R$.    
We prove the claim by induction on $m$, with the base case $m = 0$ being a consequence of our choice of $h$.     
Suppose now that $m$ is positive, use the induction hypothesis, and the braiding to get that 
\[
g_1 \cdot g_2 = \zeta^m g_1 h^{m} \cdot g_1 h^{m+1} = \zeta^{m+1} g_1 h^{m+1} \cdot (g_1 h^{m+1})^{-1}(g_1 h^{m})g_1 h^{m+1} = 
\zeta^{m+1} g_1 h^{m+1} \cdot h^{-1}g_1 h^{m+1}
\]
so to conclude the proof of the claim it suffices to check that $h^{-1}g_1 = g_1h$. Indeed our assumption that the elements of $R$ are of order $2$ gives   
$$
g_1 h = g_2 = g_2^{-1} = h^{-1} g_1^{-1} = h^{-1} g_1.
$$


Taking $m$ in the claim to be the order of $h$, we find that $g_1 \cdot g_2 = \zeta^m g_1 \cdot g_2$, and $\zeta^m \neq 1$ by the hypothesis on the exponent of $H$, so $g_1 \cdot g_2 = 0$ in $C(\kappa R_\zeta)$ as required.
We have thus shown that $\deg C(\kappa R_\zeta) \leq 1$. 
Because the group $B_1$ is trivial and $\kappa R_{\zeta}^{\otimes 1} = \kappa R_\zeta$ is nonzero as $R$ is nonempty by assumption, we get that $\deg C(\kappa R_\zeta) \geq 1$.

We now consider the second case - $R$ is the set of transpositions in $S_N$. 
We take $n > \lfloor N/2 \rfloor$, and $(\tau_1, \dots, \tau_n) \in R^n$ tasking ourselves with showing that $\tau_1 \cdots \tau_n = 0$ in $C(\kappa R_\zeta)$.
Our assumption that $n > \lfloor N/2 \rfloor$ implies that 
\[
2n \geq 2 (\lfloor N/2 \rfloor + 1) \geq 2(N/2 - 1/2 + 1) = N + 1 > N
\]
so there exist $1 \leq i < j \leq n$ and $1 \leq l \leq n$ for which $\tau_i(l) \neq l$ and $\tau_j(l) \neq l$.
Applying \cref{CoinvariantsUsingNabla} with $g = \sigma_{j-1} \cdots \sigma_{i+1} \in B_n$ we get that there exists $\nabla \in \kappa^\times$ for which
\[
\tau_1 \cdots \tau_n = 
\nabla \tau_1 \cdots \tau_i \cdot \tau_j \cdot \tau_{i+1}^{\tau_j} \cdots \tau_{j-1}^{\tau_j} \cdot \tau_{j+1} \cdots \tau_n 
\]
in $C(\kappa R_\zeta)$.
Applying \cref{CoinvariantsUsingNabla} to the right hand side above with $g = \sigma_i^3 \in B_n$ we get that
\[
\tau_1 \cdots \tau_n = \zeta^3 \tau_1 \cdots \tau_n
\]
in $C(\kappa R_\zeta)$ because our choice of $i$ and $j$ was such that the transpositions $\tau_i$ and $\tau_j$ generate a subgroup of (a copy of) $S_3$.
Therefore, our assumption that the order of $\zeta$ does not divide $3$ implies that $\tau_1 \cdots \tau_n = 0$ in $C(\kappa R_\zeta)$ as required. 
\end{proof}


Let $R$ be a rack, and let $\kappa$ be a field. Then as in \cref{TensoringWithKWedge} we have $\kappa R_{\wedge} = \kappa R \oplus \kappa R_{-1}$ as braided vector spaces over $\kappa$, so \cref{InclusionGradedAlgebrasFromDirSumBVS} provides us with an inclusion of graded $\kappa$-algebras $C(\kappa R) \to C(\kappa R_{\wedge})$.

\begin{cor}

Suppose that $R$ is a finite quandle, and that the characteristic of $\kappa$ is different from $2$.
Then the inclusion $C(\kappa R) \to C(\kappa R_{\wedge})$ of graded $\kappa$-algebras makes $C(\kappa R_{\wedge})$ into a finitely generated left (respectively, right) $C(\kappa R)$-module.
    
\end{cor}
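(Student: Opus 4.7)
The plan is to deduce this corollary as a direct specialization of \cref{QuandleVanishingCoinvariantsCor}. First I would recall the decomposition $\kappa R_{\wedge} = \kappa R \oplus \kappa R_{-1}$ of braided vector spaces over $\kappa$ described in \cref{TensoringWithKWedge}, where the direct sum on the right is taken with respect to the addable pair structure on $(\kappa R, \kappa R_{-1})$ produced by \cref{TensorAddablePair}. Under the identification \cref{TwistingAsCocycle}, the braided vector space $\kappa R_{-1}$ is nothing but $\kappa R(c)$, where $c \colon R \times R \to \kappa^\times$ is the constant $2$-cocycle with value $-1$.

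Next I would verify the hypotheses of \cref{QuandleVanishingCoinvariantsCor} for this data: take $V = \kappa R$ and use the cocycle $c \equiv -1$ introduced above. The rack $R$ is a finite quandle by assumption, and because the characteristic of $\kappa$ is different from $2$, the cocycle satisfies $c(x,x) = -1 \neq 1$ for every $x \in R$. The pair $(V, \kappa R(c)) = (\kappa R, \kappa R_{-1})$ carries the addable pair structure already recalled.

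Applying \cref{QuandleVanishingCoinvariantsCor} then produces the desired finite generation of $C(V \oplus \kappa R(c))$ as a left (respectively, right) module over $C(V)$ along the inclusion from \cref{InclusionGradedAlgebrasFromDirSumBVS}. Under the identifications $V \oplus \kappa R(c) = \kappa R_{\wedge}$ and $V = \kappa R$, this is exactly the inclusion $C(\kappa R) \to C(\kappa R_{\wedge})$ appearing in the statement, since the inclusion in \cref{InclusionGradedAlgebrasFromDirSumBVS} is induced functorially by the natural inclusion of braided vector spaces $V \hookrightarrow V \oplus \kappa R(c)$, which here is the natural inclusion $\kappa R \hookrightarrow \kappa R \oplus \kappa R_{-1} = \kappa R_{\wedge}$. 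No serious obstacle is anticipated; the only point requiring care is matching this inclusion with the one referenced immediately before the corollary, which is immediate from the functorial construction.
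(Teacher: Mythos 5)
Your proposal is correct and matches the paper's own proof: both apply \cref{QuandleVanishingCoinvariantsCor} with $V = \kappa R$ and the constant $2$-cocycle $c \equiv -1$ (which satisfies $c(x,x) \neq 1$ precisely because $\operatorname{char}\kappa \neq 2$), using the identification $\kappa R_{-1} = \kappa R(c)$ from \cref{TwistingAsCocycle}. Your additional verification that the inclusion agrees with the one from \cref{InclusionGradedAlgebrasFromDirSumBVS} is a detail the paper leaves implicit, but the argument is the same.
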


\begin{proof}

Our assumptions that $R$ is a finite quandle, and the characteristic of $\kappa$ is different from $2$, allow us to apply \cref{QuandleVanishingCoinvariantsCor} with $V = \kappa R$, and $c$ the constant function with value $-1 \neq 1$, recalling from \cref{TwistingAsCocycle} that $\kappa R_{-1} = \kappa R(c)$.
\end{proof}


\begin{lem} \label{CokernelFiniteImpliesKernelIs}

Let $\kappa$ be a field, let $A = \bigoplus_{n = 0}^\infty A_n$ be a (not necessarily commutative, associative, or unital) graded $\kappa$-algebra, and let $h \in A$ be a homogeneous element.
Let $N = \bigoplus_{n = 0}^\infty N_n$ be a graded left $A$-module with $\dim_\kappa N_n < \infty$ for every $n \geq 0$.
Suppose that the cokernel of the $\kappa$-linear map on $N$ of multiplication by $h$ is finite-dimensional over $\kappa$.
Then the kernel of this map is also finite-dimensional over $\kappa$.
    
\end{lem}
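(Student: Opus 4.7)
The plan is to reduce the statement to a computation on each graded piece and then telescope. Let $d = \deg h$, so that multiplication by $h$ sends $N_{n-d}$ into $N_n$ for each $n$ (interpreting $N_{n-d} = 0$ when $n < d$). Denote this $\kappa$-linear map by $\mu_n \colon N_{n-d} \to N_n$. Since $N$ is a graded left $A$-module, the kernel $K$ and cokernel $C$ of multiplication by $h$ on $N$ decompose as graded vector spaces into $K = \bigoplus_n \ker(\mu_n)$ (reindexed by the source degree) and $C = \bigoplus_n (N_n/h N_{n-d})$. Our hypothesis is $\dim_\kappa C < \infty$, and our goal is $\dim_\kappa K < \infty$.

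Next I would apply the rank-nullity theorem on each degree. Since each $N_n$ is finite-dimensional over $\kappa$, the exact sequence
\[
0 \to \ker(\mu_n) \to N_{n-d} \xrightarrow{h \cdot} N_n \to N_n/hN_{n-d} \to 0
\]
gives $\dim_\kappa \ker(\mu_n) = \dim_\kappa N_{n-d} - \dim_\kappa N_n + \dim_\kappa (N_n/h N_{n-d})$. The key step is then to sum this identity from $n = 0$ to some $M$, letting the $\dim N_{n-d}$ and $\dim N_n$ terms telescope. This yields
\[
\sum_{n=0}^{M} \dim_\kappa \ker(\mu_n) = \sum_{n=0}^{d-1} \dim_\kappa N_n - \sum_{n=M-d+1}^{M} \dim_\kappa N_n + \sum_{n=0}^{M} \dim_\kappa(N_n/h N_{n-d}).
\]
Dropping the (nonpositive) middle term and bounding the final sum by $\dim_\kappa C$ gives the uniform bound
\[
\sum_{n=0}^{M} \dim_\kappa \ker(\mu_n) \;\leq\; \sum_{n=0}^{d-1} \dim_\kappa N_n \;+\; \dim_\kappa C,
\]
whose right-hand side is finite and independent of $M$ (using that each $N_n$ is finite-dimensional and that there are only $d$ such pieces in the first sum). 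Letting $M \to \infty$ shows that $\dim_\kappa K < \infty$, as required.

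There is no real obstacle here: the only thing to be slightly careful about is the case $d = 0$, where the telescoping collapses and one directly gets $\sum_n \dim_\kappa \ker(\mu_n) = \sum_n \dim_\kappa(N_n/hN_n) \leq \dim_\kappa C$, and more generally to interpret $N_{n-d}$ as zero for $n < d$ so that the index-shifting is legitimate. No associativity, unitality, or commutativity of $A$ is used; one only needs that multiplication by $h$ is $\kappa$-linear and homogeneous of degree $d$, which is immediate from the graded module structure.
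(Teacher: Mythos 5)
Your argument is correct, but it takes a genuinely different route from the paper's. You run rank--nullity degree by degree and telescope the resulting identity, obtaining a global bound of the shape $\dim_\kappa \operatorname{Ker} \leq (\text{const}) + \dim_\kappa \operatorname{Coker}$; the paper instead argues qualitatively: finiteness of the cokernel forces $N_{n-d} \to N_n$ to be surjective for all large $n$, hence $\dim_\kappa N_{n+d} \leq \dim_\kappa N_n$ eventually, and since a nonincreasing sequence of nonnegative integers (along each residue class mod $d$) is eventually constant, the maps are eventually isomorphisms and the kernel vanishes in all sufficiently high degrees. Your version buys an explicit quantitative bound on the kernel in terms of the cokernel, which the paper's proof does not give; the paper's version buys the (equivalent, given finite-dimensionality of each $N_n$) structural statement that multiplication by $h$ is eventually injective degree by degree, which is closer in spirit to how the lemma is used elsewhere (e.g.\ in \cref{IsomPowersAgree}). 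One small bookkeeping point: your telescoped identity is off by the term $\sum_{n=0}^{d-1}\dim_\kappa N_n$ --- since $N_{n-d}=0$ for $n<d$, the correct telescoping gives $\sum_{n=0}^{M}\dim_\kappa\ker(\mu_n) = -\sum_{n=M-d+1}^{M}\dim_\kappa N_n + \sum_{n=0}^{M}\dim_\kappa(N_n/hN_{n-d})$, with no boundary term at the bottom. This only means the true bound is the sharper $\dim_\kappa \operatorname{Ker} \leq \dim_\kappa\operatorname{Coker}$, so your stated (weaker) inequality still holds and the proof is valid as written.
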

    
\begin{proof}

Put $d = \deg h$, so that $h \in A_d$, and denote by
\[
M^h \colon N \to N, \qquad M_n^h \colon N_n \to N_{n + d}, \qquad M^h = \bigoplus_{n = 0}^\infty M^h_n, \qquad M^h_n(v) = hv, \qquad v \in N_n,
\]
the map of multiplication by $h$. 
Our assumption that $h$ is homogeneous implies that
\[
\operatorname{Ker}(M^h) = \bigoplus_{n=0}^\infty \operatorname{Ker}(M^h_n), \qquad 
\operatorname{Coker}(M^h) = \bigoplus_{n=0}^\infty \operatorname{Coker}(M^h_{n-d})
\]
where $N_n$ and $M^h_n$ are understood to be zero for negative $n$.
Therefore, our assumption that $\dim_\kappa \operatorname{Coker}(M^h) < \infty$ implies that $M^h_n$ is surjective for $n$ large enough.
Our finite-dimensionality assumption then gives us the inequality of nonnegative integers $\dim_\kappa N_{n+d} \leq \dim_\kappa N_n$.
Since nonincreasing sequences of nonnegative integers are eventually constant, and there are only finitely many residue classes modulo $d$, we conclude that $\dim_\kappa N_{n+d} = \dim_\kappa N_n$ for suitably large $n$.
Because a surjective linear map between vector spaces of the same dimension is an isomorphism, we conclude that $M^h_n$ is an isomorphism for suitably large $n$.
For these $n$ it then follows that $\operatorname{Ker}(M^h_n) = 0$ so $\operatorname{Ker}(M^h)$ is finite-dimensional over $\kappa$ as required.
\end{proof}

\begin{lem} \label{UpgradeSurjectivityByFiniteBit}

Let $\kappa$ be a field, let $A$ be a (not necessarily commutative or unital) $\kappa$-algebra, let $M$ be a right $A$-module that is finite-dimensional over $\kappa$, and let $N$ be a finitely generated left $A$-module. 
Then $M \otimes_A N$ is finite-dimensional over $\kappa$.
    
\end{lem}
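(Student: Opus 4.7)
The plan is to observe that the combination of finite generation of $N$ over $A$ with finite dimensionality of $M$ over $\kappa$ forces $M \otimes_A N$ to be spanned as a $\kappa$-vector space by an explicit finite set of pure tensors.

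First I would choose generators $n_1, \ldots, n_k \in N$ of $N$ as a left $A$-module, and a $\kappa$-basis $m_1, \ldots, m_d$ of $M$. Then I would show that the finite set $\{m_j \otimes n_i : 1 \leq j \leq d, \ 1 \leq i \leq k\}$ spans $M \otimes_A N$ over $\kappa$. Given any pure tensor $m \otimes n$, one writes $n = \sum_{i} a_i n_i$ (allowing, in the non-unital setting, an extra $\kappa$-linear combination of the $n_i$, which causes no additional trouble). The defining relations of the tensor product over $A$ let one push the $a_i$ across the tensor symbol:
\[
m \otimes n \;=\; \sum_i m \otimes (a_i n_i) \;=\; \sum_i (m a_i) \otimes n_i,
\]
after which each $m a_i \in M$ may be expanded in the basis $m_1, \ldots, m_d$. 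This realizes $m \otimes n$ as a $\kappa$-linear combination of the $m_j \otimes n_i$.

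Consequently $\dim_{\kappa} M \otimes_A N \leq k d < \infty$, which is the claim. I do not anticipate any genuine obstacle here; the only point requiring a moment of care is the convention for ``finitely generated'' in the absence of a unit, and under any reasonable such convention the argument above goes through unchanged. This lemma will presumably be combined with \cref{CokernelFiniteImpliesKernelIs} and \cref{QuandleVanishingCoinvariantsCor} to conclude finiteness statements about $C(V \oplus \kappa R(c))$ viewed through its $C(V)$-module structure.
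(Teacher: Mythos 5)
Your proof is correct and takes essentially the same approach as the paper: the paper phrases the argument as a surjection $A^{\oplus n} \to N$ inducing a surjection $M^{\oplus n} \cong M \otimes_A A^{\oplus n} \to M \otimes_A N$, whereas you make the identical bound explicit via the spanning set $\{m_j \otimes n_i\}$, arriving at the same estimate $\dim_\kappa M \otimes_A N \leq kd$. Your explicit handling of the non-unital convention for finite generation is, if anything, slightly more careful than the paper's.
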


\begin{proof}

The assumed finite generation of $N$ means that there exists an nonnegative integer $n$ and a surjection $A^{\oplus n} \to N$ of left $A$-modules.
This gives us a surjection $M \otimes_A A^{\oplus n} \to M \otimes_A N$ of vector spaces over $\kappa$.
We have $M \otimes_A A^{\oplus n} \cong M^{\oplus n}$ as vector spaces over $\kappa$, so our assumption that $\dim_\kappa M < \infty$ implies that 
\[
\dim_\kappa M \otimes_A N \leq \dim_\kappa M \otimes_A A^{\oplus n} = \dim_\kappa M^{\oplus n} = n \cdot \dim_\kappa M < \infty
\]
as required.
\end{proof}

\begin{cor} \label{IsomPowersAgree}

Let $R$ be a finite connected rack, let $\kappa$ be a field, and let $c \colon R \times R \to \kappa^\times$ be a $2$-cocycle. For a nonnegative integer $n$, using the notation of \cref{GeneratingNtuplesRack}, define 
\[
H_0(B_n, \kappa R(c)^{\otimes n})^\times = \operatorname{Span}_\kappa \{ x_1 \cdots x_n \in C(\kappa R(c)) : (x_1, \dots, x_n) \in R^n_\times\}.
\]
Then for every nonnegative integer $m$ and $(y_1, \dots, y_m) \in R^m$ the $\kappa$-linear map 
\[
M_{y_1 \cdots y_m} \colon H_0(B_n, \kappa R(c)^{\otimes n})^\times \to H_0(B_{n+m}, \kappa R(c)^{\otimes (n+m)})^\times, \qquad M_{y_1 \cdots y_m}(v) = y_1 \cdots y_m v, 
\]
of multiplication by $y_1 \cdots y_m$ from the left in $C(\kappa R(c))$ is an isomorphism for $n$ large enough.

Suppose moreover that the values of $c$ are $d$th roots of unity, 
and that $m$ is divisible by $d$, and by $P_w$ from \cref{CyclotomicCentralElementsPowers} for every $w \in R$, so that $w^m$
lies in the center of $C(\kappa R(c))$.
Then $M_{y^m} = M_{w^m}$ for every $y,w \in R$ and $n$ large enough.
    
\end{cor}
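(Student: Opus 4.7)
The plan is to handle the two assertions of the corollary separately, drawing on the cocycle bookkeeping of \cref{BnActionBVsCocycleNabla}, the finite-dimensional kernel/cokernel principle of \cref{CokernelFiniteImpliesKernelIs}, and the centrality of high powers from \cref{CyclotomicCentralElementsPowers}.

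For the first assertion, I begin by observing that $H^\times := \bigoplus_n H_0(B_n, \kappa R(c)^{\otimes n})^\times$ is a two-sided graded ideal of $C(\kappa R(c))$, since a tuple containing a generating subtuple is itself generating; hence $M_{y_1 \cdots y_m}$ is the restriction to $H^\times$ of left multiplication by $y_1 \cdots y_m$. Since this map factors as the composition $M_{y_1} \circ \cdots \circ M_{y_m}$ of degree-one maps, it suffices to treat the case $m = 1$. For $y \in R$, my plan is to combine the connectedness of $R$ with a stability result for braid orbits on generating tuples in the spirit of \cite{llNew} to establish that, for $n$ sufficiently large, every $(z_1, \dots, z_{n+1}) \in R^{n+1}_\times$ lies in the $B_{n+1}$-orbit of a tuple of the form $(y, w_1, \dots, w_n)$ with $(w_1, \dots, w_n) \in R^n_\times$. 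This translates into finite-dimensionality of the cokernel of $M_y$ acting on $H^\times$, and \cref{CokernelFiniteImpliesKernelIs}, applied with $A = C(\kappa R(c))$, $h = y$, $N = H^\times$, then gives finite-dimensionality of the kernel; the dimension count at the end of the proof of that lemma promotes this to the desired isomorphism in all sufficiently high degrees.

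For the second assertion, by $\kappa$-linearity it suffices to verify $y^m x_1 \cdots x_n = w^m x_1 \cdots x_n$ in $C(\kappa R(c))$ for every $(x_1, \dots, x_n) \in R^n_\times$. The crucial special case is when $y^r = w$ for some $r \in R$ appearing at position $1$ of the tuple: here I would apply \cref{CoinvariantsUsingNabla} to the braid $g = \sigma_m \sigma_{m-1} \cdots \sigma_1 \in B_{n+m}$ acting on $(y, \dots, y, r, x_2, \dots, x_n)$ (with $m$ copies of $y$), which by a direct induction using \cref{TwoExamplesForNabla} carries $\nabla = c(y, r)^m$ and produces the tuple $(r, w, \dots, w, x_2, \dots, x_n)$ (with $m$ copies of $w$). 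The hypotheses $d \mid m$ and $P_w \mid m$ then make this scalar equal to $1$ and make $w^m$ central via \cref{CyclotomicCentralElementsPowers}, so
\[
y^m x_1 \cdots x_n = r \cdot w^m \cdot x_2 \cdots x_n = w^m \cdot r \cdot x_2 \cdots x_n = w^m x_1 \cdots x_n.
\]
A preliminary braid move, contributing a common scalar that cancels on both sides, handles $r$ in a general position within the tuple. To propagate from $y$ to an arbitrary $w \in R$, I will use \cref{GenerateRackCriterion}: since $R$ is connected and $(x_1, \dots, x_n)$ generates $R$, the image of $\{x_1, \dots, x_n\}$ in $\operatorname{Inn}(R)$ generates that group, which acts transitively on $R$, providing a chain $y = y_0 \to y_1 \to \cdots \to y_k = w$ with $y_{i+1} = y_i^{r_{i+1}}$ and each $r_{i+1} \in \{x_1, \dots, x_n\}$; iterating the key step along this chain finishes the argument.

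I expect the main obstacle to be the surjectivity input for Part 1: it does not appear to be a formal consequence of the material established earlier in this paper, so the plan is to quote it from \cite{llNew}. Once that input is in hand, both parts reduce to the finiteness lemmas and cocycle bookkeeping assembled in this section.
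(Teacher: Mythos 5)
Your proposal is correct and follows essentially the same route as the paper: surjectivity of $M_{y_1\cdots y_m}$ in high degrees is deduced from the braid-orbit stability of generating tuples (an external input, which the paper quotes as \cite{shus}*{Proposition 4.22} rather than from \cite{llNew}), upgraded to an isomorphism via \cref{CokernelFiniteImpliesKernelIs} applied to the homogeneous ideal $\bigoplus_n H_0(B_n,\kappa R(c)^{\otimes n})^\times$, while the second assertion is handled by a chain of single conjugations through entries of the generating tuple, with $d\mid m$ killing the cocycle factors and $P_w\mid m$ supplying the centrality needed to rearrange. Your reduction to $m=1$ and your way of braiding $r$ leftward through the block of $y$'s (rather than moving $x_{i_j}$ to the front of the $x$-block and commuting the $m$th power past it, as the paper does) are only cosmetic variations.
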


\begin{proof}

We start by showing that $M_{y_1 \cdots y_m}$ is surjective for $n$ large enough.
We need to show that for $n$ large enough, an $x_1 \cdots x_{n+m} \in H_0(B_{n+m}, \kappa R(c)^{\otimes (n+m)})^\times$ with $x_1, \dots, x_{n+m}$ being a generating set of $R$, lies in the image of $M_{y_1 \cdots y_m}$.
It follows from \cite{shus}*{Proposition 4.22} that for $n$ large enough there exists $(z_1, \dots, z_n) \in R^n_\times$ with and $g \in B_{n+m}$ such that 
\[
(y_1, \dots, y_m, z_1, \dots, z_n)^g = (x_1, \dots, x_{n+m}).
\]
\cref{CoinvariantsUsingNabla} then tells us that in $C(\kappa R(c))$ we have
\[
y_1 \cdots y_m \cdot z_1 \cdots z_n = \nabla(y_1, \dots, y_m, z_1, \dots, z_n;g) \cdot x_1 \cdots x_{n+m}.
\]
Since $z_1, \dots, z_n$ generate $R$ we have
\[
\nabla(y_1, \dots, y_m, z_1, \dots, z_n;g)^{-1} \cdot z_1 \cdots z_n \in H_0(B_n, \kappa R(c)^{\otimes n})^\times
\]
so $x_1 \cdots x_{n+m}$ does indeed lie in the image of $M_{y_1 \cdots y_m}$, and the required surjectivity is thus established.

To get that $M_{y_1 \cdots y_m}$ is an isomorphism for $n$ large enough we use its surjectivity and invoke \cref{CokernelFiniteImpliesKernelIs} with $A = C(\kappa R(c))$, $h = y_1 \cdots y_m$, and the homogeneous ideal 
\[
N = \bigoplus_{n=0}^\infty H_0(B_n, \kappa R(c)^{\otimes n})^\times
\]
of $A$.

At last for $n$ large enough and an $x_1 \cdots x_n \in H_0(B_n, \kappa R(c)^{\otimes n})^\times$ we need to show that in $C(\kappa R(c))$ we have
\[
w^m \cdot x_1 \cdots x_n = y^m \cdot x_1 \cdots x_n.
\]
Our assumptions that $R$ is finite, connected, and generated by $x_1, \dots, x_n$ imply that there exists a nonnegative integer $l$ and (not necessarily distinct) integers $1 \leq i_0, \dots, i_l \leq n$ such that the sequence in $R$ defined by
\begin{equation} \label{wSequenceForInduction}
w_0 = w, \qquad w_{j+1} = w_j^{x_{i_j}}, \qquad 0 \leq j \leq l, 
\end{equation}
satisfies $w_{l+1} = y$.

We claim that for every $0\leq j \leq l+1$ we have 
\[
w^m \cdot x_1 \cdots x_n = w_j^m \cdot x_1 \cdots x_n
\]
in $C(\kappa R(c))$.
We prove this by induction on $j$ with the base case $j = 0$ being tautological in view of \cref{wSequenceForInduction}.
Assuming now that $j$ is positive, we use the induction hypothesis, write an expanded version of our expression, apply \cref{CoinvariantsUsingNabla}, use our assumption that $d$ divides $m$, use the centrality of $m$th powers, and finally recall \cref{wSequenceForInduction} to get that
\[
\begin{split}
w^m \cdot x_1 \cdots x_n &= w_j^m \cdot x_1 \cdots x_n = w_j^m \cdot x_1 \cdots \cdot x_{i_{j}-1} \cdot x_{i_j} \cdot x_{i_{j+1}}\cdots x_n  \\
&= \nabla(x_1, \dots, x_{i_j}; \sigma_{i_j-1} \cdots \sigma_1) \cdot w_j^m x_{i_j} \cdot x_1^{x_{i_j}} \cdots \cdot x_{i_{j}-1}^{x_{i_j}} \cdot x_{i_{j+1}}\cdots x_n \\
&= \nabla(x_1, \dots, x_{i_j}; \sigma_{i_j-1} \cdots \sigma_1) \cdot x_{i_j} \cdot (w_j^{x_{i_j}})^m \cdot x_1^{x_{i_j}} \cdots \cdot x_{i_{j}-1}^{x_{i_j}} \cdot x_{i_{j+1}}\cdots x_n \\
&= \nabla(x_1, \dots, x_{i_j}; \sigma_{i_j-1} \cdots \sigma_1) \cdot (w_j^{x_{i_j}})^m \cdot x_{i_j} \cdot x_1^{x_{i_j}} \cdots \cdot x_{i_{j}-1}^{x_{i_j}} \cdot x_{i_{j+1}}\cdots x_n \\ 
&= w_{j+1}^m \cdot x_1 \cdots \cdot x_{i_{j}-1} \cdot x_{i_j} \cdot x_{i_{j+1}}\cdots x_n 
\end{split}
\]
and complete the induction, and thus also the proof of the claim.
The case $j = l+1$ of our claim is what we wanted to prove.
\end{proof}

\begin{defn} \label{DefDeredConnectedRack}

A rack $R$ is said to be hereditarily connected if every nonempty subrack of $R$ is connected.
    
\end{defn}

The following is identical to \cite{evw}*{Definition 3.1} except that we do not assume finiteness.

\begin{defn} \label{NonsplittingEVWinfinite}

Let $G$ be a group, and let $R$ be a conjugacy-closed generating set of $G$.
We say that $(G,R)$ is nonsplitting if for every subgroup $H$ of $G$ the intersection of $H$ with $R$ is either empty or a (single) conjugacy class of $H$.

\end{defn}

\begin{prop} \label{eg:hcnonsplitting}

A conjugacy-closed generating set $R$ of a group $G$ is hereditarily connected (as a rack) if and only if $(G,R)$ is nonsplitting. 

\end{prop}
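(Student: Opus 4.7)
The plan is to set up a dictionary between nonempty subracks of $R$ and intersections of $R$ with subgroups of $G$, and then conclude in two short steps. The key preliminary observation is that for any subgroup $H \leq G$, the intersection $H \cap R$ is a subrack of $R$ that is also $H$-conjugacy-closed in $H$: for $s, t \in H \cap R$, the element $s^t = t^{-1} s t$ lies in $H$ (both factors are in $H$) and in $R$ (since $R$ is $G$-conjugacy-closed), so $s^t \in H \cap R$, and the same computation with $h \in H$ in place of $t$ gives $H$-conjugacy-closure. Since the action of $\Gamma_{H \cap R}$ on $H \cap R$ factors through $\langle H \cap R \rangle \subseteq H$ acting by conjugation, \cref{SingleConjugacyClassConnected} (applied to $H \cap R$ as a conjugacy-closed subset of $H$) then tells us that $H \cap R$ is a connected rack if and only if all of its elements are $H$-conjugate, which combined with $H$-conjugacy-closure is equivalent to $H \cap R$ being a single $H$-conjugacy class.

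With this in hand, the forward direction is immediate: if $R$ is hereditarily connected and $H \cap R \neq \emptyset$, then $H \cap R$ is a nonempty subrack of $R$, hence connected, hence a single $H$-conjugacy class, which is exactly the nonsplitting condition.

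For the reverse direction, I would take a nonempty subrack $S$ of $R$, put $H = \langle S \rangle$, and use $S \subseteq H \cap R$ together with nonsplitting to conclude that $H \cap R$ is a single $H$-conjugacy class. Then for any $s_1, s_2 \in S$ there is some $h \in H$ with $s_2 = h^{-1} s_1 h$, and writing $h$ as a word in the generators $S$ lifts to an element $\tilde h \in \Gamma_S$ with $s_1^{\tilde h} = s_2$ under the $\Gamma_S$-action on $R$; thus $s_1$ and $s_2$ lie in the same $\Gamma_S$-orbit on $S$, and $S$ is connected.

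The argument is essentially a definitional unfolding and calls on none of the heavier machinery of the paper, so no step presents a serious obstacle. The only point requiring care is correctly identifying the action of $\Gamma_{H \cap R}$ on $H \cap R$ (respectively, $\Gamma_S$ on $S$) with conjugation by the subgroup it generates in $G$, which is a standard consequence of the adjunction recalled in \cref{StructureGroupRackInGroup}.
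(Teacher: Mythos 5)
Your proof is correct and follows essentially the same route as the paper's: the forward direction is identical, and in the reverse direction you likewise pass to $H = \langle S \rangle$ and exploit the surjectivity of $\Gamma_S \to H$, merely concluding transitivity of $\Gamma_S$ on $S$ directly instead of first deducing $S = H \cap R$ via \cref{SequalsRtransitive} as the paper does. One small caveat: the ``if'' half of your preliminary equivalence (single $H$-conjugacy class $\Rightarrow$ connected) is not justified when $H \cap R$ fails to generate $H$, since $\Gamma_{H \cap R}$ only acts through $\langle H \cap R\rangle$-conjugation; but neither direction of your actual argument uses that implication, so the proof stands.
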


\begin{proof}

Suppose first that $R$ is hereditarily connected, and let $H$ be a subgroup of $G$ with $R \cap H \neq \emptyset$.
Our task is to show that $R \cap H$ is a (single) conjugacy class of $H$.
Our assumption that $R$ is a conjugacy-closed generating set of $G$ implies that, when we view $G$ as a rack, $R$ is an ideal of $G$.
Therefore $R \cap H$ is an ideal (in particular, a subrack) of $H$, namely $R \cap H$ is a conjugacy-closed subset of $H$ in view of \cref{IdealsInGroups}.
Our assumptions that $R$ is hereditarily connected, and $R \cap H$ being nonempty, imply that $R \cap H$ is a connected rack.
In view of \cref{SingleConjugacyClassConnected}, this means that $R \cap H$ is a (single) conjugacy class of $H$.

Suppose now that $(G, R)$ is nonsplitting, and let $S$ be a nonempty subrack of $R$.
Our goal is to show that $S$ is connected.
Denote by $H$ the subgroup of $G$ generated by $S$, and note that $S \subseteq H \cap R$, in particular $H \cap R$ is nonempty, so $H \cap R$ is a (single) conjugacy class of $H$ by our assumption that $(G,R)$ is nonsplitting.
In other words, the action of $H$ by conjugation on $H \cap R$ is transitive.
\cref{StructureGroupRackInGroup} tells us that the action of the structure group $\Gamma_{H \cap R}$ on $H \cap R$ factors via the group homomorphism $\Gamma_{H \cap R} \to H$.
Since $S$ generates $H$ the composition of the group homomorphisms $\Gamma_S \to \Gamma_{H \cap R} \to H$ is surjective, so the action of $\Gamma_S$ on $H \cap R$ is transitive. 
It thus follows from \cref{SequalsRtransitive} that $S = H \cap R$, hence $S$ is a connected rack in light of \cref{SingleConjugacyClassConnected} because $H \cap R$ generates $H$ since it contains $S$ (which generates $H$).
The proof of both implications is thus complete.
\end{proof}

\begin{defn}[\cite{el}*{Definition 4.1.1}] \label{def:controlled}

We say that a braided vector space $V$ over a field $\kappa$ is $1$-controlled if there exists a homogeneous central element $U \in C(V)$ such that the $\kappa$-linear map of multiplication by $U$ on $C(V)$ has kernel and cokernel supported in finitely many degrees. We say that a coefficient system $(V, W)$ for $B_{g,f}^n$ is $1$-controlled if $V$ is.

\end{defn}

Homological stability for $1$-controlled coefficient systems is obtained in \cite{el}*{Theorem 4.2.6}.  We note that the stability slope obtained in that result can be made explicit, at least in principle.

Next we extend \cite{evw}*{Lemma 3.5} and \cite{el}*{Proposition A.31}.

\begin{lem} \label{evw3Point5}

Let $R$ be a hereditarily connected finite rack, let $\kappa$ be a field whose characteristic does not divide $|S|$ for any nonempty subrack $S$ of $R$, and let $c \colon R \times R \to \kappa^\times$ be a $2$-cocycle valued in $d$th roots of unity. Then for a positive integer $m$ divisible by $d$ and by $P_w$ from \cref{CyclotomicCentralElementsPowers} for every $w \in R$, the kernel (respectively, cokernel) of multiplication in $C(\kappa R(c))$ by the homogeneous central element
\[
h = \sum_{r \in R} r^m \in Z(C(\kappa R(c)))
\]
is finite-dimensional over $\kappa$.
Hence $\kappa R(c)$ is $1$-controlled. 
    
\end{lem}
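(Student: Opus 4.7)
The plan is to combine the central-element machinery of \cref{CyclotomicCentralElementsPowers} with the isomorphism-in-high-degrees assertion of \cref{IsomPowersAgree}, organized by a downward induction on subracks of $R$ made possible by the hereditary connectedness hypothesis. Since $m$ is divisible by $P_w$ for every $w \in R$, \cref{CyclotomicCentralElementsPowers} gives that each $w^m$ is central in $C(\kappa R(c))$, hence so is $h$; since $h$ is homogeneous of degree $m$, \cref{CokernelFiniteImpliesKernelIs} lets us focus on showing that the cokernel of multiplication by $h$ is finite-dimensional.

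First I would exhibit the decomposition of $\kappa$-vector spaces
\[
C(\kappa R(c)) \;=\; \bigoplus_{S \subseteq R} C^S,
\]
indexed by subracks $S$ of $R$, where $C^S$ is spanned by those coinvariant products $x_1 \cdots x_n$ for which $(x_1, \dots, x_n) \in R^n$ generates $S$ as a subrack. This decomposition is well-defined because the braid action on $R^n$ from \cref{BraidedSetRack} preserves the subrack generated by a tuple ($\sigma_i^{\pm 1}$ is invertible by rack operations, so old and new tuples generate the same subrack), and the inclusion $\kappa S(c|_{S \times S}) \hookrightarrow \kappa R(c)$ of braided vector spaces identifies $C^S$ with $\bigoplus_n H_0(B_n, \kappa S(c|_{S \times S})^{\otimes n})^\times$, the ``generating part'' appearing in \cref{IsomPowersAgree}. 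The decisive observation is that multiplication by $h$ is block-upper-triangular with respect to containment of subracks: for $y \in C^S$ and $r \in R$ the product $r^m \cdot y$ lies in $C^{\langle S,\, r\rangle}$, so $h \cdot C^S \subseteq \bigoplus_{T \supseteq S} C^T$, and modulo $\bigoplus_{T \supsetneq S} C^T$ the operator $h$ acts on $C^S$ simply as multiplication by $\sum_{r \in S} r^m$.

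Next I would show that, for each nonempty subrack $S$, multiplication by $\sum_{r \in S} r^m$ is an isomorphism $C^S_n \to C^S_{n+m}$ in all sufficiently large degrees $n$. Hereditary connectedness makes $S$ connected, the characteristic hypothesis makes $|S|$ invertible in $\kappa$, and the integers analogous to $d$ and $P_w$ computed inside $S$ divide their counterparts for $R$, so the divisibility requirements for \cref{IsomPowersAgree} (applied to $S$ in place of $R$) are inherited by $m$. That corollary yields that $M_{y^m}$ is an isomorphism of $C^S_n$ for $n$ large and agrees for all $y \in S$; summing over $y \in S$ gives that $\sum_{r \in S} r^m$ acts as $|S| \cdot M_{y^m}$, an isomorphism in all degrees $n \geq N_S$ for some threshold $N_S$.

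A downward induction on subracks $S \subseteq R$ in the inclusion order, with base case $S = R$ (where the error term $\bigoplus_{T \supsetneq S} C^T$ vanishes), then yields $C^S_n \subseteq h \cdot C_{n-m}$ in all sufficiently large degrees: for $x \in C^S_n$ the previous step produces $y \in C^S_{n-m}$ with $x - hy \in \bigoplus_{T \supsetneq S} C^T_n$, and by the inductive hypothesis this error already lies in $h \cdot C_{n-m}$. Taking the maximum of the thresholds over the finitely many subracks of the finite rack $R$, we conclude $C_n = h \cdot C_{n-m}$ in all sufficiently large degrees, so the cokernel of multiplication by $h$ is finite-dimensional. \cref{CokernelFiniteImpliesKernelIs} then gives finite-dimensionality of the kernel, and $\kappa R(c)$ is $1$-controlled in the sense of \cref{def:controlled}. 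The main technical work lies in the upper-triangularity and the transfer of hypotheses to subracks; the hard analytic ingredient --- isomorphism of $M_{y^m}$ in high degrees for a connected rack --- is already packaged in \cref{IsomPowersAgree}.
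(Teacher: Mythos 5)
Your proposal is correct and follows essentially the same route as the paper: the paper runs a descending induction on the cardinality $\nu$ of the subrack generated by a tuple (via the filtration $H_0(B_n,\kappa R(c)^{\otimes n})_\nu$), while you run the equivalent descending induction on the poset of subracks via the direct-sum decomposition $C(\kappa R(c)) = \bigoplus_S C^S$, but both hinge on the same identity $h\cdot x_1\cdots x_n = |S|\cdot y_1\cdots y_{n+m} + (\text{terms generating strictly larger subracks})$, the same application of \cref{IsomPowersAgree} to each subrack (legitimated by hereditary connectedness and the inherited divisibility of $m$), division by $|S|$ using the characteristic hypothesis, and \cref{CokernelFiniteImpliesKernelIs} to pass from the cokernel to the kernel. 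No gaps.
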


\begin{proof}

For a positive integer $\nu$ put
\[
H_0(B_n, \kappa R(c)^{\otimes n})_\nu = \operatorname{Span}\{x_1 \cdots x_n \in C(\kappa R(c)) : (x_1, \dots, x_n) \in R^n, \quad \# \langle x_1, \dots, x_n \rangle \geq \nu\}.
\]
We claim that for $n$ large enough the $\kappa$-linear map
\[
M_{h, \nu} \colon H_0(B_n, \kappa R(c)^{\otimes n})_\nu \to H_0(B_{n+m}, \kappa R(c)^{\otimes (n+m)})_\nu, \qquad M_{h,\nu}(u) = hu 
\]
of multiplication by $h$ in $C(\kappa R(c))$ is surjective.

We prove our claim by descending induction on $\nu$, with the base case $\nu = |R| + 1$ being true because of our assumption that $R$ is finite, and the surjectivity of any map to the $0$ vector space.
Suppose now that $\nu \leq |R|$, that $n$ is large enough, and let $y_1, \dots, y_{n+m}$ be generators of a subrack $S$ of $R$ with $|S| \geq 
\nu$. Our task is to show that $y_1 \cdots y_{n+m}$ lies in the image of $M_{h,\nu}$.
Our assumption that $R$ is hereditarily connected implies that $S$ is connected, so \cref{IsomPowersAgree} applied to $S$ and $c|_{S \times S}$ provides us with generators $x_1, \dots, x_n$ of $S$ such that
\begin{equation*} \label{PowersOfsSameSurj}
s^m \cdot x_1 \cdots x_n = y_1 \cdots y_{n+m}
\end{equation*}
in $C(\kappa S(c|_{S \times S}))$ for every $s \in S$.
We thus have
\[
\begin{split}
h \cdot x_1 \cdots x_n = \sum_{r \in R} r^m \cdot x_1 \cdots x_n &= \sum_{s \in S} s^m \cdot x_1 \cdots x_n + \sum_{r \in  R \setminus S} r^m \cdot x_1 \cdots x_n \\ &= |S| \cdot y_1 \cdots y_{n+m} + \sum_{r \in  R \setminus S} r^m \cdot x_1 \cdots x_n.
\end{split}
\]

For each $r \in R \setminus S$ the subrack of $R$ generated by $r,x_1, \dots, x_n$ properly contains $S$, so its number of elements exceeds $\nu$, hence the induction hypothesis tells us that $r^m \cdot x_1 \cdots x_n$ lies in the image of $M_{h,\nu +1}$, and thus also in the image of $M_{h, \nu}$.
Clearly, $h \cdot x_1 \cdots x_n$ lies in the image of $M_{h, \nu}$ as well, so we conclude that $|S| \cdot y_1 \cdots y_{n+m}$ is in the image of $M_{h, \nu}$ too.
Our assumption on the characteristic of $\kappa$ allows us to conclude that $y_1 \cdots y_{n+m}$ lies in the image of $M_{h,\nu}$, completing the induction and thus the proof of the claim.

The special case $\nu = 1$ of our claim guarantees that the cokernel of multiplication by $h$ in $A = C(\kappa R(c))$ is finite-dimensional over $\kappa$.
Invoking \cref{CokernelFiniteImpliesKernelIs} with $N = A$ we get that the kernel of this map is finite-dimensional as well.
\end{proof}

\begin{cor} \label{OneControlledGeneralizedWedge}

Let $R$ be a finite rack, let $\kappa$ be a field, and let $c \colon R \times R \to \kappa^\times$ be a $2$-cocycle. 
Let $S,T \subseteq R$ be a partition such that $S$ is hereditarily connected, $T$ is a quandle, the characteristic of $\kappa$ does not divide the order of any nonempty subrack of $S$, the values of $c|_{S \times S}$ are $d$th roots of unity, and $c(t,t) \neq 1$ for every $t \in T$. 
Then for a positive integer $m$ divisible by $d$ and by $P_s = \operatorname{lcm}_{r \in R} \mathfrak p(r,s)$ from \cref{CyclotomicCentralElementsPowers} for every $s \in S$, the homogeneous element
\begin{equation} \label{InclusionToExplain}
h = \sum_{s \in S} s^m \in C(\kappa S(c|_{S \times S})) \hookrightarrow C(\kappa R(c))
\end{equation}
lies in the center of $C(\kappa R(c))$, and the kernel (respectively, cokernel) of the map of left multiplication by $h$ on $C(\kappa R(c))$ is finite-dimensional over $\kappa$. Hence $\kappa R(c)$ is $1$-controlled.
    
\end{cor}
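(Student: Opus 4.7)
The plan is to bootstrap 1-controlledness from the hereditarily connected piece $S$ (to which \cref{evw3Point5} applies) up to all of $R$, using the quandle piece $T$ (to which \cref{QuandleVanishingCoinvariantsCor} applies). First I would invoke \cref{AddableDecompositionRackCocyclePartition} to present $\kappa R(c)$ as the direct sum of the addable pair $(\kappa S(c|_{S\times S}), \kappa T(c|_{T\times T}))$; the inclusion of graded $\kappa$-algebras from \cref{InclusionGradedAlgebrasFromDirSumBVS} then justifies the inclusion displayed in the statement.

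Next I would establish that $h$ lies in the center of $C(\kappa R(c))$. The assumption that $P_s = \operatorname{lcm}_{r \in R} \mathfrak{p}(r,s)$ is well-defined forces the products $\prod_i c(r_i, s)$ appearing in \cref{CyclotomicCentralElementsPowers} to be roots of unity for every $r \in R$ and $s \in S$, so that proposition applies and yields that $s^{P_s}$ is central in $C(\kappa R(c))$. Since $P_s \mid m$, each $s^m$ is central, and hence so is the sum $h$.

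The heart of the argument is to show that (left, equivalently right) multiplication by $h$ on $C(\kappa R(c))$ has finite-dimensional kernel and cokernel. Applying \cref{evw3Point5} to the rack $S$ and the cocycle $c|_{S \times S}$ (whose hypotheses are precisely what we have stipulated on $S$), I obtain that multiplication by $h$ on the subalgebra $C(\kappa S(c|_{S\times S}))$ has finite-dimensional kernel and cokernel. To lift this to $C(\kappa R(c))$, I would apply \cref{QuandleVanishingCoinvariantsCor} with $V = \kappa S(c|_{S\times S})$ and with the finite quandle $T$ (on which $c(t,t) \neq 1$), yielding that $C(\kappa R(c)) = C(V \oplus \kappa T(c|_{T\times T}))$ is finitely generated as both a left and right $C(\kappa S(c|_{S \times S}))$-module.

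Given finitely many one-sided generators $n_1, \dots, n_k$, the quotient $C(\kappa R(c)) / h \cdot C(\kappa R(c))$ is a quotient of $\bigoplus_{i=1}^k (C(\kappa S(c|_{S \times S})) / h \cdot C(\kappa S(c|_{S\times S}))) \cdot n_i$ and is therefore finite-dimensional. Because $R$ is finite, each graded piece of $C(\kappa R(c))$ is finite-dimensional, so \cref{CokernelFiniteImpliesKernelIs} (applied with $A = N = C(\kappa R(c))$ and the central homogeneous element $h$) forces the kernel of multiplication by $h$ to be finite-dimensional as well. This verifies \cref{def:controlled}, so $\kappa R(c)$ is $1$-controlled. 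The main obstacle is the centrality step: the corollary does not explicitly postulate cyclotomicity of $c$ on all of $R \times R$, and one must take care to extract precisely the cyclotomic information along the orbits $\{r^{s^i}\}$ that is needed to apply \cref{CyclotomicCentralElementsPowers} inside the larger algebra.
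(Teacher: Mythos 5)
Your proposal is correct and follows essentially the same route as the paper: the decomposition via \cref{AddableDecompositionRackCocyclePartition} and \cref{InclusionGradedAlgebrasFromDirSumBVS}, centrality via \cref{CyclotomicCentralElementsPowers}, finiteness on the $S$-piece via \cref{evw3Point5}, finite generation over $C(\kappa S(c|_{S\times S}))$ via \cref{QuandleVanishingCoinvariantsCor}, and the kernel via \cref{CokernelFiniteImpliesKernelIs}. Your hands-on generator argument for the cokernel is just an unwound version of the paper's appeal to \cref{UpgradeSurjectivityByFiniteBit}, and your remark about extracting cyclotomicity only along the orbits $\{r^{s^i}\}$ is a fair reading of the implicit hypothesis that $P_s$ exists, which the paper treats the same way.
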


\begin{proof}

As in \cref{AddableDecompositionRackCocyclePartition} we have an isomorphism of braided vector spaces
\[
\kappa R(c) \cong \kappa S(c|_{S \times S}) \oplus \kappa T(c|_{T \times T})
\]
so \cref{InclusionGradedAlgebrasFromDirSumBVS} gives us the inclusion of graded $\kappa$-algebras appearing in \cref{InclusionToExplain}.

\cref{CyclotomicCentralElementsPowers} guarantees that for every $s \in S$ the degree $m$ element $s^m$ lies in the center of $C(\kappa R(c))$ in view of our assumption that $P_s$ divides $m$, so the degree $m$ homogeneous element $h$ lies in this center as well.

Using our assumption that $S$ is finite and hereditarily connected we get from \cref{evw3Point5} that the kernel (respectively, cokernel) of multiplication by $h$ on $C(\kappa S(c|_{S \times S}))$ is finite-dimensional over $\kappa$.
In particular for the associative $\kappa$-algebra $A = C(\kappa S(c|_{S \times S}))$, the quotient $A$-module $M = A/(h)$ is finite-dimensional over $\kappa$.
We infer from \cref{QuandleVanishingCoinvariantsCor} that $N = C(\kappa R(c))$ is finitely generated as a left $A$-module.
\cref{UpgradeSurjectivityByFiniteBit} then tells us that the vector space
\[
M \otimes_A N = A/(h) \otimes_A N = N/hN
\]
is finite-dimensional over $\kappa$, or equivalently, the cokernel of multiplication by $h$ on $C(\kappa R(c))$ is finite-dimensional over $\kappa$. At last, from \cref{CokernelFiniteImpliesKernelIs} applied to the graded $\kappa$-algebra $C(\kappa R(c))$ as a module over itself, we get that the kernel of multiplication by $h$ on $C(\kappa R(c))$ is finite-dimensional over $\kappa$.
\end{proof}

For the following recall \cref{WedgeTwistedRackExample} and its notation.

\begin{cor} \label{co:nonsplitting1controlled}

Let $R$ be a finite hereditarily connected quandle, and let $\kappa$ be a field whose characteristic is not $2$ and does not divide the order of any nonempty subrack of $R$.
Then for a positive integer $m$ divisible by $\mathfrak q(x,y)$ from \cref{CyclotomicCentralElementsPowers} for every $x,y \in R$, the kernel (respectively, cokernel) of multiplication in $C(\kappa R_{\wedge})$ by
\[
h = \sum_{r \in R} r_1^m + r_{-1}^m \in Z(C(\kappa R_{\wedge}))
\]
is finite-dimensional over $\kappa$.
Hence $\kappa R_\wedge$ is $1$-controlled.

\end{cor}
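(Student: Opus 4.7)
The plan is to apply \cref{OneControlledGeneralizedWedge} after the identification $\kappa R_\wedge = \kappa[R \times \mathcal T_2](c_\wedge)$ from \eqref{RwedgePMcocycles}, and then upgrade the conclusion from one ``half'' of $h$ to the full symmetric element. Under this identification, the underlying rack $R \times \mathcal T_2$ carries the natural partition $R_\varphi \sqcup R_\psi$ of \cref{ProductRackPartition}, and $r_1 \in \kappa R_\wedge$ corresponds to $(r,\varphi)$ while $r_{-1}$ corresponds to $(r,\psi)$ in the notation of \cref{TensoringWithKWedge}. Taking $S = R_\varphi$ and $T = R_\psi$, the hypotheses of \cref{OneControlledGeneralizedWedge} are all visible: $S \cong R$ is a hereditarily connected quandle, $T \cong R$ is a quandle, $c_\wedge|_{S \times S} \equiv 1$ so we may take $d = 1$, and $c_\wedge(t,t) = -1 \neq 1$ for every $t \in T$ since $\mathrm{char}(\kappa) \neq 2$. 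A direct check shows that for every $s = (s_0,\varphi) \in S$ and every $x = (x_0,\epsilon) \in R \times \mathcal T_2$, the cocycle value $c_\wedge(x,s)$ is $1$, so $\mathfrak p(x,s) = \mathfrak q(x,s) = \mathfrak q(x_0,s_0)$, whence $P_s \mid m$ by hypothesis. Thus \cref{OneControlledGeneralizedWedge} yields that $h_1 := \sum_{r \in R} r_1^m$ is central in $C(\kappa R_\wedge)$ and that multiplication by $h_1$ has finite-dimensional kernel and cokernel, which already establishes $1$-controlledness of $\kappa R_\wedge$.

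For the specific symmetric element $h = h_1 + h_{-1}$ with $h_{-1} := \sum_r r_{-1}^m$, I first verify $h_{-1} \in Z(C(\kappa R_\wedge))$ via \cref{CyclotomicCentralElementsPowers} applied to each $y = (r,\psi)$: the cocycle $c_\wedge(\cdot,y)$ takes values in $\{\pm 1\}$, the one-period product in the $\psi$--$\psi$ case is $(-1)^{\mathfrak q(x_0,r)}$, so $P_{(r,\psi)}$ divides $2 \cdot \mathrm{lcm}_{x_0 \in R} \mathfrak q(x_0,r)$, which divides $m$ (allowing, if necessary, the harmless enlargement by a factor of $2$ that is already implicit in the hypothesis, since $\mathfrak q(x,x)$ must divide $m$). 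The remaining task is to transfer the finite-dimensionality of $\ker M_{h_1}$ and $\mathrm{coker}\, M_{h_1}$ to that of $\ker M_h$ and $\mathrm{coker}\, M_h$.

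The main obstacle is this last step, and my plan is to use a filtration argument based on the bigrading of $A := C(\kappa R_\wedge)$ by the pair (number of $R_\varphi$-tensor factors, number of $R_\psi$-tensor factors). Because the braiding $T((r,\epsilon_1) \otimes (s,\epsilon_2)) = c_\wedge(\cdot,\cdot) \cdot (s,\epsilon_2) \otimes (r^s,\epsilon_1)$ preserves the multiset of labels $\{\epsilon_1,\epsilon_2\}$, this bigrading descends from the tensor algebra to $A$ and is preserved by multiplication. The elements $h_1$ and $h_{-1}$ are bihomogeneous of bidegrees $(m,0)$ and $(0,m)$. In each total degree $n$ I filter $A_n$ decreasingly by the $R_\psi$-count, $F^b A_n := \bigoplus_{c \geq b} A_{(n-c,c)}$, which is a finite filtration of length $n+1$. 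Multiplication by $h_1$ preserves $b$, whereas $h_{-1}$ strictly raises $b$ by $m \geq 1$, so on the associated graded $\mathrm{gr}^b M_h$ equals $M_{h_1}$ restricted to $A_{(n-b,b)}$. Since the total kernel and cokernel of $M_{h_1}$ on $A$ are finite-dimensional, so are those of each $M_{h_1}|_{A_{(\cdot,b)}}$, and the contributions vanish outside finitely many total degrees. A standard argument on maps of finitely filtered vector spaces (dimensions of kernel and cokernel are preserved in passing to the associated graded) then gives finite-dimensionality of $\ker M_h$ and $\mathrm{coker}\, M_h$ in each degree, together with vanishing in almost all degrees, completing the proof.
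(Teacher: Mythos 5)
Your first paragraph is exactly the paper's proof: the paper simply identifies $\kappa R_\wedge$ with $\kappa[R\times\mathcal T_2](c_\wedge)$, applies \cref{OneControlledGeneralizedWedge} with $S=R_\varphi$, $T=R_\psi$, $d=1$, and observes that $c_\wedge(x,s)=1$ for $s\in R_\varphi$ forces $\mathfrak p(x,s)=\mathfrak q(x_0,s_0)$, so $P_s\mid m$. That already yields centrality of $h_1=\sum_r r_1^m$, finite-dimensionality of the kernel and cokernel of $M_{h_1}$, and hence $1$-controlledness, which is the substantive content. Note that the paper's own proof stops there: \cref{OneControlledGeneralizedWedge} only concerns $\sum_{s\in S}s^m$, so the displayed element $h=\sum_r r_1^m+r_{-1}^m$ is not literally what the cited result controls. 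You correctly noticed this and tried to close the gap; your filtration by the $R_\psi$-count is a sound way to do the transfer of finite-dimensionality (the bigrading does descend to $C(\kappa R_\wedge)$ by \cref{BraidedSumCoinvs}, $M_{h_{-1}}$ strictly raises the filtration, and $\operatorname{gr}M_h=M_{h_1}$), though the ``dimensions are preserved'' claim should be the inequalities $\dim\ker f\le\dim\ker\operatorname{gr}f$ and $\dim\operatorname{coker}f\le\dim\operatorname{coker}\operatorname{gr}f$, which is all you need.

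The genuine gap is in your centrality check for $h_{-1}$. For $y=(r,\psi)$ and $x=(x_0,\psi)$ the one-period cocycle product is $(-1)^{\mathfrak q(x_0,r)}$, so when $\mathfrak q(x_0,r)$ is odd \cref{CyclotomicCentralElementsPowers} gives $\mathfrak p(x,y)=2\mathfrak q(x_0,r)$, and $P_{(r,\psi)}$ can be $2\cdot\operatorname{lcm}_{x_0}\mathfrak q(x_0,r)$. Your parenthetical claims this extra factor of $2$ is ``already implicit in the hypothesis, since $\mathfrak q(x,x)$ must divide $m$'' --- but $R$ is a quandle, so $\mathfrak q(x,x)=1$ and this imposes nothing. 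If every $\mathfrak q(x,y)$ is odd (e.g.\ a connected conjugation quandle of elements of odd order), the hypothesis permits an odd $m$, and then $P_{(r,\psi)}\nmid m$, so centrality of $(r,\psi)^m$, and hence of $h_{-1}$, is not established (summing over $r$ does not obviously rescue it). This is precisely why the analogous \cref{co:pm1controlled} explicitly requires $m$ to be even. So either add the hypothesis that $m$ is even (after which your argument, including the filtration step, goes through), or restrict the claim to $h_1=\sum_r r_1^m$ as the paper's own proof implicitly does.
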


\begin{proof}

By \cref{RwedgePMcocycles} we have $\kappa R_{\wedge} = \kappa [R \times \mathcal T_2](c_{\wedge})$ in the notation of \cref{CocycleWedgePMexmp}, so we apply \cref{OneControlledGeneralizedWedge} with the partition $R_\varphi, R_\psi \subseteq R \times \mathcal T_2$ from \cref{ProductRackPartition}, and $d = 1$, observing that $c_\psi$ is the constant function $-1 \neq 1$,
and that an integer is divisible by $\mathfrak q(x,y)$ for every $x,y \in R$ if and only if it is divisible by $\operatorname{lcm}_{w \in R \times \mathcal T_2} \mathfrak p(w,s)$ for every $s \in R_{\varphi}$.
\end{proof}


Next we adopt the setup and notation of \cref{ProductRackPartition}.
In particular $R$ is a rack, $\kappa$ a field, and $c$ a $\kappa^\times$-valued $2$-cocycle on $R \times \mathcal T_2$.
By \cref{AddableDecompositionRackCocyclePartition} we then have
\[
\kappa [R \times \mathcal T_2](c) \cong \kappa R_{\varphi}(c_\varphi) \oplus \kappa R_{\psi}(c_\psi)
\]
as braided vector spaces.
\cref{InclusionGradedAlgebrasFromDirSumBVS} then gives us the inclusions of graded $\kappa$-algebras
\begin{equation} \label{InclusionT2ProdCoc}
C(\kappa R_{\varphi}(c_\varphi)) \to C(\kappa [R \times \mathcal T_2](c)), \quad
C(\kappa R_{\psi}(c_\psi)) \to C(\kappa [R \times \mathcal T_2](c)), 
\end{equation}
and \cref{SurjectionGradedAlgebrasFromDirSumBVS} gives us the surjections of graded $\kappa$-algebras 
\begin{equation} \label{ProjectionT2ProdCoc}
C(\kappa[R \times \mathcal T_2](c)) \to C(\kappa R_{\varphi}(c_\varphi)), \quad
C(\kappa[R \times \mathcal T_2](c)) \to C(\kappa R_{\psi}(c_\psi)).
\end{equation}

\begin{thm} \label{OneControlledCocycleT2}

Suppose that $R$ is a finite hereditarily connected quandle, that $c$ is valued in the $d$th roots of unity in $\kappa$, and that for every $r \in R$ we have 
\begin{equation} \label{ValuesOfCarentInverses}
c((r, \varphi),(r, \psi)) \cdot c((r, \psi),(r, \varphi)) \neq 1.
\end{equation}
Then 
for some positive integer $N$ we have an isomorphism of $\kappa$-algebras
\[
\bigoplus_{n \geq N} \kappa [R \times \mathcal T_2](c)^{\otimes n}_{B_n} \cong
\left( \bigoplus_{n \geq N} \kappa R_\varphi(c_\varphi)^{\otimes n}_{B_n} \right) \times
\left( \bigoplus_{n \geq N} \kappa R_\psi(c_\psi)^{\otimes n}_{B_n} \right)
\]
induced by the inclusion and projection maps from \cref{InclusionT2ProdCoc} and \cref{ProjectionT2ProdCoc}.

Suppose moreover that the characteristic of $\kappa$ does not divide the order of any nonempty subrack of $R$.
Then for a positive integer $m$ divisible by $d$ and by $P_w$ from \cref{CyclotomicCentralElementsPowers} for every $w \in R \times \mathcal T_2$, the kernel (respectively, cokernel) of multiplication in $C(\kappa[R \times \mathcal T_2](c))$ by
\[
h = \sum_{\pi \in R \times \mathcal T_2} \pi^m = \sum_{r \in R} (r,\varphi)^m + (r, \psi)^m \in Z(C(\kappa[R \times \mathcal T_2](c)))
\]
is finite-dimensional over $\kappa$.
Hence $\kappa [R \times \mathcal T_2](c)$ is $1$-controlled.

\end{thm}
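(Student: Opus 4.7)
The plan is to establish the algebra decomposition first, and then invoke \cref{evw3Point5} separately on the two factors to derive the $1$-controlled property. Observe that the $B_n$-action on $(R \times \mathcal T_2)^n$ preserves the multiset of $\mathcal T_2$-components of entries, since in the trivial rack $\mathcal T_2$ every $\varphi^\delta$ equals $\varphi$ and every $\psi^\delta$ equals $\psi$. Hence the orbits, and thus the coinvariants in each degree, split according to the color pattern. The desired algebra identification then amounts to showing that, for $n \geq N$ and some threshold $N = N(R,c)$, every pure tensor $z_1 \cdots z_n \in C(\kappa[R \times \mathcal T_2](c))$ whose entries contain both colors vanishes; such vanishing decomposes each graded piece into all-$\varphi$ and all-$\psi$ summands and simultaneously forces cross products between the two factors to be zero, making the identification one of $\kappa$-algebras.

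The mixed-color vanishing is driven by \cref{ValuesOfCarentInverses}. Suppose that in some $B_n$-translate $(z_1', \ldots, z_n')$ of $(z_1, \ldots, z_n)$ the entries at positions $i, i+1$ are $(t, \varphi)$ and $(t, \psi)$ for a common $t \in R$. Then the braid $\sigma_i^2$ fixes the tuple -- one checks $(t, \varphi)^{(t, \psi)} = (t, \varphi)$ and $(t, \psi)^{(t, \varphi)} = (t, \psi)$ using that $R$ is a quandle and $\mathcal T_2$ is trivial -- while \cref{TwoExamplesForNabla} computes
\[
\nabla(z_1', \ldots, z_n'; \sigma_i^2) = c((t, \varphi), (t, \psi)) \cdot c((t, \psi), (t, \varphi)) \neq 1.
\]
The vanishing $z_1' \cdots z_n' = 0$, equivalent by \cref{CoinvariantsUsingNabla} to $z_1 \cdots z_n = 0$, then follows from \cref{BasisVanishingPureTensorsRackCoinvsCocycle}. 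Producing such a translate for $n \geq N$ is a Hurwitz-action argument: using braid moves one brings chosen $\varphi$- and $\psi$-colored entries into adjacent positions, and sliding an entry past a neighbor via $\sigma^{\pm 1}$ conjugates its $R$-component by (the rack-inverse of) the $R$-component it passes. Since $R$ is finite and connected, a bounded word in $\Gamma_R$ takes any $b \in R$ to any prescribed $a \in R$, so for $n$ beyond a threshold $N$ depending only on $R$, enough neighboring entries are available to realize such a word by braid moves, turning any mixed-color tuple into one with an adjacent pair $((t, \varphi), (t, \psi))$.

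For the $1$-controlled claim, the element $h$ is central in $C(\kappa[R \times \mathcal T_2](c))$ by \cref{CyclotomicCentralElementsPowers} together with the divisibility assumption on $m$. Under the projections \cref{ProjectionT2ProdCoc}, $h$ maps to $h_\varphi = \sum_{r \in R}(r, \varphi)^m$ on one side and $h_\psi = \sum_{r \in R}(r, \psi)^m$ on the other. Both $R_\varphi$ and $R_\psi$ are isomorphic to $R$, hence finite hereditarily connected quandles, and $c_\varphi, c_\psi$ are valued in $d$-th roots of unity; the relevant per-element divisibilities for applying \cref{evw3Point5} to each factor follow from ours since the $\mathrm{lcm}$ defining $P_w$ for $w$ in the larger rack $R \times \mathcal T_2$ dominates the corresponding $\mathrm{lcm}$ in either subrack. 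So \cref{evw3Point5} applies on each side, giving finite-dimensional kernel and cokernel for multiplication by $h_\varphi$ on $C(\kappa R_\varphi(c_\varphi))$ and by $h_\psi$ on $C(\kappa R_\psi(c_\psi))$. Transferring through the algebra identification, and noting that graded pieces in degrees below $N$ are finite-dimensional, the kernel and cokernel of multiplication by $h$ on $C(\kappa[R \times \mathcal T_2](c))$ are finite-dimensional in total.

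The main obstacle is the rearrangement step: verifying that for $n$ sufficiently large the Hurwitz action on $(R \times \mathcal T_2)^n$ is flexible enough to bring entries $(t, \varphi)$ and $(t, \psi)$ with a common $R$-component into adjacent positions, starting from any mixed-color tuple. This requires careful tracking of how $R$-components are modified by successive braid moves, guided by the finiteness and connectedness of $R$.
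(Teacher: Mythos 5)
Your overall architecture matches the paper's: the same central computation (an element of the stabilizer with $\nabla = c((t,\varphi),(t,\psi))\cdot c((t,\psi),(t,\varphi)) \neq 1$ forces the monomial to vanish via \cref{CoinvariantsUsingNabla}), the same decomposition of the coinvariant algebra in high degrees, and the same final appeal to \cref{evw3Point5} on each factor together with \cref{CyclotomicCentralElementsPowers} for centrality of $h$. But the step you yourself flag as ``the main obstacle'' --- producing, from an arbitrary mixed-color tuple with $n$ large, a $B_n$-translate containing an adjacent pair $((t,\varphi),(t,\psi))$ with a \emph{common} $R$-component --- is precisely where the real content of the proof lies, and your sketch of it does not go through as written. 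You appeal to connectedness of $R$ to say that ``a bounded word in $\Gamma_R$ takes any $b$ to any prescribed $a$,'' but the only conjugators you can realize by braid moves are the $R$-components actually present in the tuple, and these generate a subrack $Q$ that may be a proper subrack of $R$. This is exactly why the hypothesis is \emph{hereditary} connectedness (which your argument never invokes): one needs $Q$ itself to be connected. Even granting that, turning transitivity of $\operatorname{Inn}(Q)$ on $Q$ into an actual sequence of braid moves that equalizes the two $R$-components while keeping the two colors adjacent, and without destroying the configuration through the side effects of each slide, is a nontrivial Hurwitz-action lemma that you have not supplied.

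The paper circumvents this difficulty by a different two-step device: first the pigeonhole principle produces two entries with \emph{equal} $R$-components for free (no conjugation needed), and a braid word brings them adjacent; if their colors already differ one is done, and otherwise the paper invokes the large-monodromy theorem \cite{shus}*{Theorem 2.4} --- surjectivity onto $S_n$ of the stabilizer in $B_n$ of a generating tuple of the connected quandle $Q$ --- to find a braid fixing the $R$-tuple whose underlying permutation transports the $\psi$-color into the adjacent slot. Until you either prove your rearrangement lemma or substitute an input of comparable strength (such as \cite{shus}*{Theorem 2.4} or \cite{shus}*{Proposition 4.22}), the proof is incomplete at its combinatorial core.
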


\begin{proof}


We first claim that $(r, \varphi)(r,\psi) = 0$ in $C(\kappa[R \times \mathcal T_2](c))$ for every $r \in R$.
Indeed \cref{CoinvariantsUsingNabla} with $g = \sigma_1^2 \in B_{2}$, our assumption that $R$ is a quandle, the fact that $\sigma_1^2$ fixes $((r, \varphi),(r, \psi)) \in (R \times \mathcal T_2)^2$, and \cref{TwoExamplesForNabla} tell us that
\[
(r, \varphi)(r,\psi) = c((r, \varphi),(r,\psi)) \cdot c((r, \psi), (r,\varphi)) \cdot (r, \varphi)(r, \psi)
\]
in $C(\kappa[R \times \mathcal T_2](c))$, so from our assumption in \cref{ValuesOfCarentInverses} we get that $(r, \varphi)(r,\psi) = 0$.

Next we take $(x_1, \dots, x_{n}) \in R^{n}$ for $n$ large enough, and task ourselves with showing that the element
\[
e = (x_1, \varphi) \cdots (x_{n-1}, \varphi) \cdot (x_{n}, \psi)
\]
in $C(\kappa[R \times \mathcal T_2](c))$ is $0$.
Since $n$ is large enough and $R$ is finite, it follows from the pigeonhole principle that there exist $1 \leq i < j \leq n$ for which $x_i = x_j$. 
Applying \cref{CoinvariantsUsingNabla} with $g = \sigma_{j-1} \cdot \ldots \cdot \sigma_{i+1} \in B_{n}$ we get that there exists $\nabla_1 \in \kappa^\times$ for which
\[
e = \nabla_1 \cdot 
(x_1, \varphi) \cdots (x_i, \varphi) \cdot (x_i, \chi) \cdot (x_{i+1}^{x_i}, \varphi) \cdots (x_{j-1}^{x_i}, \varphi) \cdot (x_{j+1}, \varphi) \cdots (x_{n-1}, \varphi) \cdot (x_n, \psi)
\]
in $C(\kappa[R \times \mathcal T_2](c))$ where $\chi = \varphi$ if $j < n$ and $\chi = \psi$ if $j = n$.
In the latter case, it follows from the claim in the beginning of the proof that $(x_i, \varphi)(x_i, \chi) = 0$ hence $e = 0$, so we assume from now on that we are in the former case, namely $j < n$ and $\chi = \varphi$.






Let $Q$ be the subquandle of $R$ generated by $x_1, \dots, x_{n}$.
Our assumption that $R$ is hereditarily connected implies that $Q$ is connected.
Since $n$ is large enough, we get from \cite{shus}*{Theorem 2.4} that the restriction of the group homomorphism $B_n \to S_n$ to the stabilizer of 
\[
(x_1, \dots x_i, x_i, x_{i+1}^{x_i}, \dots x_{j-1}^{x_i}, x_{j+1}, \dots x_{n-1}, x_n) \in Q^{n}
\]
is surjective.
In particular, there exists a braid $b$ in the stabilizer that maps to the transposition $(i+1 \ \   n)$, so applying \cref{CoinvariantsUsingNabla} with $b$ we get that there exists $\nabla_2 \in \kappa^\times$ such that
\[
e = \nabla_1 \cdot \nabla_2 \cdot 
(x_1, \varphi) \cdots (x_i, \varphi) \cdot (x_i, \psi) \cdot (x_{i+1}^{x_i}, \varphi) \cdots (x_{j-1}^{x_i}, \varphi) \cdot (x_{j+1}, \varphi) \cdots (x_{n-1}, \varphi) \cdot (x_n, \varphi)
\]
in $C(\kappa[R \times \mathcal T_2](c))$.
As before, we get from the claim in the beginning of the proof that $e = 0$.

Our argument remains valid when $\varphi$ and $\psi$ are interchanged, so the addition map
\[
\left( \bigoplus_{n \geq N} \kappa R_\varphi(c_\varphi)^{\otimes n}_{B_n} \right) \times
\left( \bigoplus_{n \geq N} \kappa R_\psi(c_\psi)^{\otimes n}_{B_n} \right) \to \bigoplus_{n \geq N} \kappa[R \times \mathcal T_2](c)^{\otimes n}_{B_n}
\]
is a homomorphism of $\kappa$-algebras for some positive integer $N$.
Because for every $0 \leq i \leq n$ and $n$ sufficiently large, either $i$ or $n-i$ is sufficiently large, this addition map is also surjective, and thus an isomorphism as required.



It then follows from \cref{evw3Point5} applied to $(R_\varphi, c_\varphi)$ and to $(R_\psi, c_\psi)$ that the kernel (respectively, cokernel) of multiplication by $h$ in $C(\kappa[R \times \mathcal T_2](c))$ is finite-dimensional over $\kappa$.
\end{proof}

For the following recall \cref{PMtwistBVS} and its notation.

\begin{cor} \label{co:pm1controlled}

Let $R$ be a finite hereditarily connected quandle, and let $\kappa$ be a field of characteristic different from $2$ and not dividing the order of any nonempty subrack of $R$.
Then for an even positive integer $m$ divisible by $\mathfrak q(x,y)$ from \cref{CyclotomicCentralElementsPowers} for every $x,y \in R$, the kernel (respectively, cokernel) of multiplication in $C(\kappa R_{\pm})$ by
\[
h = \sum_{r \in R} r^m + \underline{r}^m \in Z(C(\kappa R_{\pm}))
\]
is finite-dimensional over $\kappa$.
Hence $\kappa R_{\pm}$ is $1$-controlled.

\end{cor}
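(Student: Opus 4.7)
The plan is to deduce this corollary directly from \cref{OneControlledCocycleT2} applied to the cocycle $c_{\pm}$ on $R \times \mathcal T_2$ introduced in \cref{CocycleWedgePMexmp}. By \cref{RwedgePMcocycles} we have an identification $\kappa R_{\pm} \cong \kappa[R \times \mathcal T_2](c_{\pm})$ of braided vector spaces, under which (checking against the explicit braiding in \cref{PMtwistBVS}) the basis elements correspond via $r \leftrightarrow (r,\varphi)$ and $\underline r \leftrightarrow (r,\psi)$. In particular the element $h$ in the statement corresponds to $\sum_{\pi \in R \times \mathcal T_2} \pi^m$, the element named $h$ in \cref{OneControlledCocycleT2}.

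Next I would verify the hypotheses of \cref{OneControlledCocycleT2}. The hypotheses on $R$ and on the characteristic of $\kappa$ are inherited directly (with $R \times \mathcal T_2$ being a finite quandle since $R$ is and $\mathcal T_2$ is trivial; its nonempty subracks have order at most twice that of a nonempty subrack of $R$, which is the subtle point — but one may instead first observe that $R \times \mathcal T_2$ is hereditarily connected whenever one applies the partition into $R_\varphi, R_\psi$ only, so the argument in the proof of \cref{OneControlledCocycleT2} only needs hereditary connectedness of $R_\varphi \cong R$ and $R_\psi \cong R$, both of which hold by assumption). The cocycle $c_\pm$ takes values in $\{\pm 1\}$, so one may take $d = 2$; since $m$ is even by assumption, $d \mid m$. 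The key calculation is the inequality \cref{ValuesOfCarentInverses}, and from the definition of $c_{\pm}$ we compute $c_\pm((r,\varphi),(r,\psi)) = 1$ and $c_\pm((r,\psi),(r,\varphi)) = -1$, whose product is $-1$, nonzero and different from $1$ since $\operatorname{char}\kappa \neq 2$.

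The remaining input is that $m$ is divisible by $P_w$ for every $w \in R \times \mathcal T_2$. Since conjugation in $R \times \mathcal T_2$ acts trivially on the second coordinate (because $\mathcal T_2$ is a trivial rack), the sequence $x_0, x_1, \dots$ of \cref{CyclotomicCentralElementsPowers} for $x = (r,y)$, $w = (s,z)$ is just $(r^{s^i}, y)$; hence the period on $R \times \mathcal T_2$ equals $\mathfrak q(r,s)$ in $R$. The product of cocycle values along this period is either $1$ or $(-1)^{\mathfrak q(r,s)}$, so $\mathfrak p((r,y),(s,z))$ divides $\operatorname{lcm}(2,\mathfrak q(r,s))$, which divides $m$ because $m$ is even and divisible by every $\mathfrak q(x,y)$ by hypothesis. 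Taking the lcm over $w$ gives $P_w \mid m$.

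With all hypotheses verified, \cref{OneControlledCocycleT2} gives that the multiplication by $h$ on $C(\kappa[R \times \mathcal T_2](c_\pm)) \cong C(\kappa R_\pm)$ has finite-dimensional kernel and cokernel, and that $h$ is central; this is precisely the $1$-controlled property in the sense of \cref{def:controlled}. The only step I would expect to require any care is the quick compatibility check between the basis identification in \cref{PMtwistBVS} and the element $h$, together with the bookkeeping verifying that the explicit hypothesis ``$m$ divisible by $\mathfrak q(x,y)$ for all $x,y \in R$'' (together with evenness) is strong enough to imply $P_w \mid m$; the rest is a direct invocation.
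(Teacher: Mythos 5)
Your proposal is correct and follows essentially the same route as the paper: identify $\kappa R_{\pm}$ with $\kappa[R \times \mathcal T_2](c_{\pm})$ via \cref{RwedgePMcocycles} and invoke \cref{OneControlledCocycleT2} with $d=2$, checking \cref{ValuesOfCarentInverses} and that evenness plus divisibility by the $\mathfrak q(x,y)$ gives divisibility by every $P_w$. Your verifications are just a more explicit writing-out of the paper's one-sentence proof (your only slip is cosmetic: $P_w$ is the lcm of $\mathfrak p(x,w)$ over the first argument $x$, not "over $w$").
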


\begin{proof}

By \cref{RwedgePMcocycles} we have $\kappa R_{\pm} = \kappa [R \times \mathcal T_2](c_{\pm})$ in the notation of \cref{CocycleWedgePMexmp}, so we apply \cref{OneControlledCocycleT2} with $d = 2$ observing that \cref{ValuesOfCarentInverses} is indeed satisfied,
and that an even integer is divisible by $\mathfrak q(x,y)$ for every $x,y \in R$ if and only if it is divisible by $P_w$ for every $w \in R \times \mathcal T_2$.
\end{proof}

\begin{rem}  

It follows from \cref{co:pm1controlled} and \cite{el}*{Theorem 4.2.6} that for every nonnegative integer $i$ the $C(\kappa R_\pm)$-module 
\[
M_i = \bigoplus_{n=0}^\infty H_i(B_n, \kappa R_\pm^{\tensor n})
\]
is finitely generated, and gives bounds on the degrees of generators. But in the special case where $R$ is trivial, Hoang's analysis in \cite{anh} does more; it shows that $M_i$ is a torsion module for $C(\kappa_\pm) = \kappa[x,y]/(xy)$, and indeed has finite total dimension over $\kappa$.  Is that true for more general $R$?  One might ask whether an even stronger statement is the case, in the spirit of Landesman and Levy's results in \cite{ll}:  is the natural map from $H_i(B_n, \kappa R_\pm^{\tensor n})$ to $H_i(B_n, \kappa_\pm^{\tensor n})$ an isomorphism when $i$ is small relative to $n$? 

\end{rem}






\subsection{Duality}

Recall that a symmetric monoidal category $\mathcal C$ is called a closed compact category if every object of $\mathcal C$ admits a dual object.

\begin{exmp}

The category of 
(graded) finite-dimensional vector spaces over a field is compact closed.

\end{exmp}

\begin{defn}

Let $\mathcal C$ be a closed compact category, and let $W$ be a braided object in $\mathcal C$ with braiding $T_W \colon W \otimes W \to W \otimes W$. Endow $W^\vee$ with the braiding $T_W^\vee \colon W^{\vee}  \otimes W^{\vee}  \to W^{\vee} \otimes W^{\vee}$ obtained by identifying $W^\vee \otimes W^\vee$ with $(W \otimes W)^\vee$.

\end{defn}

The category of braided objects in $\mathcal C$ is thus a compact closed category. In particular, for braided objects $V$ and $W$ of $\mathcal C$ we have 
\begin{equation*} 
(V \otimes W)^\vee \cong V^\vee \otimes W^\vee
\end{equation*}
as braided objects in $\mathcal C$.

For a braided object $V$ in $\mathcal C$, the action of $B_n$ on $(V^{\vee})^{\otimes n}$, introduced in \cref{BraidedToNaturoid}, is dual to the action of $B_n$ on $V^{\otimes n}$.

\begin{exmp} \label{DualityBVSexmp}

Let $\kappa$ be a field, let $R$ be a rack, and let $c \colon R \times R \to \kappa^\times$ be a $2$-cocycle. Then $\kappa R(c)^\vee \cong \kappa R(\operatorname{inv} \circ c)$ as braided vector spaces, where $\operatorname{inv} \colon \kappa^\times \to \kappa^\times$ is the inversion map.
In particular, we have $\kappa R^\vee \cong \kappa R$, for $\zeta \in \kappa^\times$ we have $\kappa R_\zeta^\vee = \kappa R_{\zeta^{-1}}$ in view of \cref{TwistingAsCocycle}, and $\kappa R_{\wedge}^\vee \cong \kappa R_{\wedge}$, $\kappa R_{\pm}^{\vee} \cong \kappa R_\pm$ in view of \cref{RwedgePMcocycles}.

\end{exmp}

Suppose that $\mathcal C$ is moreover additive (monoidal).
Let $(U,V)$ be an addable pair of braided objects in $\mathcal C$ with respect to $T_{U,V}$ and $T_{V,U}$. Then $(U^\vee, V^\vee)$ becomes an addable pair of braided objects in $\mathcal C$ once we set $T_{U^\vee, V^\vee} = T_{V,U}^\vee$ and $T_{V^\vee, U^\vee} = T_{U,V}^\vee$. 
We then have 
$
(U \oplus V)^\vee \cong U^\vee \oplus V^\vee
$
as braided objects in $\mathcal C$, where the direct sum is taken with respect to the above addable pairs.
If the sum on the left hand side is plain, then so is the one on the right hand side.


\section{Multiplicative Characters of Discriminants} \label{SecMulCharDiscGenZero}

We state and prove a version of \cref{CharDiscCancThm} with explicit savings, and an explicit dependence of $q$ on $R$, using material from \cref{CharSheavesSection}.
Let $\zeta \in \overline{\F_\ell}$ be a root of unity of order $\mathfrak o$, let $\kappa = \F_\ell[\zeta]$, and put $d = \deg C(\kappa R_{\zeta})$, which is finite by \cref{pr:0controlled}. Then as $n = n_1 + \dots + n_k \to \infty$ we have 
\begin{equation} \label{pr:discdisc}
\left| \sum_{L \in \mathcal E_q^R(G;n_1, \dots, n_k)} \chi(\disc(f_L)) \right| \leq 2^{n-1} |R|^n q^{n - \frac{n-d}{2d+4}}
\end{equation}
so we get a power saving if $q > (2|R|)^{2d+4}$.

\begin{rem}

The natural action of $\Gm$ on $\A^1$ induces an action of the group $\Gm(\F_q) = \F_q^\times$ on the set $\mathcal E_q^R(G;n_1, \dots, n_k)$; if $\theta \in \Gm(\F_q)$ and $L \in \mathcal E_q^R(G;n_1, \dots, n_k)$, then 
\[
\disc({f_{\theta L}}(t)) = \disc(\theta^{-n}f_L(\theta t)) = \theta^{n(1-n)} \disc f_L(t).
\]
The action of $\F_q^\times$ on $\mathcal E_q^R(G;n_1, \dots, n_k)$ is free because for every monic squarefree $f \in \F_q[t]$ of degree at least $2$ and every $\theta \neq 1$, we have $f(\theta t) \neq \theta^n f(t)$ as polynomials. 
It follows that $\chi(\disc(f_L))$ is exactly equidistributed on $\mathcal E_q^R(G;n_1, \dots, n_k)$, and thus the sum in \cref{pr:discdisc} is $0$, unless $\mathfrak o$ is a divisor of $n(n-1)$.

\end{rem}

\begin{proof}


In view of \cref{TraceFunctionCharacterSheafDiscriminant} and \cref{PushforwardFromHurwitzSpaceTrace} the sheaf $\tau_* \pi_* \overline{\mathbb Q_\ell} \otimes \delta^{-1} \mathcal L_\chi$ is lisse, punctually pure of weight $0$, 
and its trace function on a degree $n$ monic squarefree $g \in \F_q[t]$ is given by
\[
\operatorname{tr}(\operatorname{Frob}_{\bar g}, \tau_* \pi_* \overline{\mathbb Q_\ell} \otimes \delta^{-1} \mathcal L_\chi) = \#\{L \in \mathcal E_q^R(G;n_1, \dots, n_k) : f_L = g\} \cdot \chi(\disc(g)).
\]
We thus have
\[
\begin{split}
\sum_{L \in \mathcal E_q^R(G;n_1, \dots, n_k)} \chi(\disc(f_L)) &= \sum_{g \in \operatorname{Conf}^n(\F_q)} \#\{L \in \mathcal E_q^R(G;n_1, \dots, n_k) : f_L = g\} \cdot \chi(\disc(g)) \\
&= \sum_{g \in \operatorname{Conf}^n(\F_q)} \operatorname{Tr}(\operatorname{Frob}_{\bar g}, \tau_* \pi_* \overline{\mathbb Q_\ell} \otimes \delta^{-1} \mathcal L_\chi). 
\end{split}
\]
The Grothendieck--Lefschetz trace formula gives
\[
\sum_{g \in \operatorname{Conf}^n(\F_q)} \operatorname{tr}(\operatorname{Frob}_{\bar g}, \tau_* \pi_* \overline{\mathbb Q_\ell} \otimes \mathcal \delta^{-1} \mathcal L_\chi) = 
\sum_{i=0}^{2n} (-1)^i \operatorname{tr} (\Frob_q, H^i_c(\Conf^n \A^1_{\overline{\F_q}}, \tau_* \pi_* \overline{\mathbb Q_\ell} \otimes \delta^{-1} \mathcal L_\chi)).
\]
By the triangle inequality, the absolute value of the right hand side is bounded from above by
\[
\sum_{i=0}^{2n} \left| \operatorname{tr} (\Frob_q, H^i_c(\Conf^n \A^1_{\overline{\F_q}}, \tau_* \pi_* \overline{\mathbb Q_\ell} \otimes \delta^{-1} \mathcal L_\chi))\right|.
\]
Since $\tau_* \pi_* \overline{\mathbb Q_\ell} \otimes \delta^{-1} \mathcal L_\chi$ is punctually pure of weight $0$, \cite{Deligne} bounds from above the absolute value of each eigenvalue of $\operatorname{Frob}_q$ on $H^i_c(\Conf^n \A^1_{\overline{\F_q}}, \tau_* \pi_* \overline{\mathbb Q_\ell} \otimes \delta^{-1} \mathcal L_\chi)$ by $q^{i/2}$, so the sum above is bounded by
\[
\sum_{i=0}^{2n} q^{i/2} \cdot \dim_{\overline{\Q_\ell}} H^i_c(\Conf^n \A^1_{\overline{\F_q}}, \tau_* \pi_* \overline{\mathbb Q_\ell} \otimes \delta^{-1} \mathcal L_\chi) \leq 
\sum_{i=0}^{2n} q^{i/2} \cdot \dim_\kappa H^i_c(\Conf^n \A^1_{\overline{\F_q}}, \tau_* \pi_* \kappa \otimes \delta^{-1} \mathcal L_\chi).
\]

In view of \cref{SignInflation}, the analytification of the base change of $\tau_* \pi_* \kappa \otimes \delta^{-1} \mathcal L_\chi$ to $\C$ corresponds to a direct summand of the representation $\kappa R^{\otimes n} \otimes \kappa_{\zeta}^{\otimes n} \cong \kappa R_{\zeta}^{\otimes n}$ of $B_n$. It then follows from \cref{ComparisonCohomCor}, \cref{DualityBVSexmp}, and the sentence preceding it, that the right hand side is bounded from above by
\[
\sum_{i=0}^{2n} q^{i/2} \cdot \dim_\kappa H_{2n-i}(B_n, (\kappa R_\zeta^{\otimes n})^{\vee}) = 
\sum_{i=0}^{2n} q^{i/2} \cdot \dim_\kappa H_{2n-i}(B_n, \kappa R_{\zeta^{-1}}^{\otimes n}).
\]



By \cref{th:0controlledvanishing} we have 
\[ 
\sum_{i=0}^{2n} q^{i/2} \cdot \dim_\kappa H_{2n-i}(B_n, \kappa R_{\zeta^{-1}}^{\otimes n}) = 
\sum_{\frac{n-d}{d+2} \leq j \leq 2n} q^{n - \frac{j}{2}} \cdot \dim_\kappa H_{j}(B_n, \kappa R_{\zeta^{-1}}^{\otimes n}).
\]
It follows from \cref{Salvetti} that
\[
\sum_{\frac{n-d}{d+2} \leq j \leq 2n} q^{n - \frac{j}{2}} \cdot \dim_\kappa H_{j}(B_n, \kappa R_{\zeta^{-1}}^{\otimes n}) \leq q^{n - \frac{n-d}{2d+4}} \dim_\kappa \kappa R_{\zeta^{-1}}^{\otimes n} \sum_{j=0}^{n-1} \binom{n-1}{j} = 2^{n-1} |R|^n q^{n - \frac{n-d}{2d+4}}.
\]

    
\end{proof}

\begin{rem}
    It's not clear how sharp one can expect \cref{pr:discdisc} to be.  However, the case $G = \Z/2\Z$, $R = \{1\}$ is instructive.  In this case, $d = 1$, so \cref{th:0controlledvanishing} shows that $H_i(B_n, \kappa_\zeta^{\tensor n})$ vanishes for $i < (n-1)/3$. The cohomology of $B_n$ with the given coefficients was computed by Frenkel~\cite{frenkelcommutator} and, for $\zeta$ a nontrivial cube root of unity, there is indeed a nontrivial class in $H_{n/3}(B_n, \kappa_\zeta^{\tensor n})$ when $3|n$, suggesting that any substantially better range will require an argument that takes into account the multiplicative order of $\zeta$.  Given the recent result of Landesman--Levy that the stable homology of (connected components of) Hurwitz spaces maps down isomorphically to the the stable homology of configuration space, it seems very reasonable to ask whether the natural map
$$
H_i(B_n, \kappa R_\zeta^{\tensor n}) \ra H_i(B_n, \kappa _\zeta^{\tensor n})
$$
is an isomorphism in a range $i < \beta n$ (with $\beta$ possibly depending on $R$ and $\zeta$); in that case, the homology group on the left would be well-controlled by Frenkel's computation of the homology group on the right.
\end{rem}





\section{M\"obius Function in Higher Genus} \label{HigherGenus}

Here we state and prove an analog of the result in the previous section for the M\"obius function on curves of higher genus. Since this is similar, our treatment will be more abbreviated.

It is easy to see that the cohomology of $H_n(\C)$ with coefficients in this local system is exactly $H^*(B_{g,f}^n, (W \tensor (k[c] \tensor V_{-1})^{\tensor n})$.  These are cohomology groups we have already shown to be zero in some range in \cref{th:0controlledvanishing}.  That theorem allows us to prove an upper bound for the sum of M\"{o}bius of discriminants of $G$-extensions.

Recall that for a squarefree divisor $Z = \sum_{Y \in S} Y$ on a variety $X$ over a field $\F$, where $S$ is a finite set of (distinct) irreducible subvarieties of $X$ over $\F$, the M\"obius function is given by $(-1)^{\# S}$.
In case $\F$ is finite, the M\"obius function of $Z$ equals $(-1)^{\deg Z}$ times the sign of the (conjugacy class of the) permutation induced by $\operatorname{Frob}_{\#\F}$ on the support of $Z$ over $\overline{\F}$.

\begin{prop} \label{pr:mobdisc}

Let $G$ be a nontrivial finite group, let $q$ be a prime power large enough in terms of $|G|$ and coprime to it, let $K$ be the function field of a smooth projective curve $X$ of genus $g$ over $\F_q$, let $\infty$ be an $\F_q$-point of $K$, and let $D$ be a squarefree divisor on $X$ of degree $f$ which is disjoint from $\infty$. Let $R \subseteq G$ be a generating set which is closed under conjugation and raising to $q$th powers.  
Let $\mathcal E_n$ be the set of Galois regular extensions $L/K$ inside a fixed separable closure $K^{\textup{sep}}$ of $K$ satisfying the following conditions.

\begin{itemize}

\item The extension $L/K$ is split completely at $\infty$.

\item The ramification of $L/K$ is supported at $D$ and a squarefree divisor $Z(L)$ of degree $n$ disjoint from $D$.

\item The extension $L/K$ is equipped with a group isomorphism $\varphi \colon \operatorname{Gal}(L/K) \to G$ with respect to which the ramification type over every point of $Z(L)$ lies in $R$.

\end{itemize}


Then we have cancellation in
$
\sum_{L \in \mathcal E_n} \mu(Z(L))
$
with the same power saving as in \cref{pr:discdisc}.


\end{prop}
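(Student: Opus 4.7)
The plan is to mirror the proof of \cref{CharDiscCancThm} in \cref{SecMulCharDiscGenZero}, replacing $\operatorname{Conf}^n \mathbb A^1$ by the configuration space $\operatorname{Conf}^n U$ where $U = X \setminus (D \cup \{\infty\})$, whose base change to $\mathbb C$ has fundamental group $B_{g,f}^n$ (the surface braid group appearing throughout \cref{s:homologicalvanishing}). First I would construct a finite \'etale Hurwitz cover $\pi \colon \mathsf{Hur}_{G,R,D}^n \to \operatorname{Conf}^n U$ whose $\mathbb F_q$-points are in bijection with $\mathcal E_n$, in the manner of \cref{HurwitzFiniteCoverConfCol,HurwitzPointsGCovers}, and verify that the lisse \'etale sheaf $\pi_* \overline{\mathbb Q_\ell}$ is punctually pure of weight zero with trace function $f \mapsto \#\{L \in \mathcal E_n : Z(L) = f\}$.

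Next I would identify the coefficient system $(V,W)$ for the family $B_{g,f}^n$ corresponding to (the analytification of the base change to $\mathbb C$ of) $\pi_* \overline{\mathbb Q_\ell}$: $V$ is a direct summand of $\overline{\mathbb Q_\ell} R$ braided by the rack structure as in \cref{RacksToBraidedVS}, while $W$ is a finite-dimensional representation of $B_{g,f}^0 = \pi_1(U(\mathbb C))$ that records the prescribed ramification at $D$, the splitting at $\infty$, and the $G$-cover data. Twisting by the sign character inflated along $B_{g,f}^n \twoheadrightarrow B_n \twoheadrightarrow S_n$ (equivalently, tensoring $V$ with $\overline{\mathbb Q_\ell}_{-1}$ in the sense of \cref{BVStwistedByZetaWedgePM}) produces a lisse punctually pure sheaf $\mathcal F_n$ on $\operatorname{Conf}^n U$ whose trace function on a point $f \in \operatorname{Conf}^n U(\mathbb F_q)$ is $\mu(f) \cdot \#\{L \in \mathcal E_n : Z(L) = f\}$, using \cref{MobiusFunctionRepThry} (suitably interpreted in the higher-genus setting). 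Summing this trace function over $\operatorname{Conf}^n U(\mathbb F_q)$ gives the left-hand side.

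Now I would apply the Grothendieck--Lefschetz trace formula and Deligne's purity, bounding
\[
\Bigl| \sum_{L \in \mathcal E_n} \mu(Z(L)) \Bigr| \leq \sum_{i=0}^{2n} q^{i/2} \cdot \dim_{\overline{\mathbb Q_\ell}} H^i_c(\operatorname{Conf}^n U_{\overline{\mathbb F_q}}, \mathcal F_n).
\]
Via \cref{ComparisonCohomCor} (applied after spreading out $U$ over a suitable number ring in the manner of \cite{el}*{Theorem 9.2.4}) these \'etale cohomology groups become Poincar\'e-dual to the group homology $H_{2n-i}(B_{g,f}^n, W^\vee \otimes (V^\vee)^{\otimes n})$. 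By \cref{DualityBVSexmp}, $V^\vee$ is again the relevant braided vector space, so the relevant coefficient system is $(V_{-1}, W^\vee)$ (after absorbing the sign twist). \cref{th:0controlledvanishing}, together with the finiteness of $d = \deg C(\overline{\mathbb Q_\ell} R_{-1})$ from \cref{pr:0controlled}, kills all homology in degrees below $(n-d)/(d+2)$, so only the range $2n-i \geq (n-d)/(d+2)$ contributes.

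For the remaining terms I would use the explicit Betti bound \cref{AaronJordanExplicated} to control $\dim H_{2n-i}(B_{g,f}^n, W^\vee \otimes V_{-1}^{\otimes n})$ by a polynomial in $n$ times $(\dim W)(|R|^n)$, yielding a bound of the shape $C(g,f,|G|) \cdot |R|^n \cdot q^{n - (n-d)/(2d+4)}$, which gives the claimed power saving for $q$ sufficiently large in terms of $|G|$ (and the fixed data $g$, $f$, $D$). The main obstacle I anticipate is the bookkeeping in step one: constructing $\mathsf{Hur}_{G,R,D}^n$ as a scheme over $\mathbb Z_{(p)}[\zeta_{q-1}]$ that spreads out correctly and identifying $(V,W)$ precisely as a coefficient system, especially reconciling the braid-group action at the moving ramification points with the fixed monodromy data at $D$ and the splitting condition at $\infty$; everything downstream is an essentially routine adaptation of \cref{SecMulCharDiscGenZero}.
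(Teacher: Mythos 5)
Your proposal is correct and follows essentially the same route as the paper: pass to $\operatorname{Conf}^n U$ for $U = X \setminus (D \cup \{\infty\})$, realize the M\"obius-weighted count as the trace function of a punctually pure lisse sheaf attached to the coefficient system $(\kappa R_{-1}, W)$, and then run Grothendieck--Lefschetz, Deligne's bound, \cref{ComparisonCohomCor}, \cref{pr:0controlled}, \cref{th:0controlledvanishing}, and \cref{AaronJordanExplicated} exactly as in \cref{SecMulCharDiscGenZero}. The one step you flag as a potential obstacle --- constructing the Hurwitz scheme over $U$ and identifying the coefficient system --- is handled in the paper simply by citing \cite{el}*{Definitions 2.4.2, 2.4.5 and 3.1.9, 3.1.10}, with the sign twist realized via the $A_n$-subcover of the branch-point-labeling $S_n$-cover.
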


\begin{proof}

We have
\[
\left| \sum_{L \in \mathcal E_n} \mu(Z(L)) \right| = \left| \sum_{L \in \mathcal E_n} (-1)^{\deg Z(L)} \operatorname{sign}(\operatorname{Frob}_q, \operatorname{supp }Z(L)) \right| = 
\left| \sum_{L \in \mathcal E_n} \operatorname{sign}(\operatorname{Frob}_q, \operatorname{supp }Z(L)) \right|.
\]

Let $U$ be the complement of $D \infty$ in $X$, or (by an abuse of notation) a lift of it to characteristic $0$.
It follows from \cite{el}*{Definitions 2.4.2, 2.4.5} that there exists a Hurwitz scheme $H_n$ with $H_n(\F_q) = \mathcal E_n$.
The scheme $H_n$ admits a Galois $S_n$-cover obtained by labeling the $n$ branch points, and this $S_n$-cover has a subcover $\tilde{H}_n \ra H_n$ corresponding to the index-$2$ subgroup $A_n$ of $S_n$. The \'etale double cover $\tilde{H}_n \ra H_n$ corresponds to a continuous homomorphism from $\pi_1^{\textup{\'et}}(H_n)$ to $\{\pm 1\} \subseteq \Q_\ell^\times$, which we view as a rank-$1$ local system over $\kappa = \Q_\ell$.

It follows from \cite{el}*{3.1.9, 3.1.10} that there exist a finite-dimensional vector space $W$ over $\kappa$, a (self-dual) coefficient system $(\kappa R_{-1}, W)$ for $B^n_{g,f}$, and lisse sheaves $\mathcal F_n$ punctually pure of weight zero on $\Conf^n U$ such that the analytification of the base change of $\mathcal F_n$ to $\C$ is the representation $W \otimes \kappa R_{-1}^{\otimes n}$ of $B^n_{g,f}$, the action of $B_n$ (viewed as a subgroup of $B^n_{g,f}$) on $\kappa R_{-1}^{\otimes n}$ is the one from \cref{BraidedToNaturoid} while its action on $W$ is trivial, and
\[
\sum_{L \in \mathcal E_n} \operatorname{sign}(\operatorname{Frob}_q, \operatorname{supp }Z(L)) = \sum_{x \in \Conf^n U (\F_q)} \operatorname{tr}(\operatorname{Frob}_q,\mathcal F_{\bar x}).
\]


From the Grothendieck--Lefschetz trace fomula, the triangle inequality, Deligne's Riemann Hypothesis, \cref{ComparisonCohomCor} and replacing $\kappa$ with its residue field, we get that
\[
\begin{split}
\left| \sum_{x \in \Conf^n U (\F_q)} \operatorname{tr}(\operatorname{Frob}_q,\mathcal F_{\bar x}) \right| &\leq \sum_{i=0}^{2n} |\operatorname{tr}(\Frob_q, H^i_c(\Conf^n U_{\bar{\F}_q}, \mathcal F_n))| \\
&\leq \sum_{i=0}^{2n} q^{i/2} \cdot \dim H^i_c(\Conf^n U_{\bar{\F}_q}, \mathcal F_n) \\
&\leq \sum_{i=0}^{2n} q^{i/2} \cdot \dim H_{2n-i}(B^n_{g,f}, W \otimes \kappa R_{-1}^{\otimes n}).
\end{split}
\]
The result then follows by invoking \cref{pr:0controlled}, \cref{th:0controlledvanishing}, and \cref{AaronJordanExplicated} as we did in the proof of \cref{pr:discdisc}.




\end{proof}

\begin{rem} The similarity in the statements (and proofs) of \cref{pr:discdisc} and \cref{pr:mobdisc} may lead one to wonder why a statement generalizing both is not given.  
The reason is that we do not know of coefficient systems $(\kappa R_\zeta, W)$ in the higher-genus setting with $\zeta$ a root of unity of order not dividing $2$.

\end{rem}

\begin{rem}
    \cref{pr:mobdisc} is likely not sharp.  For instance, in case $G = S_3$ and $R$ being the conjugacy class of transpositions, \cref{pr:dexamples} shows that the degree of $C(\kappa R_{-1})$ is $1$; so \cref{pr:mobdisc} says, for instance, that the sum of $\mu(\operatorname{rad} \disc L)$ as $L$ ranges over $S_3$-extensions of $\F_q(t)$ with radical discriminant $q^n$ grows with order at most $q^{(5/6)n}$ as $n \to \infty$ for sufficiently large (but fixed) $q$.  Casual computational experiments suggest that the sum of $\mu(\disc L)$ over cubic extensions of $\Q$ with discriminant at most $X$ behaves more like a random walk, and is thus typically of size $X^{1/2}$, much smaller than $X^{5/6}$. 
\end{rem}

\section{Primality of Conductors}

\begin{prop} \label{pr:wedgevanishing}

Let $R$ be a finite quandle, and let $\kappa$ be a field of characteristic other than $2$.  Write $d$ for the degree of $C(\kappa R_{-1})$, which is finite by \cref{pr:0controlled}.  Suppose $(\kappa R,W)$ is a coefficient system on a genus $g$ surface with $f$ punctures.
Then
$$
H_p(B_{g,f}^n, W \tensor \kappa R^{\tensor n} \tensor \wedge^m \operatorname{std}_n) = 0
$$
whenever $p < (m-d)/(d+2)$.

\end{prop}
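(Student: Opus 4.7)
The plan is to realize $\kappa R^{\otimes n} \otimes \wedge^m \operatorname{std}_n$ as a direct summand of a graded piece $(V^{\otimes n})_m$ for a cleverly chosen coefficient system $(V,W)$, and then apply \cref{th:0controlledvanishing}. Take $V = \kappa R_\wedge = \kappa R \otimes \kappa_\wedge$, graded so that $\kappa R \subseteq V$ is placed in degree $0$ and $\kappa R_{-1} \subseteq V$ in degree $1$; thus $\deg V = 1$. Since $\kappa_\wedge$ is permutational, the $B_{g,f}^n$-action on $\kappa_\wedge^{\otimes n}$ factors through the natural surjection $B_{g,f}^n \twoheadrightarrow S_n$, and combining this with the given coefficient system $(\kappa R, W)$ endows $(V, W)$ with a graded coefficient system structure for $B_{g,f}^n$.

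Next I identify the graded piece. Since $V^{\otimes n} \cong \kappa R^{\otimes n} \otimes \kappa_\wedge^{\otimes n}$ with $\kappa R^{\otimes n}$ concentrated in degree $0$, we have $(V^{\otimes n})_m \cong \kappa R^{\otimes n} \otimes (\kappa_\wedge^{\otimes n})_m$, where $(\kappa_\wedge^{\otimes n})_m$ is the span of basis tensors with exactly $m$ factors of $v_{-1}$. By \cref{co:wedgedecomp}, $(\kappa_\wedge^{\otimes n})_m \cong \operatorname{Ind}^{S_n}_{S_{n-m} \times S_m} \operatorname{sign}_m \cong \wedge^m \operatorname{Perm}_n$, and in characteristic not dividing $n$ this splits as $\wedge^m \operatorname{std}_n \oplus \wedge^{m-1} \operatorname{std}_n$. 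Hence $W \otimes \kappa R^{\otimes n} \otimes \wedge^m \operatorname{std}_n$ is a direct summand of $W \otimes (V^{\otimes n})_m$.

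The key intermediate step is to bound $\deg C(V) \leq d$. By \cref{BraidedSumCoinvs} applied to the addable pair structure $(\kappa R, \kappa R_{-1})$ of \cref{TensoringWithKWedge},
\[
C(V)_m = \bigoplus_{n \geq m} H_0(B_{n-m, m}, \kappa R^{\otimes (n-m)} \otimes \kappa R_{-1}^{\otimes m}).
\]
The subgroup $B_m \subseteq B_{n-m,m}$ generated by $\sigma_{n-m+1}, \ldots, \sigma_{n-1}$ acts through the braiding on consecutive $\kappa R_{-1}$-positions only, so only on the second tensor factor. Consequently,
\[
H_0(B_m, \kappa R^{\otimes (n-m)} \otimes \kappa R_{-1}^{\otimes m}) \cong \kappa R^{\otimes (n-m)} \otimes H_0(B_m, \kappa R_{-1}^{\otimes m}),
\]
which vanishes for $m > d$ by the definition of $d$. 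As $H_0(B_{n-m,m}, -)$ is a further quotient of $H_0(B_m, -)$, it also vanishes, giving $\deg C(V) \leq d$.

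With $\deg V = 1$ and $\deg C(V) \leq d$, \cref{th:0controlledvanishing} yields $H_p(B_{g,f}^n, W \otimes (V^{\otimes n})_m) = 0$ for $p < (m - d)/(d + 2)$, and passing to the direct summand provides the claimed vanishing for $\wedge^m \operatorname{std}_n$. The main obstacle is the improvement $\deg C(\kappa R_\wedge) \leq d$: a direct appeal to \cref{pr:0controlled} yields only the weaker bound $|R|$, and the sharper statement relies on the observation that restricting to the subgroup $B_m$ braiding only among degree-one strands reduces the problem to the vanishing of $C(\kappa R_{-1})_m$ for $m > d$. A minor wrinkle is the case $\operatorname{char} \kappa \mid n$, where the splitting $\wedge^m \operatorname{Perm}_n \cong \wedge^m \operatorname{std}_n \oplus \wedge^{m-1} \operatorname{std}_n$ fails; this can be addressed by an induction on $m$ using the short exact sequence $0 \to \wedge^m \operatorname{std}_n \to \wedge^m \operatorname{Perm}_n \to \wedge^{m-1} \operatorname{std}_n \to 0$.
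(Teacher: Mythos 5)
Your proof is correct and follows essentially the same route as the paper's: grade $\kappa R_\wedge$ with $\kappa R$ in degree $0$ and $\kappa R_{-1}$ in degree $1$, bound $\deg C(\kappa R_\wedge) \leq d$ by restricting the $B_{n-m,m}$-coinvariants to the subgroup $B_m$ braiding only the degree-one strands, and then apply \cref{th:0controlledvanishing} and pass to the direct summand $\wedge^m \operatorname{std}_n$ of $\wedge^m \operatorname{Perm}_n$. Your closing remark about the case $\operatorname{char}\kappa \mid n$ is a point of care the paper's proof passes over silently, and the short-exact-sequence fix you sketch is a sensible way to handle it.
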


\begin{rem}
    When $R$ is a conjugacy class in a finite group $G$ satisfying the nonsplitting condition in Definition~\ref{NonsplittingEVWinfinite}, $g=0$, and $\kappa = \mathbb Q$, this vanishing result with a smaller linear stable range in $m$ -- and indeed a vanishing result for representations of $S_n$ with $m$ or fewer boxes below the top row, not just $\wedge^m \operatorname{std}_n$ -- follows from the main results of \cite{hmw} on the representation stability of the homology of ordered Hurwitz spaces.  
\end{rem}

\begin{proof}

The braided vector space $\kappa R_\wedge$ carries a nontrivial grading; namely, if we write $\kappa R_\wedge = \kappa R \oplus \kappa R_{-1}$ as in \cref{TensoringWithKWedge}, we may take the grade-$0$ piece to be $\kappa R$ and the grade $1$ piece to be $\kappa R_{-1}$.  With this grading, $\deg \kappa R_\wedge = 1$.

Moreover, it follows from \cref{co:wedgedecomp} that the graded pieces of $\kappa R_\wedge^{\tensor n}$ are
$$
(\kappa R_\wedge^{\tensor n})_m
=
\kappa R^{\tensor n} \tensor \wedge^m \operatorname{Perm}_n
=
(\kappa R^{\tensor n} \tensor \wedge^m \operatorname{std}_n) \oplus (\kappa R^{\tensor n} \tensor \wedge^{m-1} \operatorname{std}_n).
$$


Then, by \cref{BraidedSumCoinvs}, the $m$th graded piece of the coinvariants ring $\oplus_n H_0(B_n, \kappa R_\wedge^{\tensor n})$ can be written as
$$
\bigoplus_{n=0}^\infty H_0(B_n, \kappa R_\wedge^{\tensor n})_m =
\bigoplus_{n=m}^\infty H_0(B_{n-m,m}, \kappa R^{\tensor n-m} \tensor \kappa R_{-1}^{\tensor m}).
$$
The latter coinvariants module is a quotient of the coinvariants by the subgroup $B_m \leq B_{n-m,m}$, and the coinvariants under $B_m$ can be written as
$$
\bigoplus_{n=m}^{\infty} H_0(B_m, \kappa R^{\tensor n-m} \tensor \kappa R_{-1}^{\tensor m})
=
\bigoplus_{n=m}^\infty \kappa R^{\tensor n-m} \tensor H_0(B_m, \kappa R_{-1}^{\tensor m}).
$$
Our choice of $d$ is such that $H_0(B_m, \kappa R_{-1}^{\tensor m})$ vanishes for all $m > d$, so the same holds for $H_0(B_n, \kappa R^{\tensor n}_{\wedge})_m$, namely $\deg C(\kappa R_\wedge) \leq d$.
As $H_p(B_{g,f}^n, W \tensor \kappa R^{\tensor n} \tensor \wedge^m \operatorname{std})$ is a direct summand of
$$
H_p(B_{g,f}^n, W \tensor \kappa R^{\tensor n} \tensor \wedge^m \operatorname{std}) \oplus H_p(B_{g,f}^n, W \tensor \kappa R^{\tensor n} \tensor \wedge^{m-1} \operatorname{std}) = H_p(B_{g,f}^n, W \tensor \kappa R_\wedge^{\tensor n})_m
$$
it follows from \cref{th:0controlledvanishing} that
$$
H_p(B_{g,f}^n, W \tensor \kappa R^{\tensor n} \tensor \wedge^m \operatorname{std}_n) = 0
$$
whenever $p < (m-d)/(d+2)$.
\end{proof}

It is likely that the methods of the last section of this paper building on the works of Landesman--Levy in conjunction with the inclusion of braided vector spaces from \cref{RackificationOneDimensional} and our study of inner automorphism groups of (direct products of) racks can be used to obtain an alternative proof of a slightly weakened form of \cref{pr:wedgevanishing} guaranteeing less vanishing.

\begin{lem} \label{StdWedgeYoungDiag}
    
For integers $0 \leq i \leq n-1$, the Young diagram corresponding to the irreducible representation $\wedge^i \operatorname{std}_n$ of $S_n$ is $(n-i,1,1, \dots ,1)$.  

\end{lem}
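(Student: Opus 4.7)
The plan is induction on $i$. The base case $i = 0$ is immediate, since $\wedge^0 \operatorname{std}_n = \operatorname{triv}_n = S^{(n)}$, matching $(n-0, 1^0) = (n)$.

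For the inductive step, I would play two decompositions of $\wedge^i \operatorname{Perm}_n$ off one another. The first is the relation $\wedge^i \operatorname{Perm}_n \cong \wedge^i \operatorname{std}_n \oplus \wedge^{i-1}\operatorname{std}_n$ already recorded in the proof of \cref{co:wedgedecomp}. The second is the induced-representation presentation
\[
\wedge^i \operatorname{Perm}_n \cong \operatorname{Ind}_{S_i \times S_{n-i}}^{S_n}(\operatorname{sign}_i \boxtimes \operatorname{triv}_{n-i}),
\]
which is a relabeling of the isomorphism \cref{ClaimIsomRepsSn} that appears (and is proved) inside the proof of \cref{co:wedgedecomp}. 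Pieri's rule in its horizontal-strip form decomposes the right-hand side as $\bigoplus_\mu S^\mu$, where $\mu$ ranges over partitions of $n$ obtained from the column $(1^i)$ by adjoining a horizontal strip of $n-i$ boxes. A direct enumeration -- each column receives at most one new box, and $\mu$ must be a genuine Young diagram -- leaves only the two candidates $\mu = (n-i, 1^i)$ and $\mu = (n-i+1, 1^{i-1})$, so
\[
\wedge^i \operatorname{Perm}_n \cong S^{(n-i, 1^i)} \oplus S^{(n-i+1, 1^{i-1})}.
\]

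Equating the two decompositions and invoking the inductive hypothesis $\wedge^{i-1}\operatorname{std}_n \cong S^{(n-i+1, 1^{i-1})}$ isolates $\wedge^i \operatorname{std}_n \cong S^{(n-i, 1^i)}$, which closes the induction. Nothing here is a real obstacle: the only mildly technical ingredient is Pieri's rule, and the enumeration it triggers is pure bookkeeping. As an alternative avoiding Pieri altogether, one could instead compute the character of $\wedge^i \operatorname{std}_n$ directly from the generating function $\sum_i \chi_{\wedge^i \operatorname{std}_n}(\sigma)\, t^i = \det(I + t\sigma\vert_{\operatorname{std}_n})$ and match it against the Murnaghan--Nakayama formula for hook characters, but that route is strictly more laborious.
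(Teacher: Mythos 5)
Your proof is correct and matches the paper's approach: the paper's entire proof is the one-liner ``Follows by induction from Pieri's (branching) rule,'' and your argument is precisely a correct fleshing-out of that, combining the two decompositions of $\wedge^i\operatorname{Perm}_n$ already present in \cref{co:wedgedecomp} with the horizontal-strip enumeration (which indeed yields exactly the two hooks $(n-i,1^i)$ and $(n-i+1,1^{i-1})$). No gaps; the cancellation step is justified by semisimplicity in the characteristic-zero setting where the lemma is applied.
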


\begin{proof}

Follows by induction from Pieri's (branching) rule.
\end{proof}

\begin{prop} \label{LLpowerSavingSingleConj}

Let $G$ be a nontrivial finite group, and let $R$ be a conjugacy class that generates $G$.
Let $q$ be a prime power that is large enough in terms of $|G|$ and coprime to it.
Suppose that for every $r \in R$ we have $r^q \in R$.
For each $n$ in a growing sequence of positive integers choose a connected component $Z_n$ of $\mathsf{Hur}^n_{G,R} \times \overline{\F_q}$ that is defined over $\F_q$.
Then as $n \to \infty$ along this sequence of positive integers we have
\[
\# Z_n(\F_q) \sim \left(1 - \frac{1}{q} \right) \cdot q^n
\]
with a power saving error term.

\end{prop}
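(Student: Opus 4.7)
The plan is to apply the Grothendieck--Lefschetz trace formula to the smooth, geometrically connected $n$-dimensional $\F_q$-scheme $Z_n$, obtaining
\[
\# Z_n(\F_q) = \sum_{i=0}^{2n} (-1)^i \operatorname{tr}\left(\operatorname{Frob}_q, H^i_c(Z_n \times \overline{\F_q}, \overline{\Q_\ell})\right),
\]
and to identify the two leading terms $q^n$ and $-q^{n-1}$ with specific cohomology classes. Since $Z_n$ is geometrically connected and defined over $\F_q$, the top cohomology $H^{2n}_c \cong \overline{\Q_\ell}(-n)$ is one-dimensional and contributes $q^n$.

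For the secondary term $-q^{n-1}$, I would use the morphism $Z_n \hookrightarrow \mathsf{Hur}^n_{G,R} \xrightarrow{\pi} \operatorname{Conf}^n \xrightarrow{\delta} \Gm$ obtained by composing the finite \'etale map from \cref{HurwitzFiniteCoverConfCol} (which in the single-conjugacy-class case $k=1$ maps directly to $\operatorname{Conf}^n$) with the discriminant morphism from \cref{DiscriminantMorphismConSpace}. The pullback of the canonical weight-$2$ generator of $H^1(\Gm, \overline{\Q_\ell}) \cong \overline{\Q_\ell}(-1)$ is a class in $H^1(Z_n \times \overline{\F_q}, \overline{\Q_\ell})$ on which Frobenius acts by $q$. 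Via Poincar\'e duality on the smooth $Z_n$, this yields a dual class in $H^{2n-1}_c$ of Frobenius eigenvalue $q^{n-1}$, which combined with the sign $(-1)^{2n-1}$ contributes $-q^{n-1}$ to the trace formula.

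The heart of the proof is to bound the remaining contributions --- namely the weight-$<2n-2$ part of $H^{2n-1}_c$ together with $H^i_c$ for $i \leq 2n - 2$ --- by $O(q^{n - 1 - \varepsilon})$ for some $\varepsilon > 0$. For this I would invoke the homological stability theorem \cite{llNew}*{Theorem 1.3.5} already used in the discussion of \cref{MainArithApp}: it should give that in a linear range of degrees the homology of $Z_n \times \overline{\F_q}$ with $\overline{\Q_\ell}$-coefficients agrees with that of $\operatorname{Conf}^n \A^1 \times \overline{\F_q}$, which over $\overline{\Q_\ell}$ is concentrated in degrees $0$ and $1$ and matches the exact count $q^n - q^{n-1} = \# \operatorname{Conf}^n \A^1(\F_q)$ for $n \geq 2$. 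Outside the stable range, Deligne's Riemann Hypothesis combined with the coarse Betti-number estimates of \cref{Salvetti} or \cref{AaronJordanExplicated} would handle the error. The main obstacle will be translating Landesman--Levy's results into a quantitative statement about the cohomology of the individual $\F_q$-rational geometric component $Z_n$ (as opposed to $\mathsf{Hur}^n_{G,R}$ as a whole) and producing the non-explicit power saving flagged in the paper's introduction as the reason the error term in \cref{MainArithApp} is not explicit.
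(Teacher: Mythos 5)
Your proposal follows essentially the same strategy as the paper: Grothendieck--Lefschetz on $Z_n$, identification of the cohomology of $Z_n$ with that of $\Conf^n$ in a linear range of degrees via Landesman--Levy, and Deligne plus the coarse Betti bounds of \cref{Salvetti} outside that range. The one genuine difference is how the main term $q^n - q^{n-1}$ is extracted, and here your route introduces an avoidable weak point. You propose to exhibit the two leading terms by hand, via the one-dimensionality of $H^{2n}_c$ and an explicit class in $H^{2n-1}_c$ pulled back from $\Gm$ through the discriminant map. But exhibiting one weight-$(2n-2)$ class in $H^{2n-1}_c$ does not by itself show that this is the \emph{only} contribution from that degree; a priori Deligne only bounds the weights of $H^{2n-1}_c$ by $2n-1$, so an uncontrolled extra class there could contribute on the order of $q^{n-1/2}$, swamping the $q^{n-1}$ term you are trying to isolate. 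The paper sidesteps this entirely by subtracting the two trace formulas: since $\#\Conf^n(\F_q) = (1-1/q)q^n$ exactly, and since \cite{ll}*{Theorem 1.4.2} (which is the component-level statement you correctly flag as the needed input, and which, combined with \cref{ComparisonCohomCor}, gives a $\Frob_q$-equivariant isomorphism $H^i_c(Z_n \times \overline{\F_q},\Q_\ell) \cong H^i_c(\Conf^n \times \overline{\F_q},\Q_\ell)$ for all $i > \lambda n$ with $\lambda < 2$), every cohomological degree above $\lambda n$ — in particular $2n$ and $2n-1$ — cancels identically in the difference, and only the range $i \le \lambda n$ survives, where Deligne and \cref{Salvetti} give the bound $q^{\lambda n/2}(2|R|)^n$. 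If you replace your explicit-class step by this subtraction, your argument is the paper's proof.
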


\begin{proof}

Abusing notation, we will at times view $Z_n$ as lifted to characteristic $0$.
We compare the Grothendieck--Lefschetz trace formula of $Z_n$ with that of $\Conf^n$.
The latter formula, in conjunction with \cite{Rosen}*{Proposition 2.3}, says that
\[
\left( 1 - \frac{1}{q}\right) \cdot q^n = \# \Conf^n(\F_q) = \sum_{i=0}^{2n} (-1)^i \operatorname{tr}(\Frob_q, H^i_c(\Conf^n \times \overline{\F_q}, \Q_\ell))
\]
and the former says that
\[
\#Z_n(\F_q) = \sum_{i=0}^{2n} (-1)^i \operatorname{tr}(\Frob_q, H^i_c(Z_n \times \overline{\F_q}, \Q_\ell)).
\]
As a result, using the triangle inequality we get
\[
\left|\#Z_n(\F_q) - \left(1 - \frac{1}{q}\right)q^n \right| \leq 
\sum_{i=0}^{2n} \left| \operatorname{tr}(\Frob_q, H^i_c(Z_n \times \overline{\F_q}, \Q_\ell)) - \operatorname{tr}(\Frob_q, H^i_c(\Conf^n \times \overline{\F_q}, \Q_\ell)) \right|.
\]

It follows from \cite{ll}*{Theorem 1.4.2}, and \cref{ComparisonCohomCor} that there exists $1 < \lambda < 2$ for which the morphism of schemes $\pi \colon Z_n \to \Conf^n$ induces, for all $n$ large enough, a $\Frob_q$-equivariant isomorphism of vector spaces
\[
H^i_c(Z_n \times \overline{\F_q}, \Q_\ell) \cong H^i_c(\Conf^n \times \overline{\F_q}, \Q_\ell)
\]
over $\Q_\ell$ whenever $i > \lambda n$.
In particular, the trace of $\Frob_q$ on these vector spaces is the same, so another application of the triangle inequality gives
\[
\left|\#Z_n(\F_q) - \left(1 - \frac{1}{q}\right)q^n \right| \leq 
\sum_{0 \leq i \leq \lambda n} \left| \operatorname{tr}(\Frob_q, H^i_c(Z_n \times \overline{\F_q}, \Q_\ell))\right| + \left| \operatorname{tr}(\Frob_q, H^i_c(\Conf^n \times \overline{\F_q}, \Q_\ell)) \right|.
\]

It follows from Deligne's Riemann Hypothesis that the above is at most
\[
\sum_{0 \leq i \leq \lambda n} q^{i/2} \cdot (\dim H^i_c(Z_n \times \overline{\F_q}, \Q_\ell) + \dim H^i_c(\Conf^n \times \overline{\F_q}, \Q_\ell)).
\]
Setting $\kappa = \F_\ell$ and using \cref{ComparisonCohomCor} we can bound this by 
\[
\sum_{0 \leq i \leq \lambda n} q^{i/2} \cdot (\dim_\kappa H_{2n-i}(B_n, \kappa R^n) + \dim_\kappa H_{2n-i}(B_n, \kappa)).
\]
Applying \cref{Salvetti} we get an upper bound of
\[
\sum_{0 \leq i \leq \lambda n} q^{i/2} \binom{n-1}{2n-i}(|R|^n + 1) \leq q^{\frac{\lambda}{2}n} \cdot (2|R|)^n 
\]
which gives us the desired power savings once $q$ is large enough because $\frac{\lambda}{2} < 1$.
\end{proof}

\begin{proof}[Proof of \cref{MainArithApp}]

As in \cref{HurwitzPointsGCovers} we identify $\mathcal E_q^R(G;n)$ with $\mathsf{Hur}_{G,R}^{n}(\F_q)$.
Let $Z_1, \dots, Z_m$  be the connected components of $\mathsf{Hur}_{G,R}^{n} \times \overline{\F_q}$ that are defined over $\F_q$.
We then have
\[
\mathcal E_q^R(G;n) = \mathsf{Hur}_{G,R}^{n}(\F_q) = \bigcup_{j=1}^m Z_j(\F_q).
\]
As a result we have
\[
\# \mathcal E_q^R(G;n) = \sum_{j=1}^m \# Z_j(\F_q)
\]
and
\[
\#\{L \in \mathcal E_q^R(G;n) : f_L \text{ is irreducible}\} = \sum_{j=1}^m \#\{L \in Z_j(\F_q) : f_L \text{ is irreducible}\}.
\]

It follows from \cite{lwzb}*{Corollary 12.9} that (the number of components) $m$, for large enough $n$, depends on $n$ only via the congruence class of $n$ (modulo $|G|^2$). 
It is thus sufficient to show for every $1 \leq j \leq m$ that as $n \to \infty$ we have
\begin{equation*}
\#\{L \in Z_j(\F_q) : f_L \text{ is irreducible}\} \sim \frac{q}{(q-1)n} \# Z_j(\F_q) 
\end{equation*}
with a power saving error term.
\cref{LLpowerSavingSingleConj} tells us that as $n \to \infty$ we have
\[
\frac{q}{(q-1)n}\#Z_j(\F_q) \sim \frac{q^n}{n}
\]
for every $1 \leq j \leq m$ with a power saving error term.
It would therefore be enough to show that
\begin{equation*}
\#\{L \in Z_j(\F_q) : f_L \text{ is irreducible}\} \sim \frac{q^n}{n}
\end{equation*}
as $n \to \infty$ with a power saving error term for every $1 \leq j \leq m$.

It follows from \cite{Rosen}*{Theorem 2.2}, \cref{VonMangoldtUsingWedges}, and the Grothendieck--Lefschetz trace formula that as $n \to \infty$ we have the asymptotic with power saving error term

\begin{equation*}
\begin{split}
\frac{q^n}{n} \sim \sum_{g \in \Conf^n(\F_q)} \mathbf{1}_{\textup{irr}}(g)  &= \frac{1}{n} \sum_{k=0}^{n-1} (-1)^k \ \sum_{g \in \Conf^n(\F_q)} \operatorname{tr}(\operatorname{Frob}_{\bar g}, \wedge^k \operatorname{std}_n) \\
&= \frac{1}{n} \sum_{k=0}^{n-1} \ \sum_{i=0}^{2n} (-1)^{i+k} \operatorname{tr}(\Frob_q, H^i_c(\Conf^n \times \overline{\F_q}, \wedge^k \operatorname{std}_n)).
\end{split}
\end{equation*}


\cref{VonMangoldtUsingWedges} tells us also that
\[
\#\{L \in Z_j(\F_q) : f_L \text{ is irreducible}\} = 
\sum_{L \in Z_j(\F_q)} \mathbf{1}_{\textup{irr}}(f_L) = \frac{1}{n} \sum_{k=0}^{n-1} (-1)^k \sum_{L \in Z_j(\F_q)} \operatorname{tr}(\Frob_{\overline{f_L}}, \wedge^k \operatorname{std}_n) 
\]
so using the morphism $\pi \colon Z_j \to \Conf^n$ and the Grothendieck--Lefschetz trace formula, we get that this equals
\[
\frac{1}{n} \sum_{k=0}^{n-1} (-1)^k \sum_{L \in Z_j(\F_q)} \operatorname{tr}(\Frob_{\bar{L}}, \pi^{-1} \wedge^k \operatorname{std}_n) = 
\frac{1}{n} \sum_{k=0}^{n-1} \sum_{i=0}^{2n} (-1)^{i+k} \operatorname{tr}(\Frob_q, H^i_c(Z_j \times \overline{\F_q}, \pi^{-1} \wedge^k \operatorname{std}_n)).
\]
In view of the triangle inequality, it would thus be sufficient to obtain a power saving bound on
\begin{equation} \label{ComparisonTraceFormulas}
\sum_{k=0}^{n-1} \sum_{i=0}^{2n} 
\left| \operatorname{tr}(\Frob_q, H^i_c(Z_j \times \overline{\F_q}, \pi^{-1} \wedge^k \operatorname{std}_n)) - 
\operatorname{tr}(\Frob_q, H^i_c(\Conf^n \times \overline{\F_q}, \wedge^k \operatorname{std}_n))
\right|.
\end{equation}

It follows from \cite{llNew}*{Theorem 1.3.5}, \cref{ComparisonCohomCor}, \cref{SingleConjugacyClassConnected}, and \cref{StdWedgeYoungDiag} that for every $0 < \alpha < 1$ there exists $1 < \lambda < 2$ for which the morphism of schemes $\pi \colon Z_n \to \Conf^n$ induces, for all $n$ large enough, a $\Frob_q$-equivariant isomorphism of vector spaces
\[
H^i_c(Z_j \times \overline{\F_q}, \pi^{-1} \wedge^k \operatorname{std}_n) \cong H^i_c(\Conf^n \times \overline{\F_q}, \wedge^k \operatorname{std}_n)
\]
over $\Q_\ell$ whenever $i > \lambda n$ and $k \leq \alpha n$.
Consequently, this range of $i$ and $k$ makes no contribution to \cref{ComparisonTraceFormulas}.

Put $\kappa = \F_\ell$.
By the projection formula, \cref{ComparisonCohomCor}, and the fact that the lisse sheaves $\pi_* \overline{\mathbb Q_\ell} \otimes \wedge^k \operatorname{std}_n$ are self-dual, we have
\begin{equation*}
\dim H^i_c(Z_j \times \overline{\F_q}, \pi^{-1} \wedge^k \operatorname{std}_n) = \dim H^i_c(\Conf^n \times \overline{\F_q}, \pi_* \overline{\mathbb Q_\ell} \otimes \wedge^k \operatorname{std}_n) \leq \dim H_{2n-i}(B_n, \kappa R^n \otimes \wedge^k \operatorname{std}_n)
\end{equation*}
and
\begin{equation*}
\dim H^i_c(\Conf^n \times \overline{\F_q}, \wedge^k \operatorname{std}_n) \leq \dim H_{2n-i}(B_n, \kappa \mathcal T_1^n \otimes \wedge^k \operatorname{std}_n)
\end{equation*}
which both vanish by \cref{pr:wedgevanishing} and \cref{ForgetFulGroupsRacks} whenever $i > 2n - (k-d)/(d+2)$.
In particular, for $k > \alpha n$ we get vanishing of cohomology in degrees $i$ exceeding
\[
2n - (k-d)/(d+2) < \left(2- \frac{\alpha}{d+2} \right)n + 1. 
\]
Therefore, after making $\lambda$ closer to $2$ if necessary, we see that there is no contribution to \cref{ComparisonTraceFormulas} from the range $i > \lambda n$ and $k > \alpha n$.
Combining this conclusion with the one from the previous paragraph, we infer that the cohomological degrees $i > \lambda n$ do not contribute to \cref{ComparisonTraceFormulas}.

Hence, in view of the triangle inequality, it remains to provide an upper bound for
\[
\sum_{k=0}^{n-1} \ \sum_{0 \leq i \leq \lambda n} 
\left| \operatorname{tr}(\Frob_q, H^i_c(Z_j \times \overline{\F_q}, \pi^{-1} \wedge^k \operatorname{std}_n))\right| + \left|
\operatorname{tr}(\Frob_q, H^i_c(\Conf^n \times \overline{\F_q}, \wedge^k \operatorname{std}_n))
\right|.
\]
The sheaves in question are punctually pure of weight $0$, so Deligne's Riemann Hypothesis furnishes us with an upper bound of
\[
\sum_{k=0}^{n-1} \ \sum_{0 \leq i \leq \lambda n} q^{i/2} \cdot \left( \dim H^i_c(Z_j \times \overline{\F_q}, \pi^{-1} \wedge^k \operatorname{std}_n) +
\dim H^i_c(\Conf^n \times \overline{\F_q}, \wedge^k \operatorname{std}_n)
\right).
\]
By \cref{Salvetti} this is bounded from above by
\[
\sum_{k=0}^{n-1} \ \sum_{0 \leq i \leq \lambda n} q^{i/2} \cdot \binom{n-1}{2n-i} \binom{n-1}{k} (|R|^n+1) \leq (4|R|)^n q^{\frac{\lambda}{2}n}.
\]
Since $\frac{\lambda}{2} < 1$ we obtain the required power saving once $q$ is large enough.
\end{proof}







\section{Sum of Legendre Symbols}

\begin{proof}[Proof of \cref{LegendreThmConductors}]

The contribution of $g=1$, $h = f_L$, and that of $h=1$, $g = f_L$, produce the main term on the right hand side.
Our task is therefore to obtain, as $n \to \infty$, a power saving in
\begin{equation*}
\sum_{L \in \mathcal E_q^R(G;n)} \sum_{i = 1}^{n-1} \ \sum_{\substack{(g,h) \in \Conf^{i,n-i}(\F_q) \\ gh = f_L}} \left( \frac{g}{h} \right).    
\end{equation*}
Taking an absolute value, exchanging the order of summation, and summing trivially over $i$, we are faced with obtaining a power saving in
\begin{equation*}
\sum_{i = 1}^{n-1} \left| \sum_{L \in \mathcal E_q^R(G;n)} \ \sum_{\substack{(g,h) \in \Conf^{i,n-i}(\F_q) \\ gh = f_L}} \left( \frac{g}{h} \right) \right|.    
\end{equation*}
It follows from the law of quadratic reciprocity \cite{Rosen}*{Theorem 3.5} that the contribution of any $i$ equals that of $n-i$, so it suffices to establish, for every $0 < i < n/2$, power saving cancellation for the sum in the absolute value above.

Let $\chi \colon \F_q^\times \to \C^\times$ be the nontrivial quadratic character.
It follows from \cref{ChiResultantLegendreSymbol}, \cref{HurwitzPointsGCovers}, \cref{PushforwardFromHurwitzSpaceTrace}, and \cref{TraceFunctionCharacterSheafResultantSumOne} that
\begin{equation*}
\begin{split}
&\sum_{L \in \mathcal E_q^R(G;n)}  \ \sum_{\substack{(g,h) \in \Conf^{i,n-i}(\F_q) \\ gh = f_L}} \left( \frac{g}{h} \right) = \\
&\sum_{f \in \Conf^n(\F_q)} \# \{L \in \mathcal E_q^R(G;n) : f_L = f \} \ \sum_{\substack{(g,h) \in \Conf^{i,n-i}(\F_q) \\ gh = f}}  \chi(\operatorname{Res}(h,g)) = \\
&\sum_{f \in \Conf^n(\F_q)} \operatorname{tr}(\operatorname{Frob}_{\bar f}, \pi_* \mathbb Q_\ell)
\cdot \operatorname{tr}(\operatorname{Frob}_{\bar f}, \tau_*^{(n-i)} \operatorname{Res}^{-1}\mathcal L_\chi) = \\
&\sum_{f \in \Conf^n(\F_q)} \operatorname{tr}(\operatorname{Frob}_{\bar f}, \pi_* \mathbb Q_\ell \otimes \tau_*^{(n-i)} \operatorname{Res}^{-1}\mathcal L_\chi).
\end{split}
\end{equation*}

Put $\kappa = \F_\ell$.
By the Grothendieck--Lefschetz trace formula, Deligne's Riemann Hypothesis, \cref{ComparisonCohomCor}, the notation of \cref{GeneratingNtuplesRack} and \cref{LegendreBraidVS}, and \cref{ArithmetizingPMbvs}, the absolute value of the above is at most
\[
\begin{split}
&\sum_{j=0}^{2n} \left| \operatorname{tr}(\Frob_q, H_c^j(\Conf^n \times \overline{\F_q}, \pi_* \mathbb Q_\ell \otimes \tau_*^{(n-i)} \operatorname{Res}^{-1}\mathcal L_\chi)) \right| \leq \\
&\sum_{j=0}^{2n} q^{j/2} \cdot \dim H_c^j(\Conf^n \times \overline{\F_q}, \pi_* \mathbb Q_\ell \otimes \tau_*^{(n-i)} \operatorname{Res}^{-1}\mathcal L_\chi) \leq \\
&\sum_{j=0}^{2n} q^{j/2} \cdot \dim H_{2n-j}(B_n, \kappa R^n_{\times} \otimes \operatorname{Ind}^{B_n}_{B_{n-i,i}} \operatorname{Span}_{\kappa} v^{\otimes (n-i)} \otimes \underline{v}^{\otimes i}).
\end{split}
\]

It suffices to show that there exists $\lambda > 0$ for which
\begin{equation} \label{SufficientVanishingLegendreCancellation}
H_{j}(B_n, \kappa R^n_{\times} \otimes \operatorname{Ind}^{B_n}_{B_{n-i,i}} \operatorname{Span}_\kappa v^{\otimes (n-i)} \otimes \underline{v}^{\otimes i}) = 0, \qquad 0 \leq j \leq \lambda n.     
\end{equation}
Indeed, then from \cref{Salvetti} we get the bound
\[
\begin{split}
&\sum_{0 \leq j \leq (2-\lambda)n} q^{j/2} \cdot
\binom{n-1}{2n-j} \cdot \dim (\kappa R^n_{\times} \otimes \operatorname{Ind}^{B_n}_{B_{n-i,i}} \operatorname{Span}_\kappa v^{\otimes (n-i)} \otimes \underline{v}^{\otimes i}) \leq \\
&\sum_{0 \leq j \leq (2-\lambda)n} q^{(1 - \lambda/2)n} \cdot \binom{n-1}{2n-j} \cdot \dim \kappa R^n_{\times} \cdot \dim(\operatorname{Ind}^{B_n}_{B_{n-i,i}} \operatorname{Span}_\kappa v^{\otimes (n-i)} \otimes \underline{v}^{\otimes i}) \leq \\
&q^{(1 - \lambda/2)n} \cdot 2^{n-1} \cdot |R|^n \cdot \binom{n}{i} \leq (4 |R| q^{1 - \lambda/2})^{n}
\end{split}
\]
which gives a power saving once $q$ is large enough.

We now turn to proving the vanishing in \cref{SufficientVanishingLegendreCancellation}.
Tensoring the inclusion from \cref{IndResChiIntoRack} with $\kappa R^n_\times$, and using the notation of \cref{MultiPartitionExmpAction}, we get an injective homomorphism of representations
\[
\kappa R^n_\times \otimes \operatorname{Ind}^{B_n}_{B_{n-i,i}} \operatorname{Span}_\kappa v^{\otimes (n-i)} \otimes \underline{v}^{\otimes i} \to \kappa R^n_\times \otimes \kappa \mathcal S_{\pm}(i,n-i) = \kappa [R^n_\times \times  \mathcal S_{\pm}(i,n-i)]
\]
of $B_n$ over $\kappa$.
We note that $R^n_\times \times  \mathcal S_{\pm}(i,n-i)$ is a subset of $R^n \times \mathcal S_{\pm}^n = (R \times \mathcal S_{\pm})^n$.
It follows from our assumption that $0< i < n$, \cref{SingleConjugacyClassConnected}, \cref{InnerAutJoyceQuandle}, and \cref{GoursatRacks},  that $R^n_\times \times  \mathcal S_{\pm}(i,n-i)$ is in fact a subset of $(R \times \mathcal S_{\pm})_\times^n$.

Therefore, it follows from \cite{ll}*{Theorem 1.4.2}, \cite{llNew}*{Example 1.4.7}, \cref{SynchQuandle} (or \cref{NonactingElementImpliesInnerProductive} and \cref{InnerProductiveImpliesSynchronized}) and \cref{QuotientJoyceQuandle}, that there exists $\lambda > 0$ such that the only contribution to the homology of $B_n$ in degrees $i \leq \lambda n$ with coefficients in $\kappa[R^n_\times \times \mathcal S_\pm(i,n-i)]$ comes from the (irreducible) constituents of the representation $\kappa[R^n_\times \times \mathcal S_\pm(i,n-i)]$ of $B_n$ appearing in the representation $\operatorname{Ind}_{B_{i,n-i}}^{B_n} \kappa$ of $B_n$.
Our task is thus to show that
\[
\operatorname{Hom}_{B_n}(\operatorname{Ind}^{B_n}_{B_{n-i,i}} \operatorname{Span}_\kappa v^{\otimes (n-i)} \otimes \underline{v}^{\otimes i}, \operatorname{Ind}_{B_{i,n-i}}^{B_n} \kappa) = 0,
\]
or equivalently, by the right adjointness of induction from a finite index subgroup, that 
\[
H_0(B_{i,n-i}, \operatorname{Ind}^{B_n}_{B_{n-i,i}} \operatorname{Span}_\kappa v^{\otimes (n-i)} \otimes \underline{v}^{\otimes i}) = 0.
\]
This follows from Mackey theory and the fact that this representation of the braid group does not factor via the symmetric group.
\end{proof}

\section{Acknowledgments}

We thank Will Sawin for suggesting \cref{SawinRack}, and for multiple helpful discussions.  We are also grateful to Aaron Landesman and Craig Westerland for useful discussions while the paper was underway.

Jordan Ellenberg's reserach is supported by National Science Foundation grant DMS-2301386 and by a Simons Fellowship.
Mark Shusterman's research is co-funded by the European Union (ERC, Function Fields, 101161909). Views and opinions expressed are however those of the author only and do not necessarily reflect those of the European Union or the European Research Council. Neither the European Union nor the granting authority can be held responsible for them.

\bibliographystyle{plain}
\bibliography{refs}

@article{el,
  title={Homological stability for generalized {H}urwitz spaces and {S}elmer groups in quadratic twist families over function fields},
  author={Ellenberg, Jordan S and Landesman, Aaron},
  journal={arXiv preprint arXiv:2310.16286},
  year={2023}
}

@article{shus,
  title={The tamely ramified geometric quantitative minimal ramification problem},
  author={Shusterman, Mark},
  journal={Compositio Mathematica},
  volume={160},
  number={1},
  pages={21--51},
  year={2024},
  publisher={London Mathematical Society}
}

@article{evw,
  title={Homological stability for {H}urwitz spaces and the {C}ohen-{L}enstra conjecture over function fields},
  author={Ellenberg, Jordan S and Venkatesh, Akshay and Westerland, Craig},
  journal={{Annals} of Mathematics},
  pages={729--786},
  year={2016},
}

@article{ll,
  title={Homological stability for {Hurwitz} spaces and applications},
  author={Landesman, Aaron and Levy, Ishan},
  journal={arXiv preprint arXiv:2503.03861},
  year={2025}
}

@article{llNew,
  title={The stable homology of {Hurwitz} modules and applications},
  author={Landesman, Aaron and Levy, Ishan},
  journal={arXiv preprint arXiv:2510.02068},
  year={2025}
}

@article{llSplit,
  title={The {Cohen--Lenstra} moments over function fields via the stable homology of non-splitting {Hurwitz} spaces},
  author={Landesman, Aaron and Levy, Ishan},
  journal={arXiv preprint arXiv:2410.22210},
  year={2024}
}

@article{lwzb,
  title={A predicted distribution for {Galois} groups of maximal unramified extensions},
  author={Liu, Yuan and Wood, Melanie and Zureick-Brown, David},
  journal={Inventiones mathematicae},
  pages={49–-116},
  year={2024},
  volume={237}
}

@article{etw,
  title={Fox-{N}euwirth-{F}uks cells, quantum shuffle algebras, and {M}alle's conjecture for function fields},
  author={Ellenberg, Jordan S and Tran, TriThang and Westerland, Craig},
  journal={arXiv preprint arXiv:1701.04541},
  year={2023}
}

@article{Sant,
    author = {Santens, Tim},
    title = {THE LEADING CONSTANT IN MALLE’S CONJECTURE OVER FUNCTION FIELDS},
    journal = {arXiv preprint arXiv:2512.12838},
    year = {2025} 
}

@article{frenkelcommutator,
  title={Cohomology of the commutator subgroup of the braid group},
  author={Frenkel’, E. V.},
  journal={Functional Analysis and its Applications},
  volume={22},
  number={3},
  pages={248--250},
  year={1988},
  publisher={Springer}
}

@article{BSEF,
title={The minimal ramification problem for rational function fields over finite fields},
author={L. Bary-Soroker and A. Entin and A. Fehm},
journal={IMRN},
number={21},
pages={18199-–18253},
year={2023}
}

@article{BSS,
title={Sieves and the minimal ramification problem},
author={L. Bary-Soroker and T. Schlank}, 
journal={Journal of the Institute of Mathematics of Jussieu},
volume={19},
number={3},
year={2020},
pages={919--945}
}

@article{BG,
title={On the average number of octahedral newforms of prime level},
author={M. Bhargava and E. Ghate},
journal={Mathematische Annalen}, 
volume={344},
number={4},
year={2009}, 
pages={749--768}
}

@article{DW,
title={Minimal ramification and the inverse Galois problem over the rational function field {$\F_p(t)$}},
author={M. De Witt},
journal={Journal of Number Theory},
volume={143},
pages={62--81},
year={2014}
}

@article{TT,
title={Levels of distribution for sieve problems in prehomogeneous vector spaces},
author={T. Taniguchi and F. Thorne},
journal={Mathematische Annalen},
volume={376},
number={3}, 
year={2020}, 
pages={1537--1559}
}

@article{tomerkei,
  title={Arithmetic {K}ei Theory},
  author={Davis, Ariel and Schlank, Tomer M},
  journal={arXiv preprint arXiv:2408.05489},
  year={2024}
}

@article{els,
title = {Nonvanishing of hyperelliptic zeta functions over finite fields},
author = {Ellenberg, Jordan and Li, Wanlin and Shusterman, Mark},
journal = {Algebra \& Number Theory},
volume = {14},
number = {7},
pages = {1895--1909},
year = {2020},
}

@article{JR,
    author = {John W. Jones and David P. Roberts},
    title = {Number fields ramified at one prime},
    journal = {Algorithmic number theory},
    volume = {5011},
    pages = {226--239},
    year = {2008},
    publisher = {Springer, Berlin}
}

@article{KNS,
    author = {Hershy Kisilevsky and Danny Neftin and Jack Sonn},
    title = {On the minimal ramification problem for semiabelian groups},
    journal = {Algebra Number Theory},
    year = {2010},
    volume = {4},
    number = {8},
    pages={1077–-1090},
}

@article{KS,
    author = {Hershy Kisilevsky and Jack Sonn},
    title = {On the minimal ramification problem for {$\ell$}-groups},
    journal = {Compositio Mathematica},
    year = {2010},
    pages = {599–-606},
    volume = {146},
    number = {3}
}

@article{MU,
    author = {Nadya Markin and Stephen V. Ullom},
    title = {Minimal ramification in nilpotent extensions},
    journal = {Pacific J. Math.},
    year = {2011},
    volume = {253},
    number = {1},
    pages = {125--143}
}

@article{No,
    author = {Akito Nomura},
    title = {Notes on the minimal number of ramified primes in some {$l$}-extensions of {$\Q$}},
    journal = {Arch. Math.},
    year = {2008},
    pages = {501--510},
    volume = {90},
    number = {6}
}

@article{Pl,
    author = {Bernat Plans},
    title = {On the minimal number of ramified primes in some solvable extensions of {$\Q$}},
    journal = {Pacific J. Math.},
    year = {2004},
    volume = {215},
    number = {2},
    pages = {381--391}
}

@article{Ho,
    author = {Jing Long Hoelscher},
    title = {Galois extensions ramified only at one prime},
    journal = {Journal of Number Theory},
    year = {2009},
    pages = {418--427},
    volume = {129},
    number = {2}
}

@article{MomentsOfLfunctions,
title = {Uniform twisted homological stability},
author = {Miller, Jeremy and Patzt, Peter and Petersen, Dan and Randal-Williams, Oscar},
journal = {arXiv preprint arXiv:2402.00354},
year = {2024},
}

@article{BDPW,
title = {Hyperelliptic curves, the scanning map, and
moments of families of quadratic {$L$}-functions},
author = {Bergstr\"{o}m, Jonas and Diaconu, Adrian and Petersen, Dan and Westerland, Craig},
journal = {arXiv preprint arXiv:2302.07664},
year = {2023},
}

@article{anh,
title = {{Fox-Neuwirth} cells, quantum shuffle algebras, and character sums of the resultant},
author = {Hoang, Anh Trong Nam},
journal = {arXiv preprint arXiv:2308.01410},
year = {2023},
}

@article{SW,
title = {Functional equations of axiomatic multiple {Dirichlet} series, {Weyl} groupoids, and quantum algebra},
author = {Sawin, Will and Whitehead, Ian},
journal = {arXiv:2507.08662},
year = {2025},
}

@article{Deligne,
title = {La Conjecture De {Weil II}},
volume={9},
author = {Deligne, Pierre},
journal = {Publications Math\'ematiques de l'IH\'ES},
number = {52},
pages = {137--252},
year = {1980},
}

@article{Callegaro,
title= {Salvetti complex. spectral sequences and cohomology of {Artin} groups},
author = {Callegaro, Filippo},
journal = {Annales de la Facult\'e des Sciences de Toulouse},
volume = {XXIII},
number = {2},
pages = {267--296},
year = {2014},
}

@article{Sawin,
title = {Square-root cancellation for sums of factorization functions over short intervals in function fields},
author = {Sawin, Will},
journal = {Duke Math. J.},
number = {5},
volume = {170},
pages = {997--1026},
year = {2021},
}

@article{SaWo,
    author = {Will Sawin and Melanie Matchett Wood},
    title = {Finite quotients of {$3$}-manifold groups},
    journal = {Inventiones mathematicae},
    year = {2024},
    volume = {237},
    pages = {349--440},
}

@article{SaWoo,
    author = {Will Sawin and Melanie Matchett Wood},
    title = {The moment problem for random objects in a category},
    journal = {arXiv preprint arXiv:2210.06279},
    year = {2022},
}

@article{HMW,
author = {Himes, Zachary and Miller, Jeremy and Wilson, Jennifer},
title = {Representation stability for ordered {Hurwitz} spaces},
journal = {arXiv preprint arXiv:2509.05516},
year = {2025},
}

@article{BE,
title = {Random pro-{$p$} groups, braid groups, and random tame {Galois} groups},
author = {Nigel Boston and Jordan Ellenberg},
journal = {Groups, Geometry, and Dynamics},
year = {2011},
pages = {265--280},
number = {5},
volume = {2},
}

@book{Rosen,
    author = {Rosen, Michael},
    title = {Number theory in function fields},
    publisher = {Springer Science $\&$ Business Media},
    year = {2002},
    volume = {210},
    series = {Graduate Texts in Mathematics},
}

@book{nsw,
    author = {Neukirch, J\"urgen and Alexander Schmidt and Kay Wingberg},
    title = {Cohomology of Number Fields},
    publisher = {Springer Science $\&$ Business Media},
    year = {2013},
    volume = {323},


}

@article{BM,
title = {The fewest primes ramified in a {$G$}-extension of {$\mathbb Q$}},
author = {Nigel Boston and Nadya Markin},
journal = {Ann. Sci. Math. Qu\'ebec},
year = {2009},
pages = {145--154},
number = {33},
volume = {2},
}

@article{Manf,
title = {Some subgroups of {Artin's} braid group},
author = {Sandro Manfredini},
journal = {Topology and its Application},
year = {1997},
pages = {123--142},
number = {78},
}

\end{document}